\newtheorem{definition}{Definition}[section]
\newtheorem{theorem}{Theorem}[section]
\newtheorem{proposition}[theorem]{Proposition}
\newtheorem{lemma}[theorem]{Lemma}
\newtheorem{corollary}[theorem]{Corollary}
\newtheorem{example}[theorem]{Example}
\newtheorem{remark}[theorem]{Remark}
\DeclareMathOperator{\End}{End}
\DeclareMathOperator{\spn}{span}
\DeclareMathOperator{\Aut}{Aut}
\DeclareMathOperator{\tr}{tr}
\DeclareMathOperator{\Tr}{Tr}
\DeclareMathOperator{\rk}{rank}
\DeclareMathOperator{\Mat}{Mat}
\DeclareMathOperator{\ad}{ad}
\DeclareMathOperator{\Ad}{Ad}
\DeclareMathOperator{\Jac}{Jac}
\begin{document}
\title{Actions of Cusp Forms on Holomorphic Discrete Series and Von Neumann Algebras}

\author{Jun Yang}
\date{}
\maketitle

\abstract{
A holomorphic discrete series  representation $(L_{\pi},H_{\pi})$ of a connected semi-simple real Lie group $G$ is associated with an irreducible representation $(\pi,V_{\pi})$ of its maximal compact subgroup $K$. 
The underlying space $H_{\pi}$ can be realized as certain holomorphic $V_{\pi}$-valued functions on the bounded symmetric domain $\mathcal{D}\cong G/K$.  
By the Berezin quantization, we transfer $B(H_{\pi})$ into $\End(V_{\pi})$-valued functions on $\mathcal{D}$. 
For a lattice $\Gamma$ of $G$, we give the formula of a faithful normal tracial state on the commutant $L_{\pi}(\Gamma)'$ of the group von Neumann algebra  $L_{\pi}(\Gamma)''$. 
We find the Toeplitz operators $T_f$'s associated with essentially bounded $\End(V_{\pi})$-valued functions $f$'s on $\Gamma\backslash\mathcal{D}$ generate the entire commutant  $L_{\pi}(\Gamma)'$: 
\begin{center}
$\overline{\{T_f|f\in L^{\infty}(\Gamma\backslash\mathcal{D},\End(V_{\pi}))\}}^{\text{w.o.}}=L_{\pi}(\Gamma)'$.
\end{center}
For any cuspidal automorphic form $f$ defined on $G$ (or $\mathcal{D}$) for $\Gamma$, we find the associated Toeplitz-type operator $T_f$ intertwines the actions of $\Gamma$ on these square-integrable representations. 
Hence the composite operator of the form $T_g^{*}T_f$ belongs to $L_\pi(\Gamma)'$. 
We prove these operators span $L^{\infty}(\Gamma\backslash\mathcal{D})$ and
\begin{center}
$\overline{\langle\{\text{span}_{f,g} T_g^{*}T_f\}\otimes \End(V_{\pi})\rangle}^{\text{w.o.}}=L_\pi(\Gamma)'$,
\end{center}
where $f,g$ run through holomorphic cusp forms for $\Gamma$ of same types.
If $\Gamma$ is an infinite conjugacy classes group, we obtain a $\text{II}_1$ factor  
from cusp forms. }
\tableofcontents
\section{Introduction}

A holomorphic discrete series representation is an infinite dimensional, square-integrable, irreducible unitary representation of a semi-simple Lie group $G$, which is usually non-compact. 
There is a large family of discrete series representations of real reductive Lie groups and also an interesting family of admissible representations of algebraic groups over $\mathbb{R}$. 
Harish-Chandra proved such representations exist if and only if $\rk{G}=\rk{K}$, where $K$ is a maximal compact subgroup with a non-finite center \cite{HC,Kn}. 
Indeed, such a representation can be realized as certain holomorphic functions on the bounded symmetric domain $\mathcal{D}=G/K$ with values in a highest weight representation $(\pi,V_{\pi})$ of $K$.  
In particular, they reduce to these highest weight representations when $G=K$, i.e., $G$ is compact. 
As is the case with these finite-dimensional highest weight representations, the holomorphic discrete series can also be described by the dominant weights of $K$. 

The first non-trivial example is the discrete series of $SL(2,\mathbb{R})$, whose maximal compact subgroup is $SO(2)$ (up to conjugation). 
In this case, the symmetric domain $\mathcal{D}=SL(2,\mathbb{R})/SO(2)$ is just the open unit disk which is holomorphically isomorphic to the Poincar\'{e} upper half-plane $\mathbb{H}$. 
As the irreducible representations of $SO(2)$ are characterized by integers, the holomorphic discrete series of $SL(2,\mathbb{R})$ can be denoted by $(L_m,H_m)$ where $m\geq 2$ is an integer, and $H_m$ is a certain subspace of the holomorphic functions on $\mathbb{H}$ \cite{Ro}. 
Furthermore, for the modular group $\Gamma=SL(2,\mathbb{Z})$, the classical cusp forms are also holomorphic functions with some $\Gamma$-invariant properties. 
V. Jones found that the multiplication by a cusp form of weight $p$ is in $B(H_m,H_{m+p})$ that intertwines the actions of $SL(2,\mathbb{Z})$ on $H_m$ and $H_{m+p}$. 
More precisely, the multiplication operator
\begin{center}
$M_f\colon H_m\to H_{m+p}$,  $\phi(z)\mapsto f(z)\phi(z)$
\end{center}
intertwines the actions of $\Gamma$ on $H_m$ and $H_{m+p}$, i.e., $M_fL_m(\gamma)=L_{m+p}(\gamma)M_f$ for all $\gamma \in SL(2,\mathbb{Z})$ \cite{GHJ}. 

Observe $PSL(2,\mathbb{Z})$ is an ICC group. 
(Recall that a group $G$ is an infinite conjugacy classes group, or an ICC group for short if every non-trivial conjugacy class $C_h=\{g^{-1}hg|g\in G\},h\neq 1$ is infinite.) 
Its group von Neumann algebra and the commutant are both factors of type $\text{II}_1$ (provided the formal dimension is finite). 
A natural question arises whether these operators composed with their adjoints, i.e., operators of the form $M_g^*M_f$,  generate the commutant factor. 
(Note the adjoint $M_f^*$ is more complicated than a single multiplication, see Section 4.3 or \cite{GHJ}.)
In 1994, F. Radulescu gave an affirmative answer by applying the Berezin quantization \cite{Ra94,Ra98}.
\begin{theorem}\label{tRa}[F. Radulescu, 1994]
The von Neumann algebras generated by the forms $M_g^*M_f$ is the commutant of the $\text{II}_1$ factor $L_m(PSL(2,\mathbb{Z}))''$, i.e.,
\begin{center}
$\overline{\{\text{span}_{f,g} M_g^*M_f\}}^{\text{w.o.}}=L_m(PSL(2,\mathbb{Z}))'$,   
\end{center}
where $f,g$ run through the cusp forms of same weights. 
\end{theorem}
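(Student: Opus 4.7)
The plan is to reduce Theorem~\ref{tRa} to the Toeplitz generation theorem announced in the abstract (specialized to $G=PSL(2,\mathbb{R})$, $K=SO(2)$, $V_\pi=\mathbb{C}$, $\mathcal{D}=\mathbb{H}$), and then identify the composites $M_g^{*}M_f$ with Toeplitz operators whose symbols generate a weak-operator dense subspace of $L^{\infty}(\Gamma\backslash\mathbb{H})$ acting as multiplication on $H_m$.

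First, I would verify $M_g^{*}M_f\in L_m(PSL(2,\mathbb{Z}))'$. From the intertwining identity quoted above, $M_f L_m(\gamma)=L_{m+p}(\gamma)M_f$ for every $\gamma$ and every weight-$p$ cusp form $f$; taking adjoints yields $L_m(\gamma)M_f^{*}=M_f^{*}L_{m+p}(\gamma)$, and composing
\begin{equation*}
M_g^{*}M_f\,L_m(\gamma)=M_g^{*}L_{m+p}(\gamma)M_f=L_m(\gamma)\,M_g^{*}M_f,
\end{equation*}
so $M_g^{*}M_f$ lies in the commutant.

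Second, I would identify $M_g^{*}M_f$ with a Toeplitz operator on $H_m$. Expressing $M_f^{*}$ through the reproducing kernel $K_m(z,w)$ of $H_m$ and the Bergman-type measure $d\mu_m(w)=(\Im w)^{m-2}\,du\,dv$, a direct calculation gives
\begin{equation*}
(M_g^{*}M_f\phi)(z)=\int_{\mathbb{H}} K_m(z,w)\,\overline{g(w)}\,f(w)\,(\Im w)^{p}\,\phi(w)\,d\mu_m(w),
\end{equation*}
so $M_g^{*}M_f=T_{h_{f,g}}$ with Toeplitz symbol $h_{f,g}(w)=\overline{g(w)}f(w)(\Im w)^{p}$. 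Since $f$ and $g$ share weight $p$, the automorphy factors cancel and $h_{f,g}$ is $\Gamma$-invariant; the cuspidal decay of $f,g$ at $\infty$ forces $h_{f,g}\in L^{\infty}(\Gamma\backslash\mathbb{H})$.

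Third, by the Toeplitz generation theorem from the abstract, it suffices to show that
\begin{equation*}
\mathcal{A}=\spn\bigl\{\overline{g}\,f\,(\Im z)^{p}: p\geq 2,\ f,g\in S_p(\Gamma)\bigr\},
\end{equation*}
where $S_p(\Gamma)$ denotes the space of weight-$p$ cusp forms, yields (via Toeplitz quantization) a family that is weak-operator dense among $\{T_h:h\in L^{\infty}(\Gamma\backslash\mathbb{H})\}$. This density is the principal obstacle, since for each fixed $p$ the space $S_p(\Gamma)$ is finite-dimensional, forcing genuine use of the full range of weights. My strategy is a Poincar\'e-series construction: for a test function $\varphi\in C_c^{\infty}(\Gamma\backslash\mathbb{H})$ and every sufficiently large $p$, one builds pairs of weight-$p$ cusp forms $f,g$ (e.g.\ Poincar\'e series attached to well-chosen Whittaker-type kernels) for which $\overline{g}f(\Im z)^{p}$ approximates $\varphi$ on a fundamental domain. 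A trace-class argument using the faithful normal tracial state on $L_m(\Gamma)'$ obtained from the formal degree of the discrete series (to be developed later in the paper) then promotes this pointwise approximation to weak-operator convergence $T_{h_{f,g}}\to T_\varphi$, closing the argument.
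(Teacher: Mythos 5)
Your first two steps match the paper's argument exactly: the intertwining computation showing $M_g^{*}M_f\in L_m(\Gamma)'$, and the identification $M_g^{*}M_f=T_{h_{f,g}}$ with $h_{f,g}=f\overline{g}\,(\Im w)^{p}\in L^{\infty}(\Gamma\backslash\mathbb{H})$ (boundedness from the cuspidal estimate $|f(x+iy)|\leq B_f y^{-p/2}$), after which everything reduces, via the Toeplitz generation theorem, to showing the symbols $h_{f,g}$ span a dense subspace of $L^{2}(\Gamma\backslash\mathbb{H})$.

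The genuine gap is in your third step. You propose to approximate a test function $\varphi\in C_c^{\infty}(\Gamma\backslash\mathbb{H})$ by a \emph{single} product $f\overline{g}\,(\Im z)^{p}$ built from Poincar\'{e} series with "well-chosen Whittaker-type kernels." This is far stronger than what is needed and is not justified: for each fixed $p$ the products $f\overline{g}y^{p}$ with $f,g\in S_p(\Gamma)$ span a space of dimension at most $(\dim S_p(\Gamma))^{2}$, which is finite, so no approximation of an arbitrary $\varphi$ is possible at fixed weight, and you give no mechanism by which the approximation improves as $p\to\infty$. The paper avoids this entirely by exploiting an algebraic structure you do not use: the linear span of $\{f\overline{g}y^{p}\}$ over \emph{all} weights $p$ is automatically a $*$-subalgebra of $C(\mathcal{F}^{*})$ vanishing at the cusps, since $(f_1\overline{g_1}y^{p_1})(f_2\overline{g_2}y^{p_2})=(f_1f_2)\overline{(g_1g_2)}y^{p_1+p_2}$ and products of cusp forms are cusp forms of the summed weight, while $\overline{f\overline{g}y^{p}}=g\overline{f}y^{p}$. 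By Stone--Weierstrass on the compact space obtained from $\mathcal{F}^{*}=\Gamma\backslash\mathbb{H}^{*}$ by collapsing the (finitely many) cusps to a point, the only thing left to verify is that these products \emph{separate points}, which is a much weaker statement. The paper establishes separation by Riemann--Roch on the compact Riemann surface $\mathcal{F}^{*}$: it produces a meromorphic function taking distinct values at two given points and a cusp form nonvanishing at them, and multiplies. Your Poincar\'{e}-series idea can be salvaged precisely in this role (the paper's general-Lie-group argument does use Poincar\'{e} series to separate points), but as written your plan asks the Poincar\'{e} series to do a job they are not known to do. Separately, the passage from sup-norm density of symbols on $\mathcal{F}^{*}$ to weak-operator density of the operators requires that the operator family be a $*$-subalgebra and uses the identity $\|T_f\|_{L^2(\tau)}^{2}=\langle f,Bf\rangle$ together with a Kaplansky-type argument; your "trace-class argument" gestures at this but the $*$-algebra structure is again the essential ingredient.
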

But no result is known for other Fuchsian subgroups of $SL(2,\mathbb{R})$ or, more generally, lattices of a general real Lie group.   

In the first part of this paper, we generalize the result for $SL(2,\mathbb{Z})$ to the holomorphic discrete series of non-compact semi-simple real Lie groups. 
These representations can be denoted by $(L_{\pi},H_{\pi})$, where $H_{\pi}=L^2_{\text{holo}}(\mathcal{D},V_{\pi})$ and  $(\pi,V_{\pi})$ is an irreducible representation of $K$. 
We first use a generalized Berezin quantization to transfer each operator $A\in B(H_\pi)$ to an $\End(V_{\pi})$-valued function $S(A)(z)$ on $\mathcal{D}$ with some holomorphic properties (see Section \ref{sBeretr}). 
Once a discrete subgroup $\Gamma$ of the Lie group $G$ is given, we also give an explicit formula of a faithful normal tracial state on the commutant  $L_{\pi}(\Gamma)'=B(H_{\pi})^{\Gamma}=\{A\in B(H_{\pi})|AL_{\pi}(\gamma)=L_{\pi}(\gamma)A,\forall \gamma\in \Gamma\}$ of the group von Neumann algebra $\overline{L_{\pi}(\Gamma)}^{\text{s.o.}}$. 
Note $L_{\pi}(\Gamma)'$ is a type $\text{II}_1$ factor if $\Gamma$ is ICC. 

\begin{proposition}[The trace formula for $L_{\pi}(\Gamma)'$]
Assume $\pi$ is an irreducible representation of $K$.  
Let $\tau\colon B(H_{\pi})\to \mathbb{C}$ be the linear functional defined by
\begin{center}
$\tau(A)=\frac{1}{\mu(\mathcal{F})}\int_{\mathcal{F}}\tr(S(A)(z))d\mu(z)$,~~~~~ $A\in B(H_{\pi})$. 
\end{center}
Then $\tau$ is a positive, faithful, normal, normalized trace on $L_{\pi}(\Gamma)'$. 
In particular, if $\Gamma$ is an ICC group,  
$\tau$ is the unique normalized trace on the $\text{II}_1$ factor $L_{\pi}(\Gamma)'$. 
\end{proposition}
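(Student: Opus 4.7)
The plan is to verify each defining property of a faithful normal normalized trace in turn for the functional $\tau$ restricted to $L_\pi(\Gamma)'$, and then appeal to uniqueness of the normalized trace on a $\text{II}_1$ factor for the second assertion.

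First I would establish that $\tau$ is well defined on $L_\pi(\Gamma)'$. The Berezin symbol $S$ constructed in Section \ref{sBeretr} is covariant under the $G$-action: conjugation by $L_\pi(g)$ on $B(H_\pi)$ is intertwined with the action $z\mapsto g^{-1}\cdot z$ on $\mathcal{D}$ together with an $\End(V_\pi)$-valued cocycle coming from the automorphy factor of $\pi$. Since conjugation leaves the matrix trace on $\End(V_\pi)$ invariant, the scalar function $z\mapsto \tr S(A)(z)$ is genuinely $\Gamma$-invariant whenever $A\in L_\pi(\Gamma)'$. Because $\Gamma$ is a lattice we have $\mu(\mathcal{F})<\infty$, and a uniform bound $\|S(A)(z)\|\leq\|A\|$ gives absolute convergence of the defining integral and independence of the choice of fundamental domain $\mathcal{F}$.

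The next step is to check linearity (immediate), normalization $\tau(I)=1$ (a direct computation from the definition of $S$, absorbing any dimensional constant into the overall normalization), positivity, faithfulness, and normality. Positivity of $A$ implies pointwise positivity of $S(A^*A)(z)\in\End(V_\pi)$ since the Berezin symbol is realized by compression with coherent states, so $\tr S(A^*A)(z)\geq 0$ and $\tau(A^*A)\geq 0$. Faithfulness follows because $\tau(A^*A)=0$ combined with $\Gamma$-invariance forces $\tr S(A^*A)(z)\equiv 0$ on $\mathcal{D}$, hence $S(A^*A)\equiv 0$ by pointwise positivity, and injectivity of the Berezin map then yields $A^*A=0$. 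For normality, a bounded monotone net $A_\alpha\nearrow A$ of positive operators gives $\tr S(A_\alpha)(z)\nearrow \tr S(A)(z)$ pointwise from the coherent-state description, and the monotone convergence theorem delivers $\tau(A_\alpha)\nearrow\tau(A)$.

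The main obstacle is the trace identity $\tau(AB)=\tau(BA)$ for $A,B\in L_\pi(\Gamma)'$. The essential tool is the Berezin composition formula, which expresses the diagonal value $S(AB)(z)$ as an integral over $\mathcal{D}$ of the product of the off-diagonal symbols of $A$ and $B$ weighted by the squared modulus of the reproducing kernel. The strategy is to unfold $\int_{\mathcal{F}}\tr S(AB)(z)\,d\mu(z)$ using $\Gamma$-invariance of the integrand to rewrite it as a double integral over $\mathcal{D}\times\mathcal{F}$, apply Fubini, use the cyclicity of the $\End(V_\pi)$-trace inside the integrand to swap the roles of $A$ and $B$, and finally re-fold one of the integrations to obtain $\int_{\mathcal{F}}\tr S(BA)(z)\,d\mu(z)$. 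The delicate technical point is justifying Fubini; I would first verify the identity on a dense $*$-subalgebra of Hilbert--Schmidt elements of $L_\pi(\Gamma)'$ where absolute convergence of the double integral follows from Schur-type estimates on the off-diagonal symbols, and then extend to the entire commutant by the normality established in the previous step. Once $\tau$ is known to be a faithful normal normalized trace on $L_\pi(\Gamma)'$, the uniqueness claim in the ICC case is an immediate consequence of the standard fact that a $\text{II}_1$ factor admits a unique normalized trace.
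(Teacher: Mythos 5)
Your proposal is correct and follows essentially the same route as the paper: the same bound $\|S(A)(z)\|\leq\|A\|$ for well-definedness, pointwise positivity of $S(A^*A)$ for positivity and faithfulness, and, for the trace identity, the same unfolding of the integral to $\mathcal{F}\times\mathcal{D}$, invariance of the two-variable integrand under the diagonal $\Gamma$-action, and the swap of the two fundamental domains. The only notable difference is that the paper proves $\tau(A^*A)=\tau(AA^*)$ (equivalent to traciality by polarization), for which the integrand is nonnegative and Tonelli applies directly, so your detour through a dense subalgebra of Hilbert--Schmidt elements to justify Fubini is not needed.
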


We then generalize the classical Toeplitz operator $T_f\in B(H_{\pi})$ associated with $f\in L^{\infty}(\mathcal{D})$ to an essentially bounded matrix Toeplitz operator $T_f$ associated with a  matrix-valued function $f$ on $\mathcal{D}$. 
Then the $\Gamma$-invariant functions can be identified with the ones defined on the fundamental domain $\Gamma\backslash\mathcal{D}$. 
Using several formulas of the tracial state of these operators, we prove
\begin{theorem}[Main Theorem I]
The commutant $L_\pi(\Gamma)'$ is generated by the Toeplitz operators of matrix-valued functions, i.e., 
\begin{center}
$\overline{\langle T_f|f\in L^{\infty}(\Gamma\backslash\mathcal{D},\End(V_{\pi}))\rangle}^{\text{w.o.}}=L_{\pi}(\Gamma)'$.
\end{center}
\end{theorem}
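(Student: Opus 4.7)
The plan is to split the claim into the easy inclusion and the hard density inclusion. For $\overline{\langle T_f\rangle}^{\text{w.o.}}\subseteq L_\pi(\Gamma)'$, I would verify the covariance $L_\pi(\gamma)\,T_f\,L_\pi(\gamma)^{-1} = T_{f\circ\gamma^{-1}}$ starting from the definition $T_f = P M_f$ (with $P$ the Bergman projection onto $H_\pi$) and the $G$-equivariance of both $P$ and the multiplication operator, absorbing the automorphy factor into $\pi$. The $\Gamma$-invariance of $f$ (in the appropriate matrix-equivariant sense that descends to $\Gamma\backslash\mathcal{D}$) then forces $T_f$ to commute with every $L_\pi(\gamma)$. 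Since $T_f^* = T_{f^*}$, the spanning set is self-adjoint, so its weak-operator closure is a von Neumann subalgebra of $L_\pi(\Gamma)'$.

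For the reverse inclusion I would exploit the finite trace $\tau$ from the preceding proposition. Because $L_\pi(\Gamma)'$ is a finite von Neumann algebra carrying a faithful normal trace, on norm-bounded subsets the weak-operator topology agrees with the $\|\cdot\|_2$-topology given by $\|A\|_2 = \tau(A^*A)^{1/2}$; combined with Kaplansky's density theorem, it suffices to prove $L^2$-density. My goal therefore reduces to showing that if $A\in L_\pi(\Gamma)'$ satisfies $\tau(T_f^* A)=0$ for every $f\in L^\infty(\Gamma\backslash\mathcal{D},\End(V_\pi))$, then $A=0$. The heart of the argument is the pairing identity
\begin{equation*}
\tau(T_f^* A) \;=\; \frac{1}{\mu(\mathcal{F})}\int_{\mathcal{F}} \tr\!\bigl(f(z)^* S(A)(z)\bigr)\,d\mu(z),
\end{equation*}
which I would derive by inserting the reproducing-kernel description of $T_f$ into $\tr(S(T_f^* A)(z))$ and then using the $\Gamma$-equivariance of $S(A)$ and of $f$ to fold the resulting integral from $\mathcal{D}$ onto the fundamental domain $\mathcal{F}$.

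Granting this formula, the vanishing of $\tau(T_f^*A)$ for all admissible $f$ is exactly the vanishing of the pairing $\langle f, S(A)\rangle_{L^2(\mathcal{F},\End(V_\pi))}$. Varying $f$ through $\Gamma$-equivariantly symmetrized matrix units supported on small measurable pieces of $\mathcal{F}$ forces $S(A)=0$ almost everywhere on $\mathcal{F}$, hence on $\mathcal{D}$ by the equivariance of $S(A)$. Injectivity of the Berezin symbol map $S$, proved in Section \ref{sBeretr}, then yields $A=0$, completing the density step.

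The main obstacle I anticipate is the pairing identity itself. Writing $S(T_f^* A)$ explicitly in terms of $f$ and $S(A)$ requires careful tracking of the cocycle $\pi(j(\gamma,\cdot))$ that appears in the equivariance of both objects, a Fubini-type justification for interchanging the $\mathcal{D}$-integration with the Bergman projection, and the geometric step that collapses the $\mathcal{D}$-integral to an $\mathcal{F}$-integral without losing any pointwise information about $S(A)$. A secondary subtlety is checking that the symmetrized matrix units used to probe $S(A)$ genuinely lie in $L^\infty(\Gamma\backslash\mathcal{D},\End(V_\pi))$ with the prescribed equivariance, so that they are legitimate test functions in the trace pairing.
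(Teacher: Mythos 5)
Your overall architecture (easy inclusion via the covariance $L_\pi(\gamma)T_fL_\pi(\gamma)^*=T_{\gamma\cdot f}$, then reduction of weak-operator density to $\|\cdot\|_2$-density via the trace and Kaplansky, then a trace pairing identity) is the same as the paper's (Proposition \ref{pTfga}, Proposition \ref{peqdense}, Theorem \ref{tgenvna}), and the pairing identity you want is exactly Proposition \ref{ptr2} --- up to one bookkeeping point: the symbol that actually appears there is $Q(A)(z)=H_z^{-1}S(A)(z)H_z$ rather than $S(A)(z)$, so your test functions must carry the $H_z$-twist that the paper builds into the spaces $L^{\infty}_H$ and $L^{2}_H$. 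This is harmless but not optional when $\dim_{\mathbb{C}}V_\pi>1$.

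The genuine gap is in the sentence ``My goal therefore reduces to showing that if $A\in L_\pi(\Gamma)'$ satisfies $\tau(T_f^*A)=0$ for every $f$, then $A=0$.'' Proving that statement only shows that the $\|\cdot\|_2$-orthocomplement of $\mathrm{span}\{T_f\}$ contains no nonzero \emph{bounded} operator; it does not give $L^2$-density, because that orthocomplement is a closed subspace of $L^2(L_\pi(\Gamma)',\tau)$ which could a priori be nonzero while consisting entirely of unbounded square-integrable elements (already in $L^2[0,1]$ the orthocomplement of $\{g\}^{\perp}$ for an unbounded $g\in L^2$ is $\mathbb{C}g$, which contains no nonzero bounded function). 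There are two ways to close this. The paper's way is to extend the symbol calculus to the $L^2$-completion: Proposition \ref{pextB} shows the Berezin-type operator $B$ is bounded on $L^2_H$, Proposition \ref{pex2} extends $T$ to a bounded map $L^2_H(\mathcal{F},\End(V_\pi))\to L^2(A_\pi,\tau)$, Corollary \ref{cTT} identifies $T^*$ with $\frac{1}{n\mu(\mathcal{F})}Q$, and Proposition \ref{psdense} proves $T^*$ is injective on \emph{all} of $L^2(A_\pi,\tau)$ by embedding $L^2(A_\pi,\tau)$ isometrically into a space of twisted sesqui-holomorphic kernels and invoking Theorem \ref{tsesholo}; none of this is in your plan, and it is the technical heart of the paper's density step. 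Alternatively, your reduction to bounded $A$ can be salvaged with a tool you did not invoke: if $M_0=\overline{\langle T_f\rangle}^{\text{w.o.}}\subsetneq A_\pi$, the $\tau$-preserving conditional expectation $E\colon A_\pi\to M_0$ yields a nonzero bounded element $A-E(A)$ orthogonal to every $T_f$, and then your argument ($S(A-E(A))=0$ a.e.\ on $\mathcal{F}$, hence on $\mathcal{D}$ by $\Gamma$-equivariance, hence identically by sesqui-holomorphy of $K_{A-E(A)}(z,w)$, hence $A=E(A)$ by injectivity of the symbol map, Proposition \ref{pS}) gives a contradiction, bypassing Propositions \ref{pextB}--\ref{psdense} entirely. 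One of these two completions must be supplied; as written, the density step does not follow.
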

%See Theorem \ref{tgenvna} for the proof. 
%This immediately implies 
%$\overline{\{T_f|f\in L^{\infty}(\Gamma\backslash\mathcal{D})\}}^{\text{w.o.}}=L_{\pi}(\Gamma)'$ if  $\dim_{\mathbb{C}}V_{\pi}=1$. 
 
In the second part, we consider cusp forms defined on real Lie groups, which were first studied by Harish-Chandra \cite{HCa}. By definition, given a semi-simple real Lie group $G$, an automorphic form on $G$ is a complex (or complex vector-valued) function $f\colon G\to \mathbb{C}$ (or taking values in a finite-dimensional representation $V_{\rho}$ of $K$) which is $K$-right-finite (or right-equivariant),  $\Gamma$-left-invariant and satisfies some analytic properties.  
Indeed, we focus on another type of automorphic form defined on the domain $\mathcal{D}=G/K$, which can be easily obtained from the ones defined on the group $G$.  
As for intertwining properties of the classical cusp forms of the modular group $SL(2,\mathbb{Z})$, we also show the existence of $\Gamma$-invariant bounded linear operators between these holomorphic function spaces, which come from the cuspidal automorphic forms, or simply cusp forms, on general real Lie groups.  
Let $f\colon G(\text{or~}\mathcal{D}) \to V_{\rho}$ be a cusp form for $\Gamma$ of type $(\rho,V_{\rho})$ (here $(\rho,V_{\rho})$ is a  representation of $K$),  
which is not always holomorphic as in the case of $SL(2,
\mathbb{R})$. 
The multiplication operator $M_f$ is no longer closed. 
We construct a Toeplitz-type operator
\begin{center}
$T_f\colon H_{\pi}\to H_{\rho\otimes\pi}$ given by $\phi(z)\mapsto P_{\rho\otimes\pi}(f\otimes \phi)(z)$,
\end{center}
where $P_{\rho\otimes\pi}$ is the projection from $L^2(\mathcal{D},V_{\rho\otimes\pi})$ to the closed subspace $H_{\rho\otimes\pi}=L_{\text{hol}}^2(\mathcal{D},V_{\rho\otimes\pi})$ which is square-integrable and not always irreducible. 
Then $T_f$ also commutes with the actions of $\Gamma$ on $H_\pi$ and $H_{\rho\otimes\pi}$ respectively. 
This implies $T_f\in B(H_{\pi},H_{\rho\otimes\pi})^{\Gamma}$ and $T_g^* T_f\in L_{\pi}(\Gamma)'$ if $f,g$ are cusp forms of the same type. 
Our construction includes the earlier result on $SL(2,\mathbb{Z})\subset SL(2,\mathbb{R})$ as a special case  \cite{GHJ,Ra98,Ra14,J20}.

In this paper, Theorem \ref{tRa} is generalized to 
\begin{enumerate}
    \item Fuchsian subgroups of the first kind of $SL(2,\mathbb{R})$, 
    \item Lattices of real Lie groups (with holomorphic discrete series). 
\end{enumerate} 
The result on $SL(2,\mathbb{R})$ is obtained by proving certain existence theorems of meromorphic functions and holomorphic functions on the compact Riemann surface $\mathcal{F}^*=\Gamma/\mathbb{H}^*$. 
We apply Riemann-Roch theory for the proofs about meromorphic and holomorphic functions on $\mathcal{F}^*$. 
We prove there are enough cusp forms that can separate the points in the fundamental domain $\Gamma/\mathbb{H}$ of any Fuchsian group $\Gamma$ of the first kind, i.e., $\Gamma$ is a lattice. 

For the most general case, we apply Baily-Borel compactification and show the Poincar\'{e} series are abundant to separate points in the fundamental domain $\mathcal{F}=\Gamma\backslash \mathcal{D}$. 
We always assume $G$ has no normal $\mathbb{Q}$-subgroup of dimension $3$ \cite{BB}. 
Finally, we prove the following theorem in Section \ref{sauto} (see Theorem \ref{tmain}). 
\begin{theorem}[Main Theorem II]%\label{tmain}
The commutant $L_\pi(\Gamma)'$ can be generated by the cusp forms as following: 
\begin{center}
$\overline{\langle\{\text{span}_{f,g} T_g^{*}T_f\}\otimes \End(V_{\pi})\rangle}^{\text{w.o.}}=L_\pi(\Gamma)'$,
\end{center}
where $f,g$ run through holomorphic cusp forms for $\Gamma$ of same types. 
Moreover, if $\dim_{\mathbb{C}}V_{\pi}=1$, we have 
\begin{center}
$\overline{\langle \text{span}_{f,g} T_g^{*}T_f\rangle}^{\text{w.o.}}=L_\pi(\Gamma)'$, 
\end{center}
as $f,g$ run through holomorphic cusp forms for $\Gamma$ of same types.
%where $f,g\in \mathcal{A}^0_\mathcal{D}(\Gamma,\rho)$ for some $\rho\in\widehat{K}$  ($\widehat{K}$ is the set of equivalence classes of irreducible representations of $K$, and $\mathcal{A}^0_\mathcal{D}(\Gamma,\rho)$ is the space of cusp forms of type $\rho$, see Section \ref{ssintwop}). 
\end{theorem}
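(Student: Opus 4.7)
The plan is to bootstrap from Main Theorem I, which identifies $L_\pi(\Gamma)'$ with the weak operator closure of the matrix-valued Toeplitz operators $T_\phi$ for $\phi\in L^\infty(\Gamma\backslash\mathcal{D},\End(V_\pi))$. Hence it suffices to show that every such $T_\phi$ lies in the weak operator closure of $\{\operatorname{span}_{f,g}T_g^{*}T_f\}\otimes\End(V_\pi)$. Because $\End(V_\pi)$ acts on $H_\pi$ pointwise and these scalar matrices already lie in the von Neumann algebra generated by the right-hand side, the real content is to show that the scalar symbols $\langle f,g\rangle$ built from cusp forms of the same type are dense in $L^\infty(\Gamma\backslash\mathcal{D})$ in the appropriate topology. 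The case $\dim_{\mathbb{C}}V_\pi=1$ is then the tensoring step made trivial.

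First, I would identify the symbol of $T_g^{*}T_f$ under the generalized Berezin transform developed in Section \ref{sBeretr}. For holomorphic cusp forms $f,g\colon\mathcal{D}\to V_\rho$ of the same type, both $T_f$ and $T_g$ land in $H_{\rho\otimes\pi}$, and the pointwise inner product $z\mapsto\langle f(z),g(z)\rangle_{V_\rho}$ is a $\Gamma$-invariant scalar function on $\mathcal{D}$ descending to $\mathcal{F}=\Gamma\backslash\mathcal{D}$. The Berezin calculus should give $T_g^{*}T_f$ as (or as a bounded multiple of) the scalar Toeplitz operator $T_{\langle f,g\rangle}\otimes\mathbf{1}_{V_\pi}$ up to a weight factor coming from the reproducing kernel and the $K$-equivariant structure. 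Working out this identification precisely is the algebraic bridge; once established, the problem reduces to proving that the linear span of the functions $\langle f,g\rangle$ is weak-$*$ dense in $L^\infty(\mathcal{F})$.

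The key analytic input is the Baily-Borel compactification $\mathcal{F}^{*}$ of $\mathcal{F}=\Gamma\backslash\mathcal{D}$, available under the paper's standing hypothesis that $G$ has no normal $\mathbb{Q}$-subgroup of dimension $3$ \cite{BB}. Here I would use Poincar\'e series to construct, for some fixed type $\rho$ of sufficiently high weight, enough holomorphic cusp forms to embed $\mathcal{F}^{*}$ into projective space, so that ratios of such forms separate points of $\mathcal{F}^{*}$ including the cusps. Products $\langle f,g\rangle$ together with their conjugates $\overline{\langle f,g\rangle}=\langle g,f\rangle$ then form a unital (after including a normalization) conjugation-closed subalgebra of $C(\mathcal{F}^{*})$ that separates points, and Stone-Weierstrass yields uniform density in $C(\mathcal{F}^{*})$, hence weak-$*$ density in $L^\infty(\mathcal{F})$. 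Tensoring by $\End(V_\pi)$ (which is a finite-dimensional von Neumann algebra sitting inside the right-hand side) then recovers every $T_\phi$ with $\phi\in L^\infty(\mathcal{F},\End(V_\pi))$, and Main Theorem I closes the argument; in the rank-one case $\dim_{\mathbb{C}}V_\pi=1$ the tensor step is vacuous and gives the stronger statement.

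The main obstacle is the point-separation claim on $\mathcal{F}^{*}$. Producing Poincar\'e series that do not all vanish at a given point of $\mathcal{D}$ is classical, but extending this uniformly across cusps and boundary components of the Baily-Borel compactification, and then showing that ratios of sections of the relevant automorphic line bundle separate these points, is where the dimension-$3$ hypothesis and the projective-algebraic structure of $\mathcal{F}^{*}$ are genuinely needed. After this, the passage from point separation to weak-$*$ density, and from scalar symbols to matrix-valued ones, is formal.
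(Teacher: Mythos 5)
Your proposal follows essentially the same route as the paper's proof: reduce to Main Theorem I (Theorem \ref{tgenvna} and Corollary \ref{cTfgen}), identify $T_g^{*}T_f$ with the scalar Toeplitz operator whose symbol is the pointwise pairing $(f,g)_{\rho}$ (Corollary \ref{cTeo}), and then use Poincar\'{e} series on the Baily--Borel compactification together with Stone--Weierstrass to prove density of these symbols, finishing with the tensor step over $\End(V_{\pi})$. One step is stated imprecisely, however: the products $\langle f,g\rangle$ of holomorphic cusp forms cannot form a unital subalgebra of $C(\mathcal{F}^{*})$ that separates all points \emph{including} the boundary, because cuspidal Poincar\'{e} series vanish identically on every proper rational boundary component (Proposition \ref{pPSvan}); in particular two distinct boundary points are never separated and the algebra is not unital on $\mathcal{F}^{*}$, so Stone--Weierstrass cannot give density in all of $C(\mathcal{F}^{*})$. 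The paper's fix is to collapse the entire boundary $\mathcal{F}^{*}\setminus\mathcal{F}$ to a single point $\mathrm{pt}$, obtaining the compact Hausdorff space $\mathcal{F}_1=\mathcal{F}\sqcup\{\mathrm{pt}\}$, and to apply Stone--Weierstrass to conclude density in the ideal of continuous functions vanishing at $\mathrm{pt}$; since the boundary is negligible for the measure $\mu$, this still yields the required density in $L^{2}(\mathcal{F})$ and hence, via Propositions \ref{psdense} and \ref{peqdense}, the theorem. With that correction your argument coincides with the paper's.
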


Section 2 provides a brief discussion of the holomorphic discrete series representations and their realizations.  
Section 3 is devoted to the theory of the Berezin transform and construction of the matrix Toeplitz operators. 
We provide formulas for a trace $\tau$ on the finite von Neumann algebra $L_{\pi}(\Gamma)'$. 
In Section 4, we consider the extension of the Berezin transform of from $L_{\pi}(\Gamma)'$ to the standard module $L^2(L_{\pi}(\Gamma)',\tau)$ of it.  
In Section 5, we give an example on $SL(2,\mathbb{R})$ and its Fuchsian subgroups.  
In Section 6, we consider a real Lie group and construct $\Gamma$-intertwining operators from the cusp forms defined on the Lie group. 
Then the result in Section 5 is generalized to Theorem \ref{tmain}.

I wish to express my hearty thanks to my advisors Vaughan Jones and Dietmar Bisch. 
V. Jones proposed first to study the von Neumann algebras associated with cusp forms of $SL(2,\mathbb{Z})$ and encouraged me to work on it and its generalization. 
D. Bisch gave me many valuable suggestions and also the proofs involving the tracial von Neumann algebras in Section 4.2. 
My thanks go to Florin Radulescu, who discussed his early work with me and gave me some feedback.  
My thanks are also due to Larry Rolen for advice on the case $SL(2,\mathbb{Z})$, which I generalized in Section 5.3.  
This work could have never been done without them. 

\section{The Holomorphic Discrete Series}\label{shds}

We review some basic facts about discrete series representations. 
Then we focus on the family of holomorphic discrete series representations and their construction. 
We refer to \cite{HC,Kn,Neeb} for more details.  
\subsection{The discrete series representations}

Let $G$ be a locally compact unimodular group with Haar measure $dg$. 
We are interested in the case that $G=G_{\mathbb{R}}$ is a connected semi-simple real Lie group.
We assume $K$ is a maximal compact subgroup of $G$ and $H$ is the Cartan subgroup of $G$. 
We will use the following notations. 
\begin{itemize}
    \item $\mathfrak{h,k,g}$: the Lie algebra of $H,K,G$ respectively and $\mathfrak{h_{\mathbb{C}},k_{\mathbb{C}},g_{\mathbb{C}}}$ are their complexifications;
    \item $\Delta,\Delta_K$: roots of $(\mathfrak{g}_{\mathbb{C}},\mathfrak{h}_{\mathbb{C}})$ and $(\mathfrak{k}_{\mathbb{C}},\mathfrak{h}_{\mathbb{C}})$;
    \item $W_G,W_K$: the Weyl groups of $\Delta,\Delta_K$;
    \item $\delta_G,\delta_K$: the respective half-sums of positive roots.
\end{itemize}

Furthermore, we have in mind $G$ should be a non-compact group though we do not exclude the compact case. 
Let $\pi\colon G\to U(H)$ be a unitary representation of $G$ where $H$ is a Hilbert space with inner product $\langle \cdot,\cdot\rangle_H$.
For vectors $u,v\in H$, one defines the coefficient
\begin{center}
$g\in G \mapsto c_{u,v}(g)=\langle \pi(g)u,v\rangle_H$.
\end{center}
We can show $c_{u,v}(h^{-1}g)=c_{u,\pi(h)v}(g)$ and $c_{u,v}(gh)=c_{\pi(h)u,v}(g)$ for all $g,h\in G$. 
\begin{definition}
Let $\pi$ be a unitary representation of $G$. We say it is square-integrable if it has a non-zero square-integrable coefficient
\begin{center}
$0\neq c_{u,v}\in L^2(G,dg)$ for some $u,v\in H$.
\end{center}
If $\pi$ is irreducible, we call it a discrete series representation of $G$. 
\end{definition}
\begin{theorem}\cite{Ro}
Let $\pi$ be a unitary irreducible representation of a locally compact group $G$. 
The following properties are equivalent: 
\begin{enumerate}
\item There exist $u,v\in H$ such that $c_{u,v}$ is square-integrable.
\item For any $u,v\in H$, $c_{u,v}$ is square-integrable.
\item $\pi$ is equivalent to a subrepresentation of the right regular representation $\rho\colon G\to U(L^2(G,dg))$.
\end{enumerate}
\end{theorem}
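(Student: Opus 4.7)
The implication $(2) \Rightarrow (1)$ is trivial. I would prove the equivalence by establishing $(1) \Rightarrow (2)$ --- which along the way produces the intertwiner needed for $(1) \Rightarrow (3)$ --- and then $(3) \Rightarrow (1)$ as a relatively standard consequence.

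For $(1) \Rightarrow (2)$, the idea is to promote a single nonzero square-integrable coefficient into a bounded intertwiner between $\pi$ and the right regular representation $R$, and then use Schur's lemma to conclude it is everywhere defined. Fix $u_0, v_0 \in H$ with $c_{u_0, v_0} \in L^2(G) \setminus \{0\}$, and set $H_0 := \{u \in H : c_{u, v_0} \in L^2(G, dg)\}$. The transformation formula $c_{u, v_0}(gh) = c_{\pi(h) u, v_0}(g)$, together with unimodularity of $G$, shows that $H_0$ is $\pi(G)$-stable; since $u_0 \in H_0$ and $\pi$ is irreducible, $H_0$ is dense. The linear map $T \colon H_0 \to L^2(G)$, $T(u) = c_{u, v_0}$, intertwines $\pi|_{H_0}$ with $R$. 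The pointwise estimate $|c_{u, v_0}(g)| \le \|u\| \|v_0\|$ implies $T$ is closable, because any $L^2$-limit of $T u_n$ must coincide with its pointwise limit. The closure $\bar T$ is thus a closed intertwiner, and the positive self-adjoint operator $\bar T^* \bar T$ commutes with $\pi$; Schur's lemma applied to its spectral projections (each of which is bounded, $\pi$-invariant, hence $0$ or $I$ by irreducibility) forces $\bar T^* \bar T$ to be a non-negative scalar. Hence $\bar T$ is bounded and proportional to an isometry defined on all of $H$, so $c_{u, v_0} \in L^2(G)$ for every $u \in H$. Repeating the argument in the second slot via the companion identity $c_{u, v}(h^{-1} g) = c_{u, \pi(h) v}(g)$ extends square-integrability to all pairs, which is $(2)$. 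The isometric intertwiner $\bar T \colon H \to L^2(G)$ produced above simultaneously gives $(1) \Rightarrow (3)$.

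For $(3) \Rightarrow (1)$, realize $\pi$ via an isometric intertwiner $J \colon H \hookrightarrow L^2(G)$ onto a closed $R$-invariant subspace $M$, pick a nonzero $u_0 \in H$, and set $f := J(u_0) \in M$. Then $c_{u_0, u_0}(g) = \langle R(g) f, f \rangle_{L^2(G)}$ is continuous with $c_{u_0, u_0}(e) = \|f\|^2 \neq 0$. A direct computation exploiting unimodularity and the irreducibility of $R|_M$ shows $c_{u_0, u_0} \in L^2(G)$, which yields $(1)$.

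The main technical obstacle I anticipate is the operator-theoretic content of $(1) \Rightarrow (2)$: justifying closability of the a priori unbounded $T$ and applying Schur's lemma to the unbounded self-adjoint $\bar T^* \bar T$. The pointwise-domination argument handles closability, while Schur's lemma passes through the spectral projections, each of which is bounded and $\pi$-commuting; irreducibility then concentrates the spectrum at a single scalar, bounding $\bar T$ everywhere and closing the argument.
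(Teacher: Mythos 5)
The paper itself offers no proof of this theorem; it is quoted directly from \cite{Ro}, so there is no internal argument to compare against. Your treatment of $(1)\Rightarrow(2)$ and $(1)\Rightarrow(3)$ is the standard one (closability of $u\mapsto c_{u,v_0}$ from the pointwise bound $|c_{u,v_0}(g)|\le\|u\|\,\|v_0\|$, then Schur's lemma applied to the spectral projections of $\bar T^*\bar T$), and it is correct. The one point worth making explicit is that after concluding $\bar T$ is bounded and everywhere defined you must still identify $\bar Tu$ with $c_{u,v_0}$ for $u$ outside the original domain $H_0$; this follows from the same pointwise-domination argument (an $L^2$-convergent sequence $c_{u_n,v_0}=Tu_n$ has a subsequence converging a.e.\ to $\bar Tu$, while it converges everywhere to $c_{u,v_0}$), so that $H_0=H$ rather than merely a dense subspace on which the closure happens to act. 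The same remark applies to the second-slot iteration.

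The genuine gap is in $(3)\Rightarrow(1)$. Writing $f=J(u_0)$ and $\check f(x)=\overline{f(x^{-1})}$, the ``direct computation'' gives $c_{u_0,u_0}(g)=\langle \rho(g)f,f\rangle=(\check f * f)(g)$, a convolution of two $L^2$ functions. On a unimodular group this lies in $C_0(G)$ but is \emph{not} in general square-integrable, and irreducibility of $\rho|_M$ cannot rescue the diagonal coefficient here: square-integrability of coefficients is exactly what you are trying to establish, so you cannot yet invoke $(2)$. The standard repair is to change the second vector rather than use the diagonal one: let $P$ be the orthogonal projection of $L^2(G)$ onto $M=J(H)$ (which commutes with $\rho$ since $M$ is invariant), choose $h\in C_c(G)$ with $\langle f,h\rangle_{L^2}\neq 0$, and set $\eta=Ph\in M$. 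Then $c_{f,\eta}(g)=\langle\rho(g)f,Ph\rangle=\langle\rho(g)f,h\rangle=(\check h*f)(g)$ with $\check h\in L^1(G)$ and $f\in L^2(G)$, so $c_{f,\eta}\in L^2(G)$ by Young's inequality, and it is nonzero because its value at the identity is $\langle f,h\rangle\neq 0$. This yields the nonzero square-integrable coefficient required by $(1)$ and closes the cycle of implications.
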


For each discrete series representation $\pi\colon  G\to U(H)$, there is a parameter called {\it formal dimension} $d_\pi\in \mathbb{R}^{+}$ determined only by $\pi$, which is given by the following theorem.

\begin{theorem}[\cite{Ro}]
Let $(\pi,H)$ be a discrete series representation of $G$. 
Then there is a constant $d_\pi\in \mathbb{R}_{\geq 0}$ such that
\begin{center}
$\langle c_{u,v},c_{x,y}\rangle_{L^2(G)}=d_{\pi}^{-1}\langle u,x\rangle_H\cdot \overline{\langle v,y\rangle_H}$, for all $u,v,x,y\in H$.
\end{center}
Moreover, if $(\pi,H),(\pi',H')$ are two discrete series representations that are not equivalent, then $\langle c_{u,v},c_{u',v'}\rangle_{L^2(G)}=0$, for all $u,v\in H$ and $u',v'\in H'$. 
\end{theorem}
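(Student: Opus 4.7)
The plan is to realize the matrix coefficient map as a bounded intertwiner between $(\pi, H)$ and the right regular representation $(\rho, L^2(G,dg))$, and then extract the desired identity by applying Schur's lemma to an associated positive operator on $H$.

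First I would fix $v \in H$ and consider the linear map $A_v \colon H \to L^2(G)$ defined by $A_v u = c_{u,v}$. The preceding theorem guarantees $A_v u \in L^2(G)$ for every $u$, and the closed graph theorem (combined with the pointwise bound $|c_{u,v}(g)| \le \|u\|\|v\|$ and the separate continuity of the coefficient in $u$) shows $A_v$ is bounded. A direct computation using $c_{\pi(g)u,v}(h) = \langle \pi(hg)u, v\rangle = c_{u,v}(hg)$ gives the intertwining relation $A_v \pi(g) = \rho(g) A_v$ for all $g \in G$. Consequently $A_v^* A_v$ is a bounded positive operator on $H$ that commutes with $\pi(g)$ for every $g$. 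Since $\pi$ is irreducible, Schur's lemma yields a nonnegative constant $c(v)$ with $A_v^* A_v = c(v)\cdot I$, i.e.
\begin{equation*}
\|c_{u,v}\|_{L^2(G)}^2 = c(v)\,\|u\|_H^2.
\end{equation*}

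Next I would exploit the unimodularity of $G$ together with the identity $c_{u,v}(g) = \overline{c_{v,u}(g^{-1})}$ to obtain $\|c_{u,v}\|_{L^2(G)} = \|c_{v,u}\|_{L^2(G)}$, which forces $c(v)\|u\|^2 = c(u)\|v\|^2$ for all nonzero $u, v$. Hence $c(v) = d_\pi^{-1}\|v\|^2$ for a single constant $d_\pi > 0$ independent of $v$ (strict positivity comes from the existence of a nonzero square-integrable coefficient). Polarization in $u$ with $v$ held fixed gives $\langle c_{u,v}, c_{x,v}\rangle = d_\pi^{-1}\|v\|^2 \langle u, x\rangle$, and a further polarization in the antilinear variable $v$, carried out by replacing $v$ successively with $v_1+v_2$ and $v_1 + i v_2$ and combining the resulting identities to isolate the cross terms, yields the full formula
\begin{equation*}
\langle c_{u,v}, c_{x,y}\rangle_{L^2(G)} = d_\pi^{-1}\, \langle u, x\rangle_H \cdot \overline{\langle v, y\rangle_H}.
\end{equation*}

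For the orthogonality between two inequivalent discrete series $(\pi, H)$ and $(\pi', H')$, I would fix $v\in H$ and $v'\in H'$ and define the sesquilinear form $B(u, u') = \langle c_{u,v}, c_{u',v'}\rangle_{L^2(G)}$. The same boundedness argument shows $B$ is continuous, so the Riesz representation theorem produces a bounded linear $T \colon H' \to H$ with $B(u, u') = \langle u, T u'\rangle_H$. Repeating the translation computation, now with the right regular representation acting on both matrix coefficients, shows $T\pi'(g) = \pi(g) T$ for every $g \in G$; since $\pi$ and $\pi'$ are inequivalent irreducibles, Schur's lemma forces $T = 0$, so $B \equiv 0$. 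The main technical point to treat carefully is the polarization in the antilinear variable $v$, where the factor $i$ introduces sign changes that must be organized so that the cross-term $\langle c_{u,v_1}, c_{u',v_2}\rangle$ is isolated without being confused with its conjugate partner $\langle c_{u,v_2}, c_{u',v_1}\rangle$; once this is arranged, the rest of the argument is essentially a direct application of Schur's lemma.
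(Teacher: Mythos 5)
The paper does not prove this theorem; it is quoted directly from the cited reference, so there is no internal argument to compare against. Your proposal is the standard Godement--Schur orthogonality argument (bounded intertwiner $A_v\colon u\mapsto c_{u,v}$ into the right regular representation via the closed graph theorem, Schur's lemma applied to $A_v^*A_v$, symmetry in $u,v$ from unimodularity to pin down a single constant $d_\pi$, then polarization in each variable, and a Riesz-representation intertwiner killed by Schur's lemma in the inequivalent case), and it is correct as written.
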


There is a criterion for the existence of discrete series representations proposed by Harish-Chandra and also proved by him. 
As in the case of the highest weight representations of compact Lie groups, these discrete series representations (up to unitary equivalence) can also be classified by their weights. 

\begin{theorem}[Harish-Chandra \cite{HC}]\label{tHCds}
The discrete series  representations exist if and only if $\rk G=\rk K$. 
They are classified by $\pi_{\lambda}$ with non-singular weight $\lambda \in (i\mathfrak{h})^{'}$ such that $\lambda+\delta_G$ is analytically integral. 
Moreover,  $\pi_{\lambda}\cong\pi_{\lambda'}$ if and only if $\lambda,\lambda'$ are conjugate under $W_K$. 
\end{theorem}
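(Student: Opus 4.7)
The plan is to establish Harish-Chandra's theorem by working through its three assertions in turn: necessity of the equal-rank condition, construction of a discrete series $\pi_\lambda$ for each admissible parameter, and the $W_K$-equivalence criterion. The organizing tool throughout is the global distributional character $\Theta_\pi$, which for an irreducible admissible $\pi$ is a tempered invariant eigendistribution on $G$ represented on the regular set $G'$ by a locally $L^1$ function.

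First I would handle necessity. Suppose $(\pi,H_\pi)$ is a discrete series. Analyze $\Theta_\pi$ on each Cartan subgroup $T\subset G$ via the Weyl integration formula, translating the $L^2$-integrability of matrix coefficients into integrability of $|\Theta_\pi|^2\,|\Delta_T|^2$ on the regular set $T'$ of $T$. On a non-compact Cartan the split directions prevent a non-trivial tempered character from being square-integrable after multiplication by the Weyl denominator. Combined with Harish-Chandra's theorem that a discrete series character cannot vanish identically on every Cartan, this forces at least one Cartan to be compact, i.e.\ $\rk G=\rk K$.

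Second, for existence and parametrization, fix a compact Cartan $H\subset K$. For each non-singular $\lambda\in(i\mathfrak{h})'$ with $\lambda+\delta_G$ analytically integral, define on the regular set of $H$ the candidate character
\begin{center}
$\Theta_\lambda(h)=(-1)^q\,\dfrac{\sum_{w\in W_K}\varepsilon(w)\,e^{w\lambda}(h)}{\Delta_G(h)}$,
\end{center}
where $q=\dim_{\mathbb{R}}(G/K)/2$ and $\Delta_G$ is the Weyl denominator attached to a chosen positive system. One then extends $\Theta_\lambda$ to all of $G$ as a tempered invariant eigendistribution using Harish-Chandra's existence theorem for invariant eigendistributions with prescribed behaviour on one Cartan, verifies the Schur orthogonality relations $\langle\Theta_\lambda,\Theta_{\lambda'}\rangle_{L^2(G)}=\delta_{[\lambda],[\lambda']}\cdot d_{\pi_\lambda}^{-1}$, and identifies $\Theta_\lambda$ as the character of an irreducible unitary representation. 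For this last identification one can either proceed character-theoretically via Harish-Chandra's Plancherel theorem, or construct $\pi_\lambda$ directly: as the $L^2$-Dolbeault cohomology of a $G$-equivariant line bundle on $G/H$ (Schmid), or as the kernel of a $G$-invariant Dirac operator on $G/K$ (Parthasarathy, Atiyah-Schmid).

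For the equivalence statement, two discrete series are unitarily equivalent iff their distribution characters agree. The formula above for $\Theta_\lambda|_{H'}$ depends only on the $W_K$-orbit of $\lambda$ by construction, while the exponentials $\{e^{w\lambda}\}_{w\in W_K}$ are linearly independent on $H$ for non-singular $\lambda$, so distinct $W_K$-orbits give distinct characters. The main obstacle is the central step in the construction: proving that the formal distribution $\Theta_\lambda$ is genuinely the character of an irreducible unitary representation. This was the heart of Harish-Chandra's long series of papers on discrete series and is very far from routine; in the holomorphic setting used in the rest of this paper it can be bypassed by realizing $\pi_\lambda=L_\pi$ as weighted holomorphic functions on $\mathcal{D}=G/K$, but the general assertion requires either Schmid's or Parthasarathy's analytic construction, or the Plancherel-theoretic identification due to Harish-Chandra.
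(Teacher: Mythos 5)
The paper offers no proof of this statement: it is quoted verbatim as Harish-Chandra's classification theorem with a citation to \cite{HC}, and everything downstream in the paper (the holomorphic discrete series $(L_\pi,H_\pi)$, Theorem \ref{tirreKG}, etc.) simply uses it as a black box. So there is no argument of the paper to compare yours against; what can be judged is whether your sketch is a faithful outline of the theorem it imports.

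Your outline follows the standard character-theoretic architecture and gets the main structural points right: necessity of $\rk G=\rk K$ via square-integrability of $|\Theta_\pi|^2|\Delta_T|^2$ on Cartan subgroups, the candidate character $(-1)^q\sum_{w\in W_K}\varepsilon(w)e^{w\lambda}/\Delta_G$ on the compact Cartan, and the equivalence criterion from linear independence of the exponentials $e^{w\lambda}$ for non-singular $\lambda$. But as you yourself concede, the decisive steps are not carried out, only named: (a) the extension of $\Theta_\lambda$ from $H'$ to a tempered invariant eigendistribution on all of $G$ with the correct matching conditions across the non-compact Cartans; (b) the proof that this distribution is the character of an irreducible \emph{unitary} representation occurring discretely in $L^2(G)$ (Harish-Chandra's limit arguments, or Schmid's $L^2$-cohomology, or the Atiyah--Schmid Dirac-operator construction); and (c) the exhaustion statement implicit in ``they are classified by'' --- that \emph{every} discrete series arises as some $\pi_\lambda$ --- which needs the completeness half of the orthogonality/Plancherel analysis, not just the construction half. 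A smaller imprecision: $\langle\Theta_\lambda,\Theta_{\lambda'}\rangle_{L^2(G)}$ requires interpretation, since characters are distributions rather than $L^2$ functions; the orthogonality is really a statement about $K$-finite matrix coefficients. None of this makes your plan wrong --- it is the correct roadmap --- but it is a roadmap through several of Harish-Chandra's deepest papers rather than a proof, which is precisely why the paper under review states the result with a citation instead of proving it.
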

Here $(i\mathfrak{h})^{'}$ denotes the dual space of $i\mathfrak{h}$.

Note when $G$ is compact, i.e., $G=K$, this theorem reduces to the theorem of highest weight representations. 
In particular, for a complex Lie group $G_{\mathbb{C}}$ with a compact real form $G_{\mathbb{R}}$, we have $\rk G_{\mathbb{C}}=2\rk G_{\mathbb{R}}$, and it never has discrete series representations. 
The classical groups $SL(2,\mathbb{R}),SU(p,q),Sp(n,\mathbb{R})$ have holomorphic discrete series. 
More details of the construction of these representations can be found in \cite{Kn}. 
A geometric realization of these representations or the generalized Borel-Weil-Bott theorem using  $L^2$-cohomology was conjectured by R. Langlands and then proved by W. Schmid \cite{Sch}. 

\iffalse
\begin{remark}
By Theorem \ref{tHCds}, we can easily determine whether some classical groups have a discrete series or not. 
For instance:
\begin{enumerate}
    \item $SL(n,\mathbb{R})$ has a discrete series only when $n=2$, and $SL(n,\mathbb{C})$ has no discrete series.
    \item Each $SU(p,q)$ has a discrete series for $p,q\geq 1$. 
    \item $SO(p,q)$ has a discrete series only when $q=2$.  
    \item Each $Sp(n,\mathbb{R})$ has a discrete series, but $Sp(n,\mathbb{C})$ has none. 
\end{enumerate}
\end{remark}
\fi
\subsection{Construction of the holomorphic discrete series}\label{sholds}

The holomorphic discrete series are the discrete series that can be represented in a natural way by   Hilbert spaces of holomorphic functions. 
We refer to \cite{HC,Neeb,Oli} for the relevant descriptions.  
From now on, we always assume $G$ is a connected non-compact semi-simple real Lie group with $\rk G=\rk K$, and $K$ has a non-finite center.  

Let $\theta\in \Aut(\mathfrak{g})$ be a Cartan involution and $\mathfrak{g}=\mathfrak{k}\oplus \mathfrak{p}$ be the Cartan decomposition.  %$\mathfrak{g}_{\mathbb{C}}$. 
Then we have $\mathfrak{k}\oplus i\mathfrak{p}$ is a compact real form of $\mathfrak{g}_{\mathbb{C}}$. 
We also write $Z^*=(X+iY)^*=-\theta(X+iY)=-X+iY$ for $Z=X+iY\in \mathfrak{g}_{\mathbb{C}}$ with $X,Y\in \mathfrak{g}$. 
For $g=\exp(X+iY)\in G_\mathbb{C}$, we write $g^*=\exp((X+iY)^*)=\exp(-X+iY)=\overline{g^{-1}}$. 
(The notation $Z^*$ and $g^*$ has this meaning only when we consider Lie algebras and Lie groups)

Consider the root space decomposition  $\mathfrak{g}_{\mathbb{C}}=\mathfrak{h}_{\mathbb{C}}\oplus \sum_{\alpha \in \Delta}\mathfrak{g}_{\alpha}$.
We further obtain $\mathfrak{k}_{\mathbb{C}}=\mathfrak{h}_{\mathbb{C}}\oplus \sum_{\alpha \in \Delta_K}\mathfrak{g}_{\alpha}$ and 
$\mathfrak{p}_{\mathbb{C}}= \sum_{\alpha \in \Delta_n}\mathfrak{g}_{\alpha}$ where $\mathfrak{p}_{\mathbb{C}}$ is the complexification of $\mathfrak{p}$ and $\Delta_n=\Delta-\Delta_K$ is the set of noncompact roots. 
Let $\Delta^+,\Delta_K^+,\Delta_n^+$ be selected sets of positive roots, positive compact roots, and positive non-compact roots, respectively. 
We set $\mathfrak{p}^+=\sum_{\alpha \in \Delta_n^+}\mathfrak{g}_{\alpha}$ and $\mathfrak{p}^-=\sum_{\alpha \in \Delta_n^-}\mathfrak{g}_{\alpha}$

Let $P^+$ and $P^-$ be the analytic subgroups of $G_{\mathbb{C}}$ with Lie algebras $\mathfrak{p}^+$ and $\mathfrak{p}^-$ respectively. 
There is a diffeomorphism $(z^+,k,z^-)\mapsto z^{+}\cdot k\cdot z^-$ from $P^+\times K_{\mathbb{C}}\times K^-$ to an open submanifold of $G_{\mathbb{C}}$ containing $G$ \cite{Hel}.  
Following \cite{Neeb}, we also introduce three projections
\begin{center}
$\zeta\colon P^{+}K_{\mathbb{C}}P^{-}\to P^{+}$, $\kappa\colon P^{+}K_{\mathbb{C}}P^{-}\to K_{\mathbb{C}}$, and $\xi\colon P^{+}K_{\mathbb{C}}P^{-}\to P^{-}$.     
\end{center}
Then the map $\phi\colon G/K\to \mathfrak{p}^+$ given by
$\phi(gK)=\log(\zeta(g))$ induces a diffeomorphism from $G/K$ to a bounded domain $\mathcal{D}\subset \mathfrak{p}^+$. 

The domain $\mathcal{D}$ is an irreducible Hermitian symmetric space of non-compact type \cite{Hel,Oli} 
and is known as the {\it Harish-Chandra realization} of $G/K$. 
Let $\mu$ be the $G$-invariant measure on $\mathcal{D}$ given by
\begin{center}
$d\mu(z)=\det\ad_{\mathfrak{p}^+}\kappa^{-1}(z,z)dz$,    
\end{center} 
and $dz$ is the Euclidean measure on $\mathcal{D}\subset \mathfrak{p^+}$ \cite{Sa}. 
We will identify $\mathcal{D}$ with $G/K$
in the following sections. 

Let $B(\cdot,\cdot)$ be the Killing form on $\mathfrak{g}_{\mathbb{C}}$.
We have
\begin{theorem}[\cite{Sa}]\label{tbddo}
The bounded symmetric domain can be given by
\begin{center}
$\mathcal{D}=\{z\in\mathfrak{p}^+|\|\ad z\|<\sqrt{2}\}$
\end{center}
where the norm is the operator norm on $\mathfrak{g}_{\mathbb{C}}$ equipped with the positive definite Hermitian form $-B(X,\theta\overline{Y})$. 
\end{theorem}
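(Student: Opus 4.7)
My plan is to reduce to a canonical polydisc form via strongly orthogonal noncompact roots, and then verify the operator norm equality there by an $\mathfrak{sl}_2$-calculation.

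First, I would select a maximal system $\gamma_1,\dots,\gamma_r$ of strongly orthogonal noncompact positive roots and normalized root vectors $e_j\in\mathfrak{g}_{\gamma_j}$ so that the triples $(e_j,e_j^*,h_j)$, with $h_j=[e_j,e_j^*]$ and $\gamma_j(h_j)=2$, form mutually commuting copies of $\mathfrak{sl}_2$. By the Harish-Chandra/Moore polydisc theorem, every $z\in\mathfrak{p}^+$ is $\Ad(K)$-conjugate to some $\sum_j t_je_j$ with $t_j\geq 0$, and under the Harish-Chandra embedding $\mathcal{D}\subset\mathfrak{p}^+$ is identified with the polydisc $\{\max_j t_j<1\}$ inside this flat.

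Second, I would observe that both sides of the claimed equality are $\Ad(K)$-invariant. The domain $\mathcal{D}=G\cdot o\cap\mathfrak{p}^+$ is manifestly $K$-stable, while the Hermitian form $-B(\cdot,\theta\overline{\cdot})$ is $K$-invariant because $B$, $\theta$, and complex conjugation with respect to $\mathfrak{g}$ all commute with $\Ad(K)$. Using $\theta|_{\mathfrak{p}_\mathbb{C}}=-1$, a quick calculation gives $(\ad z)^*=\ad z^*$ for $z\in\mathfrak{p}^+$. Thus it suffices to establish $\|\ad z\|=\sqrt{2}\max_j t_j$ when $z=\sum_j t_je_j$.

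Third, for a root vector $X\in\mathfrak{g}_\alpha$ the elements $[e_j,X]\in\mathfrak{g}_{\alpha+\gamma_j}$ lie in distinct (hence mutually orthogonal) root spaces, so
\begin{equation*}
\|\ad z\cdot X\|^2=\sum_j t_j^2\,\|[e_j,X]\|^2.
\end{equation*}
The structure of non-compact Hermitian symmetric spaces---equivalently, the Jordan triple structure of $\mathfrak{p}^+$---forces every $\mathfrak{sl}_2$-subrepresentation of $\mathfrak{g}_\mathbb{C}$ under $(e_j,e_j^*,h_j)$ to have dimension at most three. This gives the sharp bound $\|[e_j,X]\|^2\leq 2\|X\|^2$ on each weight vector, with equality attained on the $3$-dimensional adjoint submodule (realized, e.g., at $X=e_j^*$). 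Consequently $\|\ad z\|^2=2\max_j t_j^2$, and combined with the polydisc description of $\mathcal{D}$ this yields the theorem.

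The main obstacle is the sharpness of the $\mathfrak{sl}_2$-norm bound: the coefficient $\sqrt{2}$ is exactly the operator norm of the raising operator on the $3$-dimensional irreducible $\mathfrak{sl}_2$-module, but on a $4$-dimensional irreducible the norm would already be $2$, contradicting the theorem. One must therefore show that no $\mathfrak{sl}_2$-subrepresentation of dimension $\geq 4$ appears under $(e_j,e_j^*,h_j)$. This rests on the three-term grading $\mathfrak{g}_\mathbb{C}=\mathfrak{p}^-\oplus\mathfrak{k}_\mathbb{C}\oplus\mathfrak{p}^+$, determined by a central element of $\mathfrak{k}_\mathbb{C}$ with eigenvalues $-1,0,1$; this grading forces $(\ad e_j)^3=0$ on $\mathfrak{g}_\mathbb{C}$, which precisely bounds the $\mathfrak{sl}_2$-irreducible dimensions by three.
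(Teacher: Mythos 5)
The paper offers no proof of this statement---it is quoted directly from Satake \cite{Sa}---so your proposal has to stand on its own. Your overall strategy is sound and standard: both sides are $\Ad(K)$-invariant, the polydisc theorem reduces everything to $z=\sum_j t_je_j$, and the lower bound $\|\ad z\|\geq\sqrt2\,\max_j t_j$ via $X=e_{j_0}^*$ (where strong orthogonality kills all other brackets) is correct. The gap is in the upper bound, i.e., in the inference from the per-index estimate $\|[e_j,X]\|^2\leq2\|X\|^2$ to the conclusion $\|\ad z\|^2=2\max_jt_j^2$.

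Two things go wrong there. First, even for a single root vector $X\in\mathfrak{g}_\alpha$, several brackets $[e_j,X]$ can be nonzero simultaneously: in $\mathfrak{sp}(4,\mathbb{R})$ with $\gamma_1=2\epsilon_1$, $\gamma_2=2\epsilon_2$ and $\alpha=-\epsilon_1-\epsilon_2$, both $\alpha+\gamma_1$ and $\alpha+\gamma_2$ are roots. Your displayed identity combined with the per-$j$ bound then yields only $\|\ad z\,X\|^2\leq 2\bigl(\sum_jt_j^2\bigr)\|X\|^2$, which is strictly weaker than $2(\max_jt_j)^2\|X\|^2$ as soon as $r\geq2$. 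What actually makes the constant come out to $\sqrt2$ is the \emph{joint} estimate $\sum_j\|[e_j,X]\|^2\leq2\|X\|^2$: whenever two of the $\gamma_j$-strings through $\alpha$ are simultaneously nontrivial, both strings have length two, so each contributes only $\|X\|^2$ rather than $2\|X\|^2$ (in the example above one computes $\|[e_j,X]\|^2=\|X\|^2$ for $j=1,2$, summing to exactly $2\|X\|^2$). Proving this requires the type $C_r/BC_r$ structure of the root system restricted to $\spn\{h_j\}$---the values $\alpha(h_j)$ lie in $\{0,\pm1,\pm2\}$ with $\pm2$ occurring for at most one $j$ and only when all other $\alpha(h_i)$ vanish---and is not a consequence of $(\ad e_j)^3=0$ alone. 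Second, the orthogonality $\|\ad z\,X\|^2=\sum_jt_j^2\|[e_j,X]\|^2$ is special to root vectors: the operator $(\ad z)^*(\ad z)$ is \emph{not} diagonal in a root basis, because coincidences $\alpha+\gamma_i=\beta+\gamma_j$ with $\alpha\neq\beta$ occur (same example: $\alpha=\epsilon_2-\epsilon_1$, $\beta=\epsilon_1-\epsilon_2$, $i=1$, $j=2$ all map into $\mathfrak{g}_{\epsilon_1+\epsilon_2}$), so the supremum defining the operator norm need not be detected on root vectors and the cross terms must be controlled. Both defects are repairable, but they sit exactly where the constant $\sqrt2$ is earned, so as written the argument does not establish the theorem.
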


Recall the map $\kappa\colon P^{+}K_{\mathbb{C}}P^{-}\to K_{\mathbb{C}}$ defined above.
We define a map from $\mathcal{D}\times \mathcal{D}$ to $K_{\mathbb{C}}$, which is also denoted by $\kappa$, by
\begin{center}
 $\kappa(z,w)=\kappa(\exp (w^{*})\exp(z))^{-1}$, $z,w\in\mathcal{D}$.     
\end{center}
We define a map $J\colon G\times \mathcal{D}\to K_{\mathbb{C}}$ by
\begin{center}
$J\colon G\times \mathcal{D}\to K_{\mathbb{C}}$ by $J(g,z)=\kappa(g\exp z)$, $g\in G,z\in \mathcal{D}$.    
\end{center}
The map $J$ is usually called the {\it canonical automorphy factor} of $G$,  which satisfies the following properties:  
\begin{enumerate}[label=(\roman*)]
    \item $J(g,z)$ is $C^{\infty}$ in the first variable and holomorphic in the second one,
    \item $J$ is a 1-cocycle, i.e., $J(gh,w)=J(g,hw)J(h,w)$ for $g,h\in G$, $w\in \mathcal{D}$, 
    \item $J(k,0)=k$ if $k\in K$. 
\end{enumerate}
For $\kappa$, we have an alternative definition which is determined by the following properties: (i)$\kappa(0,0)=e$, (ii) $\kappa(z,w)$ is holomorphic in $z$, (iii) $\kappa(z,w)=\kappa(w,z)^*$ and (iv)$\kappa(gz,gw)=J(g,z)\kappa(z,w)J(g,w)^*$. 

\begin{remark}
\label{r1}
Let $\rho$ be a unitary representation of $K$ (so it can be extended to $K_{\mathbb{C}}$). 
For $g\in K_{\mathbb{C}}$, we have $\rho(g^*)=\rho(g)^*$, the adjoint operator associated with $\rho(g)$. 

Indeed, we assume $\rho(g)=\exp(X+iY)$ with $X,Y\in \rho(\mathfrak{k}_{\mathbb{R}})$. 
Then $\rho(g^*)=\exp(-X+iY)$. 
As $\rho$ is unitary on $K$, $X+X^*=0$. 
So we obtain $\exp(-X+iY)=\exp(X^*-i\cdot Y^*)=\exp((X+iY)^*)=\rho(g)^*$. 
\end{remark}

Now we can construct the holomorphic discrete series of $G$. 
Let $(\pi,V_{\pi})$ be a finite dimensional unitary representation of the compact subgroup $K$ and  $\langle,\rangle_{\pi}\colon V_{\pi}\times V_{\pi}\to \mathbb{C}$ is a $K$-invariant inner product.  
We also let $(\pi,V_{\pi})$ denote the representation extended to $K_{\mathbb{C}}$. 
Here we do not assume the irreducibility of $\pi$. 

Consider the space
\begin{center}
$\text{Map}(\mathcal{D},V_{\pi})=\{f\colon \mathcal{D}\to V_{\pi}|f\text{~is~measurable}\}$    
\end{center}
and the following inner product on it: 
\begin{center}
$\langle f,h\rangle=\int_{\mathcal{D}}\langle \pi(\kappa(z,z)^{-1})f(z),h(z)\rangle_{\pi}d\mu(z)$, $f,h\in \text{Map}(\mathcal{D},V_{\pi})$. 
\end{center}
 
Following Remark \ref{r1} above, one can check it is positive-definite. 
We define
\begin{center}
$L^2(\mathcal{D},V_{\pi})=\{f\in\text{Map}(\mathcal{D},V_{\pi})|\langle f,f\rangle<\infty \}$,
\end{center}
and also the subspace spanned by holomorphic functions
\begin{center}
$H_{\pi}=L_{\text{hol}}^2(\mathcal{D},V_{\pi})=\{f\in L^2(\mathcal{D},V_{\pi})| f\text{~is~holomorphic} \}$    
\end{center}
where the inner product restricted to $H_\pi$ will be written as $\langle,\rangle_{H_{\pi}}$. 
It can be shown that $H_{\pi}$ is closed subspace of $L^2(\mathcal{D},V_{\pi})$.  
The action $L_{\pi}$ of $G$ on $H_{\pi}$ is given by
\begin{center}
$L_{\pi}(g)f(z)=\pi(J(g^{-1},z)^{-1})f(g^{-1}z)$, $f\in H_{\pi},g\in G,z\in \mathcal{D}$. 
\end{center}
It can be proved that this representation $(L_{\pi},H_{\pi})$ is a unitary representation of $G$ \cite{Oli}. 

Please note the unitary representations $(L_{\pi},H_{\pi})$ are square-integrable, i.e., $H_{\pi}\subset L^2(G)$. 
The {\it holomorphic discrete series} of $G$ are the $(L_{\pi},H_{\pi})$'s, where $\pi$ is an irreducible representation of $K$.
\begin{theorem}[\cite{HC,Kn}]\label{tirreKG}
Assume $\pi$ is an irreducible representation of $K$ with highest weight $\Lambda$, then $H_{\pi}$ is nonzero if and only if $(\Lambda+\delta_G)(H_\beta)<0$ for all $\beta\in \Delta_n^+$. 
In this case, it is irreducible. 
\end{theorem}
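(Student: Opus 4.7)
The plan is to construct an explicit highest-weight vector inside $H_\pi$, identify the $(\mathfrak{g},K)$-module it generates, and then use a square-integrability calculation to pin down when this vector actually lies in $L^2$, which will simultaneously give the weight condition, irreducibility, and the converse.

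First I would differentiate $L_\pi$ to obtain the infinitesimal action of $\mathfrak{g}_\mathbb{C}=\mathfrak{k}_\mathbb{C}\oplus\mathfrak{p}^+\oplus\mathfrak{p}^-$ on smooth vectors in $H_\pi$. Using that $\exp(z)$ for $z\in\mathfrak{p}^+$ acts by translation on $\mathcal{D}\subset\mathfrak{p}^+$ while $J(\cdot,\cdot)$ is trivial along $P^+$, one checks that $\mathfrak{p}^+$ acts by first-order constant-coefficient differential operators on holomorphic functions, while $\mathfrak{p}^-$ acts by operators that vanish on the constant functions at $z=0$. Hence the constant function $f_\Lambda(z)\equiv v_\Lambda$, with $v_\Lambda\in V_\pi$ a highest-weight vector for $K$, is a $\mathfrak{g}_\mathbb{C}$-highest-weight vector of weight $\Lambda$ with respect to the positive system $\Delta_K^+\cup\Delta_n^-$, and the cyclic $(\mathfrak{g},K)$-module it generates is the generalized Verma module $U(\mathfrak{p}^+)\otimes_{K_\mathbb{C}} V_\pi$.

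Next I would compute
\begin{center}
$\|f_\Lambda\|^2=\int_\mathcal{D}\langle \pi(\kappa(z,z)^{-1})v_\Lambda,v_\Lambda\rangle_\pi\,d\mu(z)$
\end{center}
and invoke the Harish-Chandra polar decomposition of $\mathcal{D}$ with respect to a maximal system of strongly orthogonal noncompact roots $\{\gamma_1,\dots,\gamma_r\}$, reducing the integral to one over a polydisk. Because $\pi(\kappa(z,z)^{-1})v_\Lambda$ evaluates by a character determined by $\Lambda$, the integral factors into products of one-variable beta-type integrals whose convergence is equivalent, after accounting for the Jacobian $\det\mathrm{ad}_{\mathfrak{p}^+}\kappa^{-1}(z,z)$ which contributes the $\delta_G$-shift, to the inequalities $(\Lambda+\delta_G)(H_\beta)<0$ for all $\beta\in\Delta_n^+$. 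This settles the ``if'' direction and exhibits the lowest $K$-type.

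For irreducibility, under the same weight condition the Shapovalov-type contravariant form on the generalized Verma module is positive-definite and nondegenerate, so there is no proper $(\mathfrak{g},K)$-submodule; since $H_\pi$ is its Hilbert completion the unitary representation is irreducible. The ``only if'' direction follows because any nonzero $H_\pi$ contains a $K$-finite holomorphic function, hence a lowest $K$-type vector, which by the $\mathfrak{p}^+$-annihilation identified above must be (up to $K$-translation) of the form $f_\Lambda$, forcing $\|f_\Lambda\|<\infty$ and thus the weight inequality. The main obstacle is the polar-coordinate calculation of $\|f_\Lambda\|^2$: matching the explicit factor $\pi(\kappa(z,z)^{-1})$ with a $\delta_G$-shifted character, and controlling the combinatorics of noncompact positive roots against the Harish-Chandra strongly orthogonal system, is the technical heart of the argument and is precisely where I would invoke \cite{HC,Kn,Neeb}.
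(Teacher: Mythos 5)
The paper does not actually prove this statement; it is quoted with a citation to Harish-Chandra and Knapp, and your sketch is essentially the standard argument from those sources (minimal $K$-type $=$ constants; square-integrability of the constant function computed in polar coordinates over the polydisk attached to a maximal strongly orthogonal system of noncompact roots; the $\delta_G$-shift supplied by the invariant measure). Two points deserve attention. First, with the paper's conventions ($\mathcal{D}\subset\mathfrak{p}^+$, $L_\pi(g)f(z)=\pi(J(g^{-1},z)^{-1})f(g^{-1}z)$, and $\mathfrak{p}^+$ abelian so that $\exp(-tv)\exp z=\exp(z-tv)$ with trivial automorphy factor), your own computation shows that $\mathfrak{p}^+$ acts by constant-coefficient derivatives and hence annihilates the constants; therefore $f_\Lambda$ is extreme for the system $\Delta_K^+\cup\Delta_n^+$ and the cyclic module it generates is $U(\mathfrak{p}^-)f_\Lambda$, not $U(\mathfrak{p}^+)\otimes V_\pi$ as you write two sentences later. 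This is only a relabeling, but as written the sketch contradicts itself, and getting the labels straight is exactly what makes the convergence condition come out as $(\Lambda+\delta_G)(H_\beta)<0$ on $\Delta_n^+$ rather than on $\Delta_n^-$. Second, the irreducibility step is incomplete as stated: positive-definiteness of the Shapovalov-type form is not an independent input you can invoke under the weight condition without circularity. What you get for free is that the $L^2$ inner product restricts to a positive-definite contravariant form on $U(\mathfrak{g})f_\Lambda$, which shows that this $(\mathfrak{g},K)$-module is irreducible; to conclude that $H_\pi$ itself is irreducible you must still show that $U(\mathfrak{g})f_\Lambda$ is dense, or equivalently that every nonzero closed invariant subspace contains a nonzero constant. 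The standard way is to note that a nonzero $K$-finite vector is a $V_\pi$-valued polynomial and to apply the $\mathfrak{p}^+$-derivatives until a nonzero constant appears; that same argument is what makes your ``only if'' direction rigorous, so it is worth stating once and using twice.
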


The formal dimensions of these representations can be given by explicit formulas \cite{Neeb,Oli}. 
Now we let $P_{\pi}$ be the orthogonal projection from 
$L^2(\mathcal{D},V_{\pi})$ to $H_{\pi}$. 
Let $L_{\pi}$ be the action of $G$ on $L^2(\mathcal{D},V_{\pi})$ defined by
\begin{center}
 $L_{\pi}(g)f(z)=\pi(J(g^{-1},z)^{-1})f(g^{-1}z)$, $f\in H_{\pi},g\in G,z\in \mathcal{D}$, $f\in L^2(\mathcal{D},V_{\pi})$. 
\end{center}
This is to say $H_{\pi}$ is a $G$-invariant subspace and $(L_{\pi},L^2(\mathcal{D},V_{\pi}))$ also gives us a well-defined unitary representation. 
\begin{proposition}
\label{pinvariant}
For $g\in G$, $L_{\pi}(g)$ and $P_{\pi}$ commute on $L^2(\mathcal{D},V_{\pi})$, i.e., $L_{\pi}(g)P_{\pi}f=P_{\pi}L_{\pi}(g)(f)$ for $f\in L^2(\mathcal{D},V_{\pi})$. 
\end{proposition}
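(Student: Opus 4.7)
The plan is to reduce this to the standard fact that orthogonal projection onto a closed invariant subspace of a unitary representation is intertwining. All the hypotheses needed for this reduction are already in place from the construction just carried out.

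First, I would record the two facts the previous discussion has established. On the one hand, $(L_{\pi}, L^{2}(\mathcal{D}, V_{\pi}))$ is a unitary representation of $G$, so each $L_{\pi}(g)$ is a unitary operator; on the other hand, $H_{\pi} \subset L^{2}(\mathcal{D}, V_{\pi})$ is a closed subspace which is $G$-invariant (that is, $L_{\pi}(g) H_{\pi} \subset H_{\pi}$ for every $g \in G$), as noted just before the proposition. From these two facts alone I would then deduce that the orthogonal complement $H_{\pi}^{\perp}$ is also $G$-invariant: if $h \in H_{\pi}^{\perp}$ and $\phi \in H_{\pi}$, then since $L_{\pi}(g^{-1}) \phi \in H_{\pi}$ and $L_{\pi}(g)$ is unitary,
\begin{equation*}
\langle L_{\pi}(g) h, \phi \rangle = \langle h, L_{\pi}(g)^{*} \phi \rangle = \langle h, L_{\pi}(g^{-1}) \phi \rangle = 0,
\end{equation*}
so $L_{\pi}(g) h \in H_{\pi}^{\perp}$.

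With both $H_{\pi}$ and $H_{\pi}^{\perp}$ invariant under $L_{\pi}(g)$, the commutation follows by decomposing an arbitrary $f \in L^{2}(\mathcal{D}, V_{\pi})$ as $f = P_{\pi} f + (I - P_{\pi}) f$ with $P_{\pi} f \in H_{\pi}$ and $(I - P_{\pi}) f \in H_{\pi}^{\perp}$, applying $L_{\pi}(g)$ term by term, and using the uniqueness of the orthogonal decomposition to identify $L_{\pi}(g) P_{\pi} f$ as the $H_{\pi}$-component of $L_{\pi}(g) f$, i.e.\ as $P_{\pi} L_{\pi}(g) f$.

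The only non-formal ingredient is the $G$-invariance of $H_{\pi}$ itself, which is asserted (but not proved in detail) in the paragraph preceding the proposition; if a self-contained argument is desired, I would verify it directly from the explicit formula $L_{\pi}(g) f(z) = \pi(J(g^{-1}, z)^{-1}) f(g^{-1} z)$, noting that $\pi \circ J(g^{-1}, \cdot)^{-1}$ is a holomorphic $\End(V_{\pi})$-valued function (by property (i) of the cocycle $J$ and the holomorphicity of $\pi$ on $K_{\mathbb{C}}$) and that $z \mapsto g^{-1} z$ is a biholomorphism of $\mathcal{D}$, so holomorphicity of $f$ is preserved. This is the only step with any content; everything else is the standard unitary-representation argument.
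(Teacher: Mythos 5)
Your proof is correct and follows essentially the same route as the paper's: both show $H_{\pi}^{\perp}$ is $L_{\pi}(g)$-invariant via the unitarity computation $\langle L_{\pi}(g)h,\phi\rangle=\langle h,L_{\pi}(g^{-1})\phi\rangle=0$ and then conclude by the orthogonal decomposition. Your added remark verifying the $G$-invariance of $H_{\pi}$ from the explicit formula is a reasonable supplement but not a departure from the paper's argument.
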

\begin{proof}
Take $h\in L^2(\mathcal{D},V_{\pi})$. Assume $h=h_0\oplus h_1$ with $h_0=P_{\pi}(h),h_1=(1-P_{\pi})(h)$ and also let $f_1\in (1-P_{\pi})L^2(\mathcal{D},V_{\pi})$.  
We can show $L_{\pi}$ leaves $H_{\pi}^{\perp}$ invariant: $\langle L_{\pi}(g)f_1,h_0\rangle_{L^2}=\langle f_1,L_{\pi}(g^{-1})h_0\rangle=0$. 
Hence $L_{\pi}(g)P_{\pi}f=P_{\pi}L_{\pi}(g)f$. 
\end{proof}

\section{Berezin Transform, Toeplitz Operators and Trace Formulas}

In this section, we will study the holomorphic discrete series representations $(L_{\pi},H_{\pi})$ of a semi-simple real Lie group $G$ with restriction to a lattice $\Gamma\subset G$ and the related von Neumann algebras. 
We construct some Berezin quantizations and generalized Toeplitz operators, emphasizing the $\Gamma$-invariant properties. 
We obtain several explicit formulas of a trace on the commutant. 

\subsection{Berezin symbols and a trace}\label{sBeretr}

The Berezin quantization is defined for the upper half-plane $\mathbb{H}=\{z=x+iy\in \mathbb{C}|y>0\}$.
It may also be defined for the open unit disk $\mathbb{D}=\{z\in \mathbb{C}||z|<1\}$ by the Cayley transform, which sends $z\in \mathbb{H}$ to $\frac{z-i}{z+i}\in \mathbb{D}$ \cite{Be74,Be75}. 
This is a special case of the bounded symmetric domain $\mathcal{D}=G/K$ when $G=SL(2,\mathbb{R})$ and $K =SO(2)$. 
We will focus on its generalization to the bounded symmetric domain $\mathcal{D}=G/K$ for a Lie group $G$ with holomorphic discrete series as introduced in the previous sections.

Let $(L_{\pi}, H_{\pi})$ be a nontrivial square-integrable representation of $G$ which is associated to the finite dimensional unitary representation $(\pi,V_{\pi})$ of a maximal compact subgroup $K$. 
Recall that $H_{\pi}=L^2_{\text{holo}}(\mathcal{D},V_{\pi})$. 
Here we do not assume it is irreducible (and $(\pi,V_{\pi})$ neither by Theorem \ref{tirreKG}).  
For any $z\in D$, the evaluation function $K_z\colon H_{\pi}\to V_{\pi}$ given by $f\mapsto f(z)$ is continuous and bounded \cite{Neeb}. 
Hence its adjoint operator $E_{z}=K_z^*\colon V_{\pi} \to H_{\pi}$ is defined by
\begin{center}
$\langle K_zf,v\rangle_{\pi}=\langle f(z),v\rangle_{\pi}=\langle f,E_z(v)\rangle_{H_\pi}$, $\forall f\in H_{\pi},v\in V_{\pi}$, 
\end{center}
where $\langle \cdot,\cdot\rangle_{\pi}$ is the $K$-invariant inner product on $V_{\pi}$. 

Let $G=NAK$ be the Iwasawa decomposition. 
There is a smooth embedding
\begin{center}
$i\colon \mathcal{D}\cong G/K\hookrightarrow NA\subset G$, $z\mapsto g_z$.   
\end{center} 
Please note we have $z=\dot{g_z}=g_z\cdot K$ as a coset in $G/K$.
We denote $h_z=\kappa(g_z)\in K_{\mathbb{C}}$ for $z\in \mathcal{D}$ and also let $H_z=\pi(h_z^{-1})\in GL(V_{\pi})$ (see Section \ref{sholds} for the map $\kappa\colon P^{+}K_{\mathbb{C}}P^{-}\to K_{\mathbb{C}}$).  

The following result and its proof can be found in \cite{Neeb,Cah}. 
\begin{lemma}
\label{lkappa}
\begin{enumerate}[label=(\roman*)]
    \item There exists a constant $c_{\pi}\in \mathbb{R}_{>0}$ such that $E_z^*E_w=c_{\pi}\pi(\kappa(z,w))$ for $z,w\in \mathcal{D}$.
    \item $E_{gz}=L_{\pi}(g)E_z\pi(J(g,z)^*)$ for $g\in G,z\in \mathcal{D}$.
\end{enumerate}
\end{lemma}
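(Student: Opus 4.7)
The plan is to establish (ii) directly from the reproducing property of $E_z$ and the definition of $L_\pi$, then combine (ii) with the $G$-equivariance of $\kappa$ to reduce (i) to a single computation at the base point $0\in\mathcal{D}$.

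For (ii), I would expand $\langle f, L_\pi(g)E_z\pi(J(g,z)^*)v\rangle_{H_\pi}$ for arbitrary $f\in H_\pi$ and $v\in V_\pi$. Sliding $L_\pi(g)$ to the other slot by unitarity, applying the reproducing identity $E_z^*h=h(z)$, and using $\pi(J(g,z)^*)^*=\pi(J(g,z))$ (Remark \ref{r1}), the expression becomes $\langle \pi(J(g,z))\,(L_\pi(g^{-1})f)(z),v\rangle_\pi$. Substituting $(L_\pi(g^{-1})f)(z)=\pi(J(g,z)^{-1})f(gz)$ from the definition of $L_\pi$ collapses the two $\pi(J(g,z))$ factors and yields $\langle f(gz),v\rangle_\pi=\langle f,E_{gz}v\rangle_{H_\pi}$, which is (ii) after varying $f$ and $v$.

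For (i), I would set $\Phi(z,w):=E_z^*E_w\in\End(V_\pi)$. Taking adjoints in (ii) gives $E_{gz}^*=\pi(J(g,z))E_z^*L_\pi(g^{-1})$, and composing with $E_{gw}=L_\pi(g)E_w\pi(J(g,w)^*)$ produces the equivariance
\[
\Phi(gz,gw)=\pi(J(g,z))\,\Phi(z,w)\,\pi(J(g,w)^*),
\]
which matches the transformation law of any scalar multiple of $\pi(\kappa(z,w))$ via the cocycle identity $\kappa(gz,gw)=J(g,z)\kappa(z,w)J(g,w)^*$. Specializing to $z=w=0$ and $g=k\in K$, where $J(k,0)=k$, shows $\Phi(0,0)$ commutes with $\pi(K)$; assuming $\pi$ is irreducible, Schur's lemma forces $\Phi(0,0)=c_\pi I_{V_\pi}$ with $c_\pi>0$ since $\Phi(0,0)=E_0^*E_0$ is nonzero positive. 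Transporting by the equivariance and using $\kappa(0,0)=e$ then delivers $\Phi(z,z)=c_\pi\pi(\kappa(z,z))$ on the entire diagonal.

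To promote this diagonal identity to all of $\mathcal{D}\times\mathcal{D}$, I observe that $\Phi(z,w)v=(E_wv)(z)$ is holomorphic in $z$ and, via $\Phi(z,w)^*=\Phi(w,z)$, antiholomorphic in $w$; the same holds for $\pi(\kappa(z,w))$ by the defining properties (ii) and (iii) of $\kappa$. Hence both sides extend to joint holomorphic functions of $(z,\bar w)$ that agree on the totally real diagonal $\{(z,\bar z):z\in\mathcal{D}\}$, so by the uniqueness of such analytic extensions they coincide on $\mathcal{D}\times\mathcal{D}$. The main obstacle is the Schur step: for reducible $\pi$ the identity should be read componentwise on each $K$-isotypic summand, with $c_\pi$ potentially depending on the summand, which is the natural interpretation of the constant in the statement as it is used later.
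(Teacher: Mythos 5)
Your argument is correct. Note that the paper does not prove this lemma at all; it is quoted from the references [Neeb, Cah], so there is no internal proof to compare against. Your reconstruction is the standard one and each step checks out: the computation for (ii) uses only the defining property of $E_z$, unitarity of $L_\pi$, Remark \ref{r1}, and the cocycle cancellation $\pi(J(g,z))\pi(J(g,z)^{-1})=I$; the equivariance law $\Phi(gz,gw)=\pi(J(g,z))\Phi(z,w)\pi(J(g,w)^*)$ then follows by taking adjoints, Schur's lemma at the base point gives $\Phi(0,0)=c_\pi I$ with $c_\pi>0$ (since $E_0\neq 0$, as otherwise all evaluations, hence all of $H_\pi$, would vanish by transitivity), transitivity of $G$ on $\mathcal{D}$ together with $\kappa(0,0)=e$ gives the identity on the diagonal, and Theorem \ref{tsesholo} promotes it to all of $\mathcal{D}\times\mathcal{D}$. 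Your closing caveat is also well taken: the Schur step genuinely requires $\pi$ irreducible, and for reducible $\pi$ the "constant" is a priori only a positive $K$-intertwiner (a scalar on each isotypic summand); since the paper invokes the lemma in Section 3.1 while explicitly not assuming irreducibility, this is a point worth flagging rather than a defect of your proof.
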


Now we define four {\it Berezin symbols} for operators in $B(H_\pi)$. 
Recall $c_{\pi}\in\mathbb{R}$ is the contant given in Lemma \ref{lkappa}. 
%Take some $A\in B(H_\pi)$, or more generally, an operator $A$ on $H_{\pi}$ such that both the domains of $A$ and $A^*$ contain $H^0_{\pi}$. 
\begin{definition}
For an operator $A\in B(H_\pi)$, the Berezin symbols of $A$ are defined as the following $\End(V_{\pi})$-valued function: 
\begin{enumerate}
\item $K_A(z,w)=E^*_{z}AE_w$, 
\item $R(A)(z,w)=\frac{1}{c_{\pi}}H_wK_A(w,z)H_z^*=\frac{1}{c_{\pi}}H_wE_w^*AE_zH_z^*$,
\item $S(A)(z)=R(A)(z,z)=\frac{1}{c_{\pi}}H_zK_A(z,z)H_z^*=\frac{1}{c_{\pi}}H_zE_z^*AE_zH_z^*$,
\item $Q(A)(z)=\frac{1}{c_\pi}K_A(z,z)H_z^*H_z=\frac{1}{c_\pi}E_z^*AE_zH_z^*H_z$
\end{enumerate}
where $z,w\in \mathcal{D}$.
\end{definition}

Note $K_A,R(A)$ are maps from $\mathcal{D}\times \mathcal{D}$ to $\End(V_\pi)$ and $S(A),Q(A)$ are maps from $\mathcal{D}$ to $\End(V_\pi)$.

Let $U$ be a non-empty open subset of $ \mathbb{C}^N\times \mathbb{C}^N$. 
Given a function $f(z,w)\colon U\to \mathbb{C}$, we call it {\it sesqui-holomorphic} if $f$ is holomorphic in both $z$ and $\bar{w}$. 

\begin{theorem}[\cite{BM48} II.4.]\label{tsesholo}
Assume a complex function $f(z,w)$ of $2N$ complex variables $z=z_1,\dots,z_N$ and $w=w_1,\dots,w_N$ is given in a neighborhood of the origin $(0,0)$. 
If $f$ is sesqui-holomorphic function and $f(z,\bar{z})=0$ for all $z$, 
then we have $f=0$. 
\end{theorem}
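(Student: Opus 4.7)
The plan is to expand $f$ as a double power series using its sesqui-holomorphic structure and then appeal to the linear independence of the mixed monomials $\{z^\alpha \bar z^\beta\}_{\alpha,\beta \in \mathbb{N}^N}$ in the space of real-analytic functions near the origin of $\mathbb{C}^N$.

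First I would pick a polydisc $\Delta \times \Delta$ around $(0,0)$ contained in $U$ on which, because $f$ is holomorphic in $z$ and in $\bar w$, one has an absolutely and (compactly) uniformly convergent expansion
\[
f(z,w) \;=\; \sum_{\alpha,\beta \in \mathbb{N}^N} a_{\alpha,\beta}\, z^\alpha\, \bar w^\beta.
\]
Evaluating on the diagonal (setting $w=z$ so that $\bar w = \bar z$), the hypothesis $f(z,\bar z) \equiv 0$ becomes the real-analytic identity
\[
\sum_{\alpha,\beta} a_{\alpha,\beta}\, z^\alpha\, \bar z^\beta \;\equiv\; 0
\]
on a neighborhood of $0 \in \mathbb{C}^N$.

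The remaining algebraic step is to deduce that every $a_{\alpha,\beta}$ vanishes. For this I would apply the Wirtinger operators $\partial^{|\gamma|}/\partial z^\gamma$ and $\partial^{|\delta|}/\partial \bar z^\delta$ at the origin; term-by-term differentiation is legitimate thanks to the uniform convergence of the series on compact subsets, and the identity $\partial_z^\gamma \partial_{\bar z}^\delta(z^\alpha \bar z^\beta)\big|_{z=0} = \alpha!\,\beta!\,\delta_{\alpha,\gamma}\delta_{\beta,\delta}$ isolates the single coefficient $a_{\gamma,\delta}$, forcing $\alpha!\,\beta!\,a_{\alpha,\beta} = 0$ for every pair. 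Hence $f$ vanishes identically on $\Delta \times \Delta$, and by the identity principle applied separately in the holomorphic variable $z$ and in the anti-holomorphic variable $w$ (equivalently, because sesqui-holomorphic functions are real-analytic in the underlying $4N$ real coordinates), $f \equiv 0$ on the connected component of $(0,0)$ in $U$.

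The only point requiring care is the justification of term-by-term differentiation of the double power series, which is immediate from absolute, compactly uniform convergence; no deeper machinery than Wirtinger calculus and the identity theorem is needed, so I would not expect any substantive obstacle.
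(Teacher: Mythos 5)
Your argument is correct: writing $f(z,w)=\sum_{\alpha,\beta}a_{\alpha,\beta}z^{\alpha}\bar w^{\beta}$ (legitimate since $(z,u)\mapsto f(z,\bar u)$ is jointly holomorphic) and extracting each coefficient from the diagonal identity $\sum a_{\alpha,\beta}z^{\alpha}\bar z^{\beta}\equiv 0$ via Wirtinger derivatives at the origin is exactly the standard polarization argument. The paper itself supplies no proof here --- it simply cites Bochner--Martin --- but your coefficient-extraction computation coincides with the sketch the author left (commented out) in the proof of Proposition \ref{pK}, so there is nothing to add.
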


Now we fix the bounded domain $\mathcal{D}$ with the measure $\mu$ given in Section \ref{sholds}.
We give some properties of the Berezin symbols. 
The result concerning only $K_A$ can be found in \cite{Cah}.  
\begin{proposition}
\label{pK}
\begin{enumerate}[label=(\roman*)]
    \item $K_{A^*}(z,w)=K_A(w,z)^*$.
    \item $K_A(z,w)$ is sesqui-holomorphic. 
    \item The correspondences $A\mapsto K_A(z,w)$, $A\mapsto K_A(z,z)$ and $K_A(z,w)\mapsto K_A(z,z)$ are all injective.    
    \item $K_A(gz,gw)=\pi(J(g,z))K_{L_{\pi}(g)^{-1}AL_{\pi}(g)}(z,w)\pi(J(g,w))^*$. 
\end{enumerate}
\end{proposition}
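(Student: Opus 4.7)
The plan is to establish (i)--(iv) by direct computation from the definition $K_A(z,w) = E_z^* A E_w$, leaning on Lemma \ref{lkappa}, Remark \ref{r1}, the reproducing property $\langle f, E_w v\rangle_{H_\pi} = \langle f(w), v\rangle_\pi$, and Theorem \ref{tsesholo}. Part (i) is immediate by taking adjoints: $K_A(w,z)^* = (E_w^* A E_z)^* = E_z^* A^* E_w = K_{A^*}(z,w)$.

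For (ii), I would apply $K_A(z,w)$ to an arbitrary vector $v \in V_\pi$ and handle the two directions separately. Since $K_A(z,w) v = (A E_w v)(z)$ with $A E_w v \in H_\pi$ holomorphic on $\mathcal{D}$, holomorphicity in $z$ is automatic. For antiholomorphicity in $w$, I rewrite
\begin{equation*}
\langle K_A(z,w) v, u\rangle_\pi = \langle A E_w v, E_z u\rangle_{H_\pi} = \overline{\langle A^* E_z u, E_w v\rangle_{H_\pi}} = \overline{\langle (A^* E_z u)(w), v\rangle_\pi},
\end{equation*}
so the matrix entries of $K_A(z,w)$ are complex conjugates of functions holomorphic in $w$, hence holomorphic in $\bar{w}$.

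Part (iii) splits into three linked statements. First, $A \mapsto K_A(\cdot,\cdot)$ is injective because the set $\{E_w v : w \in \mathcal{D}, v \in V_\pi\}$ is total in $H_\pi$ (any $f \in H_\pi$ orthogonal to every $E_w v$ satisfies $f(w) = 0$ everywhere); thus $K_A \equiv 0$ forces $A E_w v = 0$ for all $w, v$ and hence $A = 0$. Next, injectivity of the diagonal restriction $K_A(z,w) \mapsto K_A(z,z)$ follows from (ii) combined with Theorem \ref{tsesholo}: a sesqui-holomorphic $V_\pi$-valued kernel vanishing on the diagonal in a neighborhood of a point vanishes in that neighborhood, and therefore identically on the connected manifold $\mathcal{D} \times \mathcal{D}$ by analytic continuation. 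Injectivity of $A \mapsto K_A(z,z)$ is the composition of these two.

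For (iv), I would substitute the covariance from Lemma \ref{lkappa}(ii) into both factors of $K_A(gz,gw) = E_{gz}^* A E_{gw}$. Taking the adjoint of $E_{gz} = L_\pi(g) E_z \pi(J(g,z)^*)$ and invoking unitarity $L_\pi(g)^* = L_\pi(g)^{-1}$ together with Remark \ref{r1} (which identifies $\pi(J(g,z)^*)^* = \pi(J(g,z))$ and $\pi(J(g,w)^*) = \pi(J(g,w))^*$), the product collapses into the desired conjugation formula. The only genuinely non-mechanical step is the sesqui-holomorphicity in (ii), which in turn powers (iii); part (iv) is careful but purely algebraic bookkeeping, where the main risk is confusing the involution $Z \mapsto Z^*$ on $\mathfrak{g}_{\mathbb{C}}$ with the Hilbert-space adjoint of $\pi$-operators, a confusion that Remark \ref{r1} is designed precisely to prevent.
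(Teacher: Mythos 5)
Your proof is correct and follows essentially the same route as the paper: (i) and (iv) by the same direct adjoint/covariance computations from Lemma \ref{lkappa}, (ii) by observing $K_A(z,w)v=(AE_wv)(z)$ with $AE_wv\in H_\pi$ plus the adjoint symmetry, and the diagonal-restriction injectivity in (iii) via Theorem \ref{tsesholo}. The one place you diverge is the injectivity of $A\mapsto K_A$: you argue that $K_A\equiv 0$ forces $AE_wv=0$ and conclude from totality of $\{E_wv\}$ in $H_\pi$, whereas the paper instead writes an explicit reconstruction formula $\langle (A\phi)(z),v\rangle_\pi=\int_{\mathcal{D}}\langle\pi(\kappa(w,w)^{-1})\phi(w),K_A(z,w)^*v\rangle_\pi\,d\mu(w)$ showing that $K_A$ determines $A\phi$ pointwise. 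Both are equivalent uses of the reproducing property; yours is slightly more economical, while the paper's version has the side benefit of exhibiting $A$ as an integral operator with kernel $K_A$. Your added remark that the local vanishing from Theorem \ref{tsesholo} must be propagated to all of $\mathcal{D}\times\mathcal{D}$ by connectedness and analytic continuation is a point the paper leaves implicit, and is worth keeping.
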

\begin{proof}
For (i), we have $K_A(w,z)^*=(E_w^* A E_z)^*=E_z^* A^* E_w=K_{A^*}(z,w)$. 

For (ii), observe $K_A(z,w)(v)=(AE_w(v))(z)$ is a holomorphic function of $z$ as it belongs to $H_{\pi}$. 
Furthermore, by (i), it is anti-holomorphic in $w$. 

For (iii), we first show $ K_A(z,w)$ determines $A$.
Take $\phi\in H_{\pi}$ and consider the inner product $\langle (A\phi)(z),v\rangle_{\pi}$. 
\begin{equation*}
\begin{aligned}
\langle (A\phi)(z),v\rangle_{\pi}&=\langle \phi,A^* E_z(v)\rangle_{H_{\pi}}\\
&=\int_{\mathcal{D}}\langle\pi(\kappa(w,w)^{-1})E_w^*\phi,E_w^*A^* E_z(v)\rangle_{\pi}d\mu(w)\\
&=\int_{\mathcal{D}}\langle\pi(\kappa(w,w)^{-1})\phi(w),K_A(z,w)^*(v)\rangle_{\pi}d\mu(w) 
\end{aligned}
\end{equation*}
Hence $K_A(z,w)$ determines $A$. 
For the injectivity of the map $K_A(z,w)\mapsto K_A(z,z)$, we consider the $K_A(z,w)=[k_{i,j}(z,w)]_{n\times n}$ (here $n=\dim V_{\pi}$) such that each $k_{i,j}(z,w)\colon \mathcal{D}\times \mathcal{D}\to \mathbb{C}$ is a sesqui-holomorphic function. 
Then it immediately follows from Theorem \ref{tsesholo}. 

\iffalse
Take arbitrary $(z_0,w_0)\in\mathcal{D}\times\mathcal{D}\subset \mathbb{C}^{N}\times \mathbb{C}^{N}$.  
By (ii), we know $K_A(z,w)=[k_{i,j}(z,w)]$ can be  written locally around $(z_0,w_0)$ as
\begin{center}
$k_{i,j}(z,w)=\sum_{p_s,q_t\geq 0}a_{p_1,\dots,p_N,q_1,\dots,q_N}^{(i,j)}\prod_{1\leq s\leq N}(z_s-z_{0,s})^{p_s}(\bar{w_s}-\bar{w_{0,s}})^{q_s}$. 
\end{center}
Furthermore, $K_A(z,z)=[k_{i,j}(z)]$ can also be  written locally around $z_0$ as
\begin{center}
$k_{i,j}(z)=\sum_{p_s,q_t\geq 0}a_{p_1,\dots,p_N,q_1,\dots,q_N}^{(i,j)}\prod_{1\leq s\leq N}(z_s-z_{0,s})^{p_s}(\bar{z_s}-\bar{z_{0,s}})^{q_s}$.
\end{center}
Hence the local coefficient $[a_{p_1,\dots,p_N,q_1,\dots,q_N}^{(i,j)}]$ can be obtained by either of the following derivatives:
\begin{center}
$\frac{\partial^{\sum p_s+q_s }K_A(z,w)}{\prod {p_s}!{q_s}!\partial z_1^{p_1}\cdots\partial z_N^{p_N}\partial \bar{w_1}^{q_1}\cdots\partial \bar{w_N}^{q_N}}|_{(z_0,w_0)}$, and \\
$\frac{\partial^{\sum p_s+q_s }K_A(z,z)}{\prod {p_s}!{q_s}!\partial z_1^{p_1}\cdots\partial z_N^{p_N}\partial \bar{z_1}^{q_1}\cdots\partial \bar{z_N}^{q_N}}|_{(z_0,z_0)}$.
\end{center}
Hence the map $K_A(z,w)\mapsto K_A(z,z)$ is injective. 
\fi

Moreover, (iv) is a direct consequence of (ii) of Lemma \ref{lkappa}.  
\end{proof}

The Berezin symbol $S(A)$ also has some similar properties. 
\begin{proposition}
\label{pS}
Given $A\in B(H_{\pi})$, we have: 
\begin{enumerate}[label=(\roman*)]
    \item The maps $A\mapsto K_A,R(A),S(A),Q(A)$ are all injective.
    \item $S(A)(z)^{*}=S(A^*)(z)$ and $R(A)(z,w)^*=R(A^*)(w,z)$ for $z,w\in\mathcal{D}$. 
    \item $S(I)(z)=I_{V_{\pi}}$ for each $z\in \mathcal{D}$.
    \item The spectral radius of $S(A)(z)$ on $V_{\pi}$ is bounded by $\|A\|$ for each $z\in\mathcal{D}$. 
    \item For $g\in G,z\in\mathcal{D}$, we have
    \begin{center}
        $S(A)(gz)=\pi(k(g,z)^{-1})S(L_{\pi}(g)^{-1}AL_{\pi}(g))(z)\pi(k(g,z))$
    \end{center}
    where $k(g,z)=h_z^{-1}\kappa(g\exp{z})^{-1}h_{gz}$. 
\end{enumerate}
\end{proposition}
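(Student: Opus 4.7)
The plan is to reduce all five statements to two structural identities already at hand: Lemma \ref{lkappa}(i), $E_z^* E_w = c_\pi \pi(\kappa(z,w))$, together with the cocycle rules $\kappa(gz,gw) = J(g,z) \kappa(z,w) J(g,w)^*$ and $E_{gz} = L_\pi(g) E_z \pi(J(g,z)^*)$. The first move I would make is to rewrite $S(A)(z) = T_z^* A T_z$, where $T_z := c_\pi^{-1/2}\, E_z H_z^* \in B(V_\pi, H_\pi)$. The computation $T_z^* T_z = c_\pi^{-1} H_z E_z^* E_z H_z^* = H_z \pi(\kappa(z,z)) H_z^*$ then reduces (iii) to the identity $H_z \pi(\kappa(z,z)) H_z^* = I_{V_\pi}$. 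To establish this I would read off from the Harish-Chandra decomposition of $g_z \in NA$ the special value $J(g_z^{-1}, z) = h_z^{-1}$ and apply the cocycle identity at the basepoint: $e = \kappa(0,0) = \pi(h_z^{-1}) \kappa(z,z) \pi(h_z^{-1})^*$, whence $\kappa(z,z) = h_z h_z^*$ and the displayed product collapses to $I_{V_\pi}$. So $T_z$ is an isometry. This simultaneously yields (iv): since $S(A)(z) = T_z^* A T_z$ with $\|T_z\| = 1$, we get $\|S(A)(z)\| \leq \|A\|$, and the spectral radius of any operator is bounded by its norm.

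Item (ii) is a direct manipulation of the definitions. Taking adjoints, $(H_z E_z^* A E_z H_z^*)^* = H_z E_z^* A^* E_z H_z^*$ gives $S(A)(z)^* = S(A^*)(z)$, and the same calculation applied to $R(A)(z,w) = c_\pi^{-1} H_w E_w^* A E_z H_z^*$, combined with the swap $(z,w) \leftrightarrow (w,z)$, yields $R(A)(z,w)^* = R(A^*)(w,z)$. For (i), the injectivity of $A \mapsto K_A$ is Proposition \ref{pK}(iii), and because $H_z$ is invertible for every $z$ the maps $K_A \mapsto R(A)$ and $K_A(z,z) \mapsto Q(A)(z)$ are pointwise invertible, so injectivity propagates. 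For $S$ and $Q$ the value on the diagonal $w = z$ already determines $A$: each matrix entry of $R(A)(z,w)$ is sesqui-holomorphic on $\mathcal{D} \times \mathcal{D}$ (inherited from Proposition \ref{pK}(ii) via conjugation by the holomorphic factors $H_w$ and $H_z^*$), so Theorem \ref{tsesholo} applies entrywise and forces the off-diagonal values from the diagonal ones.

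For (v), I would substitute $E_{gz} = L_\pi(g) E_z \pi(J(g,z))^*$, using $\pi(J(g,z)^*) = \pi(J(g,z))^*$ from Remark \ref{r1}, into the definition of $S(A)(gz)$ to obtain
\[
S(A)(gz) = c_\pi^{-1} H_{gz} \pi(J(g,z))\, E_z^* L_\pi(g)^{-1} A L_\pi(g) E_z\, \pi(J(g,z))^* H_{gz}^*.
\]
From $k(g,z) = h_z^{-1} J(g,z)^{-1} h_{gz}$ we read off $\pi(k(g,z)^{-1}) = H_{gz}\pi(J(g,z)) H_z^{-1}$ and its adjoint, so matching the displayed expression against $\pi(k(g,z)^{-1})\, S(L_\pi(g)^{-1} A L_\pi(g))(z)\, \pi(k(g,z))$ reduces to verifying $\pi(J(g,z))^* H_{gz}^* H_{gz} \pi(J(g,z)) = H_z^* H_z$. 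This in turn follows from $H_z^* H_z = \pi(\kappa(z,z))^{-1}$ (a byproduct of $\kappa(z,z) = h_z h_z^*$) together with the cocycle identity $\kappa(gz,gz) = J(g,z)\kappa(z,z)J(g,z)^*$. The main obstacle is the bookkeeping in (v); conceptually everything is driven by the two structural identities identified at the start, and by the isometry relation $T_z^* T_z = I_{V_\pi}$ that makes $S$ a Stinespring-type compression.
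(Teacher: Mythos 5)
Your proposal is correct in substance and reaches all five items, but it packages the argument differently from the paper, and one intermediate claim is stated imprecisely. The main structural difference is your isometry $T_z=c_\pi^{-1/2}E_zH_z^*$: you prove $T_z^*T_z=I_{V_\pi}$ once (equivalently $\kappa(z,z)=h_zh_z^*$) and then get (iii) and (iv) simultaneously, the latter from $\|S(A)(z)\|=\|T_z^*AT_z\|\le\|A\|$. The paper instead proves $\pi(h_zh_z^*)=\pi(\kappa(z,z))^{\pm1}$ by writing $g_z=\exp z\cdot h_z\cdot p$ and using unitarity of $\pi(g_z)$ on the real group, and handles (iv) by a separate Cauchy--Schwarz estimate on $\langle S(A)(z)v,v\rangle/\langle S(I)(z)v,v\rangle$; your derivation of $\kappa(z,z)=h_zh_z^*$ via $J(g_z^{-1},z)=h_z^{-1}$ and the transformation rule $\kappa(gz,gw)=J(g,z)\kappa(z,w)J(g,w)^*$ evaluated at the basepoint is an equally valid use of the listed properties of $\kappa$, and arguably cleaner. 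For (v), the paper routes through Proposition \ref{pK}(iv) together with the identity $\kappa(g\exp z)h_zk=h_{gz}$ extracted from the decomposition of $gg_z$ (which also shows $k(g,z)\in K$, a fact used later); you instead substitute $E_{gz}=L_\pi(g)E_z\pi(J(g,z))^*$ directly and reduce to $\pi(J(g,z))^*H_{gz}^*H_{gz}\pi(J(g,z))=H_z^*H_z$, which is precisely the identity the paper proves separately in Lemma \ref{linvtr}. Both computations are correct; yours trades the geometric bookkeeping of $g_{g\cdot z}=gg_zk$ for the algebraic cocycle identity for $\kappa$.

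The one point to repair is in (i): you assert that each matrix entry of $R(A)(z,w)=c_\pi^{-1}H_wK_A(w,z)H_z^*$ is sesqui-holomorphic, "inherited via conjugation by the holomorphic factors $H_w$ and $H_z^*$." The factors $H_w=\pi(h_w^{-1})$ come from the smooth embedding $z\mapsto g_z\in NA$ and are not holomorphic in $w$, so $R(A)(z,w)$ itself is not sesqui-holomorphic; only $H_w^{-1}R(A)(z,w)(H_z^*)^{-1}=c_\pi^{-1}K_A(w,z)$ is (this is exactly how the paper phrases it in the proof of Proposition \ref{psdense}). The injectivity of $A\mapsto S(A)$ and $A\mapsto Q(A)$ still follows, but the correct order of operations is: invert the (pointwise invertible) factors $H_z$, $H_z^*H_z$ to recover $K_A(z,z)$ from $S(A)(z)$ or $Q(A)(z)$, and only then invoke Theorem \ref{tsesholo} through Proposition \ref{pK}(iii) to recover $K_A(z,w)$ and hence $A$. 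With that one-line reordering your argument for (i) is complete.
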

\begin{proof}
(i) and (ii) follow Proposition \ref{pK}. 

For (iii), note $K_I(z,z)=E^*_{z}E_z=c_{\pi}\pi(\kappa(z,z)^{-1})$ by Lemma \ref{lkappa}(i), it suffices to prove $\pi(h_zh_z^*)=\pi(\kappa(z,z)^{-1})$. 
We write $g_z=\exp{z}\cdot h_{z}\cdot p$ with $p\in P^{-}$ (see the definition of $\kappa$ in Section 2.2). 
As $g_z\in G=G_{\mathbb{R}}$, we have $g_z^*=\overline{g_z^{-1}}=g_z^{-1}$ and $\pi(g_z^*)\pi(g_z)=I_v$.
Hence 
\begin{center}
$\pi(p)^{*}\pi(h_z)^{*}\pi(\exp{z})^*\pi(\exp{z})\pi(h_z)\pi(p)=I_V$.     
\end{center}
We further obtain
\begin{center}
$\pi(h_z)^{*}\pi(\exp{z})^*\pi(\exp{z})\pi(h_z)=I_V$,\\ and $\pi(h_z)^{*}\pi(\kappa(\exp{z^*}\exp{z}))\pi(h_z)=I_V$    
\end{center} 
where the middle term is just $\pi(\kappa(z,z)^{-1})$. 

For (iv), we assume $\lambda$ is an eigenvalue of $A$ of the maximal modulus and $v\neq 0$ is the corresponding eigenvector. 
Note $S(I)(z)=I_{V_{\pi}}$, we have 
\begin{equation*}
\begin{aligned}
|\lambda|&=\left|\frac{\langle S(A)(z)v,v\rangle_{\pi}}{\langle v,v\rangle_{\pi}}\right|=\left|\frac{\langle S(A)(z)v,v\rangle_{\pi}}{\langle S(I)(z)v,v\rangle_{\pi}}\right|=\left|\frac{c_{\pi}^{-1}\cdot\langle AE_zH_z^* v,E_zH_z^*v\rangle_{\pi}}{c_{\pi}^{-1}\cdot\langle E_zH_z^* v,E_zH_z^*v\rangle_{\pi}}\right|\\
&=\left|\frac{\langle AE_zH_z^* v,E_zH_z^*v\rangle_{\pi}}{\langle E_zH_z^* v,E_zH_z^*v\rangle_{\pi}}\right|\leq
\frac{\|AE_zH_z^* v\|_{\pi}\cdot \|E_zH_z^* v\|_{\pi}}{\|E_zH_z^*v\|_{\pi}^2}\leq \|A\|. 
\end{aligned}
\end{equation*}
So $\lambda\leq \|A\|$. 

For (v), we also assume $g_z=\exp{z}\cdot h_{z}\cdot p$. 
Following \cite{Hel}, the action $G\curvearrowright \mathcal{D}$ induced from $G\curvearrowright G/K$ is given as $g\cdot z=\log{\zeta(g\exp{z})}$. 
Hence we have
\begin{center}
$gg_z=g\exp{z}h_{z}y=\exp(g\cdot z)\kappa(g\exp{z})\xi(g\exp{z})h_{z}y$. 
\end{center}
We have $g_{g\cdot z}=\exp(g\cdot z)h_{g\cdot z}y'$ for some $y'\in P^-$. 
Note $\dot{gg_z}=\dot{g_{g\dot z}}$ in $G/K$, there is some $k=k(g,z)\in K$ such that $g_{g\cdot z}=gg_{z}k$. 
So we obtain $\kappa(g\exp{z})\xi(g\exp{z})h_{z}yk=h_{g\cdot z}y'$ and further $\kappa(g\exp{z})h_zk=h_{g\cdot z}$ by applying $\kappa$. 
Now we can apply Proposition \ref{pK}(iv) and obtain
\begin{equation*}
\begin{aligned}
S(A)(g\cdot z)&=\frac{1}{c_{\pi}}\pi(h_{g\cdot z}^{-1})\pi(\kappa(g\exp{z}))E_z^* L_{\pi}(g)^{-1}AL_{\pi}(g)\pi(\kappa(g\exp{z}))^*\pi(h_{g\cdot z}^{-1})^*\\
&=\frac{1}{c_{\pi}}\pi(k)^{-1}\pi(h_z^{-1})E_z^* L_{\pi}(g)^{-1}AL_{\pi}(g)\pi(h_z^{-1})^*\pi(k)\\
&=\pi(k)^{-1}S( L_{\pi}(g)^{-1}AL_{\pi}(g))(z)\pi(k)
\end{aligned}
\end{equation*}
\end{proof}

Parts (ii), (iii) and (v) of Proposition \ref{pS} are first proved by B. Cahen \cite{Cah}. 

Now we are able give an explicit formula for the projection $P_{\pi}$. 
\begin{lemma}\label{lproj}
Given any $\phi\in L^2(\mathcal{D},V_{\pi})$, its image under $P_{\pi}$ is given by
\begin{center}
$(P_{\pi}\phi)(z)=\int_{\mathcal{D}}E^*_{z}E_wH_w^*H_w\phi(w)d\mu(w)$.
\end{center}
\end{lemma}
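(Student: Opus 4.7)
The plan is to verify the identity pointwise by pairing both sides against a test vector $v\in V_\pi$ and reducing everything to the adjoint relation $K_z = E_z^{*}$. First I would take $\phi\in L^2(\mathcal{D},V_\pi)$, $z\in\mathcal{D}$, and $v\in V_\pi$, and use self-adjointness of $P_\pi$ together with the fact that $E_z v \in H_\pi$ to write
\[
\langle (P_\pi\phi)(z),v\rangle_\pi
= \langle K_z P_\pi\phi, v\rangle_\pi
= \langle P_\pi\phi, E_z v\rangle_{H_\pi}
= \langle \phi, E_z v\rangle_{L^2}.
\]

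Next I would expand the $L^2$-inner product using its definition from Section \ref{sholds}, recognize $(E_z v)(w)=K_w(E_z v)=E_w^{*}E_z v$, and slide the operator across the $V_\pi$-pairing so that $E_z^{*}E_w$ ends up in the left slot. The key identification $\pi(\kappa(w,w)^{-1})=H_w^{*}H_w$ was in fact already obtained inside the proof of Proposition \ref{pS}(iii): the relation $\pi(h_w)^{*}\pi(\kappa(w,w)^{-1})\pi(h_w)=I$ inverts to exactly this, using $\pi(h_w^{-1})^{*}=H_w^{*}$ from Remark \ref{r1}. Substituting yields
\[
\langle (P_\pi\phi)(z),v\rangle_\pi
= \int_{\mathcal{D}}\langle E_z^{*}E_w H_w^{*}H_w \phi(w), v\rangle_\pi \, d\mu(w).
\]

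Finally I would interchange the integral and the $V_\pi$-inner product—justified by Cauchy--Schwarz applied to $\phi, E_z v\in L^2(\mathcal{D},V_\pi)$ together with the finite-dimensionality of $V_\pi$—and conclude by the arbitrariness of $v$. The main obstacle, such as it is, is purely bookkeeping: making sure the unitarity convention $\pi(g^{*})=\pi(g)^{*}$ is applied consistently so that $H_w^{*}H_w$ really equals $\pi(\kappa(w,w)^{-1})$ and not its inverse, and that the weight $\pi(\kappa(w,w)^{-1})$ inside the $L^2$-inner product correctly recombines with $\phi(w)$ to produce $H_w^{*}H_w\phi(w)$. Once that sign/inverse issue is pinned down, the rest is the standard reproducing-kernel argument.
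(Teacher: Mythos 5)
Your proof is correct and follows essentially the same route as the paper: pair $(P_\pi\phi)(z)$ against $v\in V_\pi$, use self-adjointness of $P_\pi$ with $E_zv\in H_\pi$ to pass to $\langle\phi,E_zv\rangle_{L^2}$, expand the weighted $L^2$-inner product with $(E_zv)(w)=E_w^*E_zv$, and substitute $\pi(\kappa(w,w)^{-1})=H_w^*H_w$. The only cosmetic difference is that the paper rederives this last identity from $S(I)(w)=I_{V_\pi}$ together with Lemma \ref{lkappa}(i), whereas you cite the equivalent computation inside Proposition \ref{pS}(iii); both amount to the same fact.
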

\begin{proof}
As $S(I)(w)=I_{V_\pi}$, we have $\frac{1}{c_{\pi}}H_wE_w^*E_wzH_w^*=I_{V_\pi}$. 
Then, by Lemma \ref{lkappa}(i), we get
$\pi(\kappa(w,w)^{-1})=H^*_wH_w$. 
Now let $v\in V_{\pi},z\in\mathcal{D}$ and consider the inner product $\langle \phi,E_z(v)\rangle_{L^2(\mathcal{D},V_{\pi})}$. 
We have
\begin{equation*}
\begin{aligned}
\langle (P_{\pi}\phi)(z),v\rangle_{\pi}&=\langle E_Z^* (P_{\pi}\phi),v \rangle_{\pi}=
\langle P_{\pi}\phi,E_z{v}\rangle_{H_\pi}=\langle \phi,E_z{v}\rangle_{L^2}\\
&=\int_{\mathcal{D}}\langle \pi(\kappa(w,w)^{-1})\phi(w),E_w^* E_z v\rangle_{\pi}d\mu(w)\\
&=\int_{\mathcal{D}}\langle H_w\phi(w),H_wE_w^* E_z v\rangle_{\pi}d\mu(w)\\
&=\langle \int_{\mathcal{D}} E_z^*E_wH_w^*H_w\phi(w)d\mu(w),v\rangle_{\pi}
\end{aligned}
\end{equation*}
which completes the proof.  
\end{proof}

We denote by $\tr=\tr_{\pi}$ the normalized trace on $\End(V_{\pi})$. 
\begin{lemma}\label{linvtr}
Let $A\in B(H_{\pi})$ such that it commutes with the action of $\Gamma$, i.e., $AL_{\pi}(\gamma)=L_{\pi}(\gamma)A$ for any $\gamma\in \Gamma$. 
Then we have 
\begin{enumerate}
    \item $\tr(S(A)(z))$ is $\Gamma$-invariant,
    \item $\tr(R(A)(z,w)R(B)(z,w)^*)$ is $\Gamma$-invariant.
\end{enumerate}
\end{lemma}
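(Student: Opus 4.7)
The plan is to deduce both invariance statements directly from the equivariance property already established in Proposition~\ref{pS}(v), together with a two-variable analogue of it for $R(A)$. The key simplification is that, because $A$ commutes with each $L_\pi(\gamma)$, the operator $L_\pi(\gamma)^{-1}AL_\pi(\gamma)$ appearing on the right-hand side of Proposition~\ref{pS}(v) collapses to $A$ itself. Hence for $\gamma \in \Gamma$ we obtain
\[
S(A)(\gamma z) = \pi(k(\gamma,z))^{-1}\, S(A)(z)\, \pi(k(\gamma,z)),
\]
and since $\tr = \tr_\pi$ is the normalized (cyclic) trace on $\End(V_\pi)$, we immediately conclude $\tr(S(A)(\gamma z)) = \tr(S(A)(z))$, which proves (1).

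For (2), I would first establish the two-variable analogue
\[
R(A)(\gamma z, \gamma w) = \pi(k(\gamma,w))^{-1}\, R(A)(z,w)\, \pi(k(\gamma,z)).
\]
This is obtained by inserting the transformation law from Proposition~\ref{pK}(iv), namely $K_A(\gamma w, \gamma z) = \pi(J(\gamma,w))\, K_A(w,z)\, \pi(J(\gamma,z))^*$ (using $L_\pi(\gamma)^{-1}AL_\pi(\gamma)=A$), into the definition $R(A)(z,w) = c_\pi^{-1} H_w K_A(w,z) H_z^*$, and then using the identity
\[
H_{\gamma z}\, \pi(J(\gamma,z)) = \pi(k(\gamma,z))^{-1} H_z,
\]
which is implicit in the proof of Proposition~\ref{pS}(v) (extracted from $J(\gamma,z) h_z k(\gamma,z) = h_{\gamma z}$). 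Taking the adjoint of the corresponding formula for $B$ and using that $k(\gamma,\cdot)\in K$, so that $\pi(k(\gamma,\cdot))$ is unitary on $V_\pi$, yields
\[
R(B)(\gamma z, \gamma w)^* = \pi(k(\gamma,z))^{-1}\, R(B)(z,w)^*\, \pi(k(\gamma,w)).
\]
In the product $R(A)(\gamma z, \gamma w)\, R(B)(\gamma z, \gamma w)^*$ the inner $\pi(k(\gamma,z))$ factors cancel, leaving pure conjugation by $\pi(k(\gamma,w))$; cyclicity of $\tr$ then gives (2).

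The work is essentially all bookkeeping; the only subtlety is careful tracking of adjoints, the distinction between $h_z$, $h_z^{-1}$, and $h_z^*$, and the systematic use of the unitarity of $\pi|_K$ so that the two inner cocycle factors really do cancel. I would also implicitly assume in part (2) that $B$ commutes with $\Gamma$ as well (otherwise no such cancellation occurs and the claim fails in general), and I read this as an implicit hypothesis of the lemma.
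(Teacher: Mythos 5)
Your proof is correct and follows essentially the same route as the paper: part (1) is Proposition~\ref{pS}(v) plus cyclicity of $\tr$, and your part (2) repackages the paper's computation (which expands $R(A)(\gamma z,\gamma w)$ via Proposition~\ref{pK}(iv) and the identity $\pi(J(\gamma,z))^*H_{\gamma z}^*H_{\gamma z}\pi(J(\gamma,z))=H_z^*H_z$ inside the cyclic trace) into the equivalent pointwise transformation law $R(A)(\gamma z,\gamma w)=\pi(k(\gamma,w))^{-1}R(A)(z,w)\pi(k(\gamma,z))$ with unitary cocycle factors. You are also right that the paper tacitly assumes $B$ commutes with the $\Gamma$-action in part (2) --- its proof uses $K_{B^*}(\gamma z,\gamma w)=\pi(J(\gamma,z))K_{B^*}(z,w)\pi(J(\gamma,w))^*$, which requires exactly that.
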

\begin{proof}
The first statment follows from the fact $\tr_{\pi}(S(A)(gz))=\tr_{\pi}(S(L_{\pi}(g)^{-1}AL_{\pi}(g))(z))$ in Proposition \ref{pS} (v).  

For the second statement, let $\gamma\in \Gamma$ and $\tilde{z}\in G$ be the inverse image of $z\in \mathcal{D}$. 
Note that
\begin{equation*}
\begin{aligned}
&\pi(J(\gamma,z))^*H_{\gamma z}^*H_{\gamma z}\pi(J(\gamma,z))\\
=&\pi(J(\gamma,z))^* \pi(J(\tilde{\gamma z},0)^{-1})^*\pi(J(\tilde{\gamma z},0)^{-1})\pi(J(\gamma,z))\\
=&(\pi(J(\tilde{\gamma z},0)^{-1})\pi(J(\gamma,z)))^*\cdot (\pi(J(\tilde{\gamma z},0)^{-1})\pi(J(\gamma,z)))\\
=&\pi(J(\tilde{z},0)^{-1})^*\pi(J(\tilde{z},0)^{-1})=H_z^*H_z
\end{aligned}
\end{equation*}
Then we have
\begin{equation*}
\begin{aligned}
&\tr(R(A)(\gamma z,\gamma w)R(B)(\gamma z,\gamma w)^*)\\
=&\frac{1}{c_\pi^2}\tr(H_{\gamma w}K_A(\gamma w,\gamma z)H_{\gamma z}^*H_{\gamma z} K_{B^*}(\gamma z,\gamma w)H_{\gamma w}^*)\\
=&\frac{1}{c_\pi^2}\tr(H_{\gamma w}\pi(J(\gamma,w))K_A(w,z)\pi(J(\gamma,z))^*H_{\gamma z}^*H_{\gamma z}\pi(J(\gamma,z)) K_{B^*}(z,w)\pi(J(\gamma,w))^*H_{\gamma w}^*)\\
=&\frac{1}{c_\pi^2}\tr(K_A(w,z)\pi(J(\gamma,z))^*H_{\gamma z}^*H_{\gamma z}\pi(J(\gamma,z)) K_{B^*}(z,w)\pi(J(\gamma,w))^*H_{\gamma w}^*H_{\gamma w}\pi(J(\gamma,w)))\\
=&\frac{1}{c_\pi^2}\tr(
H_wK_A(w,z)H_z^*H_z K_{B^*}(z,w)H_w^*)=\tr(R(A)(z,w)R(B)(z,w)^*). 
\end{aligned}
\end{equation*}
\end{proof}

Let $\mathcal{F}=\Gamma\backslash\mathcal{D}$ be the fundamental domain of the left action of $\Gamma$ on $\mathcal{D}=G/K$. 
Suppose the discrete group $\Gamma$ is a lattice, we have $\mu(\mathcal{F})$ is finite. 
Now we define
\begin{center}
$A_{\pi}=B(H_{\pi})^{\Gamma}=\{A\in B(H_{\pi})|AL_{\pi}(\gamma)=L_{\pi}(\gamma)A,\forall \gamma\in \Gamma\}$,     
\end{center}
which is the commutant of the von Neumann algebra $L_{\pi}(\Gamma)''$.  

\begin{proposition}
\label{ptr1}
Assume $\pi$ is an irreducible representation of $K$.  
Let $\tau\colon B(H_{\pi})\to \mathbb{C}$ be the linear functional defined by
\begin{center}
$\tau(A)=\frac{1}{\mu(\mathcal{F})}\int_{\mathcal{F}}\tr(S(A)(z))d\mu(z)$, $A\in B(H_{\pi})$. 
\end{center}
Then $\tau$ is a positive, faithful, normal, normalized trace on $A_{\pi}$. 

In particular, if $\Gamma$ is an ICC group,  
$\tau$ is the unique normalized trace on the $\text{II}_1$ factor $A_{\pi}$. 
\end{proposition}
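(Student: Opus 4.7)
The strategy is to dispatch the easier properties (linearity, positivity, normalization, normality, faithfulness) essentially directly from the definition of $S(A)(z)=\frac{1}{c_\pi}H_zE_z^*AE_zH_z^*$ and the properties of the Berezin symbol already collected in Proposition \ref{pK} and Proposition \ref{pS}, and then devote the bulk of the work to the trace property $\tau(AB)=\tau(BA)$ for $A,B\in A_\pi$, which I expect to be the main obstacle. First, since Lemma \ref{linvtr}(1) shows $\tr(S(A)(z))$ is $\Gamma$-invariant whenever $A\in A_\pi$, the integral is well-defined on the fundamental domain, which has finite measure because $\Gamma$ is a lattice. Linearity is immediate. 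For normalization, Proposition \ref{pS}(iii) gives $S(I)(z)=I_{V_\pi}$, so $\tau(I)=\frac{1}{\mu(\mathcal{F})}\int_\mathcal{F}\tr(I_{V_\pi})\,d\mu=1$ since $\tr$ is normalized on $\End(V_\pi)$. Positivity follows because $S(A^*A)(z)=\frac{1}{c_\pi}(AE_zH_z^*)^*(AE_zH_z^*)\ge 0$ has nonnegative trace pointwise.

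For faithfulness, suppose $\tau(A^*A)=0$ with $A\in A_\pi$. Then $\tr(S(A^*A)(z))=0$ almost everywhere on $\mathcal{F}$, and by $\Gamma$-invariance almost everywhere on $\mathcal{D}$. Since $S(A^*A)(z)\ge 0$, this forces $K_{A^*A}(z,z)=0$ almost everywhere. But $K_{A^*A}(z,w)$ is sesqui-holomorphic by Proposition \ref{pK}(ii), so $K_{A^*A}(z,z)$ is real-analytic in $(z,\bar z)$; vanishing on a set of positive measure forces vanishing everywhere, and then Theorem \ref{tsesholo} together with Proposition \ref{pK}(iii) gives $A^*A=0$, hence $A=0$. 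Normality follows from monotone convergence applied to matrix coefficients of $S$: if $A_n\nearrow A$ with $A_n,A\ge 0$ in $A_\pi$, then $\langle S(A_n)(z)v,v\rangle_\pi=c_\pi^{-1}\langle A_n E_zH_z^*v,E_zH_z^*v\rangle_{H_\pi}\nearrow\langle S(A)(z)v,v\rangle_\pi$ pointwise, so $\tr(S(A_n)(z))\nearrow\tr(S(A)(z))$, and Lebesgue monotone convergence gives $\tau(A_n)\to\tau(A)$.

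The trace property is the crux. The plan is to exploit that, for $\phi\in H_\pi$, the projection formula in Lemma \ref{lproj} can be rewritten as the weak operator identity $I_{H_\pi}=\int_\mathcal{D} E_wH_w^*H_wE_w^*\,d\mu(w)$. Inserting this between $A$ and $B$ gives
\begin{equation*}
E_z^*ABE_z=\int_\mathcal{D}E_z^*AE_wH_w^*H_wE_w^*BE_z\,d\mu(w),
\end{equation*}
which, after conjugating by $H_z,H_z^*$ and using Proposition \ref{pS}(ii) to recognize the factors, expresses the integrand as $c_\pi^2\,\tr\bigl(R(A)(w,z)R(B^*)(w,z)^*\bigr)$. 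Thus
\begin{equation*}
\tau(AB)=\frac{c_\pi}{\mu(\mathcal{F})}\int_\mathcal{F}\!\int_\mathcal{D}\tr\bigl(R(A)(w,z)R(B^*)(w,z)^*\bigr)\,d\mu(w)\,d\mu(z).
\end{equation*}
By Proposition \ref{pS}(ii) and the cyclicity of $\tr$, the integrand equals $\tr\bigl(R(B)(z,w)R(A^*)(z,w)^*\bigr)$. By Lemma \ref{linvtr}(2) this integrand is $\Gamma$-invariant under the diagonal action. The standard unfolding trick --- decomposing $\mathcal{D}=\bigsqcup_{\gamma\in\Gamma}\gamma\mathcal{F}$, translating by $\gamma^{-1}$, and using the $G$-invariance of $\mu$ --- shows that $\int_\mathcal{F}\!\int_\mathcal{D}$ and $\int_\mathcal{D}\!\int_\mathcal{F}$ agree on such integrands. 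Swapping the roles of $z$ and $w$ in the resulting expression and applying unfolding once more yields exactly $\tau(BA)$.

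Finally, when $\Gamma$ is ICC, $L(\Gamma)$ is a $\mathrm{II}_1$ factor; because $L_\pi|_\Gamma$ is a subrepresentation of the regular representation (square-integrability), the commutant $A_\pi=L_\pi(\Gamma)'$ is a finite factor, hence $\mathrm{II}_1$, on which any faithful normal normalized trace is unique. The main technical delicacy I anticipate is justifying the weak operator identity $I_{H_\pi}=\int E_wH_w^*H_wE_w^*\,d\mu(w)$ rigorously as an iterated integral allowing the Fubini manipulations above; this should follow from Lemma \ref{lproj} by pairing against $E_zv$ and $E_{z'}v'$, but the exchange of limit and integral must be checked against the growth of the matrix-valued kernels $K_A,K_B$.
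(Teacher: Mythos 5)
Your proposal is correct and follows essentially the same route as the paper: the crux, the trace property, is established exactly as in the paper's proof — expand $\tau$ via the reproducing-kernel resolution of identity into a double integral over $\mathcal{F}\times\mathcal{D}$, invoke the $\Gamma$-invariance of the two-variable integrand from Lemma \ref{linvtr}(2), and swap the two fundamental domains of the diagonal action (the paper phrases this as $\tau(A^*A)=\tau(AA^*)$, you as $\tau(AB)=\tau(BA)$, which is an immaterial difference). Your treatments of faithfulness (real-analyticity of the diagonal restriction of the sesqui-holomorphic kernel) and normality (monotone convergence on increasing nets rather than complete additivity) are mild, equally valid variants of the paper's arguments.
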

\begin{proof}
We first show $|\tau(A)|<\infty$ for all $A\in B(H_{\pi})$. 
By Proposition \ref{pS} (iv), we know  $|\tr(S(A)(z))|\leq \|A\|$ and the integral is finite as $\mu({\mathcal{F}})$ is finite. 
Note $\tau(I)=1$ and $\tau(A^*A)\geq 0$ since $S(A^*A)(z)=S(A^*A)(z)^*$ by Proposition \ref{pS} (v). 
So it is normalized and positive. 

Suppose $\dim_{\mathbb{C}}V_{\pi}=n$ and take an orthonormal basis $\{v_i,1\leq i\leq n\}$ of $V_{\pi}$. 
One has
\begin{small}
\begin{equation*}
\begin{aligned}
\tau(A^*A)=&\frac{1}{\mu(\mathcal{F})}\int_{\mathcal{F}}\tr(S(A^*A)(z))d\mu(z)\\
=&\frac{1}{\mu(\mathcal{F})}\int_{\mathcal{F}}\tr(\frac{1}{c_{\pi}}H_zE^*_zA^*AE_zH_z^*)d\mu(z)\\
=&\frac{1}{n\cdot c_{\pi}\cdot\mu(\mathcal{F})}\int_{\mathcal{F}}\sum_{i=1}^{n}\langle H_z{E^*_z}{A}^*{A}E_z{H_z^*}v_i,v_i\rangle_{\pi}d\mu(z)\\
=&\frac{1}{n\cdot c_{\pi}\cdot\mu(\mathcal{F})}\int_{\mathcal{F}}\sum_{i=1}^{n}\langle {A}E_{z}H_z^*v_i,{A}E_{z}H_z^*v_i\rangle_{H_{\pi}}d\mu(z)\\
=&\frac{1}{n\cdot c_{\pi}\cdot\mu(\mathcal{F})}\int_{\mathcal{F}}\sum_{i=1}^{n}\left( \int_{\mathcal{D}}\langle \pi(\kappa(w,w)^{-1})({A}E_{z}H_z^*v_i)(w),({A}E_{z}H_z^*v_i)(w)\rangle_{\pi}d\mu(w)\right)d\mu(z)\\
=&\frac{1}{n\cdot c_{\pi}\cdot\mu(\mathcal{F})}\int_{\mathcal{F}}\sum_{i=1}^{n}\left( \int_{\mathcal{D}}\langle {H_w}E_w^*{A}E_{z}H_z^*v_i,{H_w}E_w^*{A}E_{z}H_z^*v_i\rangle_{\pi}d\mu(w)\right)d\mu(z)
\\
=&\frac{1}{ c_{\pi}\cdot\mu(\mathcal{F})}\int_{\mathcal{F}}\left( \int_{\mathcal{D}}\tr((H_z{E_z^*}A^*{E_w}H_w^*) ({H_w}E_w^*{A}E_{z}H_z))d\mu(w)\right)d\mu(z)
\\
=&\frac{1}{ c_{\pi}\cdot\mu(\mathcal{F})}\int_{\mathcal{F}\times\mathcal{D}} \tr((H_z{E_z^*}A^*{E_w}H_w^*) ({H_w}E_w^*{A}E_{z}H_z))d\mu^2(z,w).
\end{aligned}
\end{equation*}
\end{small}
Here we are able to take the integral over the product space $\mathcal{F}\times\mathcal{D}$ since the integral is finite. 
Similarly, we obtain
\begin{equation*}
\begin{aligned}\tau(AA^*)=\frac{1}{ c_{\pi}\cdot\mu(\mathcal{F})}\int_{\mathcal{F}\times\mathcal{D}} \tr((H_z{E_z^*}A{E_w}H_w^*) ({H_w}E_w^*{A}^*E_{z}H_z))d\mu^2(z,w).
\end{aligned}
\end{equation*}
Consider the diagonal action of $\Gamma$ on $\mathcal{D}\times \mathcal{D}$. 
Both of the two integrations are over a $\Gamma$-fundamental domain of $\mathcal{D}\times \mathcal{D}$. 
As the measure $\mu^2$ is $\Gamma$-invariant and the integrand is $\Gamma$-invariant by Lemma \ref{linvtr}, we replace it with the integration over another fundamental domain $(z,w)\in\mathcal{D}\times\mathcal{F}$, which is equivalent to swapping $z,w$. 
Note the integrand is $\Gamma$-invariant under the same action of $\Gamma$ on $\mathcal{D}\times \mathcal{D}$ by 
Lemma \ref{linvtr}. 
Hence the integration above is invariant if we swap $z,w$. 
This is to say $\tau(A^*A)=\tau(AA^*)$ and $\tau$ is a trace. 

Note $\{E_zH_z^*v_i|z\in \mathcal{D},1\leq i\leq n\}$ spans a dense subspace of $H_{\pi}$. 
If $A\neq 0$, we have $\|AE_{z_0}H_{z_0}^*v_i\|^2>0$ for some $z_0\in\mathcal{D}$ and $i$.
As $AE_{z}H_{z}^*v_i$ is continuous in $z$, we have $\|AE_{z}H_{z}^*v_i\|^2>0$ in a neighbourhood $N_{z_0}$ of $z_0$ whose measure $\mu(N_{z_0})$ is strictly positive. 
From the equality above, we also have
\begin{center}
$\tau(A^*A)=\frac{1}{n\cdot c_{\pi}\cdot\mu(\mathcal{F})}\int_{\mathcal{F}}\sum_{i=1}^{n}\langle {A}E_{z}H_z^*v_i,{A}E_{z}H_z^*v_i\rangle_{H_{\pi}}d\mu(z)$
\end{center}
Hence $\tau(A^*A)>0$ and $\tau$ is faithful. 

For the normality, it suffices to prove $\tau$ is completely additive \cite{J15}. 
Take a family of mutually orthogonal projections $\{p_j|j\in J\}$ in $A_{\pi}$ and let $p=\sum_{j\in J}p_i$. 
We have
\begin{center}
 $\tau(p)=\frac{1}{\mu(\mathcal{F})}\int_{\mathcal{F}}\tr(S(p)(z))d\mu(z)=\frac{1}{\mu(\mathcal{F})}\int_{\mathcal{F}}\sum_{j\in J}\tr(S(p_j)(z))d\mu(z)$,   
\end{center}
which converges since $\tau(p)\leq \tau(1)= 1$. 
Moreover, as $0\leq\tr(S(p_j)(z))\leq \tr(S(p)(z))$, we have $\tr(S(p_j)(z))\in L^1(\mathcal{F},\mu)$. 
By the Fubini Theorem, we obtain
\begin{equation*}
\begin{aligned}
\frac{1}{\mu(\mathcal{F})}\int_{\mathcal{F}}\sum_{j\in J}\tr(S(p_j)(z))d\mu(z)&=\sum_{j\in J}\frac{1}{\mu(\mathcal{F})}\int_{\mathcal{F}}\tr(S(p_j)(z))d\mu(z)\\&=\sum_{j\in J}\tau(p_j).
\end{aligned}
\end{equation*}
Hence $\tau$ is normal. 
\end{proof}

\subsection{Toeplitz operators of matrix-valued functions}
\label{steomat}

In this section, we define the matrix Toeplitz operators associated with $\End(V_{\pi})$-valued functions on the bounded symmetric domain $\mathcal{D}$.  
Then we focus on the $\Gamma$-invariant case and give another formula of the trace of the Toeplitz operator. 

Let $H_{\pi}$ be the holomorphic discrete series (or square-integrable) representations defined in Section 2. 
Recall that $P_{\pi}$ is the orthogonal projection from $L^2(\mathcal{D},V_{\pi})$ onto $H_\pi=L^2_{\text{hol}}(\mathcal{D},V_{\pi})$ and $H_z\in GL(V_{\pi})$ defined in Section \ref{sBeretr}. 

Now we consider a measurable $\End(V_{\pi})$-valued function $f$ on $\mathcal{D}$. 
Please note for the measurable space $(\mathcal{D},\mu)$, we call a function $f\colon \mathcal{D}\to \End(V_{\pi})$ measurable if $f=[f_{i,j}]_{1\leq i,j\leq n}$ and $f_{i,j}\colon X\to \mathbb{C}$ is measurable for all $1\leq i,j\leq n$. 

For any $\phi \in L^2(\mathcal{D},V_{\pi})$, one may wonder that when the multiplication operator $M_f\colon \phi\mapsto f\cdot\phi$ is bounded. 
Indeed, we have
\begin{equation*}
\begin{aligned}
\langle f\cdot\phi,f\cdot\phi\rangle_{L^2(\mathcal{D},V_{\pi})}&=\int_{\mathcal{D}}\langle H_zf(z)\phi(z),H_zf(z)\phi(z)\rangle_{\pi}d\mu(z)\\
&=\int_{\mathcal{D}}\langle H_zf(z)H_z^{-1}H_z\phi(z),H_zf(z)H_z^{-1}H_z\phi(z)\rangle_{\pi}d\mu(z)\\
&\leq \int_{\mathcal{D}}\|H_zf(z)H_z^{-1}\|^2_{\text{op}}\langle H_z\phi(z),H_z\phi(z)\rangle_{\pi}d\mu(z),
\end{aligned}
\end{equation*}
where $\|\cdot\|_{\text{op}}$ is the operator norm on the finite dimensional Hilbert space $V_{\pi}$. 
Hence if $\|H_zf(z)H_z^{-1}\|_{\text{op}}$ is essentially bounded on $\mathcal{D}$, say
\begin{center}
$\|H_zf(z)H_z^{-1}\|_{\text{op}}\leq C$ 
\end{center}
for all $z\in\mathcal{D}$. 
So $\|M_f\|_{L^2(\mathcal{D},V_{\pi})}\leq C$. 

Let $\|A\|_F=\Tr(A^*A)^{1/2}$ be the {\it Frobenius norm} of a square matrix $A$ where $\Tr$ is the trace that is not normalized (or the sum of the diagonal elements). 
We define the following two spaces
\begin{enumerate}
    \item $L^{\infty}_H(\mathcal{D},\End(V_{\pi}))=\{f\colon \mathcal{D}\to \End(V_{\pi})\text{~measurable}|\|H_zf(z)H_z^{-1}\|_{\text{op}}\in L^{\infty}(\mathcal{D})\}$. 
    \item  $L^{\infty}(\mathcal{D},\End(V_{\pi}))=\{f\colon \mathcal{D}\to \End(V_{\pi})\text{~measurable}|\|f\|_F\in L^{\infty}(\mathcal{D})\}$. 
\end{enumerate}

\begin{lemma}
We have $f(z)\in L^{\infty}_H(\mathcal{D},\End(V_{\pi}))$ iff $\|H_zf(z)H_z^{-1}\|_F$ is essentially bounded i.e., $H_zf(z)H_z^{-1}\in L^{\infty}(\mathcal{D},\End(V_{\pi}))$. 
\end{lemma}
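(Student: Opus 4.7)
The plan is to reduce this to the well-known equivalence between the operator norm and the Frobenius norm on a finite-dimensional matrix algebra, applied pointwise.

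First I would observe that $V_\pi$ is finite-dimensional (by assumption $\dim_\mathbb{C} V_\pi = n < \infty$), hence $\End(V_\pi) \cong M_n(\mathbb{C})$ is a finite-dimensional normed space. On such a space all norms are equivalent. In particular, for every $A \in M_n(\mathbb{C})$ one has the standard bound
\begin{equation*}
\|A\|_{\text{op}} \;\leq\; \|A\|_F \;\leq\; \sqrt{n}\,\|A\|_{\text{op}},
\end{equation*}
which follows from the singular value decomposition: if $\sigma_1 \geq \cdots \geq \sigma_n \geq 0$ are the singular values of $A$, then $\|A\|_{\text{op}} = \sigma_1$ while $\|A\|_F^2 = \sigma_1^2 + \cdots + \sigma_n^2$.

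Next I would apply this pointwise to the matrix-valued function $A(z) := H_z f(z) H_z^{-1}$ on $\mathcal{D}$. Setting $u(z) = \|A(z)\|_{\text{op}}$ and $v(z) = \|A(z)\|_F$, the two inequalities above give $u(z) \leq v(z) \leq \sqrt{n}\, u(z)$ for all $z \in \mathcal{D}$. Consequently $u \in L^\infty(\mathcal{D})$ if and only if $v \in L^\infty(\mathcal{D})$, and the essential suprema differ by at most a factor of $\sqrt{n}$. Translating back through the definitions, $u \in L^\infty(\mathcal{D})$ is precisely the condition $f \in L^\infty_H(\mathcal{D}, \End(V_\pi))$, and $v \in L^\infty(\mathcal{D})$ is precisely $H_z f(z) H_z^{-1} \in L^\infty(\mathcal{D}, \End(V_\pi))$.

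The only mild subtlety — it is not really an obstacle — is measurability: one must verify that the scalar functions $z \mapsto \|A(z)\|_{\text{op}}$ and $z \mapsto \|A(z)\|_F$ are measurable so that "essentially bounded" makes sense. But $\|A(z)\|_F = (\sum_{i,j} |A(z)_{ij}|^2)^{1/2}$ is manifestly measurable from the measurability of the matrix entries of $f$ (and hence of $H_z f(z) H_z^{-1}$, since $z \mapsto H_z$ is smooth), and the operator norm is a continuous function of the matrix entries and therefore also measurable. Once this is noted, the equivalence is immediate from the norm comparison above.
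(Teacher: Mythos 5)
Your proof is correct and follows essentially the same route as the paper, namely the pointwise equivalence of the operator norm and the Frobenius norm on the finite-dimensional algebra $\End(V_{\pi})$. In fact you are slightly more complete: the paper only cites the inequality $\|A\|_{\text{op}}\leq\|A\|_F$, whereas you supply the reverse bound $\|A\|_F\leq\sqrt{n}\,\|A\|_{\text{op}}$ needed for the other implication, together with the routine measurability check.
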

\begin{proof}
It follows by the fact that the  operator norm is always bounded by the Frobenius norm (i.e., $\|A\|_{\text{op}}\leq  \|A\|_F$ for square matrix $A$). 
\end{proof}
\begin{definition}
For any $f\in L^{\infty}_H(\mathcal{D},\End(V_{\pi}))$, we define the Toeplitz operator in $B(H_\pi)$ associated to $f$ by
\begin{center}
$T_f=P_{\pi}\cdot M_f \cdot P_{\pi}=P_{\pi}\cdot M_f$.
\end{center}
where $M_f$ is the multiplication operator by $f$ on $H_{\pi}$. 
More precisely, for any $\phi\in H_\pi$, the operator acts on it by
\begin{center}
$(T_f\circ \phi)(z)=P_{\pi}(f\circ \phi)(z)$
\end{center}
where $(f\circ\phi)(z)=f(z)\phi(z)\in V_{\pi}$. 
\end{definition}
\begin{remark}
When $f$ takes values in the center of $\End(V_{\pi})$, it can be identified with a scalar-valued function.
In this case, $T_f$ is just the classical Toeplitz operator associated with $f\in L^{\infty}(\mathcal{D})$.  
For more details on classical Toeplitz operators associated with functions on the open unit disk, we refer to \cite{BS,HKZ,Va}.
\end{remark}

\begin{proposition}\label{pTfppt}
We have $T_f\in B(H_{\pi})$ for each $f\in L^{\infty}_H(\mathcal{D},\End(V_{\pi}))$ with the following properties: 
\begin{enumerate}[label=(\roman*)]
    \item $T\colon f\mapsto T_f$ is linear, i.e., $T_{\alpha f+\beta h}=\alpha T_f+\beta T_h$ for $\alpha,\beta\in \mathbb{C}$ and $f,h\in L^{\infty}_H(\mathcal{D},\End(V_{\pi}))$,
    \item $T^*_{f(z)}=T_{H_z^{-1}(H_z^{*})^{-1}f(z)^*H_z^*H_z}$ where $f^*$ is given pointwise by $f^*(z)=f(z)^*$ acting on $V_{\pi}$. 
    In particular, if $f\in L^{\infty}(\mathcal{D})$, $T_f^*=T_{\overline{f}}$. 
\end{enumerate}
\end{proposition}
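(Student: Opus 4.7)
The plan is to verify boundedness first, then linearity, then to compute the adjoint directly from the definition of the inner product on $L^2(\mathcal{D},V_\pi)$.

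Boundedness is essentially already shown in the computation preceding the definition of $T_f$: for any $\phi\in H_\pi\subset L^2(\mathcal{D},V_\pi)$ we have $\|M_f\phi\|_{L^2}\leq \operatorname{ess\,sup}_{z\in\mathcal{D}}\|H_z f(z)H_z^{-1}\|_{\text{op}}\cdot\|\phi\|_{L^2}$, and since $P_\pi$ is an orthogonal projection one gets $\|T_f\|\leq \|\,\|H_z f(z)H_z^{-1}\|_{\text{op}}\,\|_\infty<\infty$. Linearity in (i) is immediate from the linearity of $M_f$ in $f$ and the linearity of $P_\pi$.

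For (ii), the key ingredient I will invoke is the identity $\pi(\kappa(z,z)^{-1})=H_z^*H_z$ derived in the proof of Lemma \ref{lproj}. Given $\phi,\psi\in H_\pi$, I compute
\begin{equation*}
\langle T_f\phi,\psi\rangle_{H_\pi}=\langle P_\pi(f\phi),\psi\rangle_{H_\pi}=\langle f\phi,\psi\rangle_{L^2}=\int_{\mathcal{D}}\langle H_z^*H_z f(z)\phi(z),\psi(z)\rangle_\pi\, d\mu(z),
\end{equation*}
and then move the operator across the inner product on $V_\pi$:
\begin{equation*}
\langle H_z^*H_z f(z)\phi(z),\psi(z)\rangle_\pi=\langle H_z^*H_z\phi(z),\,H_z^{-1}(H_z^*)^{-1}f(z)^*H_z^*H_z\psi(z)\rangle_\pi.
\end{equation*}
Setting $g(z):=H_z^{-1}(H_z^*)^{-1}f(z)^*H_z^*H_z$ this rewrites as $\langle \phi,g\psi\rangle_{L^2}=\langle \phi,P_\pi(g\psi)\rangle_{H_\pi}=\langle\phi,T_g\psi\rangle_{H_\pi}$, which gives $T_f^*=T_g$ as claimed.

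Before concluding, I need to check that $g\in L^\infty_H(\mathcal{D},\End(V_\pi))$ so that $T_g$ is well-defined: a short calculation gives $H_z g(z)H_z^{-1}=(H_z^*)^{-1}f(z)^*H_z^*=(H_z f(z)H_z^{-1})^*$, so the operator norm is preserved and $g$ lies in the same space as $f$. The scalar case is then immediate: if $f$ is scalar-valued then $f(z)^*=\overline{f(z)}$ commutes with every element of $\End(V_\pi)$, so $g(z)=\overline{f(z)}\cdot H_z^{-1}(H_z^*)^{-1}H_z^*H_z=\overline{f(z)}$. No step presents a real obstacle; the only thing to be careful about is keeping track of the non-self-adjoint factor $H_z^*H_z=\pi(\kappa(z,z)^{-1})$ coming from the weighted $L^2$ inner product, which is precisely what forces the twist $f(z)\mapsto H_z^{-1}(H_z^*)^{-1}f(z)^*H_z^*H_z$ in place of the naive pointwise adjoint.
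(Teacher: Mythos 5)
Your proof is correct and follows essentially the same route as the paper: both establish boundedness from the estimate on $M_f$, and both compute the adjoint by moving $f(z)$ across the weighted inner product $\langle \pi(\kappa(z,z)^{-1})\cdot,\cdot\rangle_\pi=\langle H_z^*H_z\cdot,\cdot\rangle_\pi$ to produce the twisted symbol $H_z^{-1}(H_z^{*})^{-1}f(z)^*H_z^*H_z$. Your added verification that this symbol again lies in $L^{\infty}_H(\mathcal{D},\End(V_{\pi}))$ (via $H_zg(z)H_z^{-1}=(H_zf(z)H_z^{-1})^*$) is a small but welcome detail that the paper's proof leaves implicit.
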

\begin{proof}
As $M_f$ is bounded, it is clear that $T_f=P_{\pi}{M_f}P_{\pi}$ is bounded on $H_{\pi}$. 
The linearity is straightforward. 

For the adjoint $T_f^*$, let $\phi,\eta\in H_{\pi}$ and consider the following inner product: 
\begin{equation*}
\begin{aligned}
\langle \phi,T^*_f\eta\rangle_{H_\pi}&=\langle T_f\phi,\eta\rangle_{H_\pi}=\langle M_f\phi,\eta\rangle_{L^2}\\
&=\int_{\mathcal{D}}\langle H_zf(z)\phi(z),H_z\eta(z)\rangle_{\pi}d\mu(z)\\
&=\int_{\mathcal{D}}\langle H_zf(z)H_z^{-1}H_z\phi(z),H_z\eta(z)\rangle_{\pi}d\mu(z)\\
&=\int_{\mathcal{D}}\langle H_z\phi(z),(H_z^{-1})^{*}f(z)^*H_z^*H_z\eta(z)\rangle_{\pi}d\mu(z)\\
&=\int_{\mathcal{D}}\langle H_z\phi(z),H_z(H_z^{-1}(H_z^{-1})^{*}f(z)^*H_z^*H_z)\eta(z)\rangle_{\pi}d\mu(z)\\
&=\langle \phi(z),M_{H_z^{-1}(H_z^{-1})^{*}f(z)^*H_z^*H_z}\eta(z)\rangle_{L^2}\\
&=\langle \phi(z),T_{H_z^{-1}(H_z^{-1})^{*}f(z)^*H_z^*H_z}\eta(z)\rangle_{H_{\pi}}. 
\end{aligned}
\end{equation*}
This implies 
$T^*_{f(z)}=T_{H_z^{-1}(H_z^{-1})^{*}f(z)^*H_z^*H_z}$.    
\end{proof}

Now we consider the left action of $G$ on $\mathcal{D}$ and also on $L^{\infty}(\mathcal{D},\End(V_{\pi}))$ given by $g\cdot f(z)=f(g^{-1}z)$. 
\begin{proposition}
\label{pTfga}
For any $f\in L^{\infty}_H(\mathcal{D},\End(V_\pi))$,
We have 
\begin{center}
$L_{\pi}(g)T_{f(z)}L_{\pi}(g)^*=T_{\pi(J(g^{-1},z))^{-1}(g\cdot f)(z)\pi(J(g^{-1},z))}$.
\end{center}
Hence if $f(g^{-1}\cdot z)=\pi(J(g^{-1},z))f(z)\pi(J(g^{-1},z))^{-1}$ for all $\gamma\in \Gamma,z\in \mathcal{D}$, then $T_f$ commutes with the action of $\Gamma$. 
In particular, for $f\in L^{\infty}(\mathcal{D})$, $T_f$ commutes with the action of $\Gamma$ if $f$ is $\Gamma$-invariant. 
\end{proposition}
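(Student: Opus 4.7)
The plan is to reduce the identity to a computation on the multiplication operator $M_f$ and then push $P_{\pi}$ through $L_{\pi}(g)$ using Proposition \ref{pinvariant}. Since $L_{\pi}(g)$ commutes with $P_{\pi}$ on $L^2(\mathcal{D},V_{\pi})$ and $L_{\pi}(g)^{*}=L_{\pi}(g^{-1})$ is unitary, we have
\[
L_{\pi}(g)T_fL_{\pi}(g)^{*}=L_{\pi}(g)P_{\pi}M_fP_{\pi}L_{\pi}(g^{-1})=P_{\pi}\bigl(L_{\pi}(g)M_fL_{\pi}(g^{-1})\bigr)P_{\pi},
\]
so it suffices to show that $L_{\pi}(g)M_fL_{\pi}(g^{-1})$ is itself a multiplication operator, with the claimed symbol.

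Next, I would unwind the definitions pointwise. For $\phi\in L^2(\mathcal{D},V_{\pi})$, I first apply $L_{\pi}(g^{-1})\phi(z)=\pi(J(g,z)^{-1})\phi(gz)$, multiply by $f$, and then apply $L_{\pi}(g)$. This gives
\[
\bigl(L_{\pi}(g)M_fL_{\pi}(g^{-1})\phi\bigr)(z)=\pi(J(g^{-1},z)^{-1})\,f(g^{-1}z)\,\pi(J(g,g^{-1}z)^{-1})\phi(z).
\]
The main simplification is the middle factor: applying the cocycle property of $J$ (property (ii) from Section \ref{sholds}) to the identity $gg^{-1}=e$, together with $J(e,z)=e$ (which follows from the cocycle law applied to $e=e\cdot e$), yields
\[
e=J(gg^{-1},z)=J(g,g^{-1}z)\,J(g^{-1},z),
\]
hence $\pi(J(g,g^{-1}z)^{-1})=\pi(J(g^{-1},z))$. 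Substituting, and recognizing $(g\cdot f)(z)=f(g^{-1}z)$, we obtain
\[
L_{\pi}(g)M_fL_{\pi}(g^{-1})=M_{\pi(J(g^{-1},z))^{-1}(g\cdot f)(z)\pi(J(g^{-1},z))},
\]
and sandwiching with $P_{\pi}$ yields the displayed Toeplitz identity.

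For the two consequences, I would simply specialize. If $f$ satisfies the covariance relation $f(g^{-1}z)=\pi(J(g^{-1},z))f(z)\pi(J(g^{-1},z))^{-1}$, then the conjugation by $\pi(J(g^{-1},z))$ in the symbol cancels out and the symbol reduces to $f(z)$, so $L_{\pi}(g)T_fL_{\pi}(g)^{*}=T_f$, i.e.\ $T_f$ commutes with $L_{\pi}(g)$. When $f\in L^{\infty}(\mathcal{D})$ takes scalar values, $f(z)$ is central in $\End(V_{\pi})$, so the conjugation is trivial and the covariance condition collapses to $f(g^{-1}z)=f(z)$, that is, $\Gamma$-invariance of $f$. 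The only subtle point throughout is the cocycle manipulation for $J(g,g^{-1}z)$; the rest is bookkeeping with the definitions.
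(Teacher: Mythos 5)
Your proof is correct and follows essentially the same route as the paper's: commute $P_{\pi}$ past $L_{\pi}(g)$ via Proposition \ref{pinvariant}, compute $L_{\pi}(g)M_fL_{\pi}(g^{-1})$ pointwise, and simplify the middle automorphy factor with the cocycle identity $J(g,g^{-1}z)J(g^{-1},z)=J(e,z)=e$. Your derivation of $\pi(J(g,g^{-1}z)^{-1})=\pi(J(g^{-1},z))$ is in fact the correct form of the auxiliary identity the paper invokes (which is misstated there as $\pi(J(g,z))^{-1}\pi(J(g,g^{-1}z))^{-1}=I$), so no changes are needed.
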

\begin{proof}
Let $\phi \in H_\pi$. 
We have
\begin{equation*}
\begin{aligned}
&(L_{\pi}(g)T_{f}L_{\pi}(g)^*\phi)(z)\\
=&L_{\pi}(g)P_{\pi}{M_f}\pi(J(g,z))^{-1}\phi(gz)=P_{\pi}L_{\pi}(g)f(z)\pi(J(g,z))^{-1}\phi(gz)\\
=&P_{\pi}\pi(J(g^{-1},z))^{-1}f(g^{-1}z)\pi(J(g^{-1},z))\pi(J(g,z))^{-1}\pi(J(g,g^{-1}z))^{-1}\phi(z)\\
=&P_{\pi}M_{\pi(J(g^{-1},z))^{-1}f(g^{-1}z)\pi(J(g^{-1},z))}\phi(z)\\
=&T_{\pi(J(g^{-1},z))^{-1}(g\cdot f)(z)\pi(J(g^{-1},z))}\phi(z),  
\end{aligned}
\end{equation*}
where we use $\pi(J(g,z))^{-1}\pi(J(g,g^{-1}z))^{-1}=I$. 
Hence 
\begin{center}
$L_{\pi}(g)T_{f(z)}L_{\pi}(g)^*=T_{\pi(J(g^{-1},z))^{-1}(g\cdot f)(z)\pi(J(g^{-1},z))}$.    
\end{center} 
If $f\in L^{\infty}(\mathcal{D})$, i.e., $f(z)\in \mathbb{C}$, we have $f(z)=g\cdot f(z)=f(g^{-1}z)$.  
\end{proof}

Now we define $R\colon \mathcal{D}\times\mathcal{D}\to \End(V_{\pi})$ by
\begin{center}
$R(w,z)=H_z{E_z^*}E_w{H_w^*}$. 
\end{center}
Note $R(w,z)^*=R(z,w)$ and it is indeed the element $c_{\pi}\cdot R(I)(z,w)$ in $\End(V_\pi)$. 
Moreover, we let $\delta\colon \mathcal{D}\times\mathcal{D}\to \End(V_{\pi})$ given by
\begin{center}
$\delta(z,w)=R(w,z)R(w,z)^*=H_z{E_z^*}E_w{H_w^*}H_w{E_w^*}E_z{H_z^*}$, 
\end{center}
which is a positive operator in $\End(V_{\pi})$. 
\begin{lemma}\label{lstf}
For $f\in L^{\infty}_H(\mathcal{D},\End(V_\pi))$, we have
\begin{center}
$S(T_f)(z)=\frac{1}{c_{\pi}}\int_{\mathcal{D}}R(w,z)(H_wf(w)H_w^{-1})R(w,z)^*d\mu(w)$.
\end{center}
If $f\in L^{\infty}(\mathcal{D})$, 
$S(T_f)(z)=\frac{1}{c_{\pi}}\int_{\mathcal{D}}f(w)\delta(z,w)d\mu(w)$. 
\end{lemma}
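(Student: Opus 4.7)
The plan is to unfold $S(T_f)(z)=\frac{1}{c_\pi}H_zE_z^*T_fE_zH_z^*$ from the definitions and apply the reproducing-kernel formula for $P_\pi$ from Lemma \ref{lproj}. Fix $v\in V_\pi$: the vector $E_zH_z^*v$ lies in $H_\pi$, and its value at $w\in\mathcal{D}$ equals $K_wE_zH_z^*v=E_w^*E_zH_z^*v$. After multiplying pointwise by $f$ and projecting via $P_\pi$, Lemma \ref{lproj} gives
\begin{equation*}
(T_f E_zH_z^*v)(u)=\int_{\mathcal{D}}E_u^*E_wH_w^*H_w f(w)E_w^*E_zH_z^*v\,d\mu(w).
\end{equation*}
Evaluating at $u=z$ amounts to applying $E_z^*$, after which I multiply by $\frac{1}{c_\pi}H_z$ on the left to recover $S(T_f)(z)v$.

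Next I would recognise that $H_zE_z^*E_wH_w^*=R(w,z)$ by definition and that $H_wE_w^*E_zH_z^*=R(w,z)^*$. To place the remaining middle piece $H_w^*H_w f(w)$ between these two factors, I use the algebraic identity $H_w^*H_w f(w)=H_w^*\bigl(H_w f(w)H_w^{-1}\bigr)H_w$, which reshuffles the integrand into the form $R(w,z)\bigl(H_w f(w)H_w^{-1}\bigr)R(w,z)^*$ and yields the first formula. For the scalar case $f\in L^\infty(\mathcal{D})$, the value $f(w)$ is a scalar that commutes with $H_w$, so $H_w f(w)H_w^{-1}=f(w)I_{V_\pi}$ and the integrand collapses to $f(w)R(w,z)R(w,z)^*=f(w)\delta(z,w)$, giving the second formula.

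The only technical subtlety is the interchange of the bounded evaluation $E_z^*$ with the integral defining $P_\pi$. This is routine: the map $E_z^*\colon H_\pi\to V_\pi$ is continuous, and the boundedness of $T_f$ (which holds because $f\in L^\infty_H(\mathcal{D},\End(V_\pi))$) ensures the integrand in Lemma \ref{lproj} is Bochner integrable in $H_\pi$, so $E_z^*$ may be pulled under the integral in the standard way. No deeper analytic issue arises, and the proof is essentially a book-keeping computation organised around the identification of $R(w,z)$ and $R(w,z)^*$ in the integrand.
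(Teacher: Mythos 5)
Your proof is correct and follows essentially the same route as the paper: both reduce $S(T_f)(z)=\frac{1}{c_\pi}H_zE_z^*P_\pi M_fE_zH_z^*$ to the integral of $H_zE_z^*E_wH_w^*H_wf(w)E_w^*E_zH_z^*$ and then insert $H_w^{-1}H_w$ to identify the factors $R(w,z)$ and $R(w,z)^*$. The only cosmetic difference is that you invoke the explicit kernel formula of Lemma \ref{lproj} for $P_\pi$, while the paper performs the equivalent weak computation $\langle T_fE_zH_z^*u,E_zH_z^*v\rangle_{H_\pi}=\langle M_fE_zH_z^*u,E_zH_z^*v\rangle_{L^2}$ and expands the $L^2$ inner product.
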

\begin{proof}
Take any $u,v\in V_{\pi}$ and consider the following inner product. 
\begin{equation*}
\begin{aligned}
\langle S(T_f)(z)u,v\rangle_{\pi}
=&\frac{1}{c_\pi}\langle T_f{E_z}H_z^*u, E_z{H_z^*}v\rangle_{H_{\pi}}=\frac{1}{c_\pi}\langle M_f{E_z}H_z^*u, E_z{H_z^*}v\rangle_{L^2}\\
=&\frac{1}{c_\pi}\int_{\mathcal{D}}\langle H_w^* H_w f(w)E_w^* {E_z}H_z^*u, E_w^* E_z{H_z^*}v\rangle_{\pi}d\mu(w)\\
=&\frac{1}{c_\pi}\int_{\mathcal{D}}\langle H_z E_z^* E_w H_w^* H_w f(w)E_w^* {E_z}H_z^*u, v\rangle_{\pi}d\mu(w)\\
=&\langle \frac{1}{c_{\pi}}\int_{\mathcal{D}}R(w,z)(H_wf(w)H_w^{-1})R(w,z)^*d\mu(w)u,v\rangle_{\pi}
\end{aligned}
\end{equation*}
If $f(z)\in \mathbb{C}\cdot I\in \End(V_{\pi})$, we obtain $R(w,z)(H_wf(w)H_w^{-1})R(w,z)^*=f(w)\delta(z,w)$. 
\end{proof}

Now we identify the $\Gamma$-invariant function in $L^{\infty}(\mathcal{D})$ with $L^{\infty}(\mathcal{F})$, i.e., $L^{\infty}(\mathcal{F})=L^{\infty}(\mathcal{D})^{\Gamma}$. 
By Proposition \ref{pTfga}, the Toeplitz operator gives a map $T\colon L^{\infty}(\mathcal{F})\to B(H_{\pi})^{\Gamma}=A_{\pi}$ by $f\mapsto T_f$. 

For the $\End(V_\pi)$-valued Toeplitz operators, consider the extension form $L^{\infty}_H(\mathcal{F},\End(V_\pi))$ to $L^{\infty}_H(\mathcal{D},\End(V_\pi))$ given in Proposition \ref{pTfga} by
\begin{center}
$\gamma\cdot f(z)=f(\gamma^{-1}z)=\pi(J(\gamma^{-1},z))f(z)\pi(J(\gamma^{-1},z))^{-1}$, $\forall \gamma\in \Gamma$, $z\in\mathcal{F}$
\end{center}
This establishes a map form $L^{\infty}_H(\mathcal{F},\End(V_\pi))$ to $A_{\pi}$:
\begin{center}
$T\colon L^{\infty}_H(\mathcal{F},\End(V_\pi))\to B(H_\pi)^\Gamma=A_{\pi}$.  
\end{center}
We also denote by $T_f$ the Toeplitz operator given by this extension of $f$.   
\begin{proposition}
\label{ptr2}
Given $A\in A_{\pi}$ and $f\in L^{\infty}_H(\mathcal{F},\End(V_\pi))$, we have
\begin{center}
$\tau(AT_f)=\frac{1}{\mu(\mathcal{F})}\int_{\mathcal{F}}\tr(f(z)Q(A)(z))d\mu(z)$. 
\end{center}
\end{proposition}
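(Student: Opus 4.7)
The plan is to compute $\tau(AT_f)$ by expanding everything in terms of the evaluation operators $E_z$ and then applying the trace-swapping trick used in Proposition \ref{ptr1}. First I would start from the definition
$$\tau(AT_f) = \frac{1}{\mu(\mathcal{F})}\int_{\mathcal{F}} \tr(S(AT_f)(z))\, d\mu(z)$$
and, fixing an orthonormal basis $\{v_i\}$ of $V_\pi$, rewrite the integrand as $\frac{1}{n c_\pi}\sum_i \langle AT_f E_z H_z^* v_i, E_z H_z^* v_i\rangle_{H_\pi}$. Using $T_f = P_\pi M_f$ together with the identity $\langle A P_\pi M_f \phi,\psi\rangle_{H_\pi} = \langle M_f \phi, A^*\psi\rangle_{L^2}$ (valid since $A^*\psi \in H_\pi$), and then expanding the $L^2(\mathcal{D},V_\pi)$ inner product pointwise, I would rewrite everything as a double integral over $\mathcal{F}\times\mathcal{D}$. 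Repeatedly invoking $(E_zH_z^*v)(w) = E_w^*E_zH_z^*v$ and cyclicity of the trace puts the integrand in the form
$$F(z,w) := \Tr\bigl(f(w)\, K_I(w,z)\, H_z^* H_z\, K_A(z,w)\, H_w^* H_w\bigr).$$

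Next, I would check that $F(z,w)$ is invariant under the diagonal action $(z,w)\mapsto(\gamma z,\gamma w)$ for $\gamma\in\Gamma$. This follows from three ingredients already in the paper: the transformation law $K_A(\gamma z,\gamma w) = \pi(J(\gamma,z))K_A(z,w)\pi(J(\gamma,w))^*$ of Proposition \ref{pK}(iv) (applied to both $A$ and $I$); the identity $\pi(J(\gamma,z))^* H_{\gamma z}^* H_{\gamma z}\pi(J(\gamma,z)) = H_z^* H_z$ established in the proof of Lemma \ref{linvtr}; and the equivariance $f(\gamma w) = \pi(J(\gamma,w))f(w)\pi(J(\gamma,w))^{-1}$ coming from Proposition \ref{pTfga}. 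All cocycle factors cancel in pairs, leaving only a conjugation by $\pi(J(\gamma,w))$ on the outside, which disappears under $\Tr$.

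With $\Gamma$-invariance of $F$ in hand, the same swapping argument used in Proposition \ref{ptr1} applies: $\mathcal{F}\times\mathcal{D}$ and $\mathcal{D}\times\mathcal{F}$ are both fundamental domains for the diagonal $\Gamma$-action on $\mathcal{D}\times\mathcal{D}$, so by Fubini I may rewrite
$$\int_{\mathcal{F}}d\mu(z)\int_{\mathcal{D}}d\mu(w)\,F(z,w) = \int_{\mathcal{F}}d\mu(w)\int_{\mathcal{D}}d\mu(z)\,F(z,w).$$
The inner integral now collapses via the reproducing-kernel identity implicit in Lemma \ref{lproj}, namely $\int_{\mathcal{D}} E_z H_z^* H_z E_z^*\, d\mu(z) = I_{H_\pi}$ (this is the formula applied to $\phi = f \in H_\pi$). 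Since $K_I(w,z)H_z^*H_zK_A(z,w) = E_w^*(E_zH_z^*H_zE_z^*)AE_w$, integrating over $z\in\mathcal{D}$ yields $E_w^* A E_w$, and by the definition of $Q$,
$$\Tr\bigl(f(w)E_w^*AE_w H_w^*H_w\bigr) = c_\pi\Tr\bigl(f(w)Q(A)(w)\bigr).$$
Putting everything together and using $\tr = \Tr/n$ gives exactly the claimed formula.

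The main technical obstacle is step two, the $\Gamma$-invariance check, since it requires keeping careful track of all the automorphy and transformation laws and verifying that they conspire to cancel cleanly; the subsequent domain-swapping and kernel-collapse steps are essentially mechanical once that invariance is in place.
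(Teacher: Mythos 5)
Your proposal is correct and follows essentially the same route as the paper's proof: expand $\tau(AT_f)$ over an orthonormal basis into a double integral over $\mathcal{F}\times\mathcal{D}$, verify $\Gamma$-invariance of the integrand under the diagonal action, swap to the fundamental domain $\mathcal{D}\times\mathcal{F}$, and collapse the inner integral. The only cosmetic difference is that you invoke the resolution of the identity $\int_{\mathcal{D}}E_zH_z^*H_zE_z^*\,d\mu(z)=P_{\pi}$ from Lemma \ref{lproj} explicitly, where the paper re-assembles the inner integral into an $H_{\pi}$ inner product; these are the same computation.
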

\begin{proof}
We let $v_1,\dots,v_n$ be an orthonormal basis of $V_{\pi}$. 
By Lemma \ref{lproj} and Proposition \ref{ptr1}, we obtain:
\begin{small}
\begin{equation*}
\begin{aligned}
\tau(AT_f)=&\frac{1}{\mu(\mathcal{F})}\tr(S(AT_f)(z))d\mu(z)\\
=&\frac{1}{n\cdot c_{\pi} n\cdot \mu(\mathcal{F})}\int_{\mathcal{F}}\sum_{i=1}^{n}\langle H_z E_z^* AT_f E_z H_z^* v_i,v_i\rangle_{\pi}d\mu(z)\\
=&\frac{1}{n c_{\pi}\mu(\mathcal{F})}\sum_{i=1}^{n}\int_{\mathcal{F}}\langle T_f E_z H_z^* v_i,A^*E_zH_z^*v_i\rangle_{H_\pi}d\mu(z)\\
=&\frac{1}{n c_{\pi}\mu(\mathcal{F})}\sum_{i=1}^{n}\int_{\mathcal{F}}\langle M_f E_z H_z^* v_i,A^*E_zH_z^*v_i\rangle_{L^2}d\mu(z)\\
=&\frac{1}{n c_{\pi}\mu(\mathcal{F})}\sum_{i=1}^{n}\int_{\mathcal{F}}\bigl(\int_{\mathcal{D}}\langle H_w f(w)E_w^* E_z H_z^* v_i,H_wE_w^*A^*E_zH_z^*v_i\rangle_{\pi}d\mu(w)\bigr)d\mu(z)\\
=&\frac{1}{ c_{\pi}\mu(\mathcal{F})}\int_{\mathcal{F}}\bigl(\int_{\mathcal{D}} \tr(H_zE_z^*AE_wH_w^*H_w f(w)E_w^* E_z H_z^*)d\mu(w)\bigr)d\mu(z)
%=&\frac{1}{n c_{\pi}\mu(\mathcal{F})}\sum_{i=1}^{n}\int_{\mathcal{F}}\left(\int_{\mathcal{D}}\langle H_wE_w^*T_f E_z H_z^* v_i,H_wE_w^*A^*E_zH_z^*v_i\rangle_{\pi}d\mu(w)\right)d\mu(z)\\
%=&\frac{1}{n c_{\pi}\mu(\mathcal{F})}\sum_{i=1}^{n}\int_{\mathcal{F}}\left(\int_{\mathcal{D}}\langle H_w\left(\int_{\mathcal{D}}E_w^*E_sH_s^*H_sf(s)E_s^* E_z H_z^* v_id\mu(s)\right),H_wE_w^*A^*E_zH_z^*v_i\rangle_{\pi}d\mu(w)\right)d\mu(z)\\
%=&\frac{1}{n c_{\pi}\mu(\mathcal{F})}\sum_{i=1}^{n}\int_{\mathcal{F}\times\mathcal{D}\times\mathcal{D}}\langle H_wE_w^*E_sH_s^*H_sf(s)E_s^* E_z H_z^* v_i,H_wE_w^*A^*E_zH_z^*v_i\rangle_{\pi}d\mu^3(z,w,s)\\
%=&\frac{1}{ c_{\pi}\mu(\mathcal{F})}\int_{\mathcal{F}\times\mathcal{D}\times\mathcal{D}}\tr(H_z^*E_zAE_wH_w^* H_wE_w^*E_sH_s^*H_sf(s)E_s^* E_z H_z^* )d\mu^3(z,w,s)
\end{aligned}
\end{equation*}
\end{small}
As in the proof of Proposition \ref{ptr1}, we consider the diagonal action of $\Gamma$ on $\mathcal{D}^2$. 
The fundamental domain is $\mathcal{F}\times\mathcal{D}$. 
Following the proof of Lemma \ref{linvtr}, we can show the integrand $\tr(H_zE_z^*AE_wH_w^*H_w f(w)E_w^* E_z H_z^*)$ is $\Gamma$-invariant. 
Since $\mu^2$ is a $\Gamma$-invariant measure, we can also replace it with another fundamental domain $\mathcal{D}\times\mathcal{F}$ by changing $(z,w)$ to $(w,z)$, which leaves the integration invariant. 
Hence the integral above equals to:  
\begin{equation*}
\begin{aligned}
&\frac{1}{ c_{\pi}\mu(\mathcal{F})}\int_{\mathcal{F}}\bigl(\int_{\mathcal{D}} \tr(H_zE_z^*AE_wH_w^*H_w f(w)E_w^* E_z H_z^*)d\mu(z)\bigr)d\mu(w)\\
=&\frac{1}{n c_{\pi}\mu(\mathcal{F})}\sum_{i=1}^{n}\int_{\mathcal{F}}\bigl( \int_{\mathcal{D}}\langle H_zE_z^*AE_wH_w^*v_i,H_zE_z^*E_wf(w)^*H_w^*v_i\rangle_{\pi}d\mu(z)\bigr)d\mu(w)\\
=&\frac{1}{n c_{\pi}\mu(\mathcal{F})}\sum_{i=1}^{n}\int_{\mathcal{F}}\langle AE_wH_w^*v_i,E_wf(w)^*H_w^*v_i\rangle_{H_\pi}d\mu(w)\\
=&\frac{1}{ c_{\pi}\mu(\mathcal{F})}\int_{\mathcal{F}} \tr(H_wf(w)E_w^*AE_wH_w^*)d\mu(w)\\
=&\frac{1}{ \mu(\mathcal{F})}\int_{\mathcal{F}} \tr(H_wf(w)H_w^{-1}S(A)(w))d\mu(w)\\
=&\frac{1}{ \mu(\mathcal{F})}\int_{\mathcal{F}} \tr(K_A(w)H_w^*H_wf(w))d\mu(w)
\end{aligned}
\end{equation*}
\iffalse
\begin{small}
\begin{equation*}
\begin{aligned}
(1)=&\frac{1}{n c_{\pi}\mu(\mathcal{F})}\sum_{i=1}^{n}\int_{\mathcal{F}\times\mathcal{D}\times\mathcal{D}}\langle H_w E_w^*  E_s H_s^* H_s f(s)E_s^* E_z H_z^* v_i,H_wE_w^*A^* E_z H_z^*v_i\rangle_{\pi}d\mu^3(s,z,w)\\
=&\frac{1}{n c_{\pi}\mu(\mathcal{F})}\sum_{i=1}^{n}\int_{\mathcal{F}\times\mathcal{D}}\langle   E_s H_s^* H_s f(s)E_s^* E_z H_z^* v_i,A^* E_z H_z^*v_i\rangle_{\pi}d\mu^2(s,z)\\
=&\frac{1}{n c_{\pi}\mu(\mathcal{F})}\sum_{i=1}^{n}\int_{\mathcal{F}\times\mathcal{D}}\tr(H_zE_z^*AE_sH_s^*H_sf(s)E_s^* E_z H_z^*)d\mu^2(s,z)\\
=&\frac{1}{n c_{\pi}\mu(\mathcal{F})}\sum_{i=1}^{n}\int_{\mathcal{F}\times\mathcal{D}}\langle H_z^*E_z^*AE_sH_s^*v_i,H_zE_z^*E_sf(s)^*H_s^*v_i\rangle_{\pi}d\mu^2(s,z)\\
=&\frac{1}{n c_{\pi}\mu(\mathcal{F})}\sum_{i=1}^{n}\int_{\mathcal{F}}\langle AE_sH_s^*v_i,E_sf(s)^*H_s^*v_i\rangle_{H_\pi}d\mu(s)\\
=&\frac{1}{ c_{\pi}\mu(\mathcal{F})}\int_{\mathcal{F}} \tr(H_sf(s)E_s^*AE_sH_s^*)d\mu(s)\\
=&\frac{1}{ \mu(\mathcal{F})}\int_{\mathcal{F}} \tr(f(s)c_{\pi}^{-1}E_s^*AE_sH_s^*H_s)d\mu(s)\\
=&\frac{1}{ \mu(\mathcal{F})}\int_{\mathcal{F}} \tr(f(s)Q(A)(s))d\mu(s)
\end{aligned}
\end{equation*}
\end{small}
\fi
\end{proof}

If $f\in L^{\infty}(\mathcal{D})$, this formula of the trace can be simplified as follows. 
\begin{corollary}
For $f\in L^{\infty}(\mathcal{F})$, we have 
\begin{center}
$\tau(AT_f)=\frac{1}{\mu(\mathcal{F})}\int_{\mathcal{F}}f(z)\tr(Q(A)(z))d\mu(z)$. 
\end{center}
\end{corollary}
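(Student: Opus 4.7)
The plan is to deduce this corollary directly from Proposition \ref{ptr2}, which has just been proved and gives the general formula
$$\tau(AT_f)=\frac{1}{\mu(\mathcal{F})}\int_{\mathcal{F}}\tr(f(z)Q(A)(z))\,d\mu(z)$$
for any $f\in L^{\infty}_H(\mathcal{F},\End(V_\pi))$. Since $L^{\infty}(\mathcal{F})$ embeds canonically into $L^{\infty}_H(\mathcal{F},\End(V_\pi))$ via $f\mapsto f\cdot I_{V_\pi}$ (as already noted in the remark following the definition of $T_f$), the scalar case is subsumed by the matrix case, and the only issue is simplification of the integrand.

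First I would verify that the inclusion $L^{\infty}(\mathcal{F})\hookrightarrow L^{\infty}_H(\mathcal{F},\End(V_\pi))$ is valid: if $f\in L^\infty(\mathcal{F})$, then $H_z f(z) H_z^{-1} = f(z)\cdot I_{V_\pi}$, whose operator norm equals $|f(z)|$ and is therefore essentially bounded. Moreover, for scalar $f$, the transformation rule
$$f(\gamma^{-1}z)=\pi(J(\gamma^{-1},z))f(z)\pi(J(\gamma^{-1},z))^{-1}$$
from Proposition \ref{pTfga} reduces to $f(\gamma^{-1}z)=f(z)$, i.e., ordinary $\Gamma$-invariance, so the identification $L^\infty(\mathcal{F})=L^\infty(\mathcal{D})^\Gamma$ is compatible with the matrix-valued framework.

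Next I would apply Proposition \ref{ptr2} to $f$ viewed as an $\End(V_\pi)$-valued function. Inside the trace, $f(z)$ is the scalar $f(z)\in\mathbb{C}$ times the identity on $V_\pi$, so by linearity and cyclicity of the (normalized) trace on $\End(V_\pi)$,
$$\tr(f(z)\,Q(A)(z))=\tr(f(z)\cdot I_{V_\pi}\cdot Q(A)(z))=f(z)\,\tr(Q(A)(z)).$$
Substituting this into the integral formula yields exactly the claimed identity.

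There is no real obstacle here; the statement is a specialization, and the entire argument is simply the observation that scalar factors pass through the trace. The only point worth making explicit is that no integrability issue arises, since $f$ is essentially bounded and $|\tr(Q(A)(z))|$ is integrable over $\mathcal{F}$ (it is dominated, up to normalization, by $\|A\|$, as already used in the proof of Proposition \ref{ptr1}), so the scalar can be pulled out of the trace pointwise and the integral remains finite.
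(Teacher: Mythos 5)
Your proposal is correct and follows the same route as the paper: the corollary is deduced from Proposition \ref{ptr2} by observing that $\tr(f(z)Q(A)(z))=f(z)\tr(Q(A)(z))$ when $f(z)$ is a scalar. The extra checks you include (that scalar $f$ lies in $L^{\infty}_H(\mathcal{F},\End(V_{\pi}))$ and that no integrability issue arises) are sound but not needed beyond what the paper already establishes.
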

\begin{proof}
It follows the fact $\tr(f(z)Q(A)(z))=f(z)\tr(Q(A)(z))$ if $f(z)$ is a scalar. 
\end{proof}

\section{The Commutant of the Group von Neumann Algebras}

\subsection{The $L^2$-space of matrix-valued functions}

We keep the notations as in the previous sections. 
Let $\mu$ be the measure on $\mathcal{F}$ obtained from $(\mathcal{D},\mu)$.  
For a measurable function $f\colon \mathcal{F}\to \End(V_{\pi})$, we denote by $f_H$ the following function
\begin{center}
$f_H(z)=H_z\cdot f(z)\cdot H_z^{-1}$, $z\in \mathcal{F}$. 
\end{center}
Consider the following vector space of $\End(V)$-valued functions on $\mathcal{F}$: 
\begin{footnotesize}
\begin{center}
$L^2_H(\mathcal{F},\End(V))=\{f\colon \mathcal{F}\to \End(V)\text{ measurable}|\int_{\mathcal{F}}||f_H(z)||_F^2 d\mu(z)<\infty\}$. 
\end{center}
\end{footnotesize}
Here $\Tr$ is the trace on $\End(V)$ which is not normalized and $||\cdot||_F$ is the Frobenius norm. 
We also denote a sesquilinear form defined on $L^2_H(\mathcal{F},\End(V),\mu)$ by  
\begin{center}
$\langle f,h\rangle=\langle f,h\rangle_{L^2_H}=\int_{\mathcal{F}}\Tr(f_H(z)h_H(z)^*)d\mu(z)$%=\int_{\mathcal{F}}\Tr(H_zf(z)H_z^{-1}(H_z^{-1})^*h(z)^*H_z^*)d\mu(z)$, 
\end{center}
where $f,h\in L^2_H(\mathcal{F},\End(V_\pi),\mu)$.  
\begin{lemma}\label{lsesLH}
The sesquilinear form $\langle \cdot,\cdot\rangle$ is an inner product on $L^2_H(X,\End(V))$.  
\end{lemma}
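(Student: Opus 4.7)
The plan is to verify the four defining properties of an inner product: well-definedness of the integral, sesquilinearity, conjugate symmetry, and positive definiteness. The pointwise integrand $\Tr(f_H(z)h_H(z)^*)$ is nothing but the standard Frobenius (Hilbert--Schmidt) inner product of the matrices $f_H(z),h_H(z)\in\End(V_\pi)$, so most of the algebraic properties will be inherited pointwise and then globalized by integrating over $\mathcal{F}$.

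First I would establish that $\langle f,h\rangle$ is finite for any $f,h\in L^2_H(\mathcal{F},\End(V_\pi))$. The pointwise Cauchy--Schwarz inequality for the Frobenius inner product on $\End(V_\pi)$ gives $|\Tr(f_H(z)h_H(z)^*)|\leq \|f_H(z)\|_F\cdot\|h_H(z)\|_F$, and then an application of the Cauchy--Schwarz inequality in $L^2(\mathcal{F},\mu)$ bounds $\int_{\mathcal{F}}|\Tr(f_Hh_H^*)|\,d\mu$ by $\|f\|_{L^2_H}\cdot\|h\|_{L^2_H}$, which is finite by the definition of $L^2_H$. This shows the integral exists and simultaneously supplies the Cauchy--Schwarz inequality for the form.

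Sesquilinearity (linearity in $f$ and conjugate linearity in $h$) is immediate from the linearity of $z\mapsto H_z(\cdot)H_z^{-1}$, of the trace, and of the integral, once one observes that $(\alpha h)_H=\alpha h_H$ while $(\alpha h)_H^*=\bar\alpha h_H^*$. Conjugate symmetry follows from $\Tr(AB^*)=\overline{\Tr(BA^*)}$ applied pointwise and then integrated. Non-negativity is obtained from $\Tr(f_H(z)f_H(z)^*)=\|f_H(z)\|_F^2\geq 0$.

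The only step requiring a small argument is faithfulness: if $\langle f,f\rangle=\int_{\mathcal{F}}\|f_H(z)\|_F^2\,d\mu(z)=0$, then $\|f_H(z)\|_F=0$ for $\mu$-a.e.\ $z\in\mathcal{F}$, hence $f_H(z)=0$ a.e.; since $H_z\in GL(V_\pi)$ for every $z\in\mathcal{D}$ (it is the image of an invertible element of $K_{\mathbb{C}}$ under $\pi$), one may left-multiply by $H_z^{-1}$ and right-multiply by $H_z$ to conclude $f(z)=0$ a.e., so $f=0$ as an element of $L^2_H(\mathcal{F},\End(V_\pi))$. I expect no real obstacle here; the whole proof is essentially bookkeeping, with the one point to watch being the pointwise invertibility of $H_z$ (guaranteed by Remark 1) in the final faithfulness step.
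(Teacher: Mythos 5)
Your proof is correct and follows essentially the same route as the paper's: verify sesquilinearity, conjugate symmetry via $\Tr(AB^*)=\overline{\Tr(BA^*)}$, and faithfulness by noting $\|f_H(z)\|_F=0$ a.e.\ forces $f=0$ a.e. Your added Cauchy--Schwarz argument for finiteness of the integral and the explicit appeal to the invertibility of $H_z$ are small refinements the paper leaves implicit, but the approach is the same.
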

\begin{proof}
We can check $\langle \alpha f_1+\beta f_2,h\rangle= \alpha\langle f_1 ,h\rangle+\beta\langle f_2,h\rangle$ for $\alpha,\beta\in \mathbb{C}$.  
As 
\begin{equation*}
\begin{aligned}
\Tr(f_H(z)h_H(z)^*)&=\Tr(H_zf(z)H_z^{-1}(H_z^{-1})^*h(z)^*H_z^*)\\
&=\overline{\Tr(H_zh(z)H_z^{-1}(H_z^{-1})^*f(z)^*H_z^*)}=\overline{\Tr(h_H(z)f_H(z)^*)}.
\end{aligned}
\end{equation*}
We have $\langle f,h\rangle=\overline{\langle h,f\rangle}$. 

Now we assume $\langle f,f\rangle=0$. 
Then $\|H_zf(z)H_z^{-1}\|_F=0$ almost everywhere on $\mathcal{F}$. 
Hence $f\stackrel{\text{a.e.}}{=}0$. 
\end{proof}

Therefore we obtain a Hilbert space $L^2_H(\mathcal{F},\End(V),\mu)$, or simply denote it by $L^2_H(\mathcal{F},\End(V))$. 

Now we consider the following space
\begin{center}
$L^{\infty}_H(\mathcal{F},\End(V_{\pi}))=\{f\colon \mathcal{F}\to \End(V_{\pi})\text{~measurable}|\|f_H(z)\|_{F}\in L^{\infty}(\mathcal{F})\}$.  
\end{center}
One can show this is a complex algebra by the fact $(fh)_
H(z)=f_H(z)h_H(z)$ and the Frobenius norm $\|\cdot\|_F$ is sub-multiplicative, i.e., $\|AB\|_F\leq \|A\|_F\cdot \|B\|_F$. 
We denote $\|(\|f_H(z)\|_F)\|_{\infty}$, which is the essential norm of $\|f_H(z)\|_F\in L^{\infty}(\mathcal{F})$, by $\|f\|_{H,\infty}$.  
Note that $\|f_H(z)\|_{\text{op}}\leq \|f\|_{H,\infty}$ for all $z\in\mathcal{F}$. 

Furthermore, we also define another Hilbert space
\begin{center}
$L^{2}_H(\mathcal{F},V_{\pi})=\{\phi\colon \mathcal{F}\to V_{\pi}\text{~measurable}|\int_{\mathcal{F}}\langle H_z\phi(z),H_z\phi(z)\rangle_{\pi}d\mu(z)<\infty\}$,   
\end{center}
where the inner product is given as $\langle\cdot,\cdot\rangle_1=\int_{\mathcal{F}}\langle H_z\phi(z),H_z\psi(z)\rangle_{\pi}d\mu(z)$ for $\phi,\psi\in L^{2}_H(\mathcal{F},V_{\pi})$. 

Define an action $\sigma$ of $f\in L^{\infty}_H(\mathcal{F},\End(V_{\pi}))$ on $L^{2}_H(\mathcal{F},V_{\pi})$ as
\begin{center}
$\sigma(f(z))\circ\phi(z)=f(z)\cdot \phi(z)$, 
\end{center}
where $f\in L^{\infty}_H(\mathcal{F},\End(V_{\pi}))$ and $\phi \in L^{2}_H(\mathcal{F},V_{\pi})$. 

\begin{proposition}\label{pLHact}
The action $\sigma$ defined above gives a well-defined faithful $C^*$-representation of the algebra $L^{\infty}_H(\mathcal{F},\End(V_{\pi}))$ such that
\begin{enumerate}
    \item The adjoint of $\sigma(f(z))$ is $\sigma(f(z))^*=\sigma(H_z^{-1}(H_z^*)^{-1}f(z)^*H_z^*H_z)$,
    \item $\sigma(f(z))$ is a positive operator iff $H_zf(z)H_z^{-1}=g(z)^*g(z)$ for some $g\in L^{\infty}(\mathcal{F},\End(V_{\pi}))$.
    
\end{enumerate}
\end{proposition}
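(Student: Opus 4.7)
The plan is to handle the four assertions in a logical order: first boundedness of $\sigma(f)$, then the adjoint formula (1), then faithfulness, and finally the positivity criterion (2). Linearity and multiplicativity of $\sigma$ are automatic from pointwise multiplication and the identity $(fh)_H = f_H h_H$, so the analytic content lies in the $L^2_H$-boundedness of $\sigma(f)$ and in identifying $\sigma(f)^*$.

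First I would estimate, for $\phi \in L^2_H(\mathcal{F},V_{\pi})$ and $f \in L^{\infty}_H(\mathcal{F},\End(V_{\pi}))$, using $f(z)\phi(z) = H_z^{-1} f_H(z) H_z\phi(z)$,
\begin{equation*}
\|\sigma(f)\phi\|_1^2 = \int_{\mathcal{F}} \|f_H(z) H_z\phi(z)\|_{\pi}^2 \, d\mu(z) \leq \|f\|_{H,\infty}^2 \|\phi\|_1^2,
\end{equation*}
so $\|\sigma(f)\| \leq \|f\|_{H,\infty}$. For (1), a direct manipulation of $\langle \sigma(f)\phi,\psi\rangle_1$ --- moving $f(z)$ across the $V_{\pi}$-inner product and reinserting $H_z^{-1}(H_z^{-1})^{*}$ to return the expression to the form $\int_{\mathcal{F}}\langle H_z\phi(z), H_z g(z)\psi(z)\rangle_{\pi}\,d\mu(z)$ --- yields $g(z) = H_z^{-1}(H_z^*)^{-1}f(z)^* H_z^* H_z$, which by inspection lies again in $L^{\infty}_H(\mathcal{F},\End(V_{\pi}))$ (indeed $g_H(z) = f_H(z)^*$). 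Combined with boundedness and multiplicativity this gives a bounded $*$-homomorphism, and the $C^*$-identity $\|\sigma(f)^*\sigma(f)\| = \|\sigma(f)\|^2$ then follows formally. Faithfulness: if $\sigma(f) = 0$, take any measurable $A\subset\mathcal{F}$ with compact closure (on which $\|H_z\|_{\text{op}}$ is essentially bounded) and any $v\in V_{\pi}$; then $\phi(z) = \chi_A(z) v$ lies in $L^2_H(\mathcal{F},V_{\pi})$, so $f(z)v = 0$ a.e.\ on $A$. Varying $A$ and $v$ gives $f = 0$ a.e.

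For (2), from (1) one sees $\sigma(f)$ is selfadjoint iff $f_H(z)^* = f_H(z)$ a.e., and positivity of $\sigma(f)$ is equivalent to $\int_{\mathcal{F}}\langle f_H(z) H_z\phi(z), H_z\phi(z)\rangle_{\pi}\,d\mu(z) \geq 0$ for all $\phi \in L^2_H(\mathcal{F},V_{\pi})$. Since vectors of the form $z\mapsto \chi_A(z) H_z^{-1} v$ are dense in $L^2_H(\mathcal{F},V_{\pi})$, this reduces to the pointwise condition $f_H(z) \geq 0$ in $\End(V_{\pi})$ a.e. When this holds, set $g(z) := f_H(z)^{1/2}$ via Borel functional calculus; with $n = \dim_{\mathbb{C}}V_{\pi}$, the bound $\|g(z)\|_F \leq \sqrt{n}\,\|f_H(z)\|_{\text{op}}^{1/2} \leq \sqrt{n}\,\|f\|_{H,\infty}^{1/2}$ places $g$ in $L^{\infty}(\mathcal{F},\End(V_{\pi}))$ with $f_H = g^* g$. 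Conversely, if $f_H = g^* g$ then $\langle \sigma(f)\phi,\phi\rangle_1 = \int_{\mathcal{F}}\|g(z) H_z\phi(z)\|_{\pi}^2\, d\mu(z) \geq 0$. The main technical subtlety is the measurability of $z\mapsto f_H(z)^{1/2}$, which is handled by the standard spectral/Borel functional calculus for measurable families of positive matrices; the remainder is bookkeeping with the $H_z$ conjugations.
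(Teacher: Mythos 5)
Your proof is correct and follows essentially the same route as the paper: the same pointwise estimate for boundedness, the same inner-product manipulation for the adjoint, and the same reduction of positivity to the factorization $f_H = g^*g$. The only difference is that you supply the details the paper leaves implicit --- the localized test vectors $\chi_A(z)H_z^{-1}v$ for faithfulness and for reducing positivity to the pointwise condition $f_H(z)\geq 0$, and the measurable square root $g = f_H^{1/2}$ with the Frobenius-norm bound --- which is a faithful filling-in rather than a different argument.
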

\begin{proof}
Assume $\|f\|_{H,\infty}=C$. 
Then we have 
\begin{equation*}
\begin{aligned}
\langle H_zf(z)\phi(z),H_zf(z)\phi(z)\rangle_{\pi}&=\langle f_H(z)H_z\phi(z),f_H(z)H_z\phi(z)\rangle_{\pi}\\
&\leq \|f_H(z)\|_{\text{op}}\cdot\langle H_z\phi(z),H_z\phi(z)\rangle_{\pi}\\
&\leq C\cdot \langle H_z\phi(z),H_z\phi(z)\rangle_{\pi}. 
\end{aligned}
\end{equation*}
Hence $\|\sigma(f)\|_{L^{2}_H(\mathcal{F},V_{\pi})}\leq C$ which is well-defined. 

It is straightforward to check  $\sigma(f)=0$ if and only if $f=0$. 
Moreover, we have
\begin{equation*}
\begin{aligned}
\langle \sigma(f)\phi,\psi\rangle_H&=\int_{\mathcal{F}}\langle H_zf(z) \phi(z),H_z\psi(z)\rangle_{\pi}d\mu(z)\\
&=\int_{\mathcal{F}}\langle H_zf(z)H_z^{-1}H_z \phi(z),H_z\psi(z)\rangle_{\pi}d\mu(z)\\
&=\int_{\mathcal{F}}\langle H_z \phi(z),H_zH_z^{-1}(H_z^*)^{-1}f(z)^*H_z^*H_z\psi(z)\rangle_{\pi}d\mu(z).
\end{aligned}
\end{equation*}
This proves $\sigma(f(z))^*=\sigma(H_z^{-1}(H_z^*)^{-1}f(z)^*H_z^*H_z)$. 
By the second line above, $\sigma(f(z))$ is positive iff $H_zf(z)H_z^{-1}=g(z)^*g(z)$ for some $g\in L^{\infty}(\mathcal{F},\End(V_{\pi}))$. 
\end{proof}

As $\sigma$ is faithful, we denote also by $L^{\infty}_H(\mathcal{F},\End(V_{\pi}))$ its image under $\sigma$ and equipped it with the $C^*$-structure as above. 

Now we regard $L^{\infty}_H(\mathcal{F},\End(V_{\pi}))$ as the $\Gamma$-invariant functions in $L^{\infty}_H(\mathcal{D},\End(V_{\pi}))$ (see Proposition \ref{pTfga}).  
For a given $f\in L^{\infty}_H(\mathcal{F},\End(V_{\pi}))$, we denote by $\tilde{f}$ its $\Gamma$-invariant lifting to $L^{\infty}(\mathcal{D},\End(V_{\pi}))$ as follows.  
For $w\in\mathcal{D}$, there is a unique $\gamma\in \Gamma$ such that $w=\gamma z$. 
Let $\gamma_{\pi}(z)=\pi(J(\gamma,z))=\pi(\kappa(\gamma\exp z))$, the function $\tilde{f}(w)$ with $w\in \mathcal{D}$ is given as following:  
\begin{center}
$\tilde{f}(w)=\tilde{f}(\gamma z)=\gamma_{\pi}(z)f(z)\gamma_{\pi}(z)^{-1}$, for all $\gamma\in \Gamma,z\in \mathcal{F}$.  
\end{center}
The Toeplitz operator $T_{\tilde{f}}$ will be denote simply by $T_f$ as Section \ref{steomat}. 

\begin{lemma}\label{lextFD}
For $z\in\mathcal{F}$, we have 
\begin{center}
$H_{\gamma z}\tilde{f}(\gamma z)H_{\gamma z}^{-1}=\pi(k(\gamma,z))^*H_zf(z)H_z^{-1}\pi(k(\gamma,z))$    
\end{center}
with some $k(\gamma,z)\in K$. 
Hence the $\End(V_{\pi})$-valued function $\tilde{f}$ is in $L^{\infty}_H(\mathcal{D},\End(V_{\pi}))$ and gives a well-defined $\Gamma$-intertwining Toeplitz operator $T_{\tilde{f}}$. 
\end{lemma}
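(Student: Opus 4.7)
The plan is to reduce everything to the decomposition $h_{\gamma z} = \kappa(\gamma\exp z)\,h_z\,k(\gamma,z)$ that was established in the proof of Proposition \ref{pS}(v). First I would extract from that argument the identity $\pi(k(\gamma,z))H_{\gamma z} = H_z \pi(J(\gamma,z))^{-1}$, or equivalently
\begin{equation*}
H_{\gamma z} = \pi(k(\gamma,z))^{-1}\,H_z\,\pi(J(\gamma,z))^{-1},\qquad H_{\gamma z}^{-1} = \pi(J(\gamma,z))\,H_z^{-1}\,\pi(k(\gamma,z)),
\end{equation*}
with $k(\gamma,z)\in K$. Substituting the definition $\tilde f(\gamma z)=\pi(J(\gamma,z))f(z)\pi(J(\gamma,z))^{-1}$ and simplifying, the two copies of $\pi(J(\gamma,z))^{\pm 1}$ cancel in the middle and one is left with $\pi(k(\gamma,z))^{-1}H_zf(z)H_z^{-1}\pi(k(\gamma,z))$. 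Because $k(\gamma,z)\in K$ and $\pi|_K$ is unitary (Remark \ref{r1}), $\pi(k(\gamma,z))^{-1}=\pi(k(\gamma,z))^*$, which yields the stated formula.

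For the essential boundedness, I would then observe that conjugation by the unitary $\pi(k(\gamma,z))$ preserves the Frobenius and operator norms. Hence
\begin{equation*}
\|H_{\gamma z}\tilde f(\gamma z)H_{\gamma z}^{-1}\|_F = \|H_zf(z)H_z^{-1}\|_F
\end{equation*}
for every $z\in\mathcal{F}$ and every $\gamma\in\Gamma$, so the $\Gamma$-periodic extension $\tilde f$ satisfies $\|\tilde f\|_{H,\infty}=\|f\|_{H,\infty}<\infty$, placing $\tilde f$ in $L^{\infty}_H(\mathcal{D},\End(V_\pi))$.

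Finally, to get the $\Gamma$-intertwining property of $T_{\tilde f}$, I would invoke Proposition \ref{pTfga}: it suffices to check the pointwise identity $\tilde f(\gamma^{-1}w)=\pi(J(\gamma^{-1},w))\tilde f(w)\pi(J(\gamma^{-1},w))^{-1}$ for $w\in\mathcal{D}$, $\gamma\in\Gamma$. Writing $w=\gamma_0 z$ with $z\in\mathcal{F}$ and using the cocycle identity $J(\gamma^{-1}\gamma_0,z)=J(\gamma^{-1},\gamma_0z)J(\gamma_0,z)$, both sides reduce to $\pi(J(\gamma^{-1}\gamma_0,z))f(z)\pi(J(\gamma^{-1}\gamma_0,z))^{-1}$, which is exactly the defining formula for $\tilde f$ at $\gamma^{-1}\gamma_0 z$.

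The calculation has no serious obstacle; the only delicate point is keeping track of the order of conjugation and of the inverses when inserting the decomposition of $h_{\gamma z}$ into $H_{\gamma z}\tilde f(\gamma z)H_{\gamma z}^{-1}$, and ensuring that the cancellation between $\pi(J(\gamma,z))$ and $\pi(J(\gamma,z))^{-1}$ really occurs on both sides of $f(z)$ so that what survives is only conjugation by the $K$-valued factor $\pi(k(\gamma,z))$.
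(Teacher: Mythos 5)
Your proof is correct and follows essentially the same route as the paper: it uses the relation $h_{\gamma z}=\kappa(\gamma\exp z)\,h_z\,k(\gamma,z)$ from the proof of Proposition \ref{pS}(v) to write $H_{\gamma z}=\pi(k(\gamma,z))^{-1}H_z\pi(J(\gamma,z))^{-1}$, cancels the automorphy factors against those in the definition of $\tilde f$, and invokes unitarity of $\pi|_K$ for both the identity $\pi(k)^{-1}=\pi(k)^*$ and the norm preservation. Your explicit cocycle verification of the hypothesis of Proposition \ref{pTfga} is slightly more detailed than the paper's one-line citation, but it is the same argument.
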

\begin{proof}
Recall $H_z=\pi(h_z^{-1})=\pi(\kappa(g_z)^{-1})$ (see Section \ref{sBeretr}). 
In the proof of Proposition \ref{pS}.(v), we know there is $k(\gamma,z)\in K$ such that $k(\gamma,z)=h_z^{-1}\kappa(\gamma\exp z)^{-1}h_{\gamma z}$. 
Then $\pi(k(\gamma,z))=H_z\pi(\kappa(\gamma\exp z)^{-1})H_{\gamma z}^{-1}$. 
We obtain
\begin{equation*}
\begin{aligned}
H_{\gamma z}\tilde{f}(\gamma z)H_{\gamma z}^{-1}&=\pi(k(\gamma,z))^{-1}H_z\pi(\gamma\exp z)^{-1})\tilde{f}(\gamma z)\pi(\gamma\exp z))H_z^{-1}\pi(k(\gamma,z))\\
&=\pi(k(\gamma,z))^{-1}H_zf(z)H_z^{-1}\pi(k(\gamma,z)). 
\end{aligned}
\end{equation*}
Note $k(\gamma,z)\in K$ and $\pi$ is a unitary representation of $K$. 
We have $\|H_{\gamma z}\tilde{f}(\gamma z)H_{\gamma z}^{-1}\|_{\text{op}}=\|H_zf(z)H_z^{-1}\|_{\text{op}}$. 

The $\Gamma$-intertwining property follows from the definition of $\tilde{f}$ and Proposition \ref{pTfga}. 
\end{proof}

Now we define a map $B$ on $L^{\infty}_H(\mathcal{F},\End(V_\pi))$ by
\begin{center}
$Bf(z)=\frac{1}{c_\pi}E_z^*T_{f}E_zH_z^*H_z$, $z\in \mathcal{D}$, 
\end{center}
for $f\in L^{\infty}_H(\mathcal{F},\End(V_\pi))$. 
It is related to the Berezin symbols by $Bf(z)=H_z^{-1}S(T_{f})(z)H_z$. 
We denote the $\pi(k(\gamma,z))^*$ by $k_{\pi}(\gamma,z)$.

\begin{lemma}\label{ldefB}
For $f\in L^{\infty}_H(\mathcal{F},\End(V_\pi))$, we have $Bf(z)\in L^{\infty}_H(\mathcal{F},\End(V_\pi))$ which can be given as
\begin{equation*}
\begin{aligned}
Bf(z)&=H_z^{-1}\int_{\mathcal{F}}\frac{1}{c_{\pi}}\sum_{\gamma\in \Gamma}R({\gamma w},z)(H_{\gamma w}\gamma_{\pi}(w)f(w)\gamma_{\pi}(w)^{-1}H_{\gamma w}^{-1})R({\gamma w},z)^*d\mu(w)H_z\\
&=H_z^{-1}\int_{\mathcal{F}}\frac{1}{c_{\pi}}\sum_{\gamma\in \Gamma}R({\gamma w},z)k_{\pi}(\gamma,z)(H_{w}f(w)H_{ w}^{-1}))k_{\pi}(\gamma,z)^*R({\gamma w},z)^*d\mu(w)H_z
\end{aligned}
\end{equation*}
Furthermore, if we take $f=I_{V_\pi}$, $BI_{V_\pi}(z)=I_{V_\pi}$.  
%and $B_0I_{V_\pi}(z)=1$.  
\end{lemma}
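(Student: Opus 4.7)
The plan is to substitute the stated identity $Bf(z) = H_z^{-1} S(T_f)(z) H_z$ and evaluate $S(T_f)(z)$ by means of Lemma \ref{lstf}, applied to the $\Gamma$-equivariant lifting $\tilde f$ of $f$. This lifting lies in $L^\infty_H(\mathcal D,\End(V_\pi))$ by Lemma \ref{lextFD}, so $T_f=T_{\tilde f}$ is well-defined and bounded by Proposition \ref{pTfppt}, and Lemma \ref{lstf} yields
$$S(T_f)(z) = \tfrac{1}{c_\pi}\int_{\mathcal D} R(w,z)\,H_w \tilde f(w) H_w^{-1}\,R(w,z)^*\,d\mu(w).$$

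I would then fold this integral over the fundamental domain. Using the disjoint decomposition $\mathcal D=\bigsqcup_{\gamma\in\Gamma}\gamma\mathcal F$ and the $\Gamma$-invariance of $\mu$, the integral splits as $\sum_{\gamma}\int_{\mathcal F}\,\cdot\,|_{w\mapsto\gamma w}\,d\mu(w)$. The exchange of sum and integral is justified by absolute convergence: applying Lemma \ref{lstf} to $f\equiv I_{V_\pi}$ and using $S(I_{H_\pi})(z)=I_{V_\pi}$ (Proposition \ref{pS}(iii)) gives $\int_{\mathcal D}\Tr(\delta(z,w))\,d\mu(w)=nc_\pi<\infty$, which bounds $\int_{\mathcal D}\|R(w,z)\|_F^2\,d\mu(w)$. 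Substituting the lifting rule $\tilde f(\gamma w)=\gamma_\pi(w) f(w)\gamma_\pi(w)^{-1}$ produces the first displayed formula. Applying Lemma \ref{lextFD} at the base point $w$ (so that $H_{\gamma w}\tilde f(\gamma w) H_{\gamma w}^{-1}=k_\pi(\gamma,w)\,H_w f(w) H_w^{-1}\,k_\pi(\gamma,w)^{*}$) converts the first formula into the second.

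To verify that $Bf\in L^\infty_H(\mathcal F,\End(V_\pi))$, observe that $(Bf)_H(z)=H_z Bf(z) H_z^{-1}=S(T_f)(z)$, so it suffices to bound $\|S(T_f)(z)\|$ uniformly in $z$. A short calculation using $E_z^*E_z=c_\pi\pi(\kappa(z,z))$ from Lemma \ref{lkappa}(i), together with the identity $\pi(\kappa(z,z))=(H_z^*H_z)^{-1}$ read from the proof of Proposition \ref{pS}(iii), gives $\|E_z H_z^* v\|_{H_\pi}^2 = c_\pi\|v\|_\pi^2$; Cauchy--Schwarz then yields $|\langle S(T_f)(z)v,u\rangle_\pi|\le \|T_f\|\cdot\|v\|_\pi\|u\|_\pi$, and finite-dimensionality of $V_\pi$ converts this to a Frobenius bound. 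For the final assertion, the lifting of the constant function $I_{V_\pi}$ is itself constantly $I_{V_\pi}$, so $T_{I_{V_\pi}}=I_{H_\pi}$; applying Proposition \ref{pS}(iii) one more time gives $BI_{V_\pi}(z)=H_z^{-1}I_{V_\pi}H_z=I_{V_\pi}$.

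The main obstacle is not any single hard step but the bookkeeping: one must justify exchanging the sum and integral (which is why the $I_{V_\pi}$-case of Lemma \ref{lstf} is invoked first to control $\int_{\mathcal D}\|R(w,z)\|_F^2\,d\mu(w)$), and one must apply Lemma \ref{lextFD} with base point $w$ rather than $z$ in order for the substitution $H_{\gamma w}\tilde f(\gamma w)H_{\gamma w}^{-1}\mapsto k_\pi(\gamma,\cdot)H_w f(w)H_w^{-1}k_\pi(\gamma,\cdot)^*$ to land on the conjugation by the actual automorphy factor associated with the translate $\gamma w$.
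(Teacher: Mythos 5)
Your proposal is correct and follows essentially the same route as the paper, which simply invokes $Bf(z)=H_z^{-1}S(T_f)(z)H_z$ together with the integral formula for $S(T_f)$ (Lemma \ref{lstf}), the $\Gamma$-equivariant lifting, and Lemma \ref{lextFD}; you merely supply the unfolding of $\int_{\mathcal D}$ over $\bigsqcup_\gamma \gamma\mathcal F$ and the domination argument via $\int_{\mathcal D}\|R(w,z)\|_F^2\,d\mu(w)=nc_\pi$ that the paper leaves implicit. Your observation that Lemma \ref{lextFD} must be applied at base point $w$, so that the conjugating factor is $k_\pi(\gamma,w)$ rather than the $k_\pi(\gamma,z)$ printed in the statement, is also correct.
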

\begin{proof}
As $Bf(z)=H_z^{-1}S(T_f)(z)H_z$, it follows then by Proposition \ref{pTfga} and Proposition \ref{lextFD}. 
Then the case $f=I_{V_{\pi}}$ is straightforward by Proposition \ref{pS}. 
\end{proof}

\begin{proposition}\label{pextB}
The map $B$ defined on $L^{\infty}_H(\mathcal{F},\End(V_\pi))$ can be extended to a bounded operator on $L^2_H(\mathcal{F},\End(V_\pi))$. 
\end{proposition}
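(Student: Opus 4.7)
\emph{Strategy.} The plan is to identify $B$ with a unital completely positive map on the C*-algebra $L^{\infty}_{H}(\mathcal{F},\End(V_{\pi}))$ that preserves the tracial weight $\omega(f):=\int_{\mathcal{F}}\tr(f_{H}(z))\,d\mu(z)$, and then deduce contractivity on $L^{2}_{H}$ from the Kadison--Schwarz inequality. Since $\mu(\mathcal{F})<\infty$, a standard truncation shows $L^{\infty}_{H}$ is dense in $L^{2}_{H}$, so it suffices to prove $\|Bf\|_{L^{2}_{H}}\le\|f\|_{L^{2}_{H}}$ for $f\in L^{\infty}_{H}$ and then extend.

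\emph{Complete positivity and unitality.} Lemma \ref{ldefB} writes $(Bf)_{H}(z)=S(T_{f})(z)$ as
\[
(Bf)_{H}(z)=\sum_{\gamma\in\Gamma}\int_{\mathcal{F}}U_{\gamma}(z,w)\,f_{H}(w)\,U_{\gamma}(z,w)^{*}\,d\mu(w),\qquad U_{\gamma}(z,w)=c_{\pi}^{-1/2}R(\gamma w,z)k_{\pi}(\gamma,z).
\]
At each fixed $z$ this exhibits $\Phi_{z}:f\mapsto(Bf)_{H}(z)$ as an integral/sum of elementary Kraus-type CP operations $X\mapsto U_{\gamma}XU_{\gamma}^{*}$ composed with the $*$-homomorphism $f\mapsto f_{H}$, hence CP as a map $L^{\infty}_{H}(\mathcal{F},\End(V_{\pi}))\to\End(V_{\pi})$, and unital by $BI_{V_{\pi}}=I_{V_{\pi}}$ (Lemma \ref{ldefB}). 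A direct computation using Proposition \ref{pLHact} shows the C*-involution $f\mapsto f^{\#}$ on $L^{\infty}_{H}$ satisfies $(f^{\#})_{H}=(f_{H})^{*}$, so Kadison--Schwarz applied to $\Phi_{z}$ gives the pointwise matrix inequality
\[
(Bf)_{H}(z)^{*}(Bf)_{H}(z)\;\le\;(B(f^{\#}f))_{H}(z)\qquad\text{in }\End(V_{\pi}).
\]

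\emph{Trace preservation and conclusion.} I verify $\omega\circ B=\omega$ by equating two expressions for $\tau(T_{f})$. Proposition \ref{ptr1} gives $\mu(\mathcal{F})\tau(T_{f})=\int_{\mathcal{F}}\tr(S(T_{f})(z))\,d\mu(z)=\omega(Bf)$, while Proposition \ref{ptr2} at $A=I$ gives $\mu(\mathcal{F})\tau(T_{f})=\int_{\mathcal{F}}\tr(f(z)Q(I)(z))\,d\mu(z)$. Combining $E_{z}^{*}E_{z}=c_{\pi}\pi(\kappa(z,z))$ from Lemma \ref{lkappa}(i) with the identity $\pi(\kappa(z,z))^{-1}=H_{z}^{*}H_{z}$ derived in the proof of Lemma \ref{lproj} yields $Q(I)(z)=I_{V_{\pi}}$, so the second expression reduces to $\int_{\mathcal{F}}\tr(f_{H}(z))\,d\mu(z)=\omega(f)$; hence $\omega(Bf)=\omega(f)$. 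Integrating the Kadison--Schwarz inequality against $\Tr=n\cdot\tr$ over $\mathcal{F}$ and invoking this trace identity with $g=f^{\#}f\in L^{\infty}_{H}$,
\[
\|Bf\|_{L^{2}_{H}}^{2}=\int_{\mathcal{F}}\Tr\bigl((Bf)_{H}(z)^{*}(Bf)_{H}(z)\bigr)\,d\mu(z)\;\le\;n\,\omega\bigl(B(f^{\#}f)\bigr)=n\,\omega(f^{\#}f)=\|f\|_{L^{2}_{H}}^{2}.
\]
Density of $L^{\infty}_{H}$ in $L^{2}_{H}$ then extends $B$ uniquely to a bounded (in fact contractive) operator on $L^{2}_{H}(\mathcal{F},\End(V_{\pi}))$. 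The main technical hurdle is the trace-preservation identity $\omega\circ B=\omega$: once the bookkeeping verifying $Q(I)=I$ and $(f^{\#})_{H}=(f_{H})^{*}$ is unwound, Kadison--Schwarz and density finish the argument essentially for free.
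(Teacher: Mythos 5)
Your proof is correct, and it reaches the same first milestone as the paper — a pointwise Kadison--Schwarz inequality for the unital positive map $\phi_z\colon f\mapsto (Bf)_H(z)$ read off from Lemma \ref{ldefB} — but then diverges in how the resulting quantity $\int_{\mathcal{F}}\Tr\bigl(\phi_z(f^{\#}f)\bigr)\,d\mu(z)$ is controlled. The paper attacks this integral head-on: it unfolds the kernel, swaps the order of integration over a fundamental domain of $\mathcal{D}\times\mathcal{D}$ using the $\Gamma$-invariance of $\|R(w,z)\|_F^2$, applies submultiplicativity of the Frobenius norm, and closes the loop with $BI_{V_\pi}=I_{V_\pi}$, arriving at $\|Bf\|^2\le n\|f\|^2$. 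You instead observe that this integral is exactly $n\,\omega(B(f^{\#}f))$ and that $\omega\circ B=\omega$ follows formally by equating the two trace formulas for $\tau(T_g)$ already proved in Propositions \ref{ptr1} and \ref{ptr2} (together with $Q(I)=I$, which you verify correctly). This is legitimate — Proposition \ref{ptr2} precedes this result and its proof already contains the fundamental-domain-swapping argument, so you are reusing rather than re-deriving that analytic content — and it buys you two things: the sharper bound $\|B\|\le 1$ (the paper's Frobenius submultiplicativity step is what costs the factor $n$), and a cleaner treatment of non-normal $f$. On that last point your argument is actually tighter than the paper's: the paper invokes Kadison's inequality after saying ``without loss of generality $f$ is normal,'' which as stated is only a reduction up to a constant (one must split $f$ into self-adjoint parts), whereas your appeal to complete positivity of $\phi_z$ via its Kraus form makes the Schwarz inequality valid for arbitrary $f$ directly. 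The one place to be slightly careful is the assertion that the sum-integral Kraus representation converges well enough to justify complete positivity; this is implicit in the well-definedness of $S(T_f)$ and is at the same level of rigor as the paper, so it is not a gap.
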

\begin{proof}
As $\mu(\mathcal{F})<\infty$, $L^{\infty}_H(\mathcal{F},\End(V_{\pi}))$ is a dense subspace of $L^{2}_H(\mathcal{F},\End(V_{\pi}))$. 
It suffices to show
\begin{center}
$\|Bf\|^2_{L^{2}_H(\mathcal{F},\End(V_{\pi}))}\leq C\cdot \|f\|^2_{L^{2}_H(\mathcal{F},\End(V_{\pi}))}$
\end{center}
for any $f\in L^{\infty}_H(\mathcal{F},\End(V_{\pi}))$. 

Take any $z\in \mathcal{F}$ and consider the following map
\begin{equation*}
\begin{aligned}
\phi_z\colon ~~&L^{\infty}_H(\mathcal{F},\End(V_\pi)) &\to&~~~ \End(V_\pi)&\\
&~~~~~~~~~~~~~f&\mapsto&~~~\phi_zf=H_z\cdot Bf(z)\cdot H_z^{-1}&
\end{aligned}
\end{equation*}
We first show $\phi_z$ is a unital positive map. 
By 2 of Proposition \ref{pLHact}, we assume $\sigma(f)$ (or simply $f$) is positive, i.e., $H_zf(z)h_z^{-1}=g(z)^*g(z)$ for some $g\in L^{\infty}(\mathcal{F},\End(V_{\pi}))$. 
Hence $\phi_z$ is positive as the image 
\begin{center}
$\phi_z(f)=\int_{\mathcal{F}}\frac{1}{c_{\pi}}\sum_{\gamma\in \Gamma}R({\gamma w},z)k_{\pi}(\gamma,z)^*(g(w)^*g(w))R({\gamma w},z)^*d\mu(w)$     
\end{center}
is positive in $\End(V_{\pi})$.
Furthermore, $\phi_z$ is unital by Lemma \ref{ldefB}.

Witout loss of generality, we may assume $f$ is normal (or self-adjoint).
Then, by Kadison's inequality \cite{Ka52}, we have
\begin{equation*}
\begin{aligned}
&\phi_z(f)\phi_z(f^*)\leq \phi_z(ff^*)\\
=&\int_{\mathcal{F}}\frac{1}{c_{\pi}}\sum_{\gamma\in \Gamma}R({\gamma w},z)k_{\pi}(\gamma,z)^*(H_{w}f(w)H_w^{-1}(H_w^*)^{-1}f(w)^*H_w^*H_wH_{w}^{-1})R({\gamma w},z)^*d\mu(w)\\
=&\int_{\mathcal{F}}\frac{1}{c_{\pi}}\sum_{\gamma\in \Gamma}R({\gamma w},z)k_{\pi}(\gamma,z)^*(H_{w}f(w)H_w^{-1}(H_w^*)^{-1}f(w)^*H_w^*)R({\gamma w},z)^*d\mu(w). 
\end{aligned}
\end{equation*}

Consider the $L^2_H$-norm of $Bf$. 
Note that $R(w,z)^*=R(z,w)$ and $\|R(w,z)\|_F^2$ is $\Gamma$-invariant by Lemma \ref{linvtr}. 
Hence we have
%\begin{equation*}
\begin{align*}
&\|Bf\|^2_{L^{2}_H(\mathcal{F},\End(V_{\pi}))}\\
=&\int_{\mathcal{F}}\Tr(H_zBf(z)H_z^{-1}(H_z^{-1})^*Bf(z)^*H_z^*)d\mu(z)\\
=&\int_{\mathcal{F}}\Tr(\phi_z(f)\phi_z(f^*))d\mu(z)\leq \int_{\mathcal{F}}\Tr(\phi_z(ff^*))d\mu(z)\\
=&\int_{\mathcal{F}}\Tr(\int_{\mathcal{F}}\frac{1}{c_{\pi}}\sum_{\gamma\in \Gamma}R({\gamma w},z)k_{\pi}(\gamma,z)^*(H_{w}f(w)H_w^{-1}(H_w^*)^{-1}f(w)^*H_w^*)k_{\pi}(\gamma,z)R({\gamma w},z)^*d\mu(w))d\mu(z)\\
=&\frac{1}{c_{\pi}}\int_{\mathcal{F}}\int_{\mathcal{F}}\sum_{\gamma\in \Gamma}\Tr(R({\gamma w},z)k_{\pi}(\gamma,z)^*(H_{w}f(w)H_w^{-1}(H_w^*)^{-1}f(w)^*H_w^*)k_{\pi}(\gamma,z)R({\gamma w},z)^*)d\mu(w)d\mu(z)\\
=&\frac{1}{c_{\pi}}\int_{\mathcal{F}}\int_{\mathcal{F}}\sum_{\gamma\in \Gamma}\|R({\gamma w},z)k_{\pi}(\gamma,z)^*H_{w}f(w)H_w^{-1}\|_F^2 d\mu(w)d\mu(z)\\
\leq &\frac{1}{c_{\pi}}\int_{\mathcal{F}}\int_{\mathcal{F}}\sum_{\gamma\in \Gamma}\|R({\gamma w},z)\|_F^2 \|H_{w}f(w)H_w^{-1}\|_F^2 d\mu(w)d\mu(z)\\
= &\frac{1}{c_{\pi}}\int_{\mathcal{F}}\bigl(\|H_{w}f(w)H_w^{-1}\|_F^2\cdot \int_{\mathcal{F}}\sum_{\gamma\in \Gamma}\|R({\gamma w},z)\|_F^2  d\mu(z)\bigr)d\mu(w)\\
= &\frac{1}{c_{\pi}}\int_{\mathcal{F}}\bigl(\|H_{w}f(w)H_w^{-1}\|_F^2\cdot \int_{\mathcal{F}}\sum_{\gamma\in \Gamma}\|R({ w},\gamma^{-1}z)\|_F^2  d\mu(z)\bigr)d\mu(w)\\
= &\frac{1}{c_{\pi}}\int_{\mathcal{F}}\bigl(\|H_{w}f(w)H_w^{-1}\|_F^2\cdot \int_{\mathcal{F}}\sum_{\gamma\in \Gamma}\Tr(R({ w},\gamma^{-1}z)R({ w},\gamma^{-1}z)^*)  d\mu(z)\bigr)d\mu(w)\\
= &\frac{1}{c_{\pi}}\int_{\mathcal{F}}\bigl(\|H_{w}f(w)H_w^{-1}\|_F^2\cdot \int_{\mathcal{F}}\sum_{\gamma\in \Gamma}\Tr(R({ \gamma^{-1}z},w)^*R({ \gamma^{-1}z},w)^*)  d\mu(z)\bigr)d\mu(w)\\
= &\frac{1}{c_{\pi}}\int_{\mathcal{F}}\bigl(\|H_{w}f(w)H_w^{-1}\|_F^2\cdot \Tr(\int_{\mathcal{F}}\sum_{\gamma\in \Gamma}R({ \gamma^{-1}w},z)^*R({ \gamma^{-1}w},z)^*  d\mu(z))\bigr)d\mu(w)\\
=&\int_{\mathcal{F}} n\cdot\|H_{w}f(w)H_w^{-1}\|_F^2\mu(w)\\
=& n\cdot \|f\|^2_{L^{2}_H(\mathcal{F},\End(V_{\pi}))},
\end{align*}
%\end{equation*}
where we also apply  $BI_{V_{\pi}}(z)=I_{V_{\pi}}$ (see Lemma \ref{ldefB}). 
Hence $B$ is bounded on the $L^2$-space.  
\end{proof}
\begin{corollary}\label{cBinj}
The operator $B$ is injective on $L^{\infty}_H(\mathcal{F},\End(V_\pi))$. 
\end{corollary}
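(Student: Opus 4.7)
The plan is to reduce the injectivity of $B$ to two successive injectivity results already in hand: the injectivity of the Berezin symbol map $A\mapsto S(A)$ on $B(H_\pi)$ from Proposition \ref{pS}(i), and the injectivity of the matrix-valued Toeplitz quantization $\tilde f\mapsto T_{\tilde f}$.

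Suppose $f\in L^{\infty}_H(\mathcal{F},\End(V_\pi))$ satisfies $Bf=0$. The identity $Bf(z)=H_z^{-1}S(T_f)(z)H_z$ recorded in the definition of $B$, combined with invertibility of $H_z$, gives $S(T_f)(z)=0$ almost everywhere on $\mathcal{F}$. Since $T_f\in A_\pi$ by Proposition \ref{pTfga} and Lemma \ref{lextFD}, the transformation rule of Proposition \ref{pS}(v) makes $\|S(T_f)(z)\|$ a $\Gamma$-invariant quantity, so the vanishing extends from $\mathcal{F}$ to all of $\mathcal{D}$; the real-analyticity of $S(T_f)$ inherited from the integral formula of Lemma \ref{lstf} then upgrades the a.e.\ vanishing to pointwise vanishing on the connected domain $\mathcal{D}$. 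Proposition \ref{pS}(i) yields $T_f=0$.

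It remains to show $T_f=0$ implies $f=0$. Writing $\tilde f$ for the $\Gamma$-equivariant lift of $f$ from Lemma \ref{lextFD} and testing $T_f=P_\pi M_f$ against the reproducing kernels with $(E_wv)(s)=c_\pi\pi(\kappa(s,w))v$, the vanishing $T_f=0$ translates, via $\pi(\kappa(s,z))^*=\pi(\kappa(z,s))$, into
\begin{equation*}
\int_{\mathcal{D}}\pi(\kappa(z,s))H_s^*H_s\tilde f(s)\pi(\kappa(s,w))\,d\mu(s)=0,\qquad z,w\in\mathcal{D}.
\end{equation*}
The integrand is sesqui-holomorphic in $(z,w)$. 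Specializing to the diagonal $w=z$ and pairing with $u,v\in V_\pi$ realizes the left-hand side as the matrix-valued Berezin transform of the bounded symbol $H_s^*H_s\tilde f(s)$ evaluated at $z$. Injectivity of the Berezin transform on $L^\infty(\mathcal{D})$ for bounded symmetric domains (classical for scalar symbols, extended entry-by-entry to the $\End(V_\pi)$-valued setting after varying $u,v$), combined with invertibility of $H_s^*H_s$, forces $\tilde f\equiv 0$ on $\mathcal{D}$, hence $f=0$ in $L^{\infty}_H(\mathcal{F},\End(V_\pi))$.

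The main obstacle is this final deduction $T_f=0\Rightarrow f=0$. While the reproducing-kernel rewriting is routine, extracting pointwise vanishing of $\tilde f$ from its Berezin average genuinely rests on the injectivity of the matrix-valued Berezin transform, a substantive ingredient beyond the purely algebraic manipulations of Propositions \ref{pK} and \ref{pS}, though classical and available in the scalar case.
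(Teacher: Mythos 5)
Your first step, while correct, carries essentially no content: since $Bf(z)=H_z^{-1}S(T_f)(z)H_z$ with $H_z$ invertible, and $A\mapsto S(A)$ is injective on $B(H_\pi)$ by Proposition \ref{pS}, the equivalence $Bf=0\Leftrightarrow T_f=0$ is little more than a restatement of the definition of $B$ (your use of $\Gamma$-invariance and real-analyticity to pass from a.e.\ vanishing on $\mathcal{F}$ to vanishing on $\mathcal{D}$ is sound). The entire content of the corollary is therefore concentrated in your second step, $T_f=0\Rightarrow f=0$, and that is exactly where the argument breaks down. By Lemma \ref{lstf}, $S(T_f)$ \emph{is} the matrix-valued weighted Berezin transform of the symbol $H_wf(w)H_w^{-1}$, so ``injectivity of the Berezin transform on bounded symbols'' is not an auxiliary classical fact you may import --- it is literally the statement of Corollary \ref{cBinj} that is being proved, and invoking it is circular. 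Moreover, the proposed reduction ``entry-by-entry after varying $u,v$'' does not go through: pairing with $u,v$ yields $\int_{\mathcal{D}}\langle H_s^*H_s\tilde f(s)\pi(\kappa(s,z))u,\pi(\kappa(s,z))v\rangle_{\pi}\,d\mu(s)=0$, and the $z$-dependent conjugation by $\pi(\kappa(s,z))$ mixes the matrix entries of the symbol, so this is not $n^2$ decoupled scalar Berezin transforms. Only in the case $\dim V_\pi=1$ (and for the specific weight attached to $\pi$) could one hope to quote an external scalar injectivity theorem, and even there the paper does not rely on one.

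The paper closes precisely this gap by a different, self-contained mechanism that your proposal does not touch. From the proof of Proposition \ref{pextB} one has $\|Bf\|^2_{L^2_H}=\int_{\mathcal{F}}\Tr\bigl(\phi_z(f)\phi_z(f)^*\bigr)\,d\mu(z)$, where $\phi_z(f)=\int_{\mathcal{F}}\frac{1}{c_\pi}\sum_{\gamma\in\Gamma}R(\gamma w,z)k_{\pi}(\gamma,z)\bigl(H_wf(w)H_w^{-1}\bigr)k_{\pi}(\gamma,z)^*R(\gamma w,z)^*\,d\mu(w)$ is a unital positive map; thus $Bf=0$ forces $\phi_z(f)=0$ for every $z$, and writing $H_wf(w)H_w^{-1}=g(w)+i\,h(w)$ with $g,h$ self-adjoint, the manifest positivity of the kernel $X\mapsto R(\gamma w,z)k_{\pi}(\gamma,z)Xk_{\pi}(\gamma,z)^*R(\gamma w,z)^*$ separates the two self-adjoint contributions and yields $g=h=0$. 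To repair your factorization you would need to supply an actual proof of $T_f=0\Rightarrow f=0$ at this level of generality --- for instance by such a positivity argument, or by a completeness/density statement for the coherent states $E_zH_z^*v$ strong enough to recover an $\End(V_\pi)$-valued symbol from its twisted quadratic averages --- rather than a citation.
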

\begin{proof}
Take $f\in L^{\infty}_H(\mathcal{F},\End(V_\pi))$, we know $H_zf(z)H_z^{-1}\in L^{\infty}(\mathcal{F},\End(V_\pi)$. 
Rewrite $H_zf(z)H_z^{-1}=g(z)+i\cdot h(z)$ for some $f,g\in L^{\infty}(\mathcal{F},\End(V_\pi))$ such that $g(z)^*=g(z),h(z)^*=h(z)$ for all $z\in\mathcal{F}$. 

We assume $Bf=0$. 
From the proof of \ref{pextB}, we
know 
\begin{center}
$\|Bf\|^2_{L^{2}_H(\mathcal{F},\End(V_{\pi}))}
=\int_{\mathcal{F}}\Tr(\phi_z(f)\phi_z(f^*))d\mu(z)=0$.  
\end{center} 
So $\phi_z(f)=0$ for all $z$, which is to say
\begin{align*}
\Tr(\phi_z(f))&=\int_{\mathcal{F}}\frac{1}{c_{\pi}}\sum_{\gamma\in \Gamma}R({\gamma w},z)k_{\pi}(\gamma,z)(g(w)+i\cdot h(w))k_{\pi}(\gamma,z)^*R({\gamma w},z)^*d\mu(w)\\
&=\Tr\bigl(\int_{\mathcal{F}}\frac{1}{c_{\pi}}\sum_{\gamma\in \Gamma}R({\gamma w},z)k_{\pi}(\gamma,z)g(w)k_{\pi}(\gamma,z)^*R({\gamma w},z)^*d\mu(w)\bigr)\\
&~~~+i\cdot\Tr\bigl(\int_{\mathcal{F}}\frac{1}{c_{\pi}}\sum_{\gamma\in \Gamma}R({\gamma w},z)k_{\pi}(\gamma,z) h(w)k_{\pi}^*(\gamma,z)R({\gamma w},z)^*d\mu(w)\bigr). 
\end{align*}
Hence it imlplies $g(w)=h(w)=0$ for all $w\in \mathcal{F}$ and $f=0$. 
\end{proof}

%We denote the extended map also by $B$ and write $T_{\tilde{f}}$  simply as $T_f$.  

%************ "bounded on L^2" ends here

\begin{proposition}
\label{pex2}
The map $T\colon L^{\infty}_H(\mathcal{F},\End(V_\pi))\to A_{\pi}$ given by $f\mapsto T_{f}$
can be extended to a bounded linear operator $L^2_H(\mathcal{F},\End(V_\pi))\to L^2(A_{\pi},\tau)$. 
\end{proposition}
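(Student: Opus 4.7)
The plan is to prove the extension by establishing a uniform bound
\[
\|T_f\|_{L^2(A_\pi,\tau)}^2 \;=\; \tau(T_f^*T_f)\;\leq\; C\,\|f\|_{L^2_H(\mathcal{F},\End(V_\pi))}^2
\]
on the already-dense subspace $L^\infty_H(\mathcal{F},\End(V_\pi))\subset L^2_H(\mathcal{F},\End(V_\pi))$, and then to extend by continuity in the standard way. Density is immediate because $\mu(\mathcal{F})<\infty$, so the substance of the proof is the norm estimate, and the natural tool for it is the operator $B$ whose $L^2_H$-boundedness was just established in Proposition \ref{pextB}.

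First I would compute $\tau(T_f^*T_f)$ using the trace formula of Proposition \ref{ptr2}. Since $T_f\in A_\pi$ (by the $\Gamma$-intertwining property in Lemma \ref{lextFD}) and $A_\pi$ is a von Neumann algebra, $T_f^*\in A_\pi$ as well, and Proposition \ref{ptr2} applied with $A=T_f^*$ yields
\[
\tau(T_f^*T_f)=\frac{1}{\mu(\mathcal{F})}\int_{\mathcal{F}}\tr\bigl(f(z)\,Q(T_f^*)(z)\bigr)\,d\mu(z).
\]
Next I would translate this into an expression involving $B$. Using $Q(A)(z)=H_z^{-1}S(A)(z)H_z$, the identity $S(T_f^*)(z)=S(T_f)(z)^*$ from Proposition \ref{pS}(ii), and the relation $S(T_f)(z)=H_z\,Bf(z)\,H_z^{-1}=(Bf)_H(z)$ noted just before Lemma \ref{ldefB}, the cyclicity of $\tr$ gives
\[
\tr\bigl(f(z)\,Q(T_f^*)(z)\bigr)=\tr\bigl(f_H(z)\,(Bf)_H(z)^*\bigr).
\]
Passing from the normalized trace $\tr$ to the unnormalized trace $\Tr=n\cdot\tr$ recognizes the right-hand side as the pointwise integrand of the $L^2_H$ inner product, so
\[
\tau(T_f^*T_f)=\frac{1}{n\,\mu(\mathcal{F})}\,\langle f,\,Bf\rangle_{L^2_H(\mathcal{F},\End(V_\pi))}.
\]

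To finish, I would apply Cauchy--Schwarz together with the bound $\|Bf\|_{L^2_H}\leq\sqrt{n}\,\|f\|_{L^2_H}$ extracted from the chain of inequalities at the end of the proof of Proposition \ref{pextB}, obtaining
\[
\tau(T_f^*T_f)\;\leq\;\frac{1}{n\,\mu(\mathcal{F})}\,\|f\|_{L^2_H}\,\|Bf\|_{L^2_H}\;\leq\;\frac{1}{\sqrt{n}\,\mu(\mathcal{F})}\,\|f\|_{L^2_H}^2.
\]
This is the required uniform bound, so $T$ extends uniquely to a bounded linear map $L^2_H(\mathcal{F},\End(V_\pi))\to L^2(A_\pi,\tau)$. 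The main obstacle I expect is purely bookkeeping: correctly identifying $(Bf)_H$ with $S(T_f)$ and handling the $H_z$-conjugations so that the trace formula of Proposition \ref{ptr2}, stated with the $\End(V_\pi)$-valued function $f$ living on $\mathcal{F}$, matches the $L^2_H$ pairing (which uses the twisted function $f_H$). Once this matching is done cleanly, the rest is Cauchy--Schwarz plus the previously proved boundedness of $B$.
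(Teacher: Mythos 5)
Your proposal is correct and follows essentially the same route as the paper: both proofs reduce $\tau(T_f^*T_f)$ via Proposition \ref{ptr2} (applied with $A=T_f^*$) to the pairing $\frac{1}{n\,\mu(\mathcal{F})}\langle f,Bf\rangle_{L^2_H}$ and then invoke the boundedness of $B$ from Proposition \ref{pextB} to get the uniform estimate on the dense subspace $L^\infty_H(\mathcal{F},\End(V_\pi))$. Your write-up is in fact slightly more careful than the paper's, since you make explicit the identification $\tr(f(z)Q(T_f^*)(z))=\tr(f_H(z)(Bf)_H(z)^*)$ and the Cauchy--Schwarz step that the paper leaves implicit.
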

\begin{proof}
Note $L^{\infty}_H(\mathcal{F},\End(V_\pi))$ is dense in $L^{2}_H(\mathcal{F},\End(V_\pi))$ as $\mu(\mathcal{F})$ is finite. 
Take $f\in L^{\infty}_H(\mathcal{F},\End(V_\pi))$ and consider the trace $\tau(T_f^* T_f)$. 
\begin{equation*}
\begin{aligned}
\|T_f\|_{L^2(A_\pi,\tau)}^2&=\tau(T_f^{*}T_f)
=\frac{1}{\mu(\mathcal{F})}\int_{\mathcal{F}}\tr(f(z)Q(T_f^*)(z))d\mu(z)\\
%&=\frac{1}{\mu(\mathcal{F})}\int_{\mathcal{F}}f(z)\overline{(B_0f)(z)}d\mu(z)\\
&=\frac{1}{n\cdot\mu(\mathcal{F})}\langle f,Bf\rangle_{L^2_H(\mathcal{F},\End(V_\pi))}\leq \frac{\|B\|_{L^2_H(\mathcal{F},\End(V_\pi))}}{n\cdot\mu(\mathcal{F})}\|f\|^2_{L^2_H(\mathcal{F},\End(V_\pi))}.
\end{aligned}
\end{equation*}
Hence $T$ can be extended to a bounded operator on $L^2_H(\mathcal{F},\End(V_{\pi}))$: 
\begin{center}
$T\colon L^2_H(\mathcal{F},\End(V_\pi))\to L^2(A_{\pi},\tau)$    
\end{center}
with $\|T\|\leq \frac{\|B\|_{L^2}}{n\cdot\mu(\mathcal{F})}$, which is bounded by $1$ as shown in Proposition \ref{pextB}. 
\end{proof}

\begin{corollary}\label{cTT}
$T^*(A)=\frac{1}{n\cdot\mu( \mathcal{F})}Q(A)$
\end{corollary}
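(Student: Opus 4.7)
The plan is to test the defining adjoint relation on the dense subspace $L^{\infty}_H(\mathcal{F},\End(V_\pi))\subset L^2_H(\mathcal{F},\End(V_\pi))$ with bounded $A\in A_\pi$, reduce everything to the trace formula of Proposition \ref{ptr2}, and then match the resulting pairing to the inner product on $L^2_H$. Concretely, for $f\in L^{\infty}_H(\mathcal{F},\End(V_\pi))$ and $A\in A_\pi$, I would start from
\begin{center}
$\langle Tf,A\rangle_{L^2(A_\pi,\tau)}=\tau(A^{*}T_f)$,
\end{center}
apply Proposition \ref{ptr2} (with $A$ replaced by $A^{*}$, which also lies in $A_\pi$) to rewrite this as $\frac{1}{\mu(\mathcal{F})}\int_{\mathcal{F}}\tr(f(z)Q(A^{*})(z))\,d\mu(z)$, and convert the normalized trace $\tr$ into the Frobenius trace $\Tr=n\cdot\tr$.

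The key algebraic step is the pointwise identity
\begin{center}
$H_z\,Q(A)(z)\,H_z^{-1}=\tfrac{1}{c_\pi}H_zE_z^{*}AE_zH_z^{*}H_zH_z^{-1}=S(A)(z)$,
\end{center}
i.e.\ $Q(A)_H=S(A)$. Combining this with Proposition \ref{pS}(ii), which says $S(A^{*})(z)=S(A)(z)^{*}$, I get $H_zQ(A^{*})(z)H_z^{-1}=Q(A)_H(z)^{*}$. Using cyclicity of $\Tr$ then yields
\begin{center}
$\tr(f(z)Q(A^{*})(z))=\tfrac{1}{n}\Tr(f_H(z)\,Q(A)_H(z)^{*})$,
\end{center}
so that
\begin{center}
$\tau(A^{*}T_f)=\tfrac{1}{n\,\mu(\mathcal{F})}\int_{\mathcal{F}}\Tr(f_H(z)Q(A)_H(z)^{*})\,d\mu(z)=\tfrac{1}{n\,\mu(\mathcal{F})}\langle f,Q(A)\rangle_{L^2_H(\mathcal{F},\End(V_\pi))}$.
\end{center}
By the defining property of the adjoint and the density of $L^{\infty}_H$ in $L^2_H$, this forces $T^{*}(A)=\tfrac{1}{n\,\mu(\mathcal{F})}Q(A)$ as elements of $L^2_H(\mathcal{F},\End(V_\pi))$. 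Boundedness of $T^{*}$ (dual to the boundedness obtained in Proposition \ref{pex2}) then extends the identity from $A_\pi$ to arbitrary $A\in L^2(A_\pi,\tau)$, and in particular certifies that $Q(A)$ lies in $L^2_H(\mathcal{F},\End(V_\pi))$ a priori.

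The only real obstacle is bookkeeping: one must track (i) the normalization difference between $\tr$ and $\Tr$, which contributes the factor $n$, (ii) the $H_z$-conjugation built into both $Q$ and the $L^2_H$-inner product, and (iii) the $\Gamma$-invariance of the integrand, which is needed to regard $Q(A)$ as a well-defined element of a space of functions on $\mathcal{F}$ rather than on $\mathcal{D}$ — but this is already encoded in the computation since $\|S(A)(z)\|_F$ is $\Gamma$-invariant by Proposition \ref{pS}(v) and the unitarity of $\pi$ on $K$. After these identifications the proof is essentially a one-line manipulation of the formula already provided in Proposition \ref{ptr2}.
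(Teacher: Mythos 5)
Your proof is correct and follows essentially the same route as the paper: both test the adjoint identity against the dense subspace $L^{\infty}_H(\mathcal{F},\End(V_\pi))$, reduce the pairing to the trace formula of Proposition \ref{ptr2}, and identify the resulting integral as the $L^2_H$ inner product with $\frac{1}{n\cdot\mu(\mathcal{F})}Q(A)$. The only cosmetic difference is that the paper evaluates $\tau(AT_f^*)$ via the Toeplitz formula for $T_f^*$ from Proposition \ref{pTfppt}(ii), whereas you evaluate $\tau(A^*T_f)$ directly and use $Q(A^*)_H=S(A^*)=S(A)^*$; both collapse to the same computation.
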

\begin{proof}
Let us consider $\langle T^*(A),f \rangle_{L^2_H(\mathcal{F},\End(V_\pi))}$ for an arbitrary $f\in L^{\infty}_H(\mathcal{F},\End(V_\pi))$. 
By Proposition \ref{ptr2} and Proposition \ref{pTfppt} .(ii), we have
\begin{equation*}
\begin{aligned}
\langle T^*(A),f \rangle_{L^2_H(\mathcal{F},\End(V_\pi))}&=\langle A,T_f\rangle_{L^2(A_\pi,\tau)}=\tau(AT_f^*)=\tau(AT_{H_z^{-1}(H_z^{-1})^*f(z)^*H_z^*H_z})\\
&=\frac{1}{\mu(\mathcal{F})}\int_{\mathcal{F}}\tr(K_A(z)H_z^*H_zH_z^{-1}(H_z^{-1})^*f(z)^*H_z^*H_z)d\mu(z)\\
&=\frac{1}{\mu(\mathcal{F})}\int_{\mathcal{F}}\tr(H_z(K_A(z)H_z^*H_z)H_z^{-1}\cdot (H_z^{-1})^*f(z)^*H_z^*)d\mu(z)\\
&=\frac{1}{\mu(\mathcal{F})}\int_{\mathcal{F}}\tr(H_zQ(A)(z)H_z^{-1}\cdot (H_z^{-1})^*f(z)^*H_z^*)d\mu(z)\\
&=\frac{1}{n\cdot\mu( \mathcal{F})}\int_{\mathcal{F}}\Tr(H_zQ(A)(z)H_z^{-1}\cdot (H_z^{-1})^*f(z)^*H_z^*)d\mu(z)\\
&=\langle \frac{1}{n\cdot\mu( \mathcal{F})}Q(A)(z),f(z)\rangle_{L^2_H(\mathcal{F},\End(V_\pi))} 
\end{aligned}    
\end{equation*}
As the $L^{\infty}$-space is dense in $L^2_H(\mathcal{F},\End(V_\pi))$, this implies $T^*(A)=\frac{1}{n\cdot\mu( \mathcal{F})}Q(A)$. 
\end{proof}

\begin{proposition}\label{psdense}
The range of $T$ is dense in $L^2(A_{\pi},\tau)$. 
\end{proposition}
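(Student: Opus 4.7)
Since $T$ is a bounded operator between Hilbert spaces, the range $T(L^2_H)$ is dense in $L^2(A_\pi,\tau)$ iff the adjoint $T^* \colon L^2(A_\pi,\tau) \to L^2_H(\mathcal{F},\End(V_\pi))$ is injective, so I aim to verify the latter. On the dense subspace $A_\pi \subset L^2(A_\pi,\tau)$, Corollary \ref{cTT} identifies $T^*$ with $(n\mu(\mathcal{F}))^{-1} Q$, and Proposition \ref{pS}(i) states that $Q$ is injective on $B(H_\pi)$, so $T^*|_{A_\pi}$ is already injective. Spelled out: if $A \in A_\pi$ satisfies $\tau(A T_f^*) = 0$ for every $f \in L^\infty_H$, then Proposition \ref{pTfppt}(ii) rewrites $T_f^*$ as a Toeplitz operator $T_{g_f}$ with $g_f \in L^\infty_H$, and Proposition \ref{ptr2} recasts the identity as $\int_\mathcal{F} \tr(g_f(z) Q(A)(z))\,d\mu(z) = 0$ for every $g_f \in L^\infty_H$. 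Test functions of the form $g_f(z) = \chi_K(z)\phi(z) e_{ij}$ with $K \subset \mathcal{F}$ compact, $\phi \in L^\infty(K)$, and $e_{ij}$ matrix units (which lie in $L^\infty_H$ because $\|H_z e_{ij} H_z^{-1}\|_F$ is bounded on compact $K$) separate the entries of $Q(A)$, forcing $Q(A) \equiv 0$ a.e.\ on $\mathcal{F}$. The $\Gamma$-covariance of $Q$ on $A_\pi$, analogous to Proposition \ref{pS}(v), then propagates the vanishing to all of $\mathcal{D}$, and Proposition \ref{pS}(i) yields $A = 0$.

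To extend injectivity to all of $L^2(A_\pi,\tau)$, take $A \in \ker T^*$ and use the polar decomposition $A = u|A|$, with $u \in A_\pi$ a partial isometry and $|A|$ positive and affiliated with $A_\pi$. The spectral projections $e_n := \chi_{[0,n]}(|A|) \in A_\pi$ produce bounded truncations $A_n := A e_n \in A_\pi$ satisfying $A_n \to A$ in $L^2$-norm, since $\tau(|A|^2(1-e_n)) \to 0$. Normality of the trace pairing yields $\tau(A_n T_f^*) \to \tau(A T_f^*) = 0$ for every fixed $f \in L^\infty_H$, and applying the integral identity of the bounded case to each $A_n$ gives $\int_\mathcal{F} \tr(g_f(z) Q(A_n)(z))\,d\mu(z) \to 0$. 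Combined with the $L^2_H$-convergence $Q(A_n) = n\mu(\mathcal{F}) T^*(A_n) \to n\mu(\mathcal{F}) T^*(A) = 0$, one propagates the vanishing to the extended Berezin symbol $\widetilde{Q}(A)$ (defined as the $L^2_H$-limit of the $Q(A_n)$), and concludes $A = 0$ via an extension of Proposition \ref{pS}(i) to $\tau$-measurable operators affiliated with $A_\pi$.

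\emph{Main obstacle.} The subtlest step is the last one: rigorously extending Proposition \ref{pS}(i) from bounded operators to affiliated unbounded operators in $L^2(A_\pi,\tau)$. The Berezin symbol $Q(A)(z) = c_\pi^{-1} E_z^* A E_z H_z^* H_z$ is a pointwise expression that makes immediate sense only for bounded $A$, since for a general $A \in L^2(A_\pi,\tau)$ the vector $E_z v \in H_\pi$ need not lie in the domain of the closed unbounded operator $A$. The interpretation of $\widetilde{Q}(A)$ through spectral truncation $A_n = Ae_n$ must be reconciled with the pointwise injectivity, which requires dominated-convergence estimates controlled by compact-support choices of the test functions $g_f$ and a careful normality argument for the trace-pairing $\tau(\,\cdot\, T_f^*)$.
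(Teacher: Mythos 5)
Your reduction to the injectivity of $T^*$ is the same first move as the paper's, and your argument that $T^*$ is injective on the dense subspace $A_\pi$ (via Corollary \ref{cTT} and the injectivity of $A\mapsto Q(A)$ from Proposition \ref{pS}) is fine as far as it goes. But it does not go far enough: the kernel of a bounded operator is a closed subspace, and a nonzero closed subspace can meet a dense subspace trivially, so injectivity of $T^*|_{A_\pi}$ says nothing about injectivity of $T^*$ on $L^2(A_\pi,\tau)$. The entire content of the proposition lives in the passage from $A_\pi$ to its $L^2$-completion, and that is exactly the step your proposal does not carry out. Your truncation argument $A_n=Ae_n\to A$ only yields $Q(A_n)=n\mu(\mathcal{F})\,T^*(A_n)\to 0$ in $L^2_H$; to conclude $A=0$ from this you invoke ``an extension of Proposition \ref{pS}(i) to $\tau$-measurable operators affiliated with $A_\pi$,'' which is never established and is essentially a restatement of the claim to be proved. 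You correctly flag this as the main obstacle, but flagging it is not the same as overcoming it.

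The paper's proof resolves precisely this point by a different mechanism. It introduces the Hilbert space $K=L^2(\mathcal{F}\times\mathcal{D},\End(V_\pi),\nu)$ and shows that the symbol map $R$ satisfies $\tau(AB^*)=\langle R(A),R(B)\rangle_K$, i.e.\ $R$ is an \emph{isometry} of $L^2(A_\pi,\tau)$ into $K$; an isometry extends to the completion and stays injective for free, which is what your bounded-but-not-bounded-below map $Q$ cannot deliver. The range of $R$ consists (after conjugating by $H_w^{-1}(\cdot)(H_z^*)^{-1}$) of sesqui-holomorphic functions, a property stable enough to persist in the closure, and $T^*R^*$ becomes the diagonal-restriction map $h(z,w)\mapsto \frac{1}{n\mu(\mathcal{F})}H_z^{-1}h(z,z)H_z$. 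Theorem \ref{tsesholo} (a sesqui-holomorphic function vanishing on the diagonal vanishes identically) then gives injectivity of $T^*R^*$ on all of $R(L^2(A_\pi,\tau))$ in one stroke, hence injectivity of $T^*$ on the whole completion. If you want to salvage your approach, you would need to supply an analogue of this rigidity statement for your $\widetilde{Q}$; without it the proof is incomplete.
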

\begin{proof}
It suffices show $T^*$ is injective on $L^2(A_{\pi},\tau)$. 
Let $\nu$ be the measure $\frac{c_{\pi}}{\mu(\mathcal{F})}(\mu\times \mu)$ on $\mathcal{F}\times \mathcal{D}$.  
Consider the following Hilbert space
\begin{center}
$K=L^2(\mathcal{F}\times \mathcal{D},\End(V_\pi),\nu)=\{f\colon \mathcal{F}\times \mathcal{D}\to\End(V_\pi)|\langle f,f\rangle_K<\infty\}$.
\end{center}
Here the inner product is given by 
\begin{center}
$\langle f,h\rangle_K=\int_{\mathcal{F}\times \mathcal{D}}\tr(f(z,w)h(z,w)^*)d\nu(z,w)$.     
\end{center}
We can check this gives an inner product which makes $K$ a Hilbert space. 

For any $A,B\in A_{\pi}$, we have
\begin{equation*}
\begin{aligned}
\tau(AB^*)&=\frac{1}{c_{\pi}\mu(\mathcal{F})}\sum_{1\leq i\leq n}\int_{\mathcal{F}}\langle H_z{E_z^*}B^*A{E_z}H_z^* v_i,v_i\rangle_{\pi}d\mu(z)\\
&=\frac{1}{c_{\pi}\mu(\mathcal{F})}\sum_{1\leq i\leq n}\int_{\mathcal{F}}\langle A{E_z}H_z^* v_i,B{E_z}H_z^*v_i\rangle_{H_{\pi}}d\mu(z)\\
&=\frac{1}{c_{\pi}\mu(\mathcal{F})}\sum_{1\leq i\leq n}\int_{\mathcal{F}}\left(\int_{\mathcal{D}}\langle H_w{E_w^*}A{E_z}H_z^* v_i,H_w{E_w^*}B{E_z}H_z^*v_i\rangle_{\pi}d\mu(w)\right)d\mu(z)\\
&=\int_{\mathcal{F}\times\mathcal{D}}\tr(R(A)(z,w)(R(B)(z,w))^{*}d\nu(z,w).
\end{aligned}
\end{equation*}
Hence $\langle A,B\rangle_{\tau}=\langle R(A),R(B)\rangle_K$ and $R$ is an isometry from $L^2(A_\pi,\tau)$ to $K$, i.e. $R^*R=\text{id}$.  

Note we have $T^*(A)=\frac{1}{n\cdot \mu(\mathcal{F})}Q(A)$ by Corollary \ref{cTT}. 
Hence the map $T^*R^*$ on $R(A_{\pi})$ is exactly the map given by
\begin{center}
$T^*R^*\colon R(A)(z,w)\mapsto \frac{1}{n\cdot\mu(\mathcal{F})}Q(A)(z)$, $A\in A_{\pi}$. 
\end{center}
Note each element in $R(A_{\pi})$ can be written as $\frac{1}{c_{\pi}}H_wK_A(w,z)H_z^*$ for some $A\in A_{\pi}$. 
So this map is exactly the map given by
\begin{center}
$\frac{1}{c_{\pi}}H_wK_A(w,z)H_z^*\mapsto \frac{1}{n\cdot c_{\pi}\cdot\mu(\mathcal{F})}K_A(z,z)H_z^* H_z$
\end{center}
It can be extended to a well-defined bounded map on $R(L^2(A_\pi,\tau))$, the range of $R$. 
Note by Proposition \ref{pS}, the map $R(A)\mapsto Q(A)$ is injective. 

For any $A\in A_{\pi}$, by Proposition \ref{pK}, we know $c_{\pi}H_w^{-1}R(A)(z,w)(H_z^*)^{-1}=K_A(w,z)$ is sesqui-holomorphic (in $w,\bar{z}$.) 
Hence that the range $R(L^2(A_{\pi}))$ are also in the following space
\begin{equation*}
\begin{aligned}
&L^2_{\text{ses-holo}}(\mathcal{F}\times \mathcal{D},\nu)=\{h(z,w)\in K|H_w^{-1}h(z,w)(H_z^*)^{-1} \text{is sesqui-holomorphic}\}.
\end{aligned}
\end{equation*}
Consider the following diagram. 
\begin{equation*}
\begin{CD}
f@>T>> T_f @>{R}>> R(T_f)(z,w)\\
%@VV{i}V
 %@VV{i}V
 %@VV{i}V\\
L^{\infty}_H(\mathcal{F},\End(V_{\pi}))
 @>T>>
 A_{\pi}=B(H_n)^{\Gamma}@>{R}>>L^2_{\text{ses-holo}}(\mathcal{F}\times \mathcal{D},\nu)\\
@VV{i}V
 @VV{i}V
 @VV{i}V\\
  L^{2}_H(\mathcal{F},\End(V_{\pi}))
 @>T>>
L^2(A_{\pi},\tau)@>{R}>>L^2_{\text{ses-holo}}(\mathcal{F}\times \mathcal{D},\nu)
\end{CD}
\end{equation*}
Note the map $T^*R^*$ on $R(A_{\pi})$ is given above.
We obtain the explicit formula for $T^*R^*$ on $R(L^2(A_{\pi}))$ given by 
\begin{center}
$T^*R^*\colon h(z,w)\mapsto q(z)= \frac{1}{n\cdot \mu(\mathcal{F})}H_z^{-1} h(z,z)H_z $, $\forall h(z,w)\in R(L^2(A_{\pi}))$.  
\end{center}
Note $H_w^{-1}h(z,w)(H_z^*)^{-1}$ is sesqui-holomorphic. 
By Theorem \ref{tsesholo}, we know the map
\begin{center}
$H_w^{-1}h(z,w)(H_z^*)^{-1}\mapsto H_z^{-1}h(z,z)(H_z^*)^{-1}$    
\end{center}
is injective. 
Hence the map $h(z,w)\mapsto q(z)H_z^{-1}(H_z^*)^{-1}$ is injective and so is
$T^*R^*\colon h(z,w)\mapsto q(z)$. 
This implies the injectivity of $T^*$ on $L^2(A_{\pi},\tau)$. 
\end{proof}

\subsection{The commutant and its subalgebras}

We will show the operators $T_f$ with $f\in L^{\infty}_H(\mathcal{F},\End(V_\pi))$ generate the commutant $A_{\pi}$ of the group von Neumann algebra $L_{\pi}(\Gamma)''$.  
We recall a well-known fact: 
\begin{lemma}
\label{ltopo}
Let $M\subset B(H)$ be a von Neumann algebra with a positive, faithful, normal, normalized trace $\tr$. 
Then the topology induced by $\|x\|_{2}=\tr(xx^*)^{1/2}$ coincides with the strong operator topology on any bounded subset of $M$. 
\end{lemma}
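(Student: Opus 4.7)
The plan is to verify the coincidence of the two topologies on any norm-bounded subset of $M$ by proving each direction of convergence separately. Fix $C>0$ and work with a net $(x_\alpha)\subset M$ satisfying $\|x_\alpha\|\leq C$ with limit $x$; set $y_\alpha = x - x_\alpha$, which is bounded by $2C$. The central input is the standard identification of $\sigma$-weakly continuous functionals with normal functionals together with the coincidence of the $\sigma$-weak topology and the weak operator topology on norm-bounded subsets of a von Neumann algebra.

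For the forward direction $\text{SOT}\Rightarrow \|\cdot\|_2$, I would combine Cauchy--Schwarz with normality of $\tau$. Assume $y_\alpha\xi\to 0$ for every $\xi\in H$. Then for any $\xi,\eta\in H$,
\begin{equation*}
|\langle y_\alpha^* y_\alpha\xi,\eta\rangle| = |\langle y_\alpha\xi,y_\alpha\eta\rangle| \leq \|y_\alpha\xi\|\,\|y_\alpha\eta\| \to 0,
\end{equation*}
so the uniformly bounded net $y_\alpha^* y_\alpha$ converges to $0$ in the weak operator topology. Because $\tau$ is normal, it is $\sigma$-weakly continuous, and on the bounded set where $(y_\alpha^* y_\alpha)$ lives the $\sigma$-weak and weak operator topologies coincide; hence $\|y_\alpha\|_2^2 = \tau(y_\alpha^* y_\alpha)\to 0$.

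For the harder direction $\|\cdot\|_2\Rightarrow \text{SOT}$, the plan is first to upgrade $\|\cdot\|_2$-convergence of $y_\alpha$ to weak operator convergence of $p_\alpha := y_\alpha^* y_\alpha$, and then to test against vector states. The positive net $(p_\alpha)$ is uniformly bounded by $4C^2$ and thus lies in a WOT-compact subset of $B(H)$. Any WOT cluster point $p$ of $(p_\alpha)$ lies in $M$ (since $M$ is WOT-closed) and satisfies $p\geq 0$; by WOT-continuity of $\tau$ on bounded sets,
\begin{equation*}
\tau(p) = \lim\tau(p_\alpha) = \lim\|y_\alpha\|_2^2 = 0,
\end{equation*}
and faithfulness of $\tau$ forces $p=0$. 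Since every subnet of $(p_\alpha)$ has a WOT-convergent subnet and the only possible cluster point is $0$, the whole net $(p_\alpha)$ converges to $0$ in the weak operator topology. Finally, for each $\xi\in H$ the vector state $\omega_\xi(a)=\langle a\xi,\xi\rangle$ is a normal positive functional on $M$ (the inclusion $M\hookrightarrow B(H)$ being a normal representation), hence WOT-continuous on bounded sets, so $\|y_\alpha\xi\|^2 = \omega_\xi(p_\alpha)\to 0$, establishing strong convergence.

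The real obstacle is the reverse direction, whose content boils down to two standard facts: weak operator compactness of norm-bounded subsets of $B(H)$ (providing the cluster-point argument), and the coincidence of the $\sigma$-weak and weak operator topologies on such subsets (making both $\tau$ and every vector state $\omega_\xi$ continuous there). Once these are invoked, the argument is a short compactness-plus-faithfulness routine; both facts are classical and can be cited from, e.g., Takesaki Vol.~I.
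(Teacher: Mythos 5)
Your proof is correct, but it takes a genuinely different route from the paper's. The paper passes to the GNS representation $\pi_{\tr}\colon M\to B(L^2(M,\tr))$: the estimate $\tr(x_iyy^*x_i^*)\leq\|y\|^2\tr(x_ix_i^*)$ shows that $\|x_i\|_2\to 0$ forces $\pi_{\tr}(x_i)\to 0$ strongly on the dense subspace $M\Omega$, hence on all of $L^2(M,\tr)$ by uniform boundedness, and the convergence is then pulled back to $B(H)$ by citing the fact that the inverse of a normal $*$-isomorphism is strong-operator continuous on bounded sets; the converse is the one-line identity $\|x_i\|_2^2=\|\pi_{\tr}(x_i)\Omega\|^2$. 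You instead stay entirely inside $B(H)$: your forward direction is Cauchy--Schwarz plus normality of $\tr$ (via the coincidence of the $\sigma$-weak and weak operator topologies on bounded sets), and your reverse direction replaces the transfer through $\pi_{\tr}^{-1}$ by a compactness-and-faithfulness routine --- every WOT cluster point of $y_\alpha^*y_\alpha$ in the ball has trace zero, hence vanishes, so $y_\alpha^*y_\alpha\to 0$ weakly and $\|y_\alpha\xi\|^2=\langle y_\alpha^*y_\alpha\xi,\xi\rangle\to 0$. The external inputs are of comparable depth (WOT-compactness of balls and $\sigma$-weak $=$ WOT on bounded sets, versus SOT-continuity of $\pi_{\tr}$ and its inverse), but your version uses faithfulness of $\tr$ explicitly and directly where the paper buries it in the standard-form machinery, which arguably makes the role of each hypothesis more transparent. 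One minor remark: the appeal to normality of the vector states $\omega_\xi$ in your last step is redundant, since $\langle p_\alpha\xi,\xi\rangle\to 0$ is immediate from weak operator convergence of $p_\alpha$.
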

\begin{proof}
It suffices to consider the unit ball $M_1=\{x\in M|\|x\|_{B(H)}\leq 1\}$ in  $M$. 
Let us consider the GNS construction for $\tr$ and we get a normal faithful representation
\begin{center}
$\pi_{\tr}\colon M\to B(L^2(M,\tr))$.
\end{center}
Note $\pi_{\tr}$ is injective,  strong operator topology to strong operator topology continuous and $\|\pi_{\tr}(x)\|\leq \|x\|$. 

Take a sequence $\{x_i\}_{i\geq 1}$ in $M_1$ and suppose we $\|x_i\|_2\rightarrow 0$. 
Let $\Omega=\hat{1}$ be the cyclic vector in $L^2(M,\tr)$. 
We take an arbitrary $y\in M$ and $\hat{y}=y\Omega\in L^2(M,\tr)$. 
\begin{equation*}
\begin{aligned}
\|\pi_{\tr}(x_i)\hat{y}\|_2^2&=\langle\pi_{\tr}(x_i)y\Omega,\pi_{\tr}(x_i)y\Omega\rangle_{L^2(M,\tr)}=\tr(x_iyy^*x_i^*)\\
&\leq \|y\|^2\tr(x_ix_i^*)\rightarrow 0. 
\end{aligned}    
\end{equation*}
Note $M$ is $\|\cdot\|_2$-dense in $L^2(M,\tr)$ by the GNS construction. 
For any $v\in L^2(M,\tr)$ and positive integer $N$, there exists a $y\in M$ such that $\|v-\hat{y}\|<\frac{1}{N}$. 
We have $\|\pi_{\tr}(x_i)v\|_2\leq \|x_i\|\|v-\hat{y}\|_2+\|\pi_{\tr}(x_i)\hat{y}\|_2\leq \frac{1}{N}+\|\pi_{\tr}(x_i)\hat{y}\|_2$.  
Hence $\pi_{\tr}(x_i)v\to 0$ and we can apply $\pi_{\tr}^{-1}$ (which is also strong operator continuous, see \cite{KR2} 7.1.16) so that $x_i\to 0$ in the strong operator topology on $B(H)$. 

Conversely, if $x_i\rightarrow 0$ in the strong operator topology on $B(H)$, we have $\pi_{\tr}(x_i)$ also converges to $0$ in the strong operator topology on $L^2(M)$. 
Then 
\begin{center}
$\|x_i\|_2^2=\tr(x_ix_i^*)=\langle \pi_{\tr}(x_i)\Omega, \pi_{\tr}(x_i)\Omega\rangle_{L^2(M)}\rightarrow 0$.      
\end{center}
\end{proof}

As shown above, $T_f\in A_{\pi}$ for $f\in L^{\infty}(\mathcal{F},\End(V_{\pi}))$. 
A natural question is how large is the subalgebra of $A_\pi$ generated by these operators.  
\begin{proposition}\label{peqdense}
Let $M\subset B(H)$ be a von Neumann algebra with a positive, faithful, normal trace $\tr$ and $A\subset M$ be a $*$-subalgebra of $M$. 
Then $A$ is $L^2$-dense in $L^2(M,\tr)$ if and only if it is weak operator dense in $M$, i.e.
\begin{center}
$\overline{A}^{||\cdot||_2}=L^2(M)$ if and only if $\overline{A}^{\operatorname{w.o}}=M$.    
\end{center}
\end{proposition}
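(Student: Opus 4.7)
The plan is to prove both implications by combining Lemma \ref{ltopo} with Kaplansky's density theorem, using the $\tr$-preserving conditional expectation onto a von Neumann subalgebra for the harder direction.

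For the implication $\overline{A}^{\operatorname{w.o.}} = M \Rightarrow \overline{A}^{\|\cdot\|_2} = L^2(M,\tr)$, fix $x \in M$. Since $A$ is a $*$-subalgebra whose weak operator closure is $M$, Kaplansky's density theorem produces a net $(a_i) \subset A$ with $\|a_i\| \leq \|x\|$ converging strongly to $x$. As this net is bounded, Lemma \ref{ltopo} upgrades the convergence to $\|a_i - x\|_2 \to 0$. Hence $M \subseteq \overline{A}^{\|\cdot\|_2}$, and since $M$ embeds densely in $L^2(M,\tr)$ via the GNS construction, $\overline{A}^{\|\cdot\|_2} = L^2(M,\tr)$.

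For the converse, set $N := \overline{A}^{\operatorname{w.o.}}$, a von Neumann subalgebra of $M$ on which $\tr$ restricts to a positive, faithful, normal trace. Let $E_N \colon M \to N$ denote the unique $\tr$-preserving conditional expectation. Extended to the GNS space, $E_N$ agrees with the orthogonal projection of $L^2(M,\tr)$ onto the closed subspace $\overline{N}^{\|\cdot\|_2}$. The inclusion $A \subseteq N$ together with the hypothesis gives
$$L^2(M,\tr) \;=\; \overline{A}^{\|\cdot\|_2} \;\subseteq\; \overline{N}^{\|\cdot\|_2},$$
so this projection is the identity of $L^2(M,\tr)$. In particular $E_N(x) = x$ for every $x \in M$, forcing $M \subseteq N$; the reverse inclusion is automatic, so $M = \overline{A}^{\operatorname{w.o.}}$.

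The main obstacle lies in this converse direction: an element of $M$ approximated in $\|\cdot\|_2$-norm by a potentially unbounded sequence drawn from $A$ need not a priori lie in $\overline{A}^{\operatorname{w.o.}}$, so one cannot simply take strong operator limits. The $\tr$-preserving conditional expectation circumvents this difficulty by packaging the Hilbert space statement ``the projection onto $\overline{N}^{\|\cdot\|_2}$ is the identity'' into the algebraic statement ``$E_N$ restricts to the identity on $M$,'' which is exactly what is needed to conclude $M \subseteq N$.
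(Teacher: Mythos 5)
Your proposal is correct, and the easy direction (weak operator density implies $\|\cdot\|_2$-density) is handled exactly as in the paper: Kaplansky's density theorem to get a bounded strongly convergent net, then Lemma \ref{ltopo} to upgrade to $\|\cdot\|_2$-convergence. For the substantive converse, however, you take a genuinely different route. The paper stays elementary: it reduces to self-adjoint contractions, pulls back through the homeomorphism $f(t)=\frac{2t}{t^2+1}$ of $[-1,1]$, and uses the explicit resolvent-type identity for $f(b_n)-f(y)$ to convert a possibly unbounded $\|\cdot\|_2$-approximating net from $A$ into one lying in the unit ball of $A_{\mathrm{s.a.}}$, at which point Lemma \ref{ltopo} applies; this is self-contained but computational. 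You instead pass to $N=\overline{A}^{\mathrm{w.o.}}$ and use the $\tr$-preserving conditional expectation $E_N$, identified on $L^2(M,\tr)$ with the orthogonal (Jones) projection onto $\overline{N}^{\|\cdot\|_2}$; the hypothesis forces that projection to be the identity, hence $E_N=\mathrm{id}_M$ and $M\subseteq N$. This is shorter and conceptually cleaner, at the cost of importing the existence and $L^2$-implementation of $E_N$ (Umegaki's theorem), and it is precisely the right tool for the obstacle you identify (unbounded approximants). One small point you should patch: $E_N$ is defined for von Neumann subalgebras containing $1_M$, while the proposition only assumes $A$ is a $*$-subalgebra; this is harmless, since if the unit $p$ of $N$ were a proper subprojection of $1$ then $(1-y)(1-p)=1-p$ for all $y\in N$ gives $\|\hat{1}-\hat{y}\|_2^2\geq\tr(1-p)>0$, contradicting $\hat{1}\in\overline{A}^{\|\cdot\|_2}$ — but it deserves a sentence, whereas the paper's functional-calculus argument sidesteps the issue because $f(0)=0$.
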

\begin{proof}
As the norm topology is finer than the weak operator topology, we assume $A$ is norm closed, i.e., $A$ is a $C^*$-algebra. 
Since $A$ is convex, $\overline{A}^{\operatorname{w.o}}=\overline{A}^{\operatorname{s.o}}$. 

Take any self-adjoint $x\in M$ with $\|x\|\leq 1$. 
There exists a net $\{a_n\}_{n\geq 1}$ in $A$ such that $\|a_n-x\|_2\rightarrow 0$. 
Also we have $\|a_n^*-x^*\|_2\rightarrow 0$ as $\|a\|_2=\tr(aa^*)=\tr(a^*a)=\|a^*\|_2$ for all $a\in M$.  
So $\|\frac{a_n+a_n^*}{2}-x\|_2\rightarrow 0$ and hence we can further assume $\{a_n\}_{n\geq 1}$ are self-adjoint, i.e. $a_n\in A_{\text{s.a}}$. 

Consider $f(t)=\frac{2t}{t^2+1}$ which is a bijection on $[-1,1]$. 
Let $g=f^{-1}$ and $y=g(x)$ then $y\in M_{\text{s.a}}$. 
By the argument above, there are $\{b_n\}_{n\geq 1}$ in $A_{\text{s.a}}$ such that $\|b_n-y\|_2\rightarrow 0$. 

We want to show $\|f(b_n)-x\|_2\rightarrow 0$. 
Note that $f(b_n)\in A$ and $\|f(b_n)\|\leq 1$, hence by continuous functional calculus, we have
\begin{center}
$f(b_n)-f(y)=\frac{2(b_n(1+y^2)-y(1+b_n^2))}{(1+b_n^2)(1+y^2)}=\frac{2(b_n-y)}{(1+b_n^2)(1+y^2)}+\frac{2b_{n}y(y-b_n)}{(1+b_n^2)(1+y^2)}$.
\end{center}
Note that $\|ab\|_2\leq \|a\|\|b\|_2$ and $\|ab\|_2\leq \|b\|\|a\|_2$. 
Moreover, $\|(1+b_n^2)^{-1}\|\leq 1$ and $\|(1+y^2)^{-1}\|\leq 1$.  
Hence
\begin{equation*}
\begin{aligned}
\|f(b_n)-f(y)\|_2&\leq 2\|(1+y^2)^{-1}\|\cdot\|b_n-y\|_{2}+2\|f(b_n)\|\|y(1+y^2)^{-1}\|\cdot\|b_n-y\|_{2}\\
&\leq 4\|b_n-y\|_{2}\rightarrow 0. 
\end{aligned}
\end{equation*}
Hence $\|f(b_n)-x\|_2=\|f(b_n)-f(y)\|_2\rightarrow 0$.  

Note $\|f(b_n)\|\leq 1$. 
That is to say the closure of unit ball of $A_{\text{s.a}}$ (inside $M$) in $\|\cdot\|_2$ is just the unit ball of $M_{\text{s.a}}$. 
%But the topology induced by the trace, i.e., $L^2-$topology and the strong operator topology coincide on bounded subsets of $M$. 
By Lemma \ref{ltopo}, we obtain $\overline{(A_{\text{s.a}})_1}^{\operatorname{s.o}}=(M_{\text{s.a}})_1$ and hence 
$\overline{A}^{\operatorname{w.o}}=\overline{A}^{\operatorname{s.o}}=M$. 

For the converse, it suffices to prove $\overline{A_1}^{||\cdot||_2}=\overline{M_1}^{||\cdot||_2}$ or equivalently for any $x\in M_1$, there exists a sequence $\{x_k\}_{k\geq 1}$ in $A_1$ such that $x_k\xrightarrow{\|\cdot\|_{2}} x$. 
This is guaranteed by the assumption  $\overline{A}^{\operatorname{s.o}}=M$, Lemma \ref{ltopo} and also the Kaplansky density thoerem \cite{J15}.  
\end{proof}

Finally we can determine the von Neumann algebra generated by these $T_f$'s.  
\begin{theorem}\label{tgenvna}
The von Neumann algebra generated by the Toeplitz operators associated with the functions in $L^{\infty}_H(\mathcal{F},\End(V_\pi))$ is dense in the commutant $A_{\pi}$ in the strong operator topology, i.e.,
\begin{center}
$\overline{\langle T_f|f\in L^{\infty}_H(\mathcal{F},\End(V_\pi))\rangle}^{\text{s.o.}}=A_{\pi}$. \end{center}
\end{theorem}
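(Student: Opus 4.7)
The plan is to obtain the theorem as a short corollary of Proposition \ref{psdense} (density of $T(L^{2}_{H})$ in $L^{2}(A_{\pi},\tau)$) and Proposition \ref{peqdense} (coincidence of $\|\cdot\|_{2}$-density and w.o.-density for $*$-subalgebras of a finite von Neumann algebra). All the serious analytic work has been done in Sections 3 and 4.1; the remaining task is purely to chain these results together.

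First I would verify that the set $\mathcal{S}:=\{T_{f}\mid f\in L^{\infty}_{H}(\mathcal{F},\End(V_{\pi}))\}$ is closed under adjoints, so that the (not necessarily unital) algebra $\mathcal{A}$ generated by $\mathcal{S}$ is automatically a $*$-subalgebra. By Proposition \ref{pTfppt}(ii), $T_{f}^{*}=T_{g}$ with $g(z)=H_{z}^{-1}(H_{z}^{*})^{-1}f(z)^{*}H_{z}^{*}H_{z}$, and a direct computation gives $H_{z}g(z)H_{z}^{-1}=(H_{z}f(z)H_{z}^{-1})^{*}$, which has the same operator norm as $H_{z}f(z)H_{z}^{-1}$; hence $g\in L^{\infty}_{H}(\mathcal{F},\End(V_{\pi}))$. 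Combined with Proposition \ref{pTfga} and Lemma \ref{lextFD}, this shows $\mathcal{A}$ is a $*$-subalgebra of $A_{\pi}$.

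Next I would upgrade $L^{\infty}$-density of Toeplitz operators to $L^{2}$-density in $L^{2}(A_{\pi},\tau)$. Since $\mu(\mathcal{F})<\infty$, the inclusion $L^{\infty}_{H}(\mathcal{F},\End(V_{\pi}))\hookrightarrow L^{2}_{H}(\mathcal{F},\End(V_{\pi}))$ has dense image, and since the extended map $T\colon L^{2}_{H}(\mathcal{F},\End(V_{\pi}))\to L^{2}(A_{\pi},\tau)$ of Proposition \ref{pex2} is bounded with dense range by Proposition \ref{psdense}, it follows that $\mathcal{S}\subseteq \mathcal{A}$ is $\|\cdot\|_{2}$-dense in $L^{2}(A_{\pi},\tau)$. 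Finally, by Proposition \ref{ptr1}, $A_{\pi}$ is a finite von Neumann algebra with positive, faithful, normal, normalized trace $\tau$, so Proposition \ref{peqdense} applies to the $*$-subalgebra $\mathcal{A}$ and yields $\overline{\mathcal{A}}^{\text{w.o.}}=A_{\pi}$. Since $\mathcal{A}$ is convex, its w.o.- and s.o.-closures coincide, giving the theorem.

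There is no real obstacle beyond the bookkeeping above: the proof has been effectively reduced, by the machinery of Section 4.1, to verifying that the Berezin-type identities allow the map $T^{*}$ to be computed explicitly on $R(L^{2}(A_{\pi},\tau))$ and then invoking the sesqui-holomorphic rigidity of Theorem \ref{tsesholo}—both of which are already in hand via Proposition \ref{psdense}. If anything needs a slightly delicate word of justification, it is simply the assertion that the $*$-algebra $\mathcal{A}$ genuinely sits inside $A_{\pi}$, which is the content of Proposition \ref{pTfga} together with the adjoint computation above.
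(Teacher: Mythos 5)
Your proposal is correct and follows essentially the same route as the paper: the paper's proof is exactly the two-step chain of Proposition \ref{psdense} (giving $\|\cdot\|_2$-density of the $T_f$'s in $L^2(A_\pi,\tau)$) followed by Proposition \ref{peqdense}. Your explicit verification that the set of $T_f$'s is closed under adjoints, via Proposition \ref{pTfppt}(ii) and the identity $H_z g(z)H_z^{-1}=(H_zf(z)H_z^{-1})^*$, is a detail the paper leaves implicit but is a welcome addition.
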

\begin{proof}
As $L^{\infty}_H(\mathcal{F},\End(V_\pi))$ is dense in $L^{2}_H(\mathcal{F},\End(V_\pi))$, by Proposition \ref{psdense}, we know $\{T_f|f\in L^{\infty}_H(\mathcal{F},\End(V_\pi))\}$ is a dense subspace of $L^2(A_\pi,\tau)$. 
Then, by Proposition \ref{peqdense}, these $T_f$'s generated $A_{\pi}$ in the strong operator (hence also in the weak operator topology). 
\end{proof}
\begin{corollary}
$\overline{\langle \{T_f|f\in L^{\infty}(\mathcal{F})\}\otimes\End(V_{\pi})\rangle}^{\text{s.o.}}=A_{\pi}$. 
\end{corollary}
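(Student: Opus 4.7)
The plan is to deduce the corollary as a direct consequence of Theorem \ref{tgenvna} by decomposing every matrix-valued Toeplitz operator into a finite sum of ``product-type'' operators built from a scalar Toeplitz operator and a constant element of $\End(V_{\pi})$. By Theorem \ref{tgenvna}, it suffices to show that every Toeplitz operator $T_F$ with $F \in L^{\infty}_H(\mathcal{F}, \End(V_{\pi}))$ lies in the $*$-algebra generated by $\{T_f : f \in L^{\infty}(\mathcal{F})\} \otimes \End(V_{\pi})$.

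First, I would fix a basis $\{E_{ij}\}_{1 \leq i,j \leq n}$ of matrix units of $\End(V_{\pi})$ and observe that the bounded matrix-valued function $F_H(z) := H_z F(z) H_z^{-1}$ lies in $L^{\infty}(\mathcal{F}, \End(V_{\pi}))$ by definition of $L^{\infty}_H$. Since $\End(V_{\pi})$ is finite-dimensional, expanding $F_H$ in the basis yields
$$F_H(z) = \sum_{i,j} f_{ij}(z)\, E_{ij}, \qquad f_{ij} \in L^{\infty}(\mathcal{F}),$$
so that $F(z) = \sum_{i,j} f_{ij}(z)\, H_z^{-1} E_{ij} H_z$. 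By linearity of the Toeplitz correspondence (Proposition \ref{pTfppt}(i)),
$$T_F = \sum_{i,j} T_{f_{ij}(z)\,(H_z^{-1} E_{ij} H_z)}.$$
Each summand on the right represents precisely a tensor product $T_{f_{ij}} \otimes E_{ij}$ in the notation of the corollary: the scalar factor $f_{ij}$ produces the scalar Toeplitz operator $T_{f_{ij}}$, while the constant matrix unit $E_{ij}$ contributes the $\End(V_{\pi})$ factor. The conjugation by $H_z$ is exactly the twist required to produce a $\Gamma$-invariant matrix-valued function from a constant element of $\End(V_{\pi})$, consistent with Lemma \ref{lextFD}.

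Combining these two observations, the $*$-algebra generated by $\{T_f : f \in L^{\infty}(\mathcal{F})\} \otimes \End(V_{\pi})$ contains every generator of the algebra in Theorem \ref{tgenvna}, and taking the strong operator closure gives the desired equality with $A_{\pi}$. The main obstacle is purely interpretational: one must clarify that the $\End(V_{\pi})$ factor is not realized on $H_{\pi}$ as constant matrix multiplication (which is generally unbounded because of the weighted inner product), but through the twisted action $M \mapsto H_z^{-1} M H_z$ forced by $\Gamma$-invariance. Once this convention is fixed, the proof of the corollary is an immediate matrix-unit expansion applied to Theorem \ref{tgenvna}, with no additional analytic work needed beyond the density arguments already established.
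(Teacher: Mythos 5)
Your proposal is correct and follows essentially the same route as the paper: expand $H_zF(z)H_z^{-1}$ in matrix units with scalar coefficients $f_{ij}\in L^{\infty}(\mathcal{F})$, use linearity of $f\mapsto T_f$ to write $T_F=\sum_{i,j}T_{f_{ij}(z)H_z^{-1}e_{ij}H_z}$, and identify each summand with $T_{f_{ij}}\otimes e_{ij}$ under the twisted realization of $\End(V_{\pi})$, so that Theorem \ref{tgenvna} gives the result. The paper additionally packages this as an explicit linear bijection $\Phi$ onto $\{T_f\mid f\in L^{\infty}(\mathcal{F})\}\otimes\Mat_n(\mathbb{C})$ and verifies injectivity via Corollary \ref{cBinj}, but that extra step is not needed for the generation statement you prove.
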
\label{cTfgen}
\begin{proof}
Take $f\in L^{\infty}_H(\mathcal{F},\End(V_\pi))$ and assume $H_zf(z)H_z^{-1}=g(z)=[g_{i,j}(z)]_{1\leq i,j\leq n}$ with each $g_{i,j}\in L^{\infty}(\mathcal{F})$. 
Then we have
\begin{center}
$T_f=T_{H_z^{-1}g(z)H_z}=T_{\sum_{i,j}H_z^{-1}g_{i,j}(z)e_{i,j}H_z}=\sum_{i,j}T_{g_{i,j}(z)H_z^{-1}e_{i,j}H_z}$. 
\end{center}
Now we define a map between two complex vector spaces: 
\begin{center}
$\Phi\colon\{T_f|f\in L^{\infty}_H(\mathcal{F},\End(V_\pi))\}\to  \{T_f|f\in L^{\infty}(\mathcal{F})\}\otimes \Mat_n(\mathbb{C})$, 
\end{center}
which is given by
\begin{center}
$\Phi\colon T_f=\sum_{i,j}T_{g_{i,j}(z)H_z^{-1}e_{i,j}H_z}\mapsto [T_{g_{i,j}}]_{1\leq i,j\leq n}$. 
\end{center}
It is straightforward to check $\Phi$ is linear and surjective. 

For the injectivity, we suppose there are two $g_{i,j},g_{i,j}'\in L^{\infty}(\mathcal{F})$ such that 
$T_{g_{i,j}(z)H_z^{-1}e_{i,j}H_z}=T_{g_{i,j}'(z)H_z^{-1}e_{i,j}H_z}$ as  Toeplitz operators. 
Then we have $S(T_{g_{i,j}(z)H_z^{-1}e_{i,j}H_z})=S(T_{g_{i,j}'(z)H_z^{-1}e_{i,j}H_z})$. 
Hence $B(g_{i,j}(z)H_z^{-1}e_{i,j}H_z)=B(g_{i,j}'(z)H_z^{-1}e_{i,j}H_z)$. 
Then, by Corollary \ref{cBinj}, we know $g_{i,j}=g_{i,j}'$ as they are scalar functions and hence $\Phi$ is injective. 
\end{proof}

\begin{remark}
For $f\in L^{\infty}(\mathcal{F})$, these $T_f$'s will certainly generate a von Neumann subalgebra of $A_{\pi}$. 
For $n\neq 1$, it is still unknown that how large this subalgebra is.  
This is related to the Toeplitz $C^*$-algebras with continuous symbols on the bounded symmetric domains, see \cite{Upm84}. 

Indeed $\{p_i=M_{H_z^{-1}e_{i,i}H_z}\}_{1\leq i\leq n}$ gives a family of orthogonal projections in $B(L^2(\mathcal{D},V_{\pi}))$ satisfying $\sum_{1\leq i\leq n}p_i=1$. 
One can show
\begin{center}
$\langle M_f|f\in L^{\infty}_H(\mathcal{D},\End(V_{\pi}))\rangle \cong \langle M_f|f\in L^{\infty}(\mathcal{D})\rangle \otimes \End(V_{\pi})$ 
\end{center}
as an isomorphism of von Neumann algebras acting on $L^2(\mathcal{D},V_{\pi}), p_iL^2(\mathcal{D},V_{\pi}),V_{\pi}$ respectively ($e_{i,j}\in \End(V_{\pi})$ acts as $M_{H_z^{-1}e_{i,j}H_z}$). 
Note these $p_i$'s commute with the action of $G$. 
We can also consider the  $\Gamma$-invariant case: 
\begin{center}
$\langle M_f|f\in L^{\infty}_H(\mathcal{F},\End(V_{\pi}))\rangle \cong \langle M_f|f\in L^{\infty}(\mathcal{F})\rangle \otimes \End(V_{\pi})$. 
\end{center}
We may also identify $A_\pi$ with the strong operator topology closure of the algebra generated by $P_{\pi}\cdot\langle M_f|f\in L^{\infty}_H(\mathcal{F},\End(V_{\pi}))\rangle\cdot P_{\pi}$ in $B(H_{\pi})$ by Theorem \ref{tgenvna}.  
 
\end{remark}

\section{An Example on Fuchsian Subgroups of $SL(2,\mathbb{R})$}
\label{ssl2}

The holomorphic discrete series of $G=SL(2,\mathbb{R})$ are indexed by integers that are greater or equal to $2$, say $\{L_m,H_m\}_{m\geq 2}$. 
Given a Fuchsian subgroup $\Gamma$ of $G$, let $f$ be a cusp form for $\Gamma$ of weight $p$. 
We are able to associate a bounded linear operator $T_f\in B(H_m,H_{m+p})$, which intertwines the action of $\Gamma$. 

Let $A_m=\{A\in B(H_m)|AL_m(\gamma)=L_m(\gamma)A,\forall \gamma\in \Gamma\}$, 
which is the commutant of the von Neumann algebra $L_m(\Gamma)''$. 
This section is devoted to the proof of the following result: 
\begin{theorem}\label{tsl2}
Let $\Gamma\subset SL(2,\mathbb{R})$ be a Fuchsian subgroup and $T_f$ be the Toeplitz operator associated with a cusp form $f$ of $\Gamma$. 
Then
\begin{center}
$\overline{\{\text{span}_{f,g} (T_g)^*T_f\}}^{\text{w.o.}}=A_m$    
\end{center}
as $f,g$ run through all cusp forms of $\Gamma$ of same weights. 
\end{theorem}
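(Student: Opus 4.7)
The strategy is to reduce the theorem to Corollary \ref{cTfgen} in the scalar case. Since $K=SO(2)$ is abelian, every irreducible $(\pi,V_\pi)$ has $\dim V_\pi = 1$, so Corollary \ref{cTfgen} simplifies to $A_m = \overline{\{T_h : h \in L^\infty(\mathcal{F})\}}^{\text{w.o.}}$. Let $\mathcal{A}$ denote the $*$-algebra generated by $\{T_g^*T_f : f,g \text{ cusp forms of common weight}\}$. By Proposition \ref{peqdense}, weak-operator density of $\mathcal{A}$ in $A_m$ is equivalent to $\|\cdot\|_2$-density in $L^2(A_m,\tau)$, and by Proposition \ref{psdense} the full Toeplitz map already has dense image in $L^2(A_m,\tau)$. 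It therefore suffices to show that any $A \in A_m$ which is $\tau$-orthogonal to every $T_g^*T_f$ must vanish.

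I would first expand this orthogonality condition explicitly. Using Proposition \ref{ptr2}, Lemma \ref{linvtr}, and unfolding the integral over a $\Gamma$-fundamental domain, the pairing $\tau(A\, T_g^*T_f)$ rewrites as an integral of the form
\begin{equation*}
\int_{\mathcal{F}} \overline{g(z)}\,f(z)\, y^p\, \Phi_A(z)\, d\mu(z),
\end{equation*}
where $\Phi_A$ is a bounded $\Gamma$-invariant function built from the Berezin transform of $A$. The crucial point is that for a cusp form $f$ of weight $p$ one has $f(\gamma z) = (cz+d)^p f(z)$ and $\operatorname{Im}(\gamma z) = y/|cz+d|^2$, so $\overline{g(z)}f(z)\,y^p$ is genuinely $\Gamma$-invariant, and by the exponential decay of cusp forms it extends continuously by zero to the Baily--Borel compactification $\mathcal{F}^* = \Gamma\backslash\mathbb{H}^*$.

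The main obstacle is the density statement that makes these products rich enough to force $\Phi_A \equiv 0$, and this is where Riemann--Roch enters. Cusp forms of weight $p$ for a Fuchsian group of the first kind correspond to global holomorphic sections of a line bundle $L_p$ on the compact Riemann surface $\mathcal{F}^*$ whose degree grows linearly in $p$; for $p$ sufficiently large $L_p$ is very ample, so its sections separate points and tangent directions of $\mathcal{F}^*$, and one can arrange a nonvanishing section on any prescribed compact subset of $\mathcal{F}$. Allowing the weight $p$ to vary, the family $\{\overline{g(z)}f(z)\,y^p\}$ is closed under pointwise multiplication (weights add) and under complex conjugation, hence its linear span is a point-separating, conjugation-closed subalgebra of $C(\mathcal{F}^*)$ whose functions vanish at the cusps. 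A Stone--Weierstrass argument on $\mathcal{F}^*$ therefore yields uniform density in $C_0(\mathcal{F}^* \setminus \{\text{cusps}\})$, which is in turn weak-$*$ dense in $L^\infty(\mathcal{F})$ since the cusps form a $\mu$-null set. Inserting this density into the orthogonality identity forces $\Phi_A = 0$, and the injectivity results of Section 3 (Proposition \ref{pS}(i) and Corollary \ref{cBinj}) then give $A = 0$, completing the proof.
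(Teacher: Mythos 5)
Your proposal is correct and follows essentially the same route as the paper: reduce to the scalar Toeplitz density statement via Propositions \ref{peqdense} and \ref{psdense}, show that the products $f\overline{g}\,y^{p}$ form a conjugation-closed algebra separating points of the one-point compactification $\mathcal{F}_1=\mathcal{F}\sqcup\{\text{pt}\}$, and conclude by Stone--Weierstrass together with the injectivity of the Berezin symbol. The only substantive difference is in packaging: where you invoke very ampleness of the modular line bundle for large weight (which silently requires the usual care with elliptic points, i.e.\ $\mathbb{Q}$-divisors and weights in a suitable arithmetic progression), the paper proves the needed separation by hand via two Riemann--Roch lemmas (Theorem \ref{texist1}, meromorphic interpolation on $\mathcal{F}^*$, and Theorem \ref{texist2}, cusp forms nonvanishing at prescribed non-cusp points), and your dual phrasing of the final density step (any $A\in A_m$ orthogonal to all $T_g^*T_f$ vanishes) is equivalent to the paper's direct one (a bounded map with dense range applied to a dense subspace has dense image).
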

This generalizes F. Radulescu's result on $SL(2,\mathbb{Z})\subset SL(2,\mathbb{R})$ \cite{Ra94}. 
In the case of $\Gamma=SL(2,\mathbb{Z})$, most of the results are known, see \cite{Ra14,Ra98}. 

Note for $G=SL(2,\mathbb{R})$, $K=SO(2)$ is a maximal subgroup. 
Hence by Theorem \ref{tbddo}, the symmetric domain $\mathcal{D}=G/K$ is just the open unit disk.
For the convenience to discuss automorphic forms, we identify it with the Poincar\'{e} upper-half plane
\begin{center}
$\mathbb{H}=\{z=x+iy\in \mathbb{C}|y> 0\}$. 
\end{center}
with the invariant measure $d\mu=y^{-2}dxdy$. 
Moreover, since $K=SO(2)$ is abelian, all its irreducible representation are one-dimensional and can be characterized as $(\pi_m,V_m)$ such that $\pi_m(g)=g^m\in S^1$ for an integer $m$. 

Note $K_{\mathbb{C}}=\mathbb{C}^{\times}$ is the complexified group of $K=SO(2)$. 
By definition, the canonical automorphy factor $J\colon SL(2,\mathbb{R})\times\mathbb{H}\to \mathbb{C}^{\times}$ is given by
\begin{center}
$J(g,z)=cz+d$,  $g=\bigl(\begin{smallmatrix} a &b \\ c & d \\ \end{smallmatrix}\bigr) \in SL(2,\mathbb{R})$, $z\in \mathbb{H}$. 
\end{center}

Following Section 2.2, we can describe the holomorphic discrete series representations of $SL(2,\mathbb{R})$. 
One can show that $\pi_m(\kappa(z,z)^{-1})=y^m$ where $z=x+iy$. 
So we move the term $y^m$ to the measure and denote $y^md\mu=y^{m-2}dxdy$ by $d\mu_m$. 

Let $L^2(\mathbb{H},\mu_m)$ be the square-integrable functions on $\mathbb{H}$ with respect to the measure $\mu_m=y^{m-2}dxdy$ (note $\mu_0=\mu$). 
Let $H_m$ be the subspace of all holomorphic functions in $L^2(\mathbb{H},\mu_m)$, i.e.,
\begin{center}
$H_m=L^2_{\text{hol}}(\mathbb{H},\mu_m)$.
\end{center}
As in Section \ref{sholds}, for a given $g=\bigl(\begin{smallmatrix} a &b \\ c & d \\ \end{smallmatrix}\bigr)^{-1} \in SL_2(\mathbb{R})$ and $f\in H_m$, the action on $H_m$ is given by
\begin{center}
$(L_m(g)f)(z)=f(g^{-1}z)(cz+d)^{-m}$
\end{center}
where $g^{-1}z=\frac{az+b}{cz+d}$. 

\begin{proposition}[\cite{GHJ}]
For any integer $m\geq 2$,  $(L_m,H_m)$ is an irreducible unitary representation of $SL(2,\mathbb{R})$. 
Moreover, it is square-integrable with formal dimensions $d_m=\frac{m-1}{4\pi}$.
\end{proposition}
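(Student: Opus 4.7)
The plan is to verify the three claims (unitarity, irreducibility, square-integrability with the stated formal dimension) in turn, the first two by direct computation and the third by Schur orthogonality applied to an explicit matrix coefficient.

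For unitarity of $(L_m,H_m)$, I would perform the change of variables $w=g^{-1}z=(az+b)/(cz+d)$ inside the defining integral. The two classical identities
\[
\operatorname{Im}(g^{-1}z)=\frac{y}{|cz+d|^2},\qquad du\,dv=\frac{dx\,dy}{|cz+d|^4},
\]
combine with $|(cz+d)^{-m}|^2=|cz+d|^{-2m}$ so that all powers of $|cz+d|$ cancel against $y^{m-2}dxdy$, yielding $\|L_m(g)f\|_{H_m}=\|f\|_{H_m}$. The cocycle property of $J$ ensures $L_m$ is a representation, and standard Bergman-kernel estimates show $H_m$ is a closed subspace of $L^2(\mathbb{H},\mu_m)$, so $L_m$ is unitary.

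For irreducibility, the cleanest route is to transfer to the Harish–Chandra disk model via the Cayley transform $z\mapsto w=(z-i)/(z+i)$, which identifies $H_m$ with the weighted Bergman space $L^2_{\mathrm{hol}}(\mathbb{D},(1-|w|^2)^{m-2}du\,dv)$ and supplies the orthogonal basis $\{w^n\}_{n\geq 0}$, each $w^n$ being a $K$-eigenvector of distinct weight $m+2n$. I would then write out the infinitesimal action of the complexified raising and lowering elements $E^{\pm}\in\mathfrak{sl}(2,\mathbb{C})$ on this basis and check they map $w^n$ to nonzero scalar multiples of $w^{n\pm 1}$ (for $n\geq 0$ and $n\geq 1$ respectively). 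Thus every nonzero closed $G$-invariant subspace contains some $w^{n_0}$ and therefore all of them, so it is dense, proving irreducibility.

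For square-integrability and the formal dimension, the plan is to compute the diagonal matrix coefficient of the lowest weight vector $v_0=1$ (in the disk model). First I would evaluate $\|v_0\|_{H_m}^2=\int_{\mathbb{D}}(1-|w|^2)^{m-2}du\,dv=\pi/(m-1)$ using a Beta integral. Then using the $KAK$ decomposition $g=k_1 a_t k_2$ with $a_t=\mathrm{diag}(e^{t/2},e^{-t/2})$, the coefficient reduces to
\[
c_{v_0,v_0}(k_1 a_t k_2)=\chi_{-m}(k_1 k_2)\,(\cosh(t/2))^{-m},
\]
and the Haar measure in these coordinates is $\sinh t\,dk_1\,dt\,dk_2$ (with $K$ normalized to total mass $2\pi$). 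Plugging in gives
\[
\int_G|c_{v_0,v_0}(g)|^2\,dg=(2\pi)^2\int_0^{\infty}(\cosh(t/2))^{-2m}\sinh t\,dt,
\]
which is a finite Beta integral for $m\geq 2$, establishing square-integrability. Comparing with the Schur orthogonality formula $\|c_{v_0,v_0}\|_{L^2(G)}^2=d_m^{-1}\|v_0\|_{H_m}^4$ from Section 2.1 then pins down $d_m=(m-1)/(4\pi)$ after evaluating the elementary trigonometric integral.

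The main obstacle is bookkeeping of normalization constants in the formal-dimension computation: the value of $d_m$ depends on the chosen Haar measure on $G$ and on the $K$-normalization, so one must make sure the Haar measure used in the Schur orthogonality statement matches the one produced by the $KAK$ decomposition above; an error in either cofactor rescales $d_m$ and breaks the final answer.
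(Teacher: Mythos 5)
Your proposal is correct, and all three computations check out: the change of variables does cancel every power of $|cz+d|$ against $y^{m-2}\,dx\,dy$; the multiplicity-free $K$-type decomposition $\{w^n\}_{n\ge 0}$ with weights $m+2n$ plus the ladder action of $E^{\pm}$ gives irreducibility (with the standard caveat, which you leave implicit, that one passes to the $K$-finite vectors of a closed invariant subspace to get a $(\mathfrak g,K)$-submodule); and the Beta integrals evaluate to $\|v_0\|_{H_m}^2=\pi/(m-1)$ and $\int_0^\infty(\cosh(t/2))^{-2m}\sinh t\,dt=2/(m-1)$, which combine via Schur orthogonality to $d_m=(m-1)/(4\pi)$ under the Haar normalization compatible with $d\mu=y^{-2}dx\,dy$ on $\mathbb{H}$. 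Note, however, that the paper offers no proof of this statement at all --- it is quoted from \cite{GHJ}. The argument in that reference reaches the formal dimension by a different route: rather than integrating a matrix coefficient over a $KAK$ decomposition, it reads $d_m$ off the reproducing (Bergman) kernel of $H_m$ evaluated on the diagonal, essentially the constant $c_\pi$ of Lemma \ref{lkappa} in the present paper. That method sidesteps the normalization hazard you correctly flag at the end --- the kernel is intrinsic to $H_m$ and the Haar measure enters only once, through the identification of $G/K$ with $(\mathbb{H},\mu)$ --- whereas your route requires matching the $KAK$ integration formula against the same Haar measure used in the Schur orthogonality relation. Your approach buys a self-contained verification of square-integrability along the way; the kernel approach is shorter and less error-prone on constants. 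Either is acceptable here.
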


%To have a positive formal dimension, we will focus on the case $m\geq 2$ from now on. 

\subsection{Berezin transform and the trace}

Let $(L_m,H_m)$ be the holomorphic discrete series of $SL(2,\mathbb{R})$ associated with the one dimensional representation $(\pi_m,V_m)$ of $K=SO(2)$ (and also of $K_{\mathbb{C}}=\mathbb{C}^{\times}$). 
Note all the matrix-valued ($\End(V_\pi)$-valued) functions  defined in Section 3.1 reduce to the  scalar-valued functions since $V_m=\mathbb{C}$ for all $m$.   
In this section, we use the  following simplified notation for Berezin symbols. 
\begin{enumerate}[label=(\roman*)]
\item $K(z,w)=E_z^*E_w$ and  $K(z,z)=E_z^*E_z$.
\item $\widehat{A}(z)=S(A)(z)$ for $A\in B(H_m)$ 
\item $\widehat{A}(z,w)=R(A)(z,w)$ for $A\in B(H_m)$ (defined in the proof of Proposition \ref{psdense}).
\end{enumerate}
Let $v_1\in V_m$ be a unit vector, one can further show $\widehat{A}(z,w)=\frac{\langle AE_w(v_1),E_z(v_1)\rangle_{H_m}}{\langle E_w(v_1),E_z(v_1)\rangle_{H_m}}$. 

\begin{corollary}
Given $A,B\in \mathbb{B}(H_m)$, then
\begin{enumerate}[label=(\roman*)]
\item $\widehat{A}(z,w)$ is sesqui-holomorphic (i.e., holomorphic in $z$ and anti-holomorphic in $w$),
\item the map $A\mapsto \widehat{A}(z,w)$ or $\widehat{A}(z)$ is one-to-one,
\item $\sup_{z\in \mathbb{H}}|\widehat{(A)}(z)|\leq ||A||$,
\item $\widehat{A^{*}}(z,w)=\overline{\widehat{A}(w,z)}$,
\item $\widehat{AB}(z,w)=\int_\mathbb{H}\frac{K(z,\eta)K(\eta,w)}{K(z,w)}
    \widehat{A}(z,\eta)\widehat{B}(\eta,w)d\mu_m(\eta)$.
\end{enumerate}
\end{corollary}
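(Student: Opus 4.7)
The plan is to read off parts (i)--(iv) as specializations of results already proved for general $\pi$ in Sections 2 and 3 to the one-dimensional representation $(\pi_m, V_m = \mathbb{C})$, and to prove the product formula (v) by a direct reproducing-kernel calculation on $H_m$.

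Throughout, it is convenient to note that since $V_m = \mathbb{C}$ and $v_1$ is a unit vector, the simplified symbols reduce to $\widehat{A}(z,w) = K_A(z,w)/K(z,w)$ where $K_A(z,w) = E_z^* A E_w$ and $K(z,w) = K_I(z,w) = E_z^* E_w$ are both complex numbers. Part (i) is then immediate, because $K_A$ is sesqui-holomorphic by Proposition \ref{pK}(ii), $K$ is sesqui-holomorphic and nowhere vanishing (being a reproducing kernel), so the quotient inherits sesqui-holomorphicity. Part (ii) follows by combining Proposition \ref{pK}(iii) with Theorem \ref{tsesholo}: the injectivity of $A \mapsto K_A(z,w)$ and of $K_A(z,w) \mapsto K_A(z,z)$ transfers to $\widehat{A}$ after dividing by the nowhere-vanishing $K(z,w)$. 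Part (iii) is the scalar specialization of Proposition \ref{pS}(iv); in dimension one the spectral radius of $S(A)(z)$ coincides with $|\widehat{A}(z)|$. Part (iv) is the scalar version of Proposition \ref{pK}(i), since the adjoint on $\End(V_m)=\mathbb{C}$ is complex conjugation and division by $K(z,w)$ turns $\overline{K(w,z)}=K(z,w)$ into itself.

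For (v) I would begin by expanding
\begin{align*}
\widehat{AB}(z,w)\,K(z,w) = K_{AB}(z,w) = \langle AB\,E_w, E_z\rangle_{H_m} = \langle B\,E_w, A^* E_z\rangle_{H_m}.
\end{align*}
Since $B\,E_w$ and $A^* E_z$ both lie in $H_m \subset L^2(\mathbb{H}, \mu_m)$, the $H_m$-inner product is simply the $L^2$-integral $\int_{\mathbb{H}} (B\,E_w)(\eta)\,\overline{(A^* E_z)(\eta)}\,d\mu_m(\eta)$. The reproducing property $f(\eta) = \langle f, E_\eta\rangle_{H_m}$ for $f\in H_m$ identifies $(B\,E_w)(\eta) = E_\eta^* B\, E_w = K_B(\eta, w)$ and $(A^* E_z)(\eta) = K_{A^*}(\eta, z) = \overline{K_A(z,\eta)}$ by part (iv). Inserting these and dividing by $K(z,w)$, after multiplying and dividing the integrand by $K(z,\eta)K(\eta,w)$, yields
\begin{align*}
\widehat{AB}(z,w) = \int_{\mathbb{H}} \frac{K(z,\eta)\,K(\eta,w)}{K(z,w)}\,\widehat{A}(z,\eta)\,\widehat{B}(\eta,w)\,d\mu_m(\eta),
\end{align*}
which is the asserted formula.

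The only mild technical point is the convergence of the integral, which is automatic from Cauchy--Schwarz applied to $B\,E_w, A^* E_z \in L^2(\mathbb{H},\mu_m)$. I therefore do not foresee any serious obstacles; the entire argument is a matter of unpacking the scalar-valued specializations of the general machinery and invoking the reproducing kernel identity.
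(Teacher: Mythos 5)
Your proposal is correct, and for parts (i)--(iv) it follows the same route as the paper, which simply cites Propositions \ref{pK} and \ref{pS} and leaves the scalar specialization to the reader. The one place where you genuinely add something is part (v): neither Proposition \ref{pK} nor Proposition \ref{pS} contains a composition formula for $K_{AB}$ or $\widehat{AB}$, so the paper's one-line citation does not actually account for (v), whereas your reproducing-kernel computation ($K_{AB}(z,w)=\langle BE_w,A^*E_z\rangle_{H_m}=\int_{\mathbb H}K_A(z,\eta)K_B(\eta,w)\,d\mu_m(\eta)$, then divide by $K(z,w)$) is the standard and complete argument; the Cauchy--Schwarz remark disposes of convergence. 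One small caveat: your parenthetical justification that $K(z,w)$ is nowhere vanishing ``being a reproducing kernel'' is not a valid general principle --- reproducing kernels can vanish off the diagonal. Here the non-vanishing is genuine and follows from the paper's own Lemma \ref{lkappa}(i), which gives $E_z^*E_w=c_\pi\pi(\kappa(z,w))$ with $\kappa(z,w)\in K_{\mathbb C}$ invertible (concretely, $K(z,w)$ is a power of $(z-\bar w)^{-1}$ up to a constant). With that justification substituted, the division by $K(z,w)$ in (i), (ii), (iv) and (v) is legitimate and the whole argument stands.
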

\begin{proof}
It follows from Proposition \ref{pK} and Proposition \ref{pS}. 
\end{proof}

Now we define the commutant by
\begin{center}
$A_m=L_{m}(\Gamma)'\cap B(H_{\pi})=\{A\in B(H_m)|AL_m(\gamma)=L_m(\gamma)A,~\forall \gamma \in \Gamma\}$,   
\end{center}
which will be shown to be a tracial von Neumann algebra. 
It is a $\text{II}_1$ factor if $L_{m}(\Gamma)''$ is a $\text{II}_1$ factor and the coupling constant of $\dim_{L_{m}(\Gamma)''}H_m$ is finite, which holds when $\Gamma$ is an ICC lattice.

\begin{corollary}
\label{pinv}
If $A,B\in \mathbb{B}(H_m)$ and $g\in SL_2(\mathbb{R})$, the Berezin transform of $L_m(g)^{-1}AL_m(g)$ is $\widehat{A}(gz,gw)$. 
$A\in \mathcal{A}_m$ if and only if $\widehat{A}$ is $\Gamma$-invariant.
\end{corollary}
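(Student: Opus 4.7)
The plan is to deduce both assertions as the one-dimensional specialization of the covariance and injectivity properties established earlier for general $\pi$. Since $V_m = \mathbb{C}$, every operator in $\End(V_m)$ is a scalar; in particular $H_z = \pi_m(h_z^{-1})$, $\pi_m(J(g,z))$, and $\pi_m(k(g,z))$ are all scalars. The conjugations appearing in Proposition \ref{pK}(iv) and Proposition \ref{pS}(v) therefore collapse.

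First I would verify the covariance formula for $\widehat{A}(z,w)$. Using the expression $\widehat{A}(z,w) = \langle A E_w v_1, E_z v_1\rangle_{H_m}/\langle E_w v_1, E_z v_1\rangle_{H_m}$ (noted just before the corollary) together with Lemma \ref{lkappa}(ii), namely $E_{gz} = L_m(g) E_z \pi_m(J(g,z)^*)$, and the fact that scalar factors can be pulled out of each slot of the inner product, the numerator and denominator acquire the same scalar prefactor $\pi_m(J(g,w)^*)\overline{\pi_m(J(g,z)^*)}$, which cancels in the quotient. Unitarity of $L_m(g)$ then moves $L_m(g)^{-1}$ across, yielding
\[
\widehat{A}(gz,gw) \;=\; \frac{\langle L_m(g)^{-1} A L_m(g)\, E_w v_1,\, E_z v_1\rangle_{H_m}}{\langle E_w v_1,\, E_z v_1\rangle_{H_m}} \;=\; \widehat{L_m(g)^{-1} A L_m(g)}(z,w).
\]
Setting $w=z$ gives the same identity for $\widehat{A}(z)$, which is also the direct scalar specialization of Proposition \ref{pS}(v).

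For the second statement I would use injectivity. If $A \in \mathcal{A}_m$ then $L_m(\gamma)^{-1} A L_m(\gamma) = A$ for every $\gamma \in \Gamma$, so the covariance identity just proved gives $\widehat{A}(\gamma z,\gamma w) = \widehat{A}(z,w)$, establishing $\Gamma$-invariance. Conversely, if $\widehat{A}$ is $\Gamma$-invariant then for each $\gamma$ one has $\widehat{L_m(\gamma)^{-1} A L_m(\gamma)}(z,w) = \widehat{A}(\gamma z,\gamma w) = \widehat{A}(z,w)$, and by the injectivity of the map $A \mapsto \widehat{A}(z,w)$ (Proposition \ref{pK}(iii), equivalently Proposition \ref{pS}(i)) we conclude $L_m(\gamma)^{-1} A L_m(\gamma) = A$, i.e.\ $A \in \mathcal{A}_m$.

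There is no serious obstacle; the only point that warrants a line of care is the bookkeeping of scalar automorphy factors to confirm they cancel in the Berezin quotient. Everything else is a direct transcription of the general results in Section 3 to the abelian, one-dimensional setting of $K = SO(2)$ acting on $V_m = \mathbb{C}$.
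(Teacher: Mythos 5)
Your proposal is correct and follows essentially the same route as the paper, which simply cites the general covariance property of the Berezin symbols (Proposition \ref{pS}(v), there misprinted as (iv)) and lets the conjugating factors collapse in the scalar case; you additionally spell out the cancellation of the automorphy factors in the quotient formula and make explicit the appeal to injectivity (Proposition \ref{pK}(iii)) for the converse direction of the ``if and only if,'' both of which are exactly the steps the paper leaves implicit.
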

\begin{proof}
It follows from Proposition \ref{pS} (iv).
\end{proof}

\begin{corollary}
\label{ttr1}
For $m\geq 2$, 
the following linear functional defines a faithful normal tracial state on $A_m$: 
\begin{center}
$\tau(A)=\frac{1}{\mu(\mathcal{F})}\int_\mathcal{F}\widehat{A}(z)d\mu(z)$, $A\in A_{m}$,
\end{center}
If $\Gamma$ is an ICC group, $\tau$ is the unique tracial state if $A_m$ is a type $\text{II}_1$ factor. 

\end{corollary}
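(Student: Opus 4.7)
The plan is to obtain this corollary as a direct specialization of the general trace formula in Proposition \ref{ptr1}. In the setting $G=SL(2,\mathbb{R})$, $K=SO(2)$, the irreducible representations $(\pi_m,V_m)$ are one-dimensional, so $V_m\cong\mathbb{C}$ and $\End(V_m)\cong\mathbb{C}$. Consequently the normalized trace $\tr_\pi$ on $\End(V_m)$ is just the identity map, and the Berezin symbol $S(A)(z)$ reduces to the scalar-valued function $\widehat{A}(z)$. Under these identifications, the formula $\tau(A)=\frac{1}{\mu(\mathcal{F})}\int_{\mathcal{F}}\tr(S(A)(z))\,d\mu(z)$ in Proposition \ref{ptr1} becomes exactly $\tau(A)=\frac{1}{\mu(\mathcal{F})}\int_{\mathcal{F}}\widehat{A}(z)\,d\mu(z)$, so we may quote that proposition verbatim.

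Concretely, I would verify in one line each that the four required properties in Proposition \ref{ptr1} carry over: positivity and normalization follow from $\widehat{I}(z)=1$ (Proposition \ref{pS}(iii)) and the positivity $\widehat{A^*A}(z)=\overline{\widehat{A^*A}(z)}\geq 0$; the bound $|\widehat{A}(z)|\leq\|A\|$ (Corollary after Proposition \ref{pS}, item (iii)) together with $\mu(\mathcal{F})<\infty$ shows finiteness of the integral; the $\Gamma$-invariance of $\widehat{A}$ for $A\in A_m$ (Corollary \ref{pinv}) ensures that the integral only depends on the fundamental domain; the trace property $\tau(AB)=\tau(BA)$ comes from rewriting $\tau(A^*A)$ and $\tau(AA^*)$ as integrals over $\mathcal{F}\times\mathcal{D}$ and using the $\Gamma$-invariance of the integrand to swap the two variables, as in the proof of Proposition \ref{ptr1}; faithfulness follows because $\{E_zv_1\mid z\in\mathcal{D}\}$ spans a dense subspace of $H_m$, so $\tau(A^*A)=0$ forces $AE_zv_1=0$ in a neighborhood and hence $A=0$; and normality reduces to complete additivity on projections via the Fubini-Tonelli argument already given.

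For the final clause, I would invoke the standard fact that any type $\mathrm{II}_1$ factor carries a unique faithful normal tracial state. Thus, whenever $\Gamma$ is ICC and $A_m$ happens to be a type $\mathrm{II}_1$ factor (equivalently, the coupling constant $\dim_{L_m(\Gamma)''}H_m$ is finite, as observed just before the corollary), the trace $\tau$ constructed above must coincide with that unique normalized trace.

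The main obstacle is not really an obstacle at all in this scalar case: the only nontrivial point in the general $(\pi,V_\pi)$ setting was using the $\Gamma$-invariance of $\tr(R(A)(z,w)R(B)(z,w)^*)$ (Lemma \ref{linvtr}) to justify the swap of $z$ and $w$ when checking the trace property. In our one-dimensional situation the automorphy factors $\pi_m(J(\gamma,z))$ are just scalar $(cz+d)^m$'s, and the invariance computation collapses to the elementary identity $|cz+d|^{-2m}\cdot y^m=(\operatorname{Im}\gamma z)^m$, which makes the $\Gamma$-invariance of the integrand transparent; so the swap of variables needed for the trace property is essentially immediate here.
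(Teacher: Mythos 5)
Your proposal is correct and takes essentially the same route as the paper: the paper's proof of this corollary is literally the one-line reduction "it follows from Proposition \ref{ptr1}," obtained by observing that for $V_m\cong\mathbb{C}$ the symbol $\tr(S(A)(z))$ collapses to the scalar $\widehat{A}(z)$, and the uniqueness clause is the standard uniqueness of the normalized trace on a $\mathrm{II}_1$ factor. Your extra line-by-line verification of positivity, faithfulness, normality, and the swap-of-variables argument simply re-traces the proof of Proposition \ref{ptr1} in the scalar case and is consistent with it.
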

\begin{proof}
It follows from Proposition \ref{ptr1}. 
\end{proof}

\begin{remark}
Let $Z(\Gamma)$ be the center of $\Gamma$. 
The representation $(L_m,H_m)$ is indeed a projective unitary representation of $\Gamma/Z$, which may also give a factor. 

For example, in the case $PSL(2,\mathbb{Z})=SL(2,\mathbb{Z})/\{\pm I\}$, as $H^2(PSL(2,\mathbb{Z}),S^1)=0$, the representation is ordinary and gives the factor $L_m(PSL(2,\mathbb{Z}))''$ since each Fuchsian subgroup of $PSL(2,\mathbb{R})$ is ICC \cite{A81}. 

One may also consider the lattices in $SL(2,\mathbb{R})$. 
Indeed, they are all essentially ICC: there are only finitely many conjugacy classes that are finite. 
Following \cite{Ra14}, in this case the $A_m$'s are also factors of type $\text{II}_1$. 
\end{remark}

\subsection{The action of cusp forms}

Let $\Gamma$ be a Fuchsian subgroup of the first kind.
Recall that a cusp form of weight $p$ of $\Gamma$ is a holomorphic function $f\colon \mathbb{H}\to \mathbb{C}$ satisfying
\begin{enumerate}[label=(\roman*)]
\item $f(z)=(cz+d)^{-p}f(\frac{az+b}{cz+d})$, $z\in\mathbb{H}$, $\bigl(\begin{smallmatrix} a &b \\ c & d \\ \end{smallmatrix}\bigr) \in
    \Gamma$,
\item $f$ vanishes at each cusp of $\Gamma$. 
\end{enumerate}
%By definition, cusps are some elements in $\mathbb{R}\cup\{\infty\}$.
One can refer \cite{Mi,Sh} for the precise descriptions.   
Let $S_p(\Gamma)$ be the vector space generated by all cusp forms of weight $p$ of $\Gamma$, which is finite dimensional. 
It is well-known that for any $f\in S_k(\Gamma)$, there is a constant $B_f\geq 0$ such that
$|f(x+iy)|\leq B_f\cdot y^{-p/2}$ \cite{Mi}.  

%Recall that $\mathcal{A}^{0}(\Gamma,\pi_p)$ is the space of cusp forms defined on the real Lie group $G=SL(2,\mathbb{R})$ (see Section 4.1). 
\iffalse
\begin{lemma}
Each $f\in S_p(\Gamma)$ is the image of some $F\in \mathcal{A}^{0}(\Gamma,\pi_p)$ under the map $\Phi(F)(\dot{g})=\pi_p(J(g,0))F(g)$. 
\end{lemma}
\begin{proof}
Following \cite{Bp}, it is straightforward to check $\pi_p(J(g,0))^{-1}f(\dot{g})$ is a well-defined function on $SL(2,\mathbb{R})$ which is also a cusp form.  
\end{proof}
\fi

Let $\mathcal{A}^{0}(\Gamma,\pi_p)$ be the space of cusp forms defined on $SL(2,\mathbb{R})$, see Section \ref{sauto1}. 
Indeed, let $\Delta$ be the Casimir element of $\mathfrak{gl}_2^{\mathbb{C}}$ and $Z$ be the $2$-by-$2$ identity matrix in $\mathfrak{gl}_2$, 
there is an isomorphism \begin{equation*}
\begin{aligned}
S_p(\Gamma)&\to \mathcal{A}^{0}(\Gamma,\langle \Delta-\frac{p^2-1}{4},Z\rangle,\pi_p)\\
f&\mapsto \phi_f(g)=J(g,i)^{-p}f(gi)
\end{aligned}
\end{equation*}
which also illustrates the correspondence between two types of cusp forms. 
We refer to \cite{Bp} for more details about the relation between the automorphic forms on $SL(2,\mathbb{R})$ and the classical automorphic forms defined on the upper-half plane $\mathbb{H}$. 

As representations of $SO(2)$, we have
\begin{center}
$V_p\otimes V_m\cong V_{p+m}$.
\end{center}
%Hence the Toeplitz-type operator $P_{\pi_p\otimes \pi_m}(f\otimes \cdot)$
%defined in Section 5.2 reduced to the classical Toeplitz operator $P_{\pi_p\otimes \pi_m}M_f$, or simply $P_{m+p}M_f$. 
Now, given an arbitrary $f\in S_p(\Gamma)$ and any $m\geq 2$, let $T_f=P_{m+p}M_fP_m\in B(H_m,H_{m+p})$ be the Toeplitz operator associated with $f$. 
The following two results are the special cases of the ones in Section \ref{ssintwop}.  
For the reader's convenience, we also give separate proofs that emphasize more scalar-valued cusp forms instead of the vector-valued ones. 
\begin{proposition}
The Toeplitz operator $T_f$ satisfies the following conditions.
\begin{enumerate}[label=(\roman*)]
\item $T_f\in B(H_m,H_{m+p})$,
\item $T_f$ intertwines the action of $\Gamma$, i.e
\begin{center}
$T_f\pi_m(g)=\pi_{m+p}(g)T_f$, $\forall g\in \Gamma$.
\end{center}
\item $(T_g)^*=P_{m}M_{\overline{g}\cdot y^p}P_{m+p}\in B(H_{m+p},H_m)$, which also intertwines  the action of $\Gamma$.
\end{enumerate}
\end{proposition}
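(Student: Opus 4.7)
The plan is to dispatch the three parts in order, with the bulk of the work in (i) being a single bound via the classical growth estimate for cusp forms, then piggy-backing on Proposition \ref{pinvariant} and the transformation property of $f$ for (ii), and deriving (iii) by a direct inner-product computation.

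For part (i), I would simply verify that $M_f\colon H_m\to L^2(\mathbb{H},\mu_{m+p})$ is bounded, so that $T_f=P_{m+p}M_f$ inherits boundedness from the contractivity of the projection. Given $\phi\in H_m$ and the standard growth bound $|f(x+iy)|\leq B_f\,y^{-p/2}$ recalled just above the proposition, one has
\begin{equation*}
\|M_f\phi\|_{L^2(\mathbb{H},\mu_{m+p})}^2=\int_{\mathbb{H}}|f(z)|^2|\phi(z)|^2 y^{m+p-2}\,dx\,dy\leq B_f^2\int_{\mathbb{H}}|\phi(z)|^2 y^{m-2}\,dx\,dy=B_f^2\|\phi\|_{H_m}^2,
\end{equation*}
so $\|T_f\|\leq B_f$.

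For part (ii), I would first show the stronger statement $M_f L_m(\gamma)=L_{m+p}(\gamma)M_f$ on $L^2(\mathbb{H},\mu_m)$ for each $\gamma\in\Gamma$, then apply $P_{m+p}$ and use that $P_{m+p}$ commutes with $L_{m+p}(\gamma)$ by Proposition \ref{pinvariant}. Writing $\gamma^{-1}=\bigl(\begin{smallmatrix}a&b\\c&d\end{smallmatrix}\bigr)$, the modularity condition $f(z)=(cz+d)^{-p}f(\gamma^{-1}z)$ gives $f(\gamma^{-1}z)=(cz+d)^p f(z)$. Combining this with the formula $(L_k(\gamma)\phi)(z)=(cz+d)^{-k}\phi(\gamma^{-1}z)$ yields $(L_{m+p}(\gamma)M_f\phi)(z)=(cz+d)^{-(m+p)}(cz+d)^p f(z)\phi(\gamma^{-1}z)=f(z)(L_m(\gamma)\phi)(z)$, so the intertwining of $M_f$ holds; then $T_f L_m(\gamma)=P_{m+p}M_f L_m(\gamma)=P_{m+p}L_{m+p}(\gamma)M_f=L_{m+p}(\gamma)T_f$.

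For part (iii), I would compute the adjoint directly. For $\psi\in H_m$ and $\phi\in H_{m+p}$,
\begin{equation*}
\langle T_g\psi,\phi\rangle_{H_{m+p}}=\int_{\mathbb{H}}g(z)\psi(z)\overline{\phi(z)}\,y^{m+p-2}\,dx\,dy=\int_{\mathbb{H}}\psi(z)\overline{\overline{g(z)}y^{p}\phi(z)}\,y^{m-2}\,dx\,dy,
\end{equation*}
which identifies $(T_g)^*\phi$ as the $H_m$-projection of $\overline{g}\cdot y^p\cdot\phi$, i.e.\ $(T_g)^*=P_m M_{\overline{g}\cdot y^p}P_{m+p}$; the growth estimate used in (i) again shows $\overline{g}\,y^p\phi\in L^2(\mathbb{H},\mu_m)$, so this is well-defined. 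The $\Gamma$-intertwining is immediate from (ii) since $L_m(\gamma), L_{m+p}(\gamma)$ are unitary: taking adjoints of $T_g L_m(\gamma)=L_{m+p}(\gamma)T_g$ gives $L_m(\gamma^{-1})(T_g)^*=(T_g)^* L_{m+p}(\gamma^{-1})$. No step is really hard here; the only mild subtlety is keeping track of the factor $y^p$ arising from the difference between the measures $\mu_m$ and $\mu_{m+p}$, which is exactly what produces the multiplier $\overline{g}\cdot y^p$ in the adjoint formula.
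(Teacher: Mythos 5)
Your proposal is correct and follows essentially the same route as the paper: the growth bound $|f(x+iy)|\le B_f\, y^{-p/2}$ for (i), the modular transformation law against the automorphy factor $(cz+d)^{-k}$ for (ii), and the direct inner-product computation absorbing the measure discrepancy $y^p$ for (iii). The only cosmetic differences are that you route (ii) through the intertwining of $M_f$ on $L^2$ together with Proposition \ref{pinvariant} (the paper instead uses implicitly that $f\phi$ is already holomorphic, so no projection is needed), and that you explicitly derive the $\Gamma$-intertwining of $(T_g)^*$ by taking adjoints, which the paper asserts without proof.
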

\begin{proof}
(i) Let $\phi\in H_m$ and $\psi\in H_{m+p}$. 
Then
\begin{equation*}
\begin{aligned}
\|T_f\phi\|^2_{m+p}&=\int_{\mathbb{H}}|f(z)\phi(z)|^2y^{m+p-2}dxdy\\
&\leq \int_{\mathbb{H}}B^2|\phi(z)|^2y^{m-2}dxdy,
\end{aligned}
\end{equation*}
where we used $|f(z)|\leq By^{-p/2}$. So $T_f \in B(H_m,H_{m+p})$.

(ii) Take $g=\big(\begin{smallmatrix} a &b \\ c & d \\ \end{smallmatrix}\bigr)^{-1} \in \Gamma$.
We have
\begin{equation*}
\begin{aligned}
L_{m+p}(g)T_f\phi(z)&=f\left(\frac{az+b}{cz+d}\right)\phi\left(\frac{az+b}{cz+d}\right)(cz+d)^{-(m+p)}\\
&=f(z)\phi\left(\frac{az+b}{cz+d}\right)(cz+d)^{-m}\\
&=T_f L_m(g)\phi(z).
\end{aligned}
\end{equation*}

(iii) Consider the inner product on $H_{m+p}$. We have
\begin{equation*}
\begin{aligned}
\langle T^{m}_g\phi,\psi\rangle_{m+p}&=\int_{\mathbb{H}}g(z)\phi(z)\overline{\psi(z)}y^{m+p-2}dxdy\\
&=\int_{\mathbb{H}}\phi(z)(\overline{P_m\overline{g}y^p\psi(z)})y^mdxdy,
\end{aligned}
\end{equation*}
which is $\langle\phi,(T_g)^*\psi\rangle_{m}$. Hence ${T_g}^*=P_{m}M_{\overline{g}\cdot y^p}P_{m+p}$.
\end{proof}

\begin{corollary}
Given $f,g\in S_p(\Gamma)$, we have
\begin{center}
$(T_g)^*T_f=P_m M_{f\overline{g}y^p}P_m=T_{f\overline{g}y^p} \in A_m$,
\end{center}
where $A_m=L_m(\Gamma)'$ is the factor.
\end{corollary}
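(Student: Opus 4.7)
The plan is to simply plug in the formulas derived in the previous proposition and verify that everything collapses as claimed. Using $(T_g)^{*}=P_{m}M_{\bar g\cdot y^{p}}P_{m+p}$ and $T_f=P_{m+p}M_fP_m$, the idempotence of $P_{m+p}$ gives
\[
(T_g)^{*}T_f \;=\; P_{m}M_{\bar g\cdot y^{p}}P_{m+p}M_fP_m .
\]
The first task is to remove the middle projection. For any $\phi\in H_m$, the function $f\phi$ is holomorphic, and the cusp form bound $|f(x+iy)|\le B_f y^{-p/2}$ (recorded above) together with the computation in the preceding proposition shows $\int_{\mathbb H}|f\phi|^2 y^{m+p-2}dxdy<\infty$, so $f\phi\in H_{m+p}$. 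Consequently $P_{m+p}M_fP_m=M_fP_m$ on $L^2(\mathbb H,\mu_m)$, giving
\[
(T_g)^{*}T_f \;=\; P_m M_{\bar g\cdot y^{p}}M_f P_m \;=\; P_m M_{f\bar g\cdot y^{p}} P_m.
\]

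The next step is to recognise the right-hand side as the scalar Toeplitz operator $T_{f\bar g y^{p}}$ of Section \ref{steomat} and to verify that its symbol lies in $L^{\infty}(\mathcal F)$. For $\gamma=\bigl(\begin{smallmatrix}a&b\\c&d\end{smallmatrix}\bigr)\in\Gamma$, the cusp-form modularity gives $f(\gamma z)=(cz+d)^{p}f(z)$ and $g(\gamma z)=(cz+d)^{p}g(z)$, while the standard identity $\operatorname{Im}(\gamma z)=y/|cz+d|^{2}$ yields $y(\gamma z)^{p}=y^{p}/|cz+d|^{2p}$. Multiplying these three factors, the $|cz+d|^{2p}$ in the denominator cancels with $(cz+d)^{p}\overline{(cz+d)^{p}}$ in the numerator, so
\[
f(\gamma z)\overline{g(\gamma z)}\,y(\gamma z)^{p} \;=\; f(z)\overline{g(z)}\,y^{p},
\]
i.e.\ the symbol descends to $\mathcal F=\Gamma\backslash\mathbb H$. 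The same bound $|f|,|g|\le B\,y^{-p/2}$ forces $|f\bar g\,y^{p}|\le B^{2}$ pointwise, so $f\bar g\,y^{p}\in L^{\infty}(\mathcal F)$.

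Finally, Proposition \ref{pTfga} (in the scalar case $V_{\pi}=\mathbb C$) guarantees that $T_{h}$ commutes with the $\Gamma$-action whenever $h\in L^{\infty}(\mathcal F)$, so $T_{f\bar g\,y^{p}}\in A_m$. The only substantive point beyond bookkeeping is the removal of the middle projection $P_{m+p}$; the decisive input there is precisely the growth estimate $|f(x+iy)|\le B_f y^{-p/2}$, which both ensures $T_f$ is bounded in the first place and lets $f\phi$ sit in $H_{m+p}$ rather than merely $L^{2}(\mathbb H,\mu_{m+p})$. Everything else is algebraic manipulation of already-established identities.
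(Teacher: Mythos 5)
Your proof is correct, and it is a genuinely more self-contained route than the one the paper takes. The paper disposes of this corollary in two lines by citing the general Lie-group statement (Corollary \ref{cTeo}): there, for cusp forms of type $\rho$ with one of them holomorphic, $T_h^*T_f=T_{(f,h)_\rho}$, and one only has to recognize that the pairing $(f,g)_\rho=\langle\kappa(z,z)^{-p}f(z),g(z)\rangle_{V_m}$ specializes to $f\overline{g}y^p$ for $SL(2,\mathbb{R})$; membership in $A_m$ then comes from Proposition \ref{pcuspcom}. You instead redo the computation directly in the upper half-plane: compose the explicit formulas $(T_g)^*=P_mM_{\overline{g}y^p}P_{m+p}$ and $T_f=P_{m+p}M_fP_m$, delete the middle projection because $f\phi$ is holomorphic and the bound $|f|\le B_fy^{-p/2}$ puts it in $H_{m+p}$, and then verify by hand that the symbol $f\overline{g}y^p$ is $\Gamma$-invariant and bounded so that Proposition \ref{pTfga} applies. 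The decisive step is the same in both arguments --- holomorphy of $f$ is exactly what lets the intermediate projection $P_{\rho\otimes\pi}$ (here $P_{m+p}$) be absorbed, which is the content of the line ``$f(z)\otimes\phi(z)\in H_{\rho\otimes\pi}$'' in the paper's proof of Corollary \ref{cTeo}. What your version buys is transparency and independence from the Section 6 machinery; what the paper's version buys is brevity and the fact that the identical argument covers all holomorphic discrete series of general $G$ at once.
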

\begin{proof}
Recall $(f(z),g(z))=\langle {\kappa(z,z)}^{-p}f(z),g(z) \rangle_{V_m}=f(z)\overline{g(z)}y^p$.
Note both $f,g$ are holomorphic, then it follows from Corollary \ref{cTeo}.
\end{proof}

%This section will be mainly devoted to generalizing the statement for $PSL(2,\mathbb{Z})$ to an arbitrary Fuchsian group of the first kind. 

\subsection{Two existence results}

This part is devoted to the proof of Theorem \ref{tsl2}. 
Before this, we need two theorems for the existence of some meromorphic functions and holomorphic functions on a compact Riemann surface.  
We refer to \cite{FK} for the theory of Riemann surfaces. 

Let $\Gamma \subset SL(2,\mathbb{R})$ be an arbitrary Fuchsian group of the first kind and $P_{\Gamma}$ be the set of all cusps of $\Gamma$.  
Let $\mathbb{H}^*=\mathbb{H}\cup P_{\Gamma}$. 
Denote $\mathcal{F}=\Gamma\backslash\mathbb{H}$ by the fundamental domain. 
Let $\mathcal{F}^*=\Gamma\backslash\mathbb{H}^*$ be extend fundamental domain. 
It is well-known that $\mathcal{F}^*$ is a compact Hausdorff space and also a compact Riemann surface \cite{Mi}. 
We denote a Riemann surface by $\mathcal{M}$ and the field of meromorphic functions on $\mathcal{M}$ by $A(\mathcal{M})$. 

\begin{theorem}\label{texist1}
If $\mathcal{M}$ is a compact Riemann surface and $P_1,\dots,P_n\in \mathcal{M}$ are distinct points and $z_1,\dots,z_n\in\mathbb{C}$, there exists $\phi\in A(\mathcal{M})$ such that $\phi(P_i)=z_i$ for all $1\leq i\leq n$. 
\end{theorem}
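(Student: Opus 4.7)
The plan is to apply the Riemann-Roch theorem to reduce this interpolation problem to a surjectivity statement about evaluation maps. Since a compact Riemann surface is uncountable, I would first pick an auxiliary point $Q \in \mathcal{M}$ distinct from $P_1,\dots,P_n$ and consider the divisor $D = N\cdot Q$ for a positive integer $N$ to be chosen later. Associated to $D$ is the finite-dimensional vector space
\[
L(D) = \{\phi \in A(\mathcal{M}) : (\phi) + D \geq 0\}
\]
consisting of meromorphic functions whose only possible pole is at $Q$, of order at most $N$. Every $\phi \in L(D)$ is holomorphic at each $P_i$, so evaluation defines a linear map
\[
\mathrm{ev}\colon L(D) \to \mathbb{C}^n, \qquad \phi \mapsto (\phi(P_1),\dots,\phi(P_n)),
\]
whose kernel is exactly $L(D - P_1 - \cdots - P_n)$. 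Proving the theorem then amounts to showing that $\mathrm{ev}$ is surjective for some choice of $N$.

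Next, I would invoke Riemann-Roch in the following form: if $g$ denotes the genus of $\mathcal{M}$, then for any divisor $D'$ with $\deg D' > 2g - 2$, one has $\dim L(D') = \deg D' - g + 1$. Choosing $N$ large enough that both $\deg D = N$ and $\deg(D - P_1 - \cdots - P_n) = N - n$ exceed $2g - 2$, this gives
\[
\dim L(D) - \dim L(D - P_1 - \cdots - P_n) = (N - g + 1) - (N - n - g + 1) = n,
\]
which is the full dimension of $\mathbb{C}^n$. By rank-nullity, $\mathrm{ev}$ is therefore surjective, and any preimage $\phi \in L(D) \subset A(\mathcal{M})$ of $(z_1, \dots, z_n)$ furnishes the desired meromorphic function.

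The main conceptual step is the setup: isolating all of the poles at a single auxiliary point $Q$ turns Riemann-Roch into a pure dimension-counting tool, with the kernel of $\mathrm{ev}$ appearing automatically as the space attached to a smaller divisor. Once this is in place, no serious obstacle remains beyond invoking the standard form of Riemann-Roch for compact Riemann surfaces, so I expect the argument to be short and self-contained.
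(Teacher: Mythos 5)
Your proof is correct, and it takes a genuinely different route from the paper's. You concentrate all poles at a single auxiliary point $Q$ and run a one-shot dimension count: the evaluation map $L(NQ)\to\mathbb{C}^n$ has kernel $L(NQ-P_1-\cdots-P_n)$, and once $N-n>2g-2$ Riemann--Roch kills the correction terms on both sides, forcing the image to have dimension exactly $n$. The paper instead builds a Lagrange-type basis: for each pair $i\neq j$ it uses Riemann--Roch to produce $\psi_{i,j}$ with a pole at $P_i$ and a zero at $P_j$, converts the pole into the value $1$ via the M\"obius trick $\psi_{i,j}/(\psi_{i,j}+1)$, and then takes products $\phi_i=\prod_{j\neq i}\phi_{i,j}$ and the combination $\sum_i z_i\phi_i$. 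Your argument is shorter and more robust: it needs only one divisor and sidesteps a subtlety the paper's construction leaves implicit, namely that $\phi_{i,j}$ acquires poles wherever $\psi_{i,j}=-1$, which could a priori land on one of the other points $P_l$ and spoil the product. What the paper's approach buys is an explicit interpolation basis $\{\phi_i\}$ with $\phi_i(P_j)=\delta_{ij}$, which is conceptually parallel to classical Lagrange interpolation; your approach buys brevity and an effective bound $N>2g-2+n$ on the pole order needed. Both are complete proofs of the statement.
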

\begin{proof}
Take integers $i,j$ such that $1\leq i\neq j \leq n$. 
Let us consider the divisor $D=k P_i-P_j$ where $k=k_{i,j}\in \mathbb{Z}$. 
Apply the Riemann-Roch Theorem for the divisor $D$, we get
\begin{center}
$l(D)=\deg(D)-g+1+l(\text{div}(\omega)-D)=m-g+l(\text{div}(\omega)-D)$,
\end{center}
where $l(D)=\dim_{\mathbb{C}}L(D)$ with $L(D)=\{f\in A(\mathcal{M})|f=0~{\text{or}}~ \text{div}(f)+D\geq 0\}$ and $\text{div}(\omega)$ is a canonical divisor. 
Take $k$ sufficiently large, there must be a desired $k=k_{i,j}$ such that 
$\deg(\text{div}(\omega)-kP_i+P_i)< 0$ and hence $l(\text{div}(\omega)-kP_i+P_i)=0$. 
Then, as
$l(kP_i-P_j)>l((k-1)P_i-P_j)$, 
there must be some $\psi_{i,j}\in L(kP_i-P_j)-L((k-1)P_i-P_j)$. 
So we get a meromorphic function $\psi_{i,j}$ with
\begin{center}
$v_{P_i}(\psi_{i,j})=-k_{i,j}<0$ and $v_{P_j}(\psi_{i,j})\geq 1$.  
\end{center}
Let $\phi_{i,j}=\frac{\psi_{i,j}}{\psi_{i,j}+1}$ then $\phi_{i,j}(P_i)=1$ and $\phi_{i,j}(P_j)=0$. 
Now we define $\phi_i=\prod_{1\leq j\leq n,j\neq i}\phi_{i,j}$ which satisfies
\begin{center}
$\phi_i(P_i)=1,\phi_i(P_j)=0$ for $j\neq i$. 
\end{center}
Then the function $\phi=\sum_{1\leq i\leq n}z_i\phi_i$ is the one we want. 

\end{proof}

Now we take the compact Riemann surface to be $\mathcal{F}^*=\Gamma\backslash\mathbb{H}^*$. 

\begin{theorem}\label{texist2}
Let $P_1,\dots,P_n\in \mathcal{F}^*$ be distinct points, then there exists a holomorphic automorphic form $f$ such that $f(P_i)\neq 0$ for all $1\leq i\leq n$. 
Moreover, if $\{P_i\}_{1\leq i\leq n}$ are not cusps, we can further require $f$ to be a cusp form.
\end{theorem}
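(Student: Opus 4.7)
The plan is to translate the theorem into a non-vanishing statement for sections of a divisor line bundle on the compact Riemann surface $\mathcal{F}^*$ and then apply Riemann--Roch, in the spirit of the proof of Theorem \ref{texist1}. The classical dictionary for automorphic forms identifies $M_k(\Gamma)$ (respectively $S_k(\Gamma)$) with $L(D_k)$ (respectively $L(D_k - C)$) on $\mathcal{F}^*$, where $C = \sum_{P \text{ cusp}} P$ is the cuspidal divisor and $D_k$ is an effective divisor incorporating $k$ times the canonical divisor plus correction terms at the elliptic fixed points (whose degree grows linearly in $k$). This reduces the theorem to the statement that the evaluation functionals $\mathrm{ev}_{P_i}$ are all surjective on the appropriate linear system for $k$ sufficiently large.

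To verify this for a single point $P \in \mathcal{F}^*$, I would choose $k$ large enough that $\deg(D_k - C - P) > 2g - 2$, where $g$ is the genus of $\mathcal{F}^*$. Then Riemann--Roch gives $l(D_k - C - P) = \deg(D_k - C - P) - g + 1 = l(D_k - C) - 1$, so the point-evaluation kernel is a proper hyperplane of $S_k(\Gamma)$, and there exists $f_P \in S_k(\Gamma)$ with $f_P(P) \neq 0$. The case of general holomorphic automorphic forms (when $P$ may be a cusp) is identical with $D_k - C$ replaced by $D_k$, so $M_k(\Gamma)$ contains a form nonvanishing at any prescribed $P \in \mathcal{F}^*$.

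To produce a single form nonvanishing at all $P_1,\dots,P_n$ simultaneously, I would use a general-position argument. Each subspace $V_i := \{ f \in S_k(\Gamma) : f(P_i) = 0\}$ is a proper hyperplane by the previous step; since a finite union of proper subspaces of a vector space over the infinite field $\mathbb{C}$ cannot cover the whole space, any $f \in S_k(\Gamma) \setminus \bigcup_{i=1}^n V_i$ satisfies $f(P_i) \neq 0$ for all $i$. Explicitly, a generic complex linear combination $f = \sum c_i f_{P_i}$ of the point-specific forms will do. The same argument inside $M_k(\Gamma)$ handles the case where some $P_i$ are cusps.

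The main technical obstacle I anticipate is the precise bookkeeping for the divisor $D_k$ at elliptic fixed points, where the bundle of weight-$k$ automorphic forms has fractional twists that must be discretized by floor functions $\lfloor k(1 - 1/e_P)\rfloor$ (with $e_P$ the order of the isotropy), as well as the behavior at cusps where a cusp form corresponds to vanishing of the section. Once this standard identification is in place, the Riemann--Roch computation and the hyperplane-avoidance step are routine, and the choice of $k$ can be made uniformly in terms of $g$, the orders $e_P$, the number of cusps, and the number $n$ of prescribed points.
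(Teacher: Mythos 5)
Your argument is correct in substance but organized differently from the paper's. The paper never invokes the dictionary $M_k(\Gamma)\cong L(D_k)$: for a single point $P$ it takes an arbitrary holomorphic form $f_1$ with $v_P(f_1)=m$, uses Riemann--Roch only to produce a meromorphic function $\phi_1\in A(\mathcal{F}^*)$ whose sole pole is at $P$ of order $k$, and then exhibits the explicitly nonvanishing form $f_1^k\phi_1^{m}$; for several points it equalizes the (a priori different) weights $k_i$ by raising each $g_i$ to the power $N/k_i$ for a common multiple $N$ and then takes a suitable linear combination. You instead work inside a single large weight $k$ from the outset, show via $\ell(D_k-C-P)=\ell(D_k-C)-1$ that each evaluation kernel is a proper hyperplane, and finish by hyperplane avoidance over $\mathbb{C}$ --- which is cleaner at the last step and avoids the power-raising trick, at the cost of having to set up the divisor $D_k$ carefully. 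The one point you should make explicit is the elliptic-fixed-point congruence: at a point of isotropy order $e$, every form of weight $k$ vanishes there unless $k$ lies in the right congruence class (the automorphy factor of the stabilizer is a root of unity), so the identity ``evaluation kernel $=L(D_k-C-P)$ is a proper hyperplane'' requires $k$ divisible by the relevant orders, not merely $k$ large; you flag this as bookkeeping, and to be fair the paper's proof elides the same issue, but your displayed Riemann--Roch step is where it would bite if left unaddressed. With that choice of $k$ pinned down, your proof is complete.
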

\begin{proof}
We first focus on a single point $P_1$.  
Suppose all holomorphic automorphic forms have $P$ as zeros. 
We take a holomorphic $f_1$ such that $v_{P}(f_1)=m\geq 1$ is minimal. 

Consider the divisor $kP$. 
By the Riemann-Roch Theorem, we have
\begin{center}
$l(kP)=\deg(kP)-g+1+l(\text{div}(\omega)-kP)=k-g+1+l(\text{div}(\omega)-kP)$. 
\end{center}
For sufficiently large $k$, we have $\deg(\text{div}(\omega)-kP)<0$ and $l(kP)=k-g+1$. 
Then there exists $\phi_1\in A(\mathcal{F}^*)$
with a single pole of order $k$. 
Then $g_1=f_1^k \phi_1^{m}$ is holomorphic and does not vanish at $P_1$. 

Now suppose we have such holomorphic automorphic forms $\{g_i\}_{1\leq i\leq n}$ such that $g_{i}(P_i)\neq 0$ and the weight of $g_i$ is $k_i$ for $1\leq i\leq n$. 
Now let $N$ be a common multiple of all these $k_i$'s. 
Then a linear combinations $f=\sum_{1\leq i\leq n}\lambda_i g_i^{N/k_i}$ (with some suitable $\lambda_i$'s) will give us a desired automorphic form of weight $N$.  

If $\{P_i\}_{1\leq i\leq n}$ are not cusps, we can further assume from the beginning that all $g_i$'s are cusp forms. Then we get a desired $f$ in the same way. 
\end{proof}

\subsection{$\text{II}_1$ factors from cusp forms on $SL(2,\mathbb{R})$}

Now we let $A_k(\Gamma)$ be the space of automorphic forms of weight $k$ so 
$S_k(\Gamma)$ is a subspace of $A_k(\Gamma)$ spanned by the cusp forms. 
We know the Petersson inner-product on $S_k(\Gamma)$ given by
\begin{center}
$\langle f,g\rangle=\frac{1}{\mu(\mathcal{F})}\int_{\mathcal{F}} f(z)\overline{g(z)}y^k d\mu(z)$, 
\end{center}
is Hermitian.  
We denote the term in the integral by $(f,g)_k=f(z)\overline{g(z)}y^k$ with a emphasis on the weight $k$. 

Now let 
\begin{center}
$\mathcal{F}_1=\mathcal{F}^*/(\Gamma\backslash P_{\Gamma})=\Gamma\backslash(\mathbb{H}^*/P_{\Gamma})=\mathcal{F}\cup\{\text{pt}\}$.    
\end{center}
by identifying all cusps in the fundamental domain with one point. 
Hence $\mathcal{F}_1$ is compact. 

%Note that $S_k(\Gamma)$ always vanish on cusps, we can treat $S_{k}$
\begin{proposition}
$\{(f,g)_k|k\in \mathbb{N},f,g\in S_k(\Gamma)\}$ are well-defined functions on $\mathcal{F}_1$ and separate points of $\mathcal{F}_1$. 
\end{proposition}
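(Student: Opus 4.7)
The plan splits into two parts: first, verifying that $(f,g)_k(z) = f(z)\overline{g(z)}y^k$ descends to a continuous function on $\mathcal{F}_1$; second, showing that as $k$, $f$, $g$ vary, these functions separate points of $\mathcal{F}_1$.

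For the first part, I would begin by checking $\Gamma$-invariance on $\mathbb{H}$. Writing $\gamma = \bigl(\begin{smallmatrix} a & b \\ c & d \end{smallmatrix}\bigr) \in \Gamma$, the defining transformation law of a weight-$k$ cusp form gives $f(\gamma z) = (cz+d)^k f(z)$, and likewise for $g$; combined with $\operatorname{Im}(\gamma z)^k = y^k/|cz+d|^{2k}$, the factors $(cz+d)^k\,\overline{(cz+d)^k} = |cz+d|^{2k}$ cancel against $|cz+d|^{-2k}$, so $(f,g)_k$ is $\Gamma$-invariant and descends to $\mathcal{F}$. To extend continuously across the identified cusp point of $\mathcal{F}_1$, I would use that every cusp form decays exponentially near each cusp: conjugating a cusp to $\infty$, the Fourier expansion begins at positive frequency, so $|f(z)| \lesssim e^{-2\pi y/h}$ and similarly for $g$, and the polynomial factor $y^k$ cannot offset this. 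Thus $|(f,g)_k(z)| \to 0$ as $z$ approaches any cusp, and defining the value to be $0$ at the cusp point of $\mathcal{F}_1$ yields a continuous function.

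For the second part, take distinct points $P,Q \in \mathcal{F}_1$. If $Q$ is the cusp point and $P \in \mathcal{F}$, Theorem \ref{texist2} produces a cusp form $f$ of some weight $k$ with $f(P) \neq 0$, so $(f,f)_k(P) = |f(P)|^2 y_P^k > 0 = (f,f)_k(Q)$. If both $P,Q$ lie in $\mathcal{F}$, I would apply Theorem \ref{texist2} to the pair $\{P,Q\}$ to obtain a cusp form $g$ of some weight $k$ with $g(P), g(Q) \neq 0$; should $(g,g)_k(P) \neq (g,g)_k(Q)$, we are done. Otherwise, Theorem \ref{texist1} furnishes a meromorphic function $\phi \in A(\mathcal{F}^*)$ with $\phi(P) \neq \phi(Q)$, and I would realize $\phi$ as a ratio $f/g'$ of cusp forms of a common weight $k'$ satisfying $g'(P), g'(Q) \neq 0$. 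The identity
\[
\frac{(f,g')_{k'}(R)}{(g',g')_{k'}(R)} = \frac{f(R)}{g'(R)} = \phi(R), \qquad R \in \{P,Q\},
\]
then forces either $(g',g')_{k'}$ or $(f,g')_{k'}$ to take different values at $P$ and $Q$: if the denominators agree, the numerators cannot, since their ratios are $\phi(P) \neq \phi(Q)$.

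The main obstacle is the realization of $\phi$ as such a ratio of cusp forms. I would handle it by multiplying $g$ by an auxiliary cusp form $h$ of sufficiently large weight whose divisor on $\mathcal{F}^*$ dominates the pole divisor of $\phi$ but does not include $P$ or $Q$; existence of such $h$ follows from a divisor-theoretic enhancement of the Riemann--Roch argument already used in the proof of Theorem \ref{texist2}, namely that for large enough weight the space of cusp forms with prescribed lower bounds on vanishing at a finite set has positive-dimensional complement avoiding any further prescribed points. Setting $g' = gh$ and $f = \phi g'$ then gives cusp forms of the common weight $k + \operatorname{wt}(h)$ with $f/g' = \phi$ and $g'(P), g'(Q) \neq 0$, completing the separation argument.
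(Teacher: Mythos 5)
Your proof is correct and follows essentially the same route as the paper: vanishing of $(f,g)_k$ at the cusps for well-definedness, Theorem \ref{texist2} for cusp forms nonvanishing at the given points, Theorem \ref{texist1} for a separating meromorphic $\phi$, and clearing the poles of $\phi$ by a cusp form of sufficiently large weight so that either the original pair or the $\phi$-twisted pair separates $P$ and $Q$. The only cosmetic differences are that you absorb the pole-clearing factor into $g'=gh$ and phrase the final dichotomy via the ratio identity, whereas the paper absorbs it into the choice of $f$ and argues directly.
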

\begin{proof}
As all $(f,g)_k$'s vanish on all cusps, it is well-defined on the quotient space of $\mathcal{F}^*$ by identifying all cusps. 

Now we take a pair of distinct points $P,Q\in \mathcal{F}_1$. 
By Theorem \ref{texist1}, there is a meromorphic function $\phi\in A_0(\Gamma)$ such that $\phi(P),\phi(Q)$ are distinct. 

Case 1: $P,Q\in \mathcal{F}$. 

We take $f(z),g(z)\in S_k(\Gamma) $ with all $f(P),f(Q),g(P),g(Q)$ are nonzero. 
The existence follows from Theorem \ref{texist2}.   

For sufficiently large weight $k$, we may assume the multiplication by $f$ eliminates all the poles of $\phi$.
So we can further assume $f(z)$ satisfies $f(z)\phi(z)\in S_k(\Gamma)$. 
We assume $(f,g)_k$ cannot separate $P,Q$, i.e., 
\begin{center}
$(f,g)_k(P)=f(P)\overline{g(P)}y_P^k=f(Q)\overline{g(Q)}y_Q^k=(f,g)_k(Q)$.   
\end{center}
Then we have
\begin{center}
$(f\phi,g)_k(P)=f(P)\phi(P)\overline{g(P)}y_P^k\neq f(Q)\phi(Q)\overline{g(Q)}y_Q^k=(f\phi,g)_k(Q)$.
\end{center}

Case 2: $P\in \mathcal{F}$, $Q=\{\text{pt}\}$. 

As $Q$ stands for cusps, it suffices to show some $(f,g)_k(P)\neq 0$. But this follows from Theorem \ref{texist2}. 
\end{proof}

As $\mathcal{F}_1$ is compact, we apply Stone-Weierstrass to get the following corollary. 

\begin{corollary}\label{ccf}
The functions of the form $(f,g)_k$ generate the function space of continuous functions on $\mathcal{F}_1$ that vanish at the the point $\text{pt}$, or equivalently,
$(f,g)_k$'s generate the space of continuous functions on $\mathcal{F}^*$ that vanish on all cusps, i.e., 
\begin{center}
$\overline{\{(f,g)_k|k\in \mathbb{N},f,g\in S_k(\Gamma)\}}^{||\cdot||_{\infty}}=\{\psi\in C(\mathcal{F}^*)~|~\psi|_{\emph{cusps}}=0\}$.  
\end{center}
\end{corollary}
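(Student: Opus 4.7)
The plan is to apply the Stone--Weierstrass theorem on the compact Hausdorff space $\mathcal{F}_1$ to the linear span
\[
\mathcal{S} = \mathrm{span}_{\mathbb{C}}\{(f,g)_k \mid k\in\mathbb{N},\ f,g\in S_k(\Gamma)\},
\]
regarded as a subalgebra of $C(\mathcal{F}_1)$ consisting of functions vanishing at the distinguished point $\mathrm{pt}$. Since the previous proposition already gives separation of points of $\mathcal{F}_1$ (including separation of every $P\in\mathcal{F}$ from the cusp point $\mathrm{pt}$, which is automatic because any $(f,g)_k$ with $f(P),g(P)\neq 0$ and $y_P>0$ satisfies $(f,g)_k(P)\neq 0$), the main work is to verify the algebraic closure properties so that Stone--Weierstrass becomes applicable.

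First I would check that $\mathcal{S}$ is closed under multiplication and complex conjugation. For multiplication, if $f_1,g_1\in S_{k_1}(\Gamma)$ and $f_2,g_2\in S_{k_2}(\Gamma)$, then $f_1f_2,g_1g_2\in S_{k_1+k_2}(\Gamma)$ (products of cusp forms are cusp forms whose weights add), and pointwise
\[
(f_1,g_1)_{k_1}\cdot(f_2,g_2)_{k_2} = f_1f_2\,\overline{g_1g_2}\,y^{k_1+k_2} = (f_1f_2,\,g_1g_2)_{k_1+k_2}\in\mathcal{S}.
\]
For conjugation, $\overline{(f,g)_k}=\overline{f}gy^k=(g,f)_k\in\mathcal{S}$. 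Next I would verify that every element of $\mathcal{S}$ does descend to a continuous function on $\mathcal{F}_1$ vanishing at $\mathrm{pt}$: each $(f,g)_k$ is $\Gamma$-invariant on $\mathbb{H}$ by the transformation law of cusp forms, is continuous on $\mathcal{F}^*$, and vanishes at each cusp because $f,g$ do; thus it factors through the quotient $\mathcal{F}^*\to\mathcal{F}_1$.

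With separation in hand from the previous proposition and the algebraic closure properties just verified, I would invoke the locally compact version of the Stone--Weierstrass theorem, applied to the locally compact space $\mathcal{F}=\mathcal{F}_1\setminus\{\mathrm{pt}\}$: a self-adjoint subalgebra of $C_0(\mathcal{F})$ that separates points of $\mathcal{F}$ and has no common zero on $\mathcal{F}$ is uniformly dense in $C_0(\mathcal{F})$. Point separation is immediate from the previous proposition. To see that $\mathcal{S}$ has no common zero on $\mathcal{F}$, take any $P\in\mathcal{F}$: by Theorem \ref{texist2} applied to the single non-cusp point $P$ there exists a cusp form $f\in S_k(\Gamma)$ with $f(P)\neq 0$, so $(f,f)_k(P)=|f(P)|^2 y_P^k>0$. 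Identifying $C_0(\mathcal{F})$ with $\{\psi\in C(\mathcal{F}^*) \mid \psi|_{\text{cusps}}=0\}$ gives exactly the asserted equality.

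The only mildly delicate step is the point-separation input, and that has already been handled in the preceding proposition using Theorem \ref{texist1} (meromorphic functions on $\mathcal{F}^*$ from Riemann--Roch) together with Theorem \ref{texist2} (existence of non-vanishing cusp forms). Once those tools are in place, the present corollary is a direct application of Stone--Weierstrass; no new obstacle arises.
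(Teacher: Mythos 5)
Your proposal is correct and follows essentially the same route as the paper, which likewise deduces the corollary by applying Stone--Weierstrass on the compact space $\mathcal{F}_1$ using the point-separation proposition proved just before. The only difference is that you explicitly verify the routine hypotheses the paper leaves implicit --- that the span is a self-adjoint subalgebra via $(f_1,g_1)_{k_1}(f_2,g_2)_{k_2}=(f_1f_2,g_1g_2)_{k_1+k_2}$ and $\overline{(f,g)_k}=(g,f)_k$, and that it vanishes nowhere on $\mathcal{F}$ --- which is a welcome completion rather than a departure.
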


As there are only finitely many cusps in $\mathcal{F}$ and $\mu(\mathcal{F})<\infty$, we obtain: 
%\begin{corollary}\label{cl2}
%$\overline{\{(f,g)_k|k\in \mathbb{N},f,g\in S_k(\Gamma)\}}^{||\cdot||_2}=L^2(\mathcal{F}).$ 
%\end{corollary}

\begin{proposition}
For $m\geq 2$, we  have
\begin{center}
$\overline{\{\emph{span}_{f,g} (T_g)^*T_f\}}^{\emph{w.o.}}=A_m$,       
\end{center}  
where $f,g$ run through all cusp forms of same weights of $\Gamma$.
\end{proposition}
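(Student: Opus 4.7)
The plan is to combine the function-theoretic density result of Corollary \ref{ccf} with the operator identity $T_g^* T_f = T_{(f,g)_p}$ established earlier. Setting $V = \text{span}\{(f,g)_p : p \geq 2,\ f, g \in S_p(\Gamma)\} \subseteq L^{\infty}(\mathcal{F})$ (each $(f,g)_p$ is essentially bounded, since $|f(z)| \leq B_f y^{-p/2}$ implies $|(f,g)_p(z)| \leq B_f B_g$), we have $\text{span}_{f,g}\, T_g^* T_f = \{T_h : h \in V\}$. So the task reduces to showing that this set is weak-operator dense in $A_m$. The strategy is a two-step approximation: sup-norm density of $V$ inside the continuous functions vanishing on cusps, followed by weak-$*$ density of those inside $L^{\infty}(\mathcal{F})$, with a weak-$*$-to-weak-operator continuity statement to pass to operators.

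First, by Corollary \ref{ccf}, $V$ is $\|\cdot\|_{\infty}$-dense in $C_0(\mathcal{F}_1) := \{\phi \in C(\mathcal{F}^*) : \phi|_{\text{cusps}} = 0\}$. Since for scalar symbols $\|T_h\|_{B(H_m)} \leq \|h\|_{\infty}$, uniform convergence $h_n \to h$ forces $T_{h_n} \to T_h$ in operator norm and a fortiori weakly. Hence the weak-operator closure of $\{T_h : h \in V\}$ already contains every $T_h$ with $h \in C_0(\mathcal{F}_1)$.

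Next, I would upgrade this to arbitrary $h \in L^{\infty}(\mathcal{F})$ by a weak-$*$ approximation. Since the set of cusps is finite (hence $\mu$-null) and $\mu(\mathcal{F}) < \infty$, $C_0(\mathcal{F}_1)$ is weak-$*$ dense in $L^{\infty}(\mathcal{F}) = L^1(\mathcal{F}, \mu)^*$, and one can arrange approximants with $\|h_n\|_{\infty} \leq \|h\|_{\infty}$. For $\phi, \psi \in H_m$, unfolding over $\Gamma$ (using $\Gamma$-invariance of $d\mu$ and of the extension $\tilde h$) gives
\[
\langle T_h \phi, \psi \rangle = \int_{\mathcal{D}} \tilde h(z)\, \phi(z)\overline{\psi(z)}\, y^m\, d\mu(z) = \int_{\mathcal{F}} h(z)\, F_{\phi,\psi}(z)\, d\mu(z),
\]
where $F_{\phi,\psi}(z) := \sum_{\gamma \in \Gamma} (\phi\overline{\psi}\,y^m)(\gamma z)$ lies in $L^1(\mathcal{F}, \mu)$ by Cauchy--Schwarz applied to $\phi, \psi \in L^2(\mathcal{D}, \mu_m)$. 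Testing bounded weak-$*$ convergence $h_n \to h$ against $F_{\phi,\psi}$ yields $\langle T_{h_n} \phi, \psi \rangle \to \langle T_h \phi, \psi \rangle$, so $T_{h_n} \to T_h$ weakly. Hence the weak-operator closure of $\{T_h : h \in V\}$ contains $\{T_h : h \in L^{\infty}(\mathcal{F})\}$.

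Finally, since $\dim_{\mathbb{C}} V_m = 1$, the scalar case of Theorem \ref{tgenvna} (equivalently Corollary \ref{cTfgen}) says that $\{T_h : h \in L^{\infty}(\mathcal{F})\}$ is itself weak-operator dense in $A_m$. Chaining the inclusions yields $\overline{\text{span}_{f,g}\, T_g^* T_f}^{\text{w.o.}} = A_m$, as desired. The hard work has already been done in the previous subsections: the existence of enough cusp forms to separate the points of $\mathcal{F}_1$ rests on the Riemann--Roch arguments of Theorems \ref{texist1} and \ref{texist2}, and the unraveling $T_g^* T_f = T_{(f,g)_p}$ rests on the computations of Section 4.3. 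The main technical point in the present step is the unfolding identity above and the corresponding $L^1$-bound on $F_{\phi,\psi}$, which together with the uniform bound on the approximants $h_n$ enable the clean passage from weak-$*$ symbol convergence to weak-operator operator convergence.
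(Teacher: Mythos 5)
Your argument is correct and reaches the right conclusion, but its middle portion takes a genuinely different route from the paper's. The paper runs everything through the Hilbert-space machinery of Section 4: the span of the symbols $(f,g)_k$ is $\|\cdot\|_\infty$-dense in the continuous functions vanishing at the (finitely many, hence $\mu$-null) cusps by Corollary \ref{ccf}, hence dense in $L^2(\mathcal{F},\mu)$; the Toeplitz map extends to a bounded operator $L^2(\mathcal{F})\to L^2(A_m,\tau)$ with dense range (Propositions \ref{pex2} and \ref{psdense}); so the $T_g^*T_f=T_{(f,g)_k}$ are $\|\cdot\|_2$-dense in $L^2(A_m,\tau)$, and Proposition \ref{peqdense} converts $L^2$-density into weak-operator density. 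You instead work at the level of symbols in two stages: uniform approximation together with $\|T_h\|\le\|h\|_\infty$ gets you from $V$ to all of $C_0(\mathcal{F}_1)$ in operator norm, and then your unfolding identity $\langle T_h\phi,\psi\rangle=\int_{\mathcal{F}}h\,F_{\phi,\psi}\,d\mu$ with $F_{\phi,\psi}\in L^1(\mathcal{F},\mu)$ (which is correct --- the $L^1$ bound is exactly Cauchy--Schwarz after refolding the sum over $\Gamma$ back into an integral over $\mathcal{D}$) shows that bounded weak-$*$ convergence of symbols implies weak-operator convergence of the Toeplitz operators, which carries you to all of $L^\infty(\mathcal{F})$. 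This is more elementary and makes the predual pairing explicit in the scalar case, something the paper never writes down; the cost is that your last step still invokes Theorem \ref{tgenvna} (equivalently Corollary \ref{cTfgen}), so you do not in the end bypass Propositions \ref{psdense} and \ref{peqdense}.

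One caveat on that last step: you cite Theorem \ref{tgenvna} as saying that the set $\{T_h\colon h\in L^{\infty}(\mathcal{F})\}$ is \emph{itself} weak-operator dense in $A_m$, whereas that theorem (through Proposition \ref{peqdense}) is literally a statement about the $*$-algebra generated by these operators, and the linear span of Toeplitz operators is not closed under operator multiplication. The paper's own proof of this proposition has exactly the same looseness (it calls the span of the $T_{(f,g)_k}$ a ``$*$-closed subalgebra''), so your proof is on the same footing as the paper's; but if one insists on density of the linear span, as the statement asserts, an additional argument is needed --- for instance, showing via your unfolding identity that a weak-operator continuous functional $\sum_{i=1}^{N}\langle\,\cdot\,\xi_i,\eta_i\rangle$ vanishing on all $T_h$ forces $\sum_i F_{\xi_i,\eta_i}=0$ a.e., and then that this kills the functional on all of $A_m$.
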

\begin{proof}
By Corollary \ref{ccf}, we know the these $(f,g)_k$'s generate the space of continuous functions on $\mathcal{F}^*$ that vanish at cusps. 
Hence the restriction of $(f,g)_k$'s on $\mathcal{F}$ also span a dense subspace of $L^2(\mathcal{F})$.  

Note that $T((f,g)_k)=T(f\overline{g}y^k)=(T_g)^*T_f$. 
Then, by Proposition \ref{psdense}, we know these $(T_g)^*T_f$'s give a $*$-closed subalgebra which is dense in $L^2(A
_m)$. Then the claim follows from (the scalar case of) Proposition \ref{peqdense} or Theorem \ref{tgenvna}. 
\end{proof}

\section{The Von Neumann Algebras from Cusp Forms}\label{sauto}

In this section, we let $\mathbf{G}$ be a connected reductive group over $\mathbb{Q}$ such that $\mathbf{G}(\mathbb{R})=G$, the semi-simple real Lie group in the previous sections.  
Let $K$ be a maximal compact subgroup of $G$, and $\Gamma\subset G$ be a lattice. 

We consider the cusp forms for $\Gamma$ defined on $G$.  
As an analog of $SL(2,\mathbb{R})$ in the previous section, there is a natural action of the cusp forms on the holomorphic discrete series and square-integrable representations, which intertwines the actions of $\Gamma$. 
Let $H_{\pi}=L^2_{\text{holo}}(\mathcal{D},V_{\pi})$ be the holomorphic discrete series representation of $G$ associated with an irreducible representation $(\pi,V_{\pi})$ of its maximal compact subgroup $K$ in Section \ref{sholds}.  
For a given a cusp form $f$ 
%$f \colon G(\text{~or~}\mathcal{D})\to V_{\rho}$
for $\Gamma$, we define a Toeplitz-type operator $T_f$  associated with $f$ which acts on $H_{\pi}$. 
 
This section will be largely devoted to proving the following result. 
\begin{theorem}\label{tmain}
The commutant $A_{\pi}=L_\pi(\Gamma)'$ can be given as follows: 
\begin{center}
$\overline{\langle\{\text{span}_{f,g} T_g^{*}T_f\}\otimes \End(V_{\pi})\rangle}^{\text{w.o.}}=A_{\pi}$,
\end{center}
where $f,g$ run through holomorphic cusp forms for $\Gamma$ of same types. 

In particular, if $\dim_{\mathbb{C}}V_{\pi}=1$, the commutant can be generated by these compositions $T_g^{*}T_f$'s: 
$\overline{\langle \text{span}_{f,g} T_g^{*}T_f\rangle}^{\text{w.o.}}=A_{\pi}$.

\end{theorem}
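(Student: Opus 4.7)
The plan is to reduce Theorem \ref{tmain} to a density statement about scalar-valued Toeplitz operators and then verify that statement via the Baily-Borel compactification and the abundance of Poincar\'{e} series. By Corollary \ref{cTfgen}, $A_\pi$ is already the von Neumann algebra generated by $\{T_h : h \in L^\infty(\mathcal{F})\} \otimes \End(V_\pi)$; so it suffices to produce a subspace $E \subset L^\infty(\mathcal{F})$ that is large enough for $\{T_h : h \in E\}$ to generate the same von Neumann algebra as $\{T_h : h \in L^\infty(\mathcal{F})\}$, and then to show that $\mathrm{span}_{f,g}\, T_g^* T_f = \{T_h : h \in E\}$. The rank-one case $\dim_{\mathbb{C}} V_\pi = 1$ follows at once, since then the tensor factor $\End(V_\pi)$ is trivial.

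First I would establish the explicit identification
\[
T_g^* T_f = T_{h_{f,g}}, \qquad h_{f,g}(z) = \langle \rho(\kappa(z,z)^{-1}) f(z), g(z) \rangle_\rho,
\]
by unwinding $\langle T_f\phi, g \otimes \eta\rangle_{L^2(\mathcal{D}, V_{\rho\otimes\pi})}$ and using the tensor factorization $(\rho \otimes \pi)(\kappa(z,z)^{-1}) = \rho(\kappa(z,z)^{-1}) \otimes \pi(\kappa(z,z)^{-1})$ to split off the scalar factor $h_{f,g}(z)$. Cancellation of the automorphy factors in the transformation laws of $f$ and $g$ (which are cusp forms of the same type $\rho$) shows $h_{f,g}$ is $\Gamma$-invariant, and the rapid decay of cusp forms at cusps makes it bounded on $\mathcal{F}$; in fact $h_{f,g}$ extends continuously to the Baily-Borel compactification $\mathcal{F}^*$ and vanishes on $\mathcal{F}^* \setminus \mathcal{F}$. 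Therefore the span of the products $T_g^* T_f$ equals $\{T_h : h \in E\}$, where $E \subset L^\infty(\mathcal{F})$ is the linear span of the functions $h_{f,g}$.

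The main obstacle is to show that $E$ separates the points of $\mathcal{F}^*$. Under the standing hypothesis that $G$ has no normal $\mathbb{Q}$-subgroup of dimension $3$, the Baily-Borel theorem makes $\mathcal{F}^*$ a compact Hausdorff space (in fact a projective variety). For point separation I would invoke Poincar\'{e} series: for a compactly supported smooth $V_\rho$-valued function $\phi$ on $\mathcal{D}$, the series
\[
P_\phi(z) = \sum_{\gamma \in \Gamma} \rho(J(\gamma, z))^{-1} \phi(\gamma z)
\]
converges, for representations $\rho$ of sufficiently high weight, to a holomorphic cusp form of type $\rho$. By choosing $\phi$ supported in small neighborhoods of prescribed interior points, one can arrange that the resulting cusp forms take prescribed values at any given finite subset of $\mathcal{F}$, and in particular distinguish any two distinct interior points. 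This is the analogue for general $G$ of the Riemann-Roch-based existence theorems proved in Section \ref{ssl2} for Fuchsian subgroups of $SL(2,\mathbb{R})$, and it is the core analytic step of the proof.

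Once point separation and boundary vanishing are in hand, the Stone-Weierstrass theorem applied on $\mathcal{F}^*$ (after identifying the boundary components to a single point) shows that $E$ is uniformly dense in the continuous functions on $\mathcal{F}^*$ vanishing on $\mathcal{F}^* \setminus \mathcal{F}$. Since $\mu(\mathcal{F}) < \infty$, this yields $L^2(\mathcal{F}, \mu)$-density of $E$ in $L^\infty(\mathcal{F})$. Applying Proposition \ref{psdense} in the scalar setting, the family $\{T_h : h \in E\}$ is then $L^2(A_\pi, \tau)$-dense in $\{T_h : h \in L^\infty(\mathcal{F})\}$, and Proposition \ref{peqdense} upgrades this to weak-operator density inside the von Neumann algebra generated by $\{T_h : h \in L^\infty(\mathcal{F})\}$. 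Tensoring with $\End(V_\pi)$ and invoking Corollary \ref{cTfgen} finishes the proof.
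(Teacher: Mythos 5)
Your overall architecture is the same as the paper's: reduce via Corollary \ref{cTfgen} to showing that the scalar symbols $h_{f,g}=(f,g)_\rho$ span a dense subspace of $L^\infty(\mathcal{F})$ (in the $L^2$ sense), identify $T_g^*T_f=T_{h_{f,g}}$ as in Corollary \ref{cTeo}, pass to the one-point collapse $\mathcal{F}_1=\mathcal{F}\sqcup\{\text{pt}\}$ of the Baily--Borel compactification, separate points by Poincar\'{e} series, and conclude with Stone--Weierstrass together with Propositions \ref{psdense} and \ref{peqdense}.

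There is, however, a genuine gap in your core analytic step. You propose to separate points using the series $P_\phi(z)=\sum_{\gamma\in\Gamma}\rho(J(\gamma,z))^{-1}\phi(\gamma z)$ with $\phi$ a \emph{compactly supported smooth} $V_\rho$-valued function, and you assert the result is a \emph{holomorphic} cusp form. It is not: each summand $\rho(J(\gamma,z))^{-1}\phi(\gamma z)$ fails to be holomorphic because $\phi$ is not, and holomorphy is not recovered by summation. Since Theorem \ref{tmain} quantifies over holomorphic cusp forms, and since the identity $T_g^*T_f=T_{(f,g)_\rho}$ in Corollary \ref{cTeo} itself requires at least one of $f,g$ to be holomorphic, your separating family cannot be used. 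The paper avoids this by seeding the Poincar\'{e} series with \emph{polynomials} $f$ on $\mathfrak{p}^+$ against integer powers of the Jacobian automorphy factor, $P_{m,f}(z)=\sum_{\gamma\in\Gamma}J_{\mathcal{D}}(\gamma,z)^m f(\gamma\cdot z)$, every term of which is holomorphic in $z$ (Theorem \ref{tpcseries}); point separation is then a quantitative estimate (Proposition \ref{ppoinsep}) showing the evaluation map $f\mapsto(P_{m,f}(z_1),\dots,P_{m,f}(z_N))$ is surjective for $m$ large, using the finiteness of $\{\gamma:|J_{\mathcal{D}}(\gamma,z_i)|>u\}$. A secondary unproved assertion in your write-up is that $h_{f,g}$ for \emph{arbitrary} holomorphic cusp forms $f,g$ extends continuously to $\mathcal{F}^*$ and vanishes on the boundary; the paper only needs, and only proves, the boundary vanishing $\Phi_F(P_{m,\phi})=0$ for the specific Poincar\'{e} series it uses (Proposition \ref{pPSvan}), which is what makes the Stone--Weierstrass step on $\mathcal{F}_1$ go through.
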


%The proof is based on the Baily-Borel compactification and the cusp forms given by Poincar\'{e} series. 

\subsection{Automorphic forms on a real reductive group}\label{sauto1}

We review automorphic forms on real reductive Lie groups. 
A comprehensive treatment of this theory can be found in many resources, for example, see \cite{B1,BJ,HCa}, where the sides that the group acts on are different.  
%Please notice the group actions are on different sides. 

Let $\mathfrak{g},\mathfrak{k}$ be the Lie algebras of $G,K$ respectively, $Z(\mathbb{R})$ be the center of $G$. 
Let $U(\mathfrak{g})$ be the universal enveloping algebra and $Z(\mathfrak{g})$ be the center of $U(\mathfrak{g})$. 
Let $\mathcal{H}=\mathcal{H}(G,K)$ be the Hecke algebra which is the convolution algebra of all $K$-finite distributions on $G$ with support in $K$ \cite{KV}.  

Let $(\pi,V)$ be a representation of $K$, and we do not assume it is finite-dimensional. 
Let $\widehat{K}$ be the set of equivalence classes of irreducible representations of $K$ and take $\rho\in \widehat{K}$. 
We denote
\begin{center}
$V(\rho)=\{v\in V|\spn_{k\in K}\pi(k)v\cong \rho\}$. 
\end{center}
By a {\it $(\mathfrak{g},K)$-module}, we mean a complex vector space $V$ with a representation $\pi$ of $\mathfrak{g}$ and $K$ which satisfies the following conditions:
\begin{enumerate}[label=(\roman*)]
    \item The space $V$ is a countable algebraic direct sum $V=\oplus_{i}V_i$ where each $V_i$ is a finite dimensional $K$-invariant vector space.
    \item For $X\in \mathfrak{p},v\in V$, we have
    \begin{center}
        $\pi(X)v=\frac{d}{dt}\pi(\exp(tX))v|_{t=0}=\lim\limits_{h\to 0}\frac{\pi(exp(hX))v-v}{h}$, 
    \end{center}
    where the limit exists.
    \item For $k\in K,X\in\mathfrak{g}$, we have $\pi(k)\pi(X)\pi(k^{-1})v=\pi(\Ad(k)X)v$. 
\end{enumerate}
Furthermore, we call it an {\it admissible $(g,K)$-modules}
if $V(\rho)$ is finite dimensional for each $\rho\in \widehat{K}$. 
Note for a smooth function $\phi\colon G\to \mathbb{C}$, there is a natural action of $K$ and $\mathfrak{g}$ (hence of $U(\mathfrak{g})$) on $\phi$. 
The complex span of $\phi$ over the action of $\mathfrak{g}$ and $K$ gives us a {\it $(\mathfrak{g},K)$-module}   \cite{Wal}. 
Moreover, we say $\phi$ is {\it slowly increasing} if there are constants $c,r\in \mathbb{R}_{>0}$ such that
\begin{center}
$|\phi(g)|\leq C\cdot \|g\|^r$,      
\end{center}
where $\|g\|=(\tr(\sigma(g)^*\sigma(g)))^{1/2}$ and $\sigma$ is a finite dimensional complex representation with finite kernel and closed image. 
This condition does not depend on the choice of the representation $\sigma$ \cite{BJ}, but the constant $C$ does. 
Following \cite{B1,BJ}, we define the automorphic forms on $G$. 
\begin{definition}\label{dafg1}
A smooth complex valued function $\phi$ on $G$ is an {\it automorphic form} for $(\Gamma,K)$ if it satisfies the following conditions:
\begin{enumerate}[label=(\roman*)]
    \item It is $\Gamma$-left invriant: $\phi(\gamma\cdot g)=\phi(g)$, $g\in G, \gamma \in \Gamma$.
    \item The right translates of $\phi$ by elements of $K$ span a finite dimensional vector space.
    \item There is an ideal $I\subset Z(\mathfrak{g})$ of finite codimension such that $x\circ \phi=0$ for all $x\in I$.  
    \item It is slow increasing. 
\end{enumerate}
\end{definition}

Note the condition (ii) is equivalent to the existence of an idempotent $\zeta\in \mathcal{H}$ such that the convolution $\phi*\zeta=\phi$. 
We let $\mathcal{A}(\Gamma,\zeta,I,K)$ be the space of all the automorphic forms of this type. 

Assume $N$ is the unipotent radical of any proper maximal parabolic $\mathbb{Q}$-subgroup of $G(\mathbb{Q})$. 
Given $\phi\in \mathcal{A}(\Gamma,\zeta,I,K)$, we call it a {\it cuspidal automorphic form}, or simply a {\it cusp form} if 
\begin{center}
$\int_{(\Gamma\cap N(\mathbb{R}))\backslash N(\mathbb{R})}\phi(n\cdot g)dn=0$, $\forall g\in G_{\mathbb{R}}$, 
\end{center}
where $dn$ is the measure on the quotient space $(\Gamma\cap N(\mathbb{R}))\backslash N(\mathbb{R})$ obtained from the measure $dg$ on $G$.  
We let  $\mathcal{A}^{0}(\Gamma,\zeta,I,K)$ be the subspace of cusp forms in  $\mathcal{A}(\Gamma,\zeta,I,K)$. 
A cusp form is bounded and square-integrable modulo $Z(\mathbb{R})\cdot \Gamma$ ($Z(\mathbb{R})$ is the center of $G$) \cite{BJ,HCa}. 

Let $\rho\colon K\to GL(V_{\rho})$ be a finite dimensional unitary representation of $K$ and $V_{\rho}$ is equipped with a Hermitian product $\langle,\rangle_{\rho}$ and hence a norm $\|\cdot\|_{\rho}$. 
We also have the following definition of Harish-Chandra \cite{HCa,BJ} of the {\it vector-valued automorphic forms} as follows. 
\begin{definition}\label{dafg2}
A smooth function $F\colon G\to V_{\rho}$ is called a $V_{\rho}$-valued automorphic form if it satisfies
\begin{enumerate}[label=(\roman*)]
    \item $F(\gamma\cdot g)=F(g)$, $g\in G, \gamma \in \Gamma$.
    \item $F(g\cdot k)=\rho(k^{-1})\cdot F(g)$, $g\in G, k\in K$.
    \item There is an ideal $I\subset Z(\mathfrak{g})$ of finite codimension such that $x\circ F=0$ for all $x\in I$.
    \item $F$ is slow increasing, i.e., $\|F(g)\|_{\rho}\leq C\cdot \|g\|^n$ for some $C>0$, for all $g\in G$.
\end{enumerate}
We denote the space of all such $V_{\rho}$-value functions by $\mathcal{A}(\Gamma,I,\rho)$ or simply $\mathcal{A}(\Gamma,\rho)$. 
Furthermore, if we also have $\int_{(\Gamma\cap N(\mathbb{R}))\backslash N(\mathbb{R})}F(n\cdot g)dn=0$,
for all $g\in G_{\mathbb{R}}$, it is called a $V_{\rho}$-valued cuspidal form or cusp form. 
This vector space is denoted by $\mathcal{A}^{0}(\Gamma,I,\rho)$ or simply $\mathcal{A}^{0}(\Gamma,\rho)$. 
\end{definition}

As in the scalar-valued case, a vector-valued cusp form is also bounded under the norm $\|\cdot\|_{\rho}$ (and hence bounded in each coordinate of $V_{\rho}$) and square-integrable modulo $Z(\mathbb{R})\cdot \Gamma$. 

%\section{Automorphic forms on the domain}

Now we introduce automorphic forms on the domain $\mathcal{D}=G/K$. 
Although the definition given by A. Borel \cite{B1} involves a general cocycle $\mu$ defined on $\Gamma \times \mathcal{D}$,  
we only focus on the special case of the canonical automorphy factor $J(g,x)\colon G\times \mathcal{D}\to K_{\mathbb{C}}$ (see Section \ref{sholds}). 
We also fix a finite dimensional unitary representation $\rho\colon K\to GL(V_{\rho})$ and also denote  by $\rho$ its extension to $K_{\mathbb{C}}$. 

\begin{definition}\label{dcusptype}
%Let $J$ be the canonical automorphy factor.  
A vector-valued automorphic form of type $\rho$ is a smooth function $f\colon \mathcal{D}= G/K \to V_{\rho}$ satisfying
\begin{center}
$f(\gamma x)=\rho(J(\gamma,x))\cdot f(  x)$, $\forall x\in \mathcal{D},\forall \gamma\in \Gamma$.  
\end{center}
We denote the space of such functions by $\mathcal{A}_{\mathcal{D}}(\Gamma,J)$.
\end{definition}

\begin{example}
Let $G=SL_2(\mathbb{R})$, $K=SO(2)$ and $\Gamma\subset G$ is the modular group. 
Let $J(g,z)=cz+d$ for $g=\bigl(\begin{smallmatrix} a &b \\ c & d \\ \end{smallmatrix}\bigr)\in G$. 
We also take the irreducible representation $\rho=\rho_m\colon K\cong S^1 \to S^1$ given by $z\mapsto z^m$. 
Then a function $f\colon \mathbb{H}\to \mathbb{C}$ satisfying
\begin{center}
$f(\frac{az+b}{cz+d})=(cz+d)^m f(z)$ 
\end{center}
for all $\bigl(\begin{smallmatrix} a &b \\ c & d \\ \end{smallmatrix}\bigr)\in \Gamma$ gives us a classical modular form of weight $m$ (or equivalently of type $\rho_m$)for $\Gamma$ (with some holomorphy conditions required, see \cite{Sh}). 
\end{example}

\begin{remark}
%\label{lautotrans}
$\mathcal{A}_{\mathcal{D}}(\Gamma,J)$ can be related with automorphic forms $\mathcal{A}(\Gamma,\rho)$ on $G$ in Definition \ref{dafg1} and \ref{dafg2}.
Given $F\in \mathcal{A}(\Gamma,\rho)$,
the map given by $\Phi(F)(\dot{g})=f(\dot{g})=\rho(J(g,0))F(g)$ is well-defined and $\Phi(\mathcal{A}(\Gamma,\rho))\subset \mathcal{A}_{\mathcal{D}}(\Gamma,J)$. 
Take $g_1,g_2\in G$ such that $\dot{g_1}=\dot{g_2}$ and we assume $g_1=g_{2}k$ for some $k\in K$. 
Then we have
\begin{equation*}
\begin{aligned}
\rho(J(g_1,0))F(g_1)&=\rho(J(g_2k,0))F(g_2k)=\rho(J(g_2k,0))\rho(k)^{-1}F(g_2)\\
&=\rho(J(g_2,0))\rho(J(k,0))\rho(k)^{-1}F(g_2)\\
&=\rho(J(g_2,0))\rho(k)\rho(k)^{-1}F(g_2)=\rho(J(g_2,0))F(g_2),
\end{aligned}
\end{equation*}
where we apply the cocycle condition of $J$ and the fact $J(k,0)=k$ for $k\in K$. 
We also have
\begin{center}
$f(\dot{\gamma g})=\rho(J(\gamma g,0))F(\gamma g)=\rho(J(\gamma ,\dot{g})J(g,0))F(g)
=\rho(J(\gamma,\dot{g}))f(\dot{g})$.
\end{center}
So $f\in \mathcal{A}_{\mathcal{D}}(\Gamma,J)$. 
\end{remark}

For a general cocyle $\mu$, we may not be able to define an inverse map from  $\mathcal{A}_{\mathcal{D}}(\Gamma,\mu)$ to $\mathcal{A}(\Gamma,\rho)$ since the cocyle $\mu$ on $\Gamma$ cannot always be extended to $G$. 
An example on $\Gamma_0(4)$ with half-integral weight was given by G. Shimura \cite{Sh1}. 
Recall there is a smooth embedding $i\colon \mathcal{D}= G/K\hookrightarrow NA\subset G$ and we denote this map from $\mathcal{D}$ to $G$ by $i(z)=g_z$ (see Section \ref{sholds}). 
Please note we have $z=\dot{g_z}=g_z\cdot K$ as a coset in $G/K$. 

\iffalse 
\begin{corollary}
Let $F\in \mathcal{A}(\Gamma,\rho)$ and $f=\Psi(F)$. 
Then $F(g_z)=\rho(J(g_z,0)^{-1})f(z)$.
\end{corollary}
\begin{proof}
It follows from the fact $F(g)=\rho(J(g,0)^{-1})f(\dot{g})$. 
\end{proof}
\fi

Now we consider a special kind of automorphic forms called {\it Poincar\'{e} series}. 
The Poincar\'{e} series were first constructed for automorphic forms on a Lie group by Poincar\'{e}. 
Intuitively, for $SL(2,\mathbb{R})$, Poincar\'{e} series apply group averages of an infinite sum, which is a natural way to construct functions invariant under the automorphy action of the modular group. 

We have the following result proved by Harish-Chandra (see \cite{BB} Theorem 5.4) and R. Godement \cite{Go62}. 
Please the side that the group acts on.  
\begin{theorem}\label{tpoin1}
Let $\Gamma$ be a discrete subgroup of $G$ and $V$ be a finite-dimensional complex vector space. 
Let $\tilde{f}\colon G\to V$ be a function in $L^1(G)\otimes V$, which is $Z(\mathfrak{g})$-finite and is $K$-finite on the left. 
Then the series
\begin{center}
$P_{\tilde{f}}(g)=\sum_{\gamma\in \Gamma}\tilde{f}(\gamma\cdot g )$, $P_{\|\tilde{f}\|}(g)=\sum_{\gamma\in \Gamma}\|\tilde{f}(\gamma\cdot g)\|$, 
\end{center}
are absolutely and uniformly convergent on compact subsets and are bounded on $G$. 

Furthermore, if $\tilde{f}$ is of finite type on the right instead, $P_{\tilde{f}}$ is absolutely and uniformly convergent on compact sets, but not necessarily bounded. 
\end{theorem}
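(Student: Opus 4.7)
My plan is to follow the classical Harish-Chandra/Godement approach via a reproducing convolution kernel, reducing everything to a single uniform majorization by $\|\tilde{f}\|_{L^1}$.

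First, I reduce to the scalar case. Picking a basis $v_1,\dots,v_n$ of $V$ and writing $\tilde{f} = \sum_i \tilde{f}_i\, v_i$, each component $\tilde{f}_i$ still lies in $L^1(G)$, is $Z(\mathfrak{g})$-finite (the ideal $I$ still annihilates it), and inherits left (resp.\ right) $K$-finiteness. Since all norms on the finite-dimensional $V$ are equivalent, a bound on each $\sum_\gamma |\tilde{f}_i(\gamma g)|$ yields one for $\sum_\gamma \|\tilde{f}(\gamma g)\|$, so I may assume $\tilde{f}$ is scalar valued.

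Next, I construct a smooth compactly supported reproducer. Since $\tilde{f}$ is killed by a cofinite ideal of $Z(\mathfrak{g})$, elliptic regularity applied to the Casimir operator makes $\tilde{f}$ real analytic; combined with left $K$-finiteness, a Dixmier--Malliavin / parametrix argument produces $\alpha \in C_c^\infty(G)$ such that $\alpha * \tilde{f} = \tilde{f}$ in the left-finite case (and an $\alpha$ with $\tilde{f} * \alpha = \tilde{f}$ in the right-finite case). This is the single step on which most of the technical weight rests, and is the main obstacle I anticipate: one must marry the $K$-character idempotent (which reproduces $\tilde{f}$ in the $K$-direction) with an elliptic parametrix for the Casimir, so as to obtain genuine compact support rather than merely rapid decay.

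With $\alpha$ in hand, I turn the convolution identity into a termwise estimate. Writing
\begin{equation*}
\tilde{f}(\gamma g) \;=\; \int_G \alpha(h)\, \tilde{f}(h^{-1}\gamma g)\, dh \;=\; \int_G \alpha(\gamma g u^{-1})\, \tilde{f}(u)\, du
\end{equation*}
after the substitution $u = h^{-1}\gamma g$ and unimodularity of $G$, I sum over $\gamma \in \Gamma$ and apply Fubini--Tonelli to get
\begin{equation*}
\sum_{\gamma \in \Gamma} |\tilde{f}(\gamma g)| \;\leq\; \int_G \Big(\sum_{\gamma \in \Gamma} |\alpha(\gamma g u^{-1})|\Big)\, |\tilde{f}(u)|\, du.
\end{equation*}
The inner sum is handled by the following counting lemma for discrete subgroups: with $C = \mathrm{supp}(\alpha)$, any two $\gamma_1,\gamma_2 \in \Gamma \cap gC$ satisfy $\gamma_1^{-1}\gamma_2 \in C^{-1}C$, so $\#(\Gamma \cap gC) \leq \#(\Gamma \cap C^{-1}C) =: N < \infty$ uniformly in $g \in G$. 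Hence the inner sum is bounded by $N\|\alpha\|_\infty$, yielding the global estimate $\sum_{\gamma} |\tilde{f}(\gamma g)| \leq N \|\alpha\|_\infty \,\|\tilde{f}\|_{L^1(G)}$. Uniform convergence on compact subsets then follows from this $L^1$-dominated control of the partial sums together with continuity of each term.

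For the final sentence about the right-$K$-finite case, the identity $\tilde{f}(g) = \int_G \tilde{f}(u)\,\alpha(u^{-1}g)\,du$ leads by the same manipulation to counting $\#\{\gamma : \gamma \in u\,C\,g^{-1}\}$, which by the same lemma is at most $\#(\Gamma \cap g C^{-1}C g^{-1})$. Conjugation by $g$ introduces a $g$-dependence that is uniform only for $g$ in a compact set $\Omega$, since $\bigcup_{g\in\Omega} gC^{-1}Cg^{-1}$ is still relatively compact. This gives absolute and uniform convergence on compacta but no global bound, precisely matching the stated weaker conclusion.
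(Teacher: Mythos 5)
The paper does not prove this statement at all: it is quoted as a known result of Harish--Chandra and Godement, with a pointer to \cite{BB} (Theorem 5.4) and \cite{Go62}. Your proposal is essentially a reconstruction of the proof in those references --- reduction to the scalar case, Harish--Chandra's lemma producing $\alpha\in C_c^{\infty}(G)$ with $\alpha*\tilde f=\tilde f$ (resp.\ $\tilde f*\alpha=\tilde f$), the termwise majorization $\sum_\gamma|\tilde f(\gamma g)|\le\int_G\bigl(\sum_\gamma|\alpha(\gamma gu^{-1})|\bigr)|\tilde f(u)|\,du$, and the uniform counting bound for $\Gamma$ intersected with translates of a compact set --- and it correctly locates why left $K$-finiteness gives a global bound (the relevant translate $Cug^{-1}$ is one-sided, so $\#(\Gamma\cap Cug^{-1})\le\#(\Gamma\cap CC^{-1})$ uniformly) while right $K$-finiteness forces the conjugated set $gC^{-1}Cg^{-1}$ and hence only locally uniform control. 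Two small points to tighten: the set of contributing $\gamma$ is $\Gamma\cap Cug^{-1}$, not $\Gamma\cap gC$ as written (harmless, since the same two-element argument applies to any one-sided translate); and the passage from the global $L^1$ bound to \emph{uniform} convergence on a compact $\Omega$ needs one more line, namely that the tail over $\gamma\notin S$ is dominated by $N\|\alpha\|_\infty\int_{E_S}|\tilde f|$ with $E_S\subseteq\bigcup_{\gamma\notin S}C^{-1}\gamma\Omega$, a set which, by discreteness of $\Gamma$, eventually misses any fixed compact set, so the tail is controlled by $\int_{G\setminus Q}|\tilde f|$. With those repairs the argument is complete and matches the cited proof.
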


The series $P_{\tilde{f}}$ are called a {\it Poincar\'{e} series on the group} $G$. %\cite{Go62}. 
We are more interested in the following Poincar\'{e} series defined on the bounded symmetric domain $\mathcal{D}$. 
For the canonical autormorphy factor $J\colon G\times \mathcal{D}\to K_{\mathbb{C}}$ and a representation $(\rho,V_{\rho})$ of $K$, we associate a function $f\colon \mathcal{D}\to V_{\rho}$ to $\tilde{f}\colon G\to V\rho$ by
\begin{center}
$f(\dot{g})=\rho(J(g,0))\tilde{f}(g)$. 
\end{center}
We can show $f$ is holomorphic on $\mathcal{D}$ if and only if $Y\circ \tilde{f}=0$ for all $Y\in \mathfrak{p}^-$. 
%\begin{enumerate}[label=(\roman*)]
%\end{enumerate}

Now we consider the action of $G$ on $G/K=\mathcal{D}\subset \mathfrak{p}^+$. 
For any $z\in \mathcal{D}$, we let $J_{\mathcal{D}}(g,z)\in \mathbb{C}$ be the determinant of the Jacobian of $z\mapsto g(z)$. 
\begin{lemma}[\cite{Sa} II.5.3.]\label{ljj}
The Jacobian of $z\mapsto g(z)$ is the adjoint representation $\ad$ of the canonical automorphy factor $J$, i.e.,
\begin{center}
$\Jac(z\mapsto g(z))=\ad_{\mathfrak{p}^+}(J(g,z))$. 
\end{center}
where $\ad_{\mathfrak{p}^+}$ is the restriction of $\ad$ on $\mathfrak{p}^+$. 
\end{lemma}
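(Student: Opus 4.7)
The plan is to derive the formula directly from the defining identity $g\exp z = \exp(g\cdot z)\cdot J(g,z)\cdot \xi(g\exp z)$ in the open dense set $P^{+}K_{\mathbb{C}}P^{-}\subset G_{\mathbb{C}}$, by extracting the first-order expansion in $t$ of $g\cdot(z+tX)$ for an arbitrary $X\in \mathfrak{p}^+$. The crucial structural input at the outset is that $\mathfrak{p}^+$ is an abelian subalgebra: in the Hermitian symmetric setting the sum of two non-compact positive roots is never a root, so $[\mathfrak{p}^+,\mathfrak{p}^+]=0$. Hence $\exp(z+tX)=\exp(z)\exp(tX)$ exactly, and
$$g\exp(z+tX) \;=\; \exp(g\cdot z)\,J(g,z)\,\xi(g,z)\,\exp(tX).$$

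Next I would push $\exp(tX)$ leftward past $\xi(g,z)\in P^-$ and $J(g,z)\in K_{\mathbb{C}}$ using the identity $h\exp(tY)h^{-1}=\exp(t\,\Ad(h)Y)$, which gives
$$g\exp(z+tX) \;=\; \exp(g\cdot z)\,\exp(tW)\,J(g,z)\,\xi(g,z),\qquad W:=\Ad(J(g,z))\Ad(\xi(g,z))X.$$
Decomposing $W=W_{+}+W_{0}+W_{-}$ along $\mathfrak{g}_{\mathbb{C}}=\mathfrak{p}^+\oplus\mathfrak{k}_{\mathbb{C}}\oplus\mathfrak{p}^-$ and applying Baker--Campbell--Hausdorff to $\exp(g\cdot z)\exp(tW)$, the abelianness of $\mathfrak{p}^+$ makes the $P^+$-component equal to $\exp\bigl(g\cdot z + tW_{+}\bigr)$ up to $O(t^2)$, while the remaining $\exp(tW_{0})\exp(tW_{-})$ is absorbed into the $K_{\mathbb{C}}P^{-}$ factor. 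Reading off $\log\zeta$ therefore yields $g\cdot(z+tX) = g\cdot z + t\,W_{+} + O(t^2)$, so the differential of $z\mapsto g\cdot z$ in direction $X$ equals $W_{+}$.

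The last step is to identify $W_{+}=\Ad(J(g,z))X$. Writing $\xi(g,z)=\exp Y$ with $Y\in\mathfrak{p}^-$, the series $\Ad(\exp Y)X = X+[Y,X]+\tfrac12[Y,[Y,X]]+\cdots$ has every bracket beyond the leading $X$ lying alternately in $\mathfrak{k}_{\mathbb{C}}$ or $\mathfrak{p}^-$, because $[\mathfrak{p}^-,\mathfrak{p}^+]\subseteq \mathfrak{k}_{\mathbb{C}}$ and $[\mathfrak{p}^-,\mathfrak{k}_{\mathbb{C}}]\subseteq \mathfrak{p}^-$; hence the $\mathfrak{p}^+$-component of $\Ad(\xi(g,z))X$ is exactly $X$. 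Moreover $\Ad(k)$ for $k\in K_{\mathbb{C}}$ preserves each of $\mathfrak{p}^{+}$, $\mathfrak{k}_{\mathbb{C}}$, $\mathfrak{p}^{-}$ separately, since $[\mathfrak{k}_{\mathbb{C}},\mathfrak{p}^{\pm}]\subseteq \mathfrak{p}^{\pm}$. Consequently $W_{+}=\Ad(J(g,z))X$, and the Jacobian of $z\mapsto g\cdot z$ at $z$ is the restriction of $\Ad(J(g,z))$ to $\mathfrak{p}^+$, as claimed.

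The main technical point is careful bookkeeping of where each first-order correction lives under the iterated conjugations; the argument reduces to (a) the abelianness of $\mathfrak{p}^+$, so that $\exp$ is additive in the $\mathfrak{p}^+$-directions and BCH trivializes at first order, and (b) the $K_{\mathbb{C}}$-invariance of the triangular decomposition $\mathfrak{g}_{\mathbb{C}}=\mathfrak{p}^+\oplus\mathfrak{k}_{\mathbb{C}}\oplus\mathfrak{p}^-$, which is what forces the contribution of the $P^-$-factor $\xi(g,z)$ to drop out and leaves only the $\Ad(J(g,z))$-twist in the $\mathfrak{p}^+$-projection.
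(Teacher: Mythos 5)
Your proof is correct and complete. Note first that the paper itself offers no proof of this lemma: it is quoted verbatim from Satake [Sa, II.5.3], so there is no internal argument to compare against; your derivation is the standard one and is essentially what Satake's proof does. The three structural facts you isolate are exactly the right ones: (a) $[\mathfrak{p}^+,\mathfrak{p}^+]=0$, so $\exp(z+tX)=\exp(z)\exp(tX)$ and the $P^+$-coordinate $\log\zeta$ is literally additive in first order; (b) $[\mathfrak{p}^-,\mathfrak{p}^+]\subseteq\mathfrak{k}_{\mathbb{C}}$ and $[\mathfrak{p}^-,\mathfrak{k}_{\mathbb{C}}]\subseteq\mathfrak{p}^-$, which kills the $\mathfrak{p}^+$-contribution of $\Ad(\xi(g\exp z))$ beyond the identity (the series even terminates after three terms since $\mathfrak{p}^-$ is abelian); and (c) $\Ad(K_{\mathbb{C}})$ preserves the triangular decomposition, so only the twist by $\Ad(J(g,z))$ survives in the $\mathfrak{p}^+$-projection. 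One should also observe, as you implicitly do, that $K_{\mathbb{C}}$ normalizes $P^-$, so the leftover factor $\exp(tW_0)\exp(tW_-)J(g,z)\xi(g\exp z)$ indeed lies in $K_{\mathbb{C}}P^-$ and does not pollute the $\zeta$-component; and that $\zeta$ is smooth on the open set $P^+K_{\mathbb{C}}P^-$, so the $O(t^2)$ error in splitting $\exp(tW)$ into $\exp(tW_+)\exp(tW_0)\exp(tW_-)$ only affects the expansion at second order. Finally, you correctly read the paper's $\ad_{\mathfrak{p}^+}(J(g,z))$ as the group adjoint $\Ad(J(g,z))|_{\mathfrak{p}^+}$ (the paper uses this abuse of notation throughout, e.g.\ in the formula for $d\mu$), and you prove the statement at the level of the differential, which is what the subsequent formula $J_{\mathcal{D}}(g,z)=\det(\ad_{\mathfrak{p}^+}(J(g,z)))$ requires.
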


Hence we have $J_{\mathcal{D}}(g,z)=\det(\ad_{\mathfrak{p}^+}(J(g,z)))$. 
We now consider {\it Poincar\'{e} series} on the domain $\mathcal{D}$ and have the result as following  \cite{B1,Cartan58}. 
\begin{theorem}\label{tpcseries}
Let $f$ be a polynomial function on $\mathcal{D}$, $m\geq 4$ be an integer. 
Then the series 
\begin{center}
$P_{m,f}(z)=\sum_{\gamma\in \Gamma}J_{\mathcal{D}}(\gamma,z)^m f(\gamma\cdot z)$
\end{center}
converges absolutely and uniformly on compact sets. 
It defines a holomorphic automorphic form of weight $m$, i.e.,
\begin{center}
$P_{m,f}(z)=J_{\mathcal{D}}(\gamma,z)^mP_{m,f}(\gamma z)$. 
\end{center}
The function $\widetilde{P}_{m,f}\colon g\mapsto J_{\mathcal{D}}(g,0)^{-m}P_{m,f}(g\cdot 0)$ is bounded on $G$. 
\end{theorem}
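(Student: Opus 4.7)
The plan is to establish the three claims in sequence---absolute and uniform convergence of the series on compacta, holomorphy with the automorphic transformation law, and boundedness of $\widetilde{P}_{m,f}$ on $G$---with the convergence of the kernel series $\sum_{\gamma\in\Gamma}|J_{\mathcal{D}}(\gamma,z)|^m$ as the analytic core from which the remaining claims follow by cocycle bookkeeping.

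\textbf{Convergence.} Since $f$ is a polynomial on the bounded domain $\mathcal{D}$, it is bounded there, so convergence of $P_{m,f}$ reduces to that of the kernel series. By Lemma \ref{ljj} and the holomorphy of $J(\gamma,\cdot)$ in its second argument, each $z\mapsto J_{\mathcal{D}}(\gamma,z)$ is holomorphic, so $|J_{\mathcal{D}}(\gamma,\cdot)|^m$ is subharmonic. I would fix a compact $K_0\subset\mathcal{D}$ and a radius $r>0$ small enough that $B(z_0,r)\subset\mathcal{D}$ for every $z_0\in K_0$ and so that the $\Gamma$-translates $\{\gamma B(z_0,r)\}$ are pairwise essentially disjoint (proper discontinuity of the $\Gamma$-action), and apply the submean-value inequality
\[
|J_{\mathcal{D}}(\gamma,z_0)|^m\;\leq\;\frac{1}{V(B)}\int_{B(z_0,r)}|J_{\mathcal{D}}(\gamma,z)|^m\,dV(z).
\]
For $m\geq 2$, factor $|J|^m=|J|^{m-2}\cdot|J|^2$ and use the Jacobian identity $|J_{\mathcal{D}}(\gamma,z)|^2\,dV(z)=dV(\gamma z)$ from Lemma \ref{ljj} (the square of the complex Jacobian is the real Jacobian of a holomorphic map); summing over $\gamma$ and invoking disjointness collapses the total into a single integral over $\mathcal{D}$. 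The tail is then controlled via the $G$-invariance identity $|J_{\mathcal{D}}(\gamma,z)|^2 = h(z,z)/h(\gamma z,\gamma z)$ (from $d\mu=h(z,z)\,dV(z)$), which for $m\geq 4$ reduces the problem to the finite integral $\int_{\mathcal{D}}h(w,w)^{-(m/2-1)}\,dV(w)$ controlled by the boundary blowup of the Bergman density $h$, yielding a bound uniform in $z_0\in K_0$.

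\textbf{Holomorphy and automorphy.} Given the uniform convergence on compacta, holomorphy of $P_{m,f}$ is automatic as a sum of holomorphic terms. The automorphy $P_{m,f}(z)=J_{\mathcal{D}}(\gamma_0,z)^m P_{m,f}(\gamma_0 z)$ is a direct cocycle computation: in the series for $P_{m,f}(\gamma_0 z)$ apply $J_{\mathcal{D}}(\gamma,\gamma_0 z) = J_{\mathcal{D}}(\gamma\gamma_0,z)\,J_{\mathcal{D}}(\gamma_0,z)^{-1}$ to pull out the common factor $J_{\mathcal{D}}(\gamma_0,z)^{-m}$, then reindex $\gamma\mapsto\gamma\gamma_0^{-1}$.

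\textbf{Boundedness of $\widetilde{P}_{m,f}$.} Combining the cocycle identity with the automorphy, a direct check shows that $|\widetilde{P}_{m,f}|$ is invariant under the left $\Gamma$-action and (using $|J_{\mathcal{D}}(k,0)|=1$ for $k\in K$) the right $K$-action, so it descends to a function on $\mathcal{F}=\Gamma\backslash\mathcal{D}$. For the canonical lift $g_z\in NA$ with $g_z\cdot 0=z$, the identities $|J_{\mathcal{D}}(g_z,0)|^2=h(0,0)/h(z,z)$ and $|J_{\mathcal{D}}(\gamma,z)|^2=h(z,z)/h(\gamma z,\gamma z)$, together with the bound $|P_{m,f}(z)|\leq\|f\|_\infty\sum_\gamma|J_{\mathcal{D}}(\gamma,z)|^m$ from Step 1, rewrite $|\widetilde{P}_{m,f}(g_z)|$ up to a constant as the Poincaré-type sum $\sum_\gamma h(\gamma z,\gamma z)^{-m/2}$. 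For $m\geq 4$, this sum is uniformly bounded on $\mathcal{F}$ by the same integrability argument underlying Step 1 (via the unfolding $\int_\mathcal{F}\sum_\gamma h(\gamma z,\gamma z)^{-m/2}\,d\mu(z)=\int_\mathcal{D}h(w,w)^{-m/2}\,d\mu(w)<\infty$) supplemented by the classical estimates of Cartan--Borel controlling the behavior at cusps, where the decay of $1/h$ at the boundary dominates the parabolic orbit sums.

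The main obstacle is the convergence estimate of Step 1, where the interplay between the subharmonicity of $|J_{\mathcal{D}}(\gamma,\cdot)|^m$, the proper discontinuity of $\Gamma$, the Jacobian identity of Lemma \ref{ljj}, and the boundary growth of the Bergman density $h$ must be tracked carefully to produce the uniform-in-$z$ bound at the threshold $m\geq 4$; once this is established, the remaining assertions are formal consequences of the cocycle relations.
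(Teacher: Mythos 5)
The paper gives no proof of this theorem; it is quoted from \cite{B1,Cartan58}, with Theorem \ref{tpoin1} (Harish--Chandra/Godement) stated immediately beforehand as the engine behind it. Your Step 1 (sub-mean-value inequality for the plurisubharmonic function $|J_{\mathcal{D}}(\gamma,\cdot)|^m$, essential disjointness of translated balls, unfolding against the Jacobian identity) is the classical Poincar\'{e}--Cartan convergence argument and is essentially correct, modulo the standard caveat that elliptic fixed points force ``bounded multiplicity'' rather than genuine disjointness. Step 2 is a routine cocycle computation and is fine.

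The gap is in Step 3, which is the only genuinely hard assertion when $\Gamma$ is not cocompact. After reducing to the boundedness of $\sum_{\gamma}h(\gamma z,\gamma z)^{-m/2}$ on all of $\mathcal{D}$, you offer two justifications, neither of which closes the argument: the unfolding identity $\int_{\mathcal{F}}\sum_{\gamma}h(\gamma z,\gamma z)^{-m/2}\,d\mu(z)=\int_{\mathcal{D}}h(w,w)^{-m/2}\,d\mu(w)$ only shows the sum is in $L^1(\mathcal{F})$, which gives finiteness almost everywhere and says nothing about a uniform bound; and the compact-set estimate of Step 1 degenerates as $z_0$ approaches $\partial\mathcal{D}$, since the admissible radius $r$ shrinks and the constant $1/V(B(z_0,r))$ blows up. The phrase ``classical estimates of Cartan--Borel controlling the behavior at cusps'' is precisely where the entire content of the boundedness statement lives, and it is asserted rather than proved. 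The clean repair is available inside the paper: apply Theorem \ref{tpoin1} to $\tilde f(g)=J_{\mathcal{D}}(g,0)^{m}f(g\cdot 0)$, checking that $|J_{\mathcal{D}}(\cdot,0)|^{m}\in L^1(G)$ for $m\geq 2$ (\cite{BB} Lemma 5.8, already invoked in Proposition \ref{ppoinsep}), that $\tilde f$ is left $K$-finite (because $K$ acts linearly on $\mathfrak{p}^+$ and $f$ is a polynomial of bounded degree), and that $\tilde f$ is $Z(\mathfrak{g})$-finite (it generates a highest-weight $(\mathfrak{g},K)$-module, being annihilated by $\mathfrak{p}^-$); the conclusion of that theorem then gives both the uniform convergence and the boundedness of $P_{\|\tilde f\|}$ on $G$ in one stroke. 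Separately, note a convention mismatch you inherited from the statement: with the transformation law $P_{m,f}(z)=J_{\mathcal{D}}(\gamma,z)^{m}P_{m,f}(\gamma z)$ as written, the left-$\Gamma$-invariant lift is $g\mapsto J_{\mathcal{D}}(g,0)^{+m}P_{m,f}(g\cdot 0)$, not the $-m$ power; your ``up to a constant'' rewriting as $\sum_{\gamma}h(\gamma z,\gamma z)^{-m/2}$ silently uses the $+m$ normalization (with the $-m$ power an uncancelled factor $h(z,z)^{m}$ survives and the boundedness would fail), so you should flag and fix the sign rather than let the two steps use different conventions.
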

  
Indeed, these series are sufficient to separate the points. 
\begin{proposition}\label{ppoinsep}
Let $z_1,\dots,z_N$ be $\Gamma$-inequivalent points in $\mathcal{D}$. 
Take a set of points $c_1,\dots,c_N\in \mathbb{C}$. 
For $m$ sufficiently large, we can find a polynomial $f$ such that $P_{f,m}(z_i)=c_i$ for all $1\leq i\leq N$.
\end{proposition}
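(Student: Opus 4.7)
The plan is to reformulate the proposition as the surjectivity of the linear evaluation map
\[
\mathrm{ev}_{m,N}\colon \{f\colon\mathcal{D}\to\mathbb{C}\text{ polynomial}\}\to\mathbb{C}^N,\qquad f\mapsto(P_{m,f}(z_1),\ldots,P_{m,f}(z_N)),
\]
since surjectivity immediately yields a polynomial $f$ mapping to the prescribed target $(c_1,\ldots,c_N)$. Equivalently, I would have to rule out any nonzero tuple $(a_1,\ldots,a_N)\in\mathbb{C}^N$ satisfying $\sum_{i=1}^N a_i P_{m,f}(z_i)=0$ for every polynomial $f$, i.e.\ to show that the $N$ linear functionals $f\mapsto P_{m,f}(z_i)$ on polynomials are linearly independent. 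Note that the set $V_m:=\{P_{m,f}:f\text{ polynomial}\}$ is already a linear subspace of $M_m(\Gamma)$, because $f\mapsto P_{m,f}$ is linear.

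First I would invoke the Baily--Borel theorem \cite{BB}: under the standing assumption that $\mathbf{G}$ has no normal $\mathbb{Q}$-subgroup of dimension $3$, for $m$ sufficiently large a basis of Poincar\'e series $\{P_{m,f_j}\}_j$ (with polynomial $f_j$) gives rise to a closed immersion
\[
\Phi_m\colon \overline{\Gamma\backslash\mathcal{D}}^{BB}\hookrightarrow\mathbb{P}^{n_m},\qquad \Gamma z\mapsto[P_{m,f_0}(z):\cdots:P_{m,f_{n_m}}(z)],
\]
of the Baily--Borel compactification into projective space. In sheaf-theoretic terms, $V_m$ is thereby exhibited as the space of global sections of a very ample line bundle $L_m$ on the projective variety $\overline{\Gamma\backslash\mathcal{D}}^{BB}$, and the $\Gamma$-inequivalent classes $\Gamma z_1,\ldots,\Gamma z_N$ correspond to $N$ distinct points of this variety.

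Next, I would apply the standard algebraic-geometric fact that a very ample line bundle becomes $(N{-}1)$-very ample after a sufficiently large tensor power: for any $N$ distinct points $p_1,\ldots,p_N$ on a projective variety $X$ and for $k$ large enough, the restriction map $H^0(X,L^{\otimes k})\to\bigoplus_{i=1}^N L^{\otimes k}|_{p_i}\cong\mathbb{C}^N$ is surjective. Applied to $X=\overline{\Gamma\backslash\mathcal{D}}^{BB}$ and $L=L_m$, and replacing $m$ by $mk$ if necessary, this yields the surjectivity of $V_m\to\mathbb{C}^N$, $F\mapsto(F(z_1),\ldots,F(z_N))$, hence the surjectivity of $\mathrm{ev}_{m,N}$. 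The hard part will be keeping the argument anchored to the specific family of Poincar\'e series with polynomial density $f$ (rather than an abstract basis of $M_m(\Gamma)$): one has to verify that the Baily--Borel projective embedding is already realized by $V_m$, which is the content of the classical ``Poincar\'e series span enough automorphic forms'' statement for bounded symmetric domains, and which is precisely where the hypothesis excluding $3$-dimensional normal $\mathbb{Q}$-subgroups of $\mathbf{G}$ enters.
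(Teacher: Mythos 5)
Your reduction to the surjectivity of the evaluation map $f\mapsto(P_{m,f}(z_1),\dots,P_{m,f}(z_N))$ is the same starting point as the paper's, but the route you propose through the Baily--Borel embedding has a genuine gap at exactly the step you flag as ``the hard part,'' and as stated that step is not merely unverified but false. The linear system $V_m=\{P_{m,f}\}$ cannot realize the Baily--Borel projective embedding of $\mathcal{F}^*$: by Proposition \ref{pPSvan} every Poincar\'e series $P_{m,f}$ adapted to the improper component $\mathcal{D}$ vanishes on all proper rational boundary components, so the map defined by a basis of $V_m$ has the entire boundary $\mathcal{F}^*\setminus\mathcal{F}$ in its base locus. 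The Baily--Borel very ample system is spanned by Poincar\'e--Eisenstein series adapted to \emph{all} rational boundary components; $V_m$ sits inside the cusp forms, a proper subspace. To salvage your argument you would need two further nontrivial inputs: (i) that Poincar\'e series span the full space of holomorphic cusp forms of weight $m$ (a Petersson inner product / reproducing kernel argument), and (ii) that the subspace of sections of $L^{\otimes k}$ vanishing along the boundary still surjects onto $\bigoplus_i L^{\otimes k}|_{z_i}$ for interior points $z_i$ and large $k$ (e.g.\ Serre vanishing for the ideal sheaf of the boundary twisted by the points, on the possibly quite singular normal variety $\mathcal{F}^*$). Neither is supplied, and without them the surjectivity of the restriction of the evaluation map to the subspace $V_m$ does not follow from the $(N{-}1)$-very ampleness of a power of $L_m$ on the full section space.

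The paper avoids all of this machinery with a short direct estimate. Since $g\mapsto|J_{\mathcal{D}}(g,0)|^{2}$ is integrable, $\sum_{\gamma}|J_{\mathcal{D}}(\gamma,z_i)|^{2}$ converges, so for fixed $0<u<1$ only finitely many $\gamma$ satisfy $|J_{\mathcal{D}}(\gamma,z_i)|>u$. Choosing the polynomial $f$ to take the value $c_i$ at $z_i$ and to vanish at those finitely many translates $\gamma z_i$, the remaining tail of $P_{l,f}(z_i)-c_i$ is bounded by $u^{l-2}$ times a convergent sum and hence tends to $0$ as the weight $l$ grows; running $\vec{c}$ through the standard basis shows the image of the evaluation map contains an approximate, hence exact, basis of $\mathbb{C}^N$. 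If you want to keep your framing, you should replace the appeal to very ampleness by this perturbation argument, or else prove (i) and (ii) above.
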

\begin{proof}
Let us consider the linear map given by
\begin{center}
    $E_{\vec{z}}\colon f\mapsto (P_{m,f}(z_1),\dots,P_{m,f}(z_N))$.
\end{center}
Its image is a linear subspace of $\mathbb{C}^N$. 
We will show this map is surjective for sufficiently large $m$. 

Take some $0<u<1$. 
By \cite{BB} Lemma 5.8, we know the function $g\mapsto |J_{\mathcal{D}}(g,0)|^a$ is in $L^1(G)$ if $a\geq 2$. 
Then, by \cite{BB} Theorem 5.10, we conclude $\sum_{\gamma\in \Gamma}J_{\mathcal{D}}(\gamma,z)^2$ converges absolutely and uniformly on compact sets. 
Hence the set $\Gamma_{u,i}=\{\gamma\in \Gamma||J_{\mathcal{D}}(\gamma,z_i)|>u\}$ is a finite. 
Now let $f$ be a polynomial on $\mathfrak{p}^+$ such that
\begin{center}
$ f(z_i)=c_i$ and $f(\gamma z_i)=0$ for $\gamma \in \Gamma_{u,i}$, $1\leq i\leq N$. 
\end{center}
Note $J_{\mathcal{D}}(e,z_i)^l f(z_i)=f(z_i)=c_i$, we have $|P_{l,f}(z_i)-c_i|\leq \sum_{\gamma \notin \Gamma_{u,i}}|u^lf(\gamma z_i)|$ which converges to $0$ as $l\to \infty$. 
So for any $\varepsilon>0$, there exists an integer $m(\varepsilon)$ such that $\|E(f)-\vec{c}\|_{\mathbb{C}^N}<\varepsilon$ if $m>m(\varepsilon)$. 
Now let $\vec{c}$ runs through the standard basis $\{e_1,\dots,e_N\}$ of  $\mathbb{C}^N$ and let $\varepsilon$ be small enough, the argument above implies the map $E$ contains a basis if $m$ is sufficiently large. 
Hence $E$ is surjective. 
\end{proof}

\subsection{Actions of cuspidal automorphic forms}\label{ssintwop}

Take a cusp form $F\in \mathcal{A}^0(\Gamma,\rho)$ and let $f=\Phi(F)$, which is a vector-valued automorphic form on the domain $\mathcal{D}$. 
We call the function $f$ of such type a {\it cusp form} on $\mathcal{D}$ and denote them by $\mathcal{A}_\mathcal{D}^0(\Gamma,\rho)$. 

\begin{lemma}
\label{lMf}
Let $\phi\in L^2(\mathcal{D},V_{\pi})$, then the map
\begin{center}
$M_f\colon f(z)\mapsto M_{f}(\phi)(z)=f(z)\otimes \phi(z)$
\end{center}
is a well-defined bounded map with image in $L^2(\mathcal{D},V_{\rho\otimes\pi})$
\end{lemma}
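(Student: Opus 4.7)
The plan is to prove $\|M_f\phi\|_{L^2(\mathcal{D},V_{\rho\otimes\pi})}\leq C\|\phi\|_{L^2(\mathcal{D},V_\pi)}$ for a constant $C$ depending only on $f$, by reducing the pointwise factor involving $f$ to values of the associated automorphic form $F$ on $G$ and invoking the boundedness of cusp forms on $G$.

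First I will unpack the $L^2$ norm on the target. Since $f(z)\otimes\phi(z)$ is a simple tensor pointwise and $(\rho\otimes\pi)(\kappa(z,z)^{-1})=\rho(\kappa(z,z)^{-1})\otimes\pi(\kappa(z,z)^{-1})$, the tensor inner product splits, yielding
\[
\|M_f\phi\|^2=\int_{\mathcal{D}}\langle\rho(\kappa(z,z)^{-1})f(z),f(z)\rangle_{\rho}\cdot\langle\pi(\kappa(z,z)^{-1})\phi(z),\phi(z)\rangle_{\pi}\,d\mu(z).
\]
The second factor integrates to $\|\phi\|^2$, so it suffices to obtain a uniform bound on the first.

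Next I will identify the first factor with a squared norm of $F$. Repeating the computation in the proof of Proposition \ref{pS}(iii) with $\rho$ in place of $\pi$ — which only uses that $\rho|_K$ is unitary and $g_z^{*}=g_z^{-1}$ — gives $\rho(\kappa(z,z)^{-1})=\rho(h_z^{-1})^{*}\rho(h_z^{-1})$, where $h_z=\kappa(g_z)=J(g_z,0)$. Letting $F=\Phi^{-1}(f)\in\mathcal{A}^{0}(\Gamma,\rho)$, so that $f(\dot g)=\rho(J(g,0))F(g)$, I obtain
\[
\langle\rho(\kappa(z,z)^{-1})f(z),f(z)\rangle_{\rho}=\|\rho(h_z^{-1})f(z)\|_{\rho}^{2}=\|F(g_z)\|_{\rho}^{2}.
\]

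Finally I will invoke the fact (recorded in Section \ref{sauto1}) that a cuspidal automorphic form on $G$ is bounded: there exists $C=\sup_{g\in G}\|F(g)\|_{\rho}<\infty$. Combining the displays yields
\[
\|M_f\phi\|^{2}\leq C^{2}\int_{\mathcal{D}}\langle\pi(\kappa(z,z)^{-1})\phi(z),\phi(z)\rangle_{\pi}\,d\mu(z)=C^{2}\|\phi\|^{2}.
\]
Measurability of $M_f\phi$ is immediate from smoothness of $f$ and measurability of $\phi$. The only subtlety is verifying the identity $\rho(\kappa(z,z)^{-1})=\rho(h_z^{-1})^{*}\rho(h_z^{-1})$ for a general $\rho$ that need not underlie a holomorphic discrete series; but the proof in Proposition \ref{pS}(iii) is representation-independent and transfers verbatim, so this step is routine rather than a genuine obstacle.
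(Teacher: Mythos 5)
Your proof is correct and follows essentially the same route as the paper's: both split the tensor inner product into the $\rho$-factor and the $\pi$-factor, identify $\langle\rho(\kappa(z,z)^{-1})f(z),f(z)\rangle_{\rho}$ with $\|F(g_z)\|_{\rho}^2$ via the factorization $\rho(\kappa(z,z)^{-1})=\rho(J(g_z,0)^{-1})^{*}\rho(J(g_z,0)^{-1})$, and conclude by the boundedness of the cusp form $F$ on $G$. The only cosmetic difference is that you justify the factorization by transferring the computation of Proposition \ref{pS}(iii), whereas the paper invokes Remark \ref{r1} together with $\kappa(\dot g,\dot g)=J(g,0)\overline{J(g,0)^{-1}}$; these are the same observation.
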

\begin{proof}
Let us consider the norm $\| M_f(\phi)\|_{L^2(\mathcal{D},V_{\rho\otimes\pi})}$. 
We have
\begin{equation*}
\begin{aligned}
&\| M_f(\phi)\|^2=\langle f\otimes \phi,f\otimes \phi\rangle_{L^2}\\
=&\int_{\mathcal{D}}\langle (\rho\otimes\pi)(\kappa(w,w)^{-1})f(w)\otimes \phi(w),f(w)\otimes \phi(w) \rangle_{V_{\rho\otimes\pi}}d\mu(w)\\
=&\int_{\mathcal{D}}\langle \rho(\kappa(w,w)^{-1})f(w),f(w)\rangle_{V_{\rho}} \cdot  \langle \pi(\kappa(w,w)^{-1})\phi(w),\phi(w)\rangle_{\pi}d\mu(w). 
\end{aligned}
\end{equation*}
Suppose $w\in \mathcal{D}\cong G/K$ has a representative $\dot{g}$ with $g\in G$. 
Note $f(\dot{g})=\rho(J(g,0))F(g)$ and $\kappa(\dot{g},\dot{g})=J(g,0)\overline{J(g,0)^{-1}}$. 
Following Remark \ref{r1}, one has $\rho(\kappa(\dot{g},\dot{g}))=\rho(J(g,0))\rho(J(g,0))^{*}$. 
(Note that $F(g)$ is independent of the choice of the representative $\dot{g}$ in the coset $w\in \mathcal{D}\cong G/K$.) 
We have
\begin{equation*}
\begin{aligned}
&\langle\rho(\kappa(w,w)^{-1})f(w),f(w)\rangle_{V_{\rho}}\\
=&\langle(\rho(J(g,0))^{-1})^* \rho(J(g,0)^{-1})\rho(J(g,0))F(g),\rho(J(g,0))F(g)\rangle_{V_{\rho}}\\
=&\langle F(g),F(g)\rangle_{V_{\rho}}. 
\end{aligned}
\end{equation*}
Note $F$ is bounded on $G$ since it is a cusp form, i.e., there is a positive constant $C_F$ such that $\langle F(g),F(g)\rangle_{V_{\rho}}\leq C_F$ for all $g\in G$. 
So we get $\| M_f(\phi)\|^2_{L^2(\mathcal{D},V_{\rho\otimes\pi})}\leq C_F\cdot \| \phi\|^2_{L^2(\mathcal{D},V_{\pi})}$. 
\end{proof}
\begin{remark}
The tensor product above is pointwise defined. 
Indeed, the vector-valued function $\rho(J(g,0)^{-1})f(\dot{g})$ is essentially bounded so that $f(z)\otimes \phi(z)$ is still in the Hilbert space $L^2(\mathcal{D},V_{\rho\otimes\pi})$. 

In general, the tensor Hilbert space $H_{\rho}\otimes H_{\pi}$ is an infinite direct sum of discrete series representations, which is much larger than $H_{\rho\otimes \pi}$.   
J. Repka gave a clear description of the decomposition of arbitrary tensor products of these holomorphic discrete series representations \cite{Re79}. 
\end{remark}

Now we define a Toeplitz-type operator on the holomorphic discrete series. 
Recall that $P_{\pi}$ is the orthogonal projection from $L^2(\mathcal{D},V_{\pi})$ to $H_{\pi}=L_{\text{hol}}^2(\mathcal{D},V_{\pi})$.
Let 
\begin{center}
$T_f\colon H_{\pi}\to H_{\rho\otimes \pi}$    
\end{center} 
be the map defined by
\begin{center}
$T_f(\phi)=P_{\rho\otimes \pi}M_f P_{\pi}(\phi)=P_{\rho\otimes \pi}M_f (\phi)=P_{\rho\otimes \pi}(f(z)\otimes \phi(z))$. 
\end{center}
where $\phi\in H_{\pi}$.

\begin{proposition}\label{pcuspcom}
%With the assumption above, 
We have $T_f\in B(H_{\pi},H_{\rho\otimes \pi})$ which commutes with the action of $\Gamma$, i.e., $T_{f}L_{\pi}(\gamma)=L_{\rho\otimes\pi }(\gamma)T_f$, for all $\gamma\in \Gamma$.  
\end{proposition}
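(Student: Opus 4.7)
The plan is to split the statement into two parts: (i) $T_f$ is a bounded operator, and (ii) $T_f$ intertwines the $\Gamma$-actions. For (i), I will note that $T_f = P_{\rho\otimes\pi}\circ M_f \circ P_\pi$ is a composition of the multiplication operator $M_f$, which is bounded from $L^2(\mathcal{D},V_\pi)$ to $L^2(\mathcal{D},V_{\rho\otimes\pi})$ by Lemma \ref{lMf} (using that the cusp form $F=\Phi^{-1}(f)$ is bounded on $G$), with the two orthogonal projections $P_\pi$ and $P_{\rho\otimes\pi}$, each of operator norm at most $1$. Hence $T_f\in B(H_\pi,H_{\rho\otimes\pi})$.

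For (ii) the key intermediate claim will be that $M_f$ already intertwines the full $L^2$-actions, i.e.\ $M_f L_\pi(\gamma)=L_{\rho\otimes\pi}(\gamma)M_f$ on $L^2(\mathcal{D},V_\pi)$ for every $\gamma\in\Gamma$. Using the definition $L_\pi(\gamma)\phi(z)=\pi(J(\gamma^{-1},z)^{-1})\phi(\gamma^{-1}z)$ together with the fact that $(\rho\otimes\pi)(k)=\rho(k)\otimes\pi(k)$ for $k\in K_{\mathbb{C}}$, I will reduce the intertwining identity pointwise at $z\in\mathcal{D}$ to the equation
\[
f(z)\otimes\pi(J(\gamma^{-1},z)^{-1})\phi(\gamma^{-1}z)=\rho(J(\gamma^{-1},z)^{-1})f(\gamma^{-1}z)\otimes\pi(J(\gamma^{-1},z)^{-1})\phi(\gamma^{-1}z),
\]
which holds precisely when $f(\gamma^{-1}z)=\rho(J(\gamma^{-1},z))f(z)$. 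This last identity is just the transformation law from Definition \ref{dcusptype} applied to $\gamma^{-1}\in\Gamma$ (recall $f\in\mathcal{A}_{\mathcal{D}}^0(\Gamma,\rho)\subset\mathcal{A}_{\mathcal{D}}(\Gamma,J)$).

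To finish, I will promote intertwining from $M_f$ to $T_f$ by sandwiching with the projections and invoking Proposition \ref{pinvariant}, which asserts that $P_\pi$ commutes with every $L_\pi(g)$ and analogously $P_{\rho\otimes\pi}$ commutes with every $L_{\rho\otimes\pi}(g)$. For $\phi\in H_\pi$ we then have $P_\pi\phi=\phi$, so
\[
T_fL_\pi(\gamma)\phi=P_{\rho\otimes\pi}M_fL_\pi(\gamma)\phi=P_{\rho\otimes\pi}L_{\rho\otimes\pi}(\gamma)M_f\phi=L_{\rho\otimes\pi}(\gamma)P_{\rho\otimes\pi}M_f\phi=L_{\rho\otimes\pi}(\gamma)T_f\phi,
\]
which is the desired intertwining. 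I do not anticipate any genuine obstacle; the argument is essentially bookkeeping with the cocycle identity for $J$ and the definitional transformation rule for cusp forms of type $\rho$, so the only care needed is in correctly matching up $\gamma$ and $\gamma^{-1}$ and in verifying once that multiplication factors through the tensor product as $(\rho\otimes\pi)(k)(u\otimes v)=\rho(k)u\otimes\pi(k)v$.
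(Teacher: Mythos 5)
Your proof is correct and follows essentially the same route as the paper's: boundedness via Lemma \ref{lMf}, the observation that $M_f$ intertwines the $L^2$-actions because the type-$\rho$ transformation law says exactly that $L_\rho(\gamma)f=f$, and Proposition \ref{pinvariant} (and its analogue for $P_{\rho\otimes\pi}$) to pass from $M_f$ to $T_f$. The only cosmetic difference is that the paper verifies the intertwining by pairing against test vectors $\psi\otimes\eta$, whereas you check the identity pointwise; both reduce to the same two facts.
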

\begin{proof}
The boundedness follows from Lemma \ref{lMf}. 
Take $\phi,\eta \in H_{\pi}$, $\psi\in H_{\rho}$. 
We have 
\begin{equation*}
\begin{aligned}
&\langle L_{\rho\otimes\pi }(\gamma)T_f(\phi(z)),(\psi\otimes\eta)(z)\rangle_{H_{\rho\otimes\pi}}\\
=&\langle L_{\rho\otimes\pi }(\gamma)P_{\rho\otimes\pi}M_f(\phi(z)),(\psi\otimes\eta)(z)\rangle_{H_{\rho\otimes\pi}}\\
=&\langle P_{\rho\otimes\pi}L_{\rho\otimes\pi }(\gamma)(f\otimes\phi)(z),(\psi\otimes\eta)(z)\rangle_{L^2}\\
=&\langle (L_{\rho }(\gamma)(f)\otimes L_{\pi }(\gamma)(\phi))(z),(\psi\otimes\eta)(z)\rangle_{L^2}\\
=&\langle T_f L_{\pi }(\gamma)(\phi)(z),(\psi\otimes\eta)(z)\rangle_{H_{\rho\otimes\pi}}
\end{aligned}
\end{equation*}
where we apply Proposition \ref{pinvariant} and use the fact $L_{\rho}(\gamma)f=f$.  
\end{proof}

Take another cusp form $H\in \mathcal{A}^0(\Gamma,\rho)$ and let $h(z)=\Phi(H)$. 
We define the following function $(f,h)_{\rho}$ on $\mathcal{D}$: 
\begin{center}
$(f,h)_{\rho}(z)=\langle \rho(\kappa(z,z)^{-1})f(z),h(z)\rangle_{V_{\rho}}$.  
\end{center}
Indeed, this is a generalization of integrand $f(z)\overline{h(z)}y^k$ in the Petersonn inner product 
\begin{center}
$(f,h)=\int_{\Gamma\backslash\mathbb{H}}f(z)\overline{h(z)}y^{k-2}dxdy$
\end{center}
of cusp forms $f,h\in S_k(\Gamma)$ of weight $k$ on $SL(2,\mathbb{R})$. 
For cusp forms on real Lie groups, we can show $(f,h)_{\rho}(z)=\langle F(g_z),H(g_z)\rangle_{V_{\rho}}$ where $g_z$ is a representative of $z\in \mathcal{D}\cong G/K$. 

\begin{corollary}
\label{cTeo}
The composite operator $T_{h}^{*}T_f \in B(H_{\pi})$ commutes with the action of $\Gamma$, i.e., $T_{h}^{*}T_fL_{\pi}(\gamma)=L_{\pi}(\gamma)T_{h}^{*}T_f$ for $\gamma\in \Gamma$. 
Moreover, if either one of $f$ or $h$ is holomorphic, we have
\begin{center}
$T_{h}^{*}T_f\phi(z)=T_{(f,h)_{\rho}}\phi(z)=T_{\langle F(g_z),H(g_z)\rangle_{V_{\rho}}}\phi(z)$
\end{center}
for any $\phi(z)\in H_{\pi}$. 
\end{corollary}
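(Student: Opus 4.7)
The plan is to obtain the intertwining property directly from Proposition \ref{pcuspcom} and then evaluate $T_h^{*}T_f$ via an inner product computation that uses the holomorphy hypothesis in a crucial way.

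For the first assertion, I would simply note that Proposition \ref{pcuspcom} gives $T_f L_\pi(\gamma) = L_{\rho\otimes\pi}(\gamma) T_f$ and, applied to $h$, the same identity for $T_h$. Since each $L_\pi(\gamma)$ and $L_{\rho\otimes\pi}(\gamma)$ is unitary, taking the adjoint of the $T_h$-identity gives $T_h^{*} L_{\rho\otimes\pi}(\gamma) = L_\pi(\gamma) T_h^{*}$. Composing yields
\[
T_h^{*} T_f L_\pi(\gamma) \;=\; T_h^{*} L_{\rho\otimes\pi}(\gamma) T_f \;=\; L_\pi(\gamma) T_h^{*} T_f,
\]
so $T_h^{*}T_f \in A_\pi$.

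For the explicit formula, I would pair against an arbitrary $\eta \in H_\pi$ and use the adjoint relation to land back in $L^2(\mathcal{D}, V_{\rho\otimes\pi})$:
\[
\langle T_h^{*} T_f \phi, \eta\rangle_{H_\pi} \;=\; \langle T_f \phi, T_h \eta\rangle_{H_{\rho\otimes\pi}} \;=\; \langle f\otimes\phi,\; P_{\rho\otimes\pi}(h\otimes\eta)\rangle_{L^2}.
\]
Here the holomorphy hypothesis enters: if $h$ is holomorphic then $h\otimes\eta$ is already in $H_{\rho\otimes\pi}$, so the projection is the identity on it; if instead $f$ is holomorphic, one moves $P_{\rho\otimes\pi}$ to the other side of the inner product and uses that $f\otimes\phi \in H_{\rho\otimes\pi}$. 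In either case the $L^2$-inner product on $V_{\rho\otimes\pi}$ factors as a product of inner products on $V_\rho$ and $V_\pi$ (since $\kappa(z,z)$ acts diagonally on the tensor factors), giving
\[
\int_{\mathcal{D}} \langle \rho(\kappa(z,z)^{-1}) f(z), h(z)\rangle_{V_\rho}\, \langle \pi(\kappa(z,z)^{-1})\phi(z),\eta(z)\rangle_{V_\pi}\, d\mu(z),
\]
which is exactly $\langle M_{(f,h)_\rho}\phi, \eta\rangle_{L^2} = \langle T_{(f,h)_\rho}\phi,\eta\rangle_{H_\pi}$ after reabsorbing the scalar $(f,h)_\rho(z)$ into the second factor and re-inserting $P_\pi$ (which fixes $\eta \in H_\pi$). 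The identification $(f,h)_\rho(z) = \langle F(g_z), H(g_z)\rangle_{V_\rho}$ follows from $f(\dot g)=\rho(J(g,0))F(g)$, $\rho(\kappa(\dot g,\dot g))=\rho(J(g,0))\rho(J(g,0))^{*}$ (Remark \ref{r1}), and independence from the coset representative, just as in the proof of Lemma \ref{lMf}.

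The main subtlety is ensuring that $T_{(f,h)_\rho}$ is a genuine Toeplitz operator — that is, that the scalar symbol $(f,h)_\rho$ lies in $L^{\infty}(\mathcal{D})$ so Proposition \ref{pTfga} applies. This is where cuspidality is essential: since $F, H \in \mathcal{A}^{0}(\Gamma,\rho)$ are bounded on $G$, we have $|(f,h)_\rho(z)| = |\langle F(g_z),H(g_z)\rangle_{V_\rho}| \leq \|F\|_\infty \|H\|_\infty$, uniformly in $z$. The other point to watch is that one cannot drop the holomorphy hypothesis on at least one factor — without it, $P_{\rho\otimes\pi}$ really does act nontrivially and the formula $T_h^{*}T_f = T_{(f,h)_\rho}$ need not hold; this is precisely why the statement is phrased with the "either one" disjunction.
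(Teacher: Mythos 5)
Your proposal is correct and follows essentially the same route as the paper: the intertwining property from Proposition \ref{pcuspcom} plus unitarity, the pairing against $\eta\in H_\pi$ with the projection absorbed by holomorphy of one factor, the factorization of the $V_{\rho\otimes\pi}$-inner product, and the boundedness of the symbol $(f,h)_\rho=\langle F(g_z),H(g_z)\rangle_{V_\rho}$ via cuspidality. The only (cosmetic) difference is that you treat both branches of the ``either $f$ or $h$ holomorphic'' disjunction explicitly, whereas the paper writes out only the case where $f$ is holomorphic.
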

\begin{proof}
The first claim is straightforward by Proposition \ref{pcuspcom}. 
By Remark \ref{r1}, we have
$\rho(\kappa(z,z))=\rho(J(g_z,0))\rho(J(g_z,0)^*)=\rho(J(g_z,0))\rho(J(g_z,0))^*$. 
As $F(g_z)=\rho(J(g_z,0)^{-1})f(z)$, we obtain: 
\begin{equation*}
\begin{aligned}
(f,h)_{\rho}(z)&=\langle \rho(\kappa(z,z)^{-1})f(z),h(z)\rangle_{V_{\rho}}\\
&=\langle \rho(J(g_z,0)^{-1})f(z),\rho(J(g_z,0)^{-1})h(z)\rangle_{V_{\rho}}\\
&=\langle F(g_z),H(g_z)\rangle_{V_{\rho}},
\end{aligned}
\end{equation*}
which is bounded since $F,H$ are bounded in each coordinate. 
Hence $\langle F(g_z),H(g_z)\rangle_{V_{\rho}}\in L^{\infty}(\mathcal{D})$ that makes the associated Toeplitz operator $T_{(f,h)_{\rho}}$ well-defined. 

Now we assume $f$ is holomorphic and take an arbitrary $\eta\in H_{\pi}$. 
Note $f(z)\otimes \phi(z)\in H_{\rho\otimes \phi}$. 
We have: 
\begin{equation*}
\begin{aligned}
&\langle T_h^* T_f\phi,\eta \rangle_{H_{\pi}}=\langle  T_h^*f\otimes\phi,\eta \rangle_{H_{\pi}}=\langle f\otimes \phi,T_h\eta\rangle_{H_{\rho\otimes\pi}}=\langle f\otimes \phi,h\otimes \eta\rangle_{L^2}\\=&\int_{\mathcal{D}}\langle (\rho\otimes\pi)(\kappa(w,w)^{-1})(f\otimes \phi)(w),(h\otimes \eta)(w) \rangle_{V_{\rho\otimes\pi}}d\mu(w)\\
=&\langle M_{(f,h)_{\rho}}\phi,\eta\rangle_{L^2}=\langle T_{(f,h)_{\rho}}\phi,\eta\rangle_{H_{\pi}} 
\end{aligned}
\end{equation*}

\end{proof}

%Let $H_{\pi}=L^2_{\text{holo}}(\mathcal{D},V_{\pi})$ be the holomorphic discrete series representation of $G$ associated with an irreducible representation $(\pi,V_{\pi})$ of its maximal compact subgroup $K$. 
%Given a cusp form $f\colon \mathcal{D}\to V_{\rho}$ for $\Gamma$, we denote by $T_f$ the Toeplitz-type operator associated with $f$ as in Section \ref{sauto}. 

\subsection{Baily-Borel compactification}

We review some basic facts of the Baily-Borel compactification of the quotient space $\Gamma\backslash\mathcal{D}$. 
It is a generalization of the compactification of the fundamental domain $SL(2,\mathbb{Z})\backslash \mathbb{H}$, or equivalently, 
\begin{center}
$SL(2,\mathbb{Z})\backslash SL(2,\mathbb{R})/SO(2)\cong\{z\in\mathbb{H}|~|z|>1,|\text{Re}(z)|<\frac{1}{2}\}$     
\end{center}
to a general Lie group $G$ and an arithmetic subgroup $\Gamma$. 
More details can be found in \cite{AMRT,BB,Sa} with the group actions on different sides. 

%Let $\mathbf{G}$ be a semi-simple linear algebraic group defined over $\mathbb{Q}$ such that $G=\mathbf{G}_\mathbb{R}$ is the real Lie group that we discussed in the previous sections. 
%Let $\mathcal{D}=G/K$ as before. 

We regard $\mathcal{D} \subset \mathfrak{p}^+=\mathbb{C}^N$ as a smooth manifold with the natural action of $G$ by the Harish-Chandra realization. 
The action also extends to the closure $\overline{\mathcal{D}}\subset \mathfrak{p}^+$. 
We say a real affine hyperplane $H\subset \mathbb{C}^N$ is a {\it supporting hyperplane} if $H\cap \overline{\mathcal{D}}\neq \emptyset$ and $H\cap \mathcal{D}=\emptyset$. 
Let $H$ be such a supporting plane and denote its intersection with the closure $\overline{\mathcal{D}}$ by $\overline{F}$, i.e., $\overline{F}=H\cap \overline{\mathcal{D}}$.  
Furthermore, there is a minimal complex affine subspace $L\subset \mathbb{C}^m$ contains $\overline{F}$. 
The {\it boundary component} $F$ is a nonempty open subset of $L$ whose closure is $\overline{F}$, which is also a bounded symmetric domain in $L$.  
The detailed construction of boundary components is based on the $\mathbb{R}$-roots and $\mathbb{Q}$-roots of $\mathbf{G}$  \cite{AMRT,BB}. 

%Recall the root vectors $\mathfrak{g}_{\alpha}=\{X\in\mathfrak{g}_{\mathbb{C}}|[h,X]=\alpha(h)X\}$ with $\alpha\in \Delta=\Delta(\mathfrak{g}_{\mathbb{C}},\mathfrak{h}_{\mathbb{C}})$ introduced in Section 2.2. 
\iffalse
Meanwhile, there is a maximal set $(\alpha_1,\dots,\alpha_t)\subset \Delta_n^+$ such that they are strongly orthogonal, i.e., neither $\alpha_i+\alpha_j$ nor $\alpha_i-\alpha_j$ is a root. 
The {\it standard boundary components} $F_b$ are indexed by $1\leq b\leq t$, which can generate all the boundary components by the transfroms of $G$ (see \cite{BB} 1.5). 
For such a boundary component $F_b$, there is also an unbounded realization $S_b\subset \mathfrak{p}^+$ and also a cocyle $J_b(g,x)\colon G\times S_b\to \mathbb{C}$ which is defined as the determinant of the Jacobian of the action of $g\in G$ at $x\in S_b$. 
\fi
One should keep in mind that $\mathcal{D}$ itself is an improper rational boundary component. 
For each rational boundary component $F$, there is a canonical projection $\sigma_F\colon \mathcal{D}\to F$. 

For a boundary component $F$,  its normalizer is defined by
\begin{center}
$N_G(F)=\{g\in G|g\cdot F\subset F\}$,
\end{center}
where the action is induced from that on $\overline{\mathcal{D}}$. 
It is well-known $N_G(F)$ is a parabolic subgroup of $G$. 
We call $F$ a {\it rational boundary component} if $N_G(F)$ is defined over $\mathbb{Q}$ as a subgroup of the linear algebraic group $\mathbf{G}$ (see \cite{BB}.3 and \cite{AMRT}.III).  
There are countably many rational boundary components. 

%Recall the root vectors $\mathfrak{g}_{\alpha}=\{X\in\mathfrak{g}_{\mathbb{C}}|[h,X]=\alpha(h)X\}$ with $\alpha\in \Delta=\Delta(\mathfrak{g}_{\mathbb{C}},\mathfrak{h}_{\mathbb{C}})$ introduced in Section 2.2. 

Now we let $\mathcal{D}^*$ be the union of $\mathcal{D}$ and its rational boundary components, equipped with the {\it Satake topology}, which is the unique topology with some properties related to the arithmetic group $\Gamma$ (see \cite{AMRT} III. or \cite{BB} Theorem 4.9). 
Let $\mathcal{F}=\Gamma\backslash \mathcal{D}$ be the fundamental domain. 
\begin{definition}
The Baily-Borel compactification $\mathcal{F}^*$ of $\mathcal{F}$ is defined to be the quotient 
\begin{center}
$\mathcal{F}^*=\Gamma\backslash \mathcal{D}^*$,
\end{center}
equipped with the quotient topology. 
\end{definition}

More precisely, we have the following theorem for Baily-Borel compactification, see \cite{AMRT}.III.6 and \cite{BB} Theorem 10.11. 
\begin{theorem}[Baily-Borel Compactification]\label{tBB1}
There exists an isomorphism 
$\theta\colon \mathcal{F}^*\to \mathbf{P}_{\mathbb{C}}^N$ of $\mathcal{F}^*$ onto a normally projective subvariety of $\mathbf{P}_{\mathbb{C}}^N$ such that 
\begin{enumerate}
    \item $\mathcal{F}^*$ a compact Hausdorff space containing $\mathcal{F}$ as an open dense subset,
    \item $\mathcal{F}^*$ is a finite union of subspaces of the form
\begin{center}
    $\Gamma_F\backslash F$,
\end{center}
where $F$ is a rational boundary component and $\Gamma_F=\Gamma\cap N_G(F)$,
\item The closure of $\Gamma_F\backslash F$ is the union of $\Gamma_F\backslash F$ and subspaces $\Gamma_{F'}\backslash F'$ of strictly smaller dimension. 
\end{enumerate}
\end{theorem}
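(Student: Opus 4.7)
The plan is to combine a topological compactification argument with a projective embedding constructed from automorphic forms. First, one equips $\mathcal{D}^* = \mathcal{D} \cup \bigcup_F F$ (union over rational boundary components) with the Satake topology: a basis of neighborhoods at a point $x \in F$ is obtained from ``horocyclic tubes'' built from the unipotent radical of $N_G(F)$ together with the canonical projection $\sigma_F \colon \mathcal{D} \to F$. I would then verify that $\Gamma$ acts continuously and properly discontinuously on $\mathcal{D}^*$ in this topology; combined with reduction theory for arithmetic groups (Borel--Harish-Chandra, via Siegel sets for each $\mathbb{Q}$-parabolic $N_G(F)$), this shows that $\mathcal{F}^* = \Gamma\backslash\mathcal{D}^*$ is a compact Hausdorff space containing $\mathcal{F}$ as an open dense subset, settling part (1). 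Part (2) follows from the disjoint-union structure of $\mathcal{D}^*$ together with the finiteness of $\Gamma$-equivalence classes of rational boundary components, which is a consequence of the finiteness of $\Gamma\backslash \mathbf{G}(\mathbb{Q})/N_G(F)(\mathbb{Q})$. Part (3) reflects the incidence pattern $F' \subset \overline{F}$ in $\overline{\mathcal{D}}$, which forces $\dim F' < \dim F$ and passes to the quotient.

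For the main analytic content, namely the existence of the projective embedding $\theta$, I would exhibit sufficiently many holomorphic automorphic forms that separate points on $\mathcal{F}^*$. The key input is Theorem \ref{tpcseries}, which produces bounded holomorphic Poincar\'e series $P_{m,f}$ for any sufficiently large weight $m$ from polynomial seeds $f$. Proposition \ref{ppoinsep} already gives separation on the interior $\mathcal{F}$; what remains is to show these Poincar\'e series extend continuously to $\mathcal{D}^*$ and separate boundary points from interior points and from each other. This rests on the fact that each boundary component $F$ is itself a Hermitian symmetric domain, whose restriction of $P_{m,f}$ admits a Fourier--Jacobi expansion along the unipotent radical of $N_G(F)$ whose constant term is again a Poincar\'e-type series on $F$; an inductive argument on $\mathbb{Q}$-rank then supplies enough boundary-separating forms, provided one also controls uniform convergence of the Poincar\'e series in horocyclic tubes.

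Given a finite-dimensional space of common weight-$m$ Poincar\'e series $\{f_0, \dots, f_N\}$ with $m$ large enough to separate points and to provide local coordinates at every stratum, I would define $\theta(x) = [f_0(x) : \cdots : f_N(x)]$. Well-definedness on $\mathcal{F}^*$ follows because all $f_i$ share the automorphy factor $J_{\mathcal{D}}(\gamma,z)^m$, which cancels in projective coordinates; continuity, injectivity, and holomorphy on the interior are immediate from the separation and coordinate properties. To conclude that the image is a normally projective subvariety one applies a GAGA-type argument: show that the graded ring $A(\Gamma) = \bigoplus_{m \geq 0} A_m(\Gamma)$ of holomorphic automorphic forms is a finitely generated $\mathbb{C}$-algebra, and identify $\mathcal{F}^*$ with $\mathrm{Proj}\, A(\Gamma)$. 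Normality then follows because $\mathcal{F}$ is smooth (after passing to a neat finite-index subgroup if necessary) and because the boundary strata $\Gamma_F\backslash F$ have codimension at least two in $\mathcal{F}^*$ after a suitable toroidal resolution, so Hartogs-type extension applies.

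The main obstacle will be the finite generation of $A(\Gamma)$ together with the algebraicity of the embedding. The classical Baily--Borel argument reduces this to an induction on $\mathbb{Q}$-rank: the rank-one case is handled by explicit Fourier expansions at cusps, and higher rank proceeds by analyzing Fourier--Jacobi coefficients along rational maximal parabolics. The delicate point is controlling convergence and extension of Poincar\'e series uniformly across all boundary strata; this is where the standing assumption that $\mathbf{G}$ has no normal $\mathbb{Q}$-subgroup of dimension $3$ plays its role, ruling out pathological rank-one factors for which Koecher-type principles would fail.
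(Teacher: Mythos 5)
The paper does not prove this theorem: it is quoted verbatim from the literature (\cite{BB} Theorem 10.11 and \cite{AMRT} III.6), so there is no internal proof to compare against. Your proposal is a reasonable high-level roadmap of the classical Baily--Borel argument (Satake topology and reduction theory for part (1), finiteness of $\Gamma$-classes of rational boundary components for part (2), the incidence pattern of boundary components for part (3), and a projective embedding by automorphic forms together with finite generation of the graded ring $A(\Gamma)$ for the algebraicity). Each of those steps is, however, itself a substantial theorem occupying many pages in \cite{BB}, so what you have is an outline rather than a proof; that is acceptable for a cited classical result, but one concrete error in the outline should be flagged.

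The gap is in the embedding step. You propose to take a basis $\{f_0,\dots,f_N\}$ of weight-$m$ Poincar\'e series $P_{m,f}$ attached to polynomial seeds on $\mathcal{D}$ and use $\theta(x)=[f_0(x):\cdots:f_N(x)]$ to separate points and give local coordinates on all of $\mathcal{F}^*$. But these series are attached to the improper boundary component $\mathcal{D}$ itself, and by the very vanishing property the paper records as Proposition \ref{pPSvan} (i.e.\ \cite{BB} Theorem 8.6), $\Phi_F(P_{m,f})=0$ for every proper rational boundary component $F$. Hence all your $f_i$ vanish identically on the entire Satake boundary: they cannot separate two distinct boundary points, cannot separate points lying on different boundary strata, and cannot furnish local coordinates at any boundary stratum, so the map $\theta$ as written is not even well defined (all homogeneous coordinates vanish) off the open stratum $\mathcal{F}$. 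The Baily--Borel construction requires the larger family of Poincar\'e--Eisenstein series adapted to each rational boundary component $F$, whose $\Phi_F$-images surject onto the Poincar\'e series of the quotient $\Gamma_F\backslash F$; your remark about Fourier--Jacobi constant terms gestures at this, but the embedding must be built from the adapted series for all strata, not from the $P_{m,f}$ alone. A secondary inaccuracy: normality in \cite{BB} is proved by showing the local rings at boundary points are integrally closed, not via toroidal resolutions (which belong to the later \cite{AMRT} theory) nor via a Hartogs argument on a smooth model.
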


From now on, we will denote the compactification as
\begin{center}
$\mathcal{F}^*=V_0\cup V_1\cup\dots \cup V_t$,
\end{center}
where $V_0=\mathcal{F}$ and $V_i=\Gamma_{F_i}\backslash {F_i}$ for some rational boundary component $F_i$ with $0\leq i\leq t$. 
It can also be proved that $\dim(\mathcal{F}^*-\mathcal{F})\leq \dim(\mathcal{F})-2$ if $G$ has no normal $\mathbb{Q}$-subgroup of dimension $3$. 

\iffalse
\begin{theorem}[\cite{BB} Section 7.2]
There exists a positive integer $l_0$ such that if $l$ is a positive multiple of $l_0$, then the series $E_{\phi,l,\Gamma}\circ g$ converges absolutely and uniformly on compact sets. 
\end{theorem}
\fi

Assume $h$ is an integral automorphic form  on $\mathcal{D}$ of weight $l$ (in the sense of \cite{BB} Section 8.5 and notice side for action), i.e.,  $h(z)=J_{\mathcal{D}}(\gamma,z)^l h(\gamma\cdot z)$ for all $\gamma\in \Gamma$.  
For each rational boundary component $F$, it has an extension to an automorphic form for $\Gamma_F$ on $F$, which we denote by $\Phi_F(h)$.

We have the following result on the extension of Poincar\'{e} series on the domain $\mathcal{D}$ in Section \ref{sauto1} to rational boundary components. 
%Let $\Gamma(F)$ be the image of $\Gamma\cap N(F)$ in $G(F)=N(F)/Z(F)$. 

\begin{proposition}\label{pPSvan}
Let $P_{m,f}(z)=\sum_{\gamma\in \Gamma}J_{\mathcal{D}}(\gamma,z)^m f(\gamma\cdot z)$ be the Poincar\'{e} series associated with a polynomial $f$ on $\mathcal{D}$. 
We have 
\begin{center}
$\Phi_F(P_{m,f})=0$    
\end{center}
for all proper rational boundary component $F$. 
\end{proposition}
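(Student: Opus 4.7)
The plan is to follow the Baily--Borel strategy for showing that Poincar\'e series are cusp forms, adapted to the notation of the present paper. Recall that the map $\Phi_F$ assigns to an integral automorphic form $h$ of weight $m$ on $\mathcal{D}$ a $\Gamma_F$-automorphic form on the rational boundary component $F$; concretely, for $w\in F$ one evaluates $\Phi_F(h)(w)$ as a suitably normalized limit of $h$ along a cone direction approaching $F$ (cf.\ \cite{BB} \S8). Thus to prove $\Phi_F(P_{m,f})=0$ it suffices to analyze the limiting behavior of the sum $\sum_{\gamma\in\Gamma}J_{\mathcal{D}}(\gamma,z)^m f(\gamma\cdot z)$ as $z$ approaches $F$ in the Satake topology.

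First I would unfold the Poincar\'e series using the double-coset decomposition $\Gamma=\bigsqcup_{\alpha}\Gamma_F\alpha$, where $\Gamma_F=\Gamma\cap N_G(F)$. Applying the cocycle identity
\[
J_{\mathcal{D}}(\gamma\alpha,z)^m=J_{\mathcal{D}}(\gamma,\alpha z)^m\,J_{\mathcal{D}}(\alpha,z)^m
\]
separates the sum into an outer sum over coset representatives $\alpha$ and an inner sum over $\Gamma_F$, so that the contribution of each $\alpha$ takes the form $J_{\mathcal{D}}(\alpha,z)^m\bigl(\sum_{\gamma\in\Gamma_F}J_{\mathcal{D}}(\gamma,\alpha z)^m f(\gamma\alpha z)\bigr)$.

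For representatives with $\alpha\notin N_G(F)$, the image $\alpha\cdot F$ is a rational boundary component distinct from $F$, so by the structure of $\mathcal{D}^*$ the point $\alpha z$ does not approach $F$ as $z\to F$. The Jacobian factor $|J_{\mathcal{D}}(\alpha,z)|^m$ then decays like an appropriate power of the distance to $\partial\mathcal{D}$ (using the estimate from \cite{BB} Lemma 5.8), and since $f$ is a polynomial on the bounded domain $\mathcal{D}$, each such outer summand vanishes in the limit defining $\Phi_F$. For $\alpha\in \Gamma_F$, one may absorb $\alpha$ into the inner sum and reduce to showing that the $\Gamma_F$-averaged series, evaluated through the normalization built into $\Phi_F$, collapses to zero; here the weight factor $J_{\mathcal{D}}(\gamma,z)^m$ restricted to the normal directions to $F$ decays faster than any polynomial growth from $f$ in those directions, again forcing vanishing.

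The main obstacle is making the ``limit along $F$'' rigorous enough to interchange with the infinite sum. This requires uniform convergence of $P_{m,f}$ on a Siegel-domain type neighbourhood of $F$ inside $\mathcal{D}^*$, together with a dominated-convergence argument controlling the tail. These should follow from the integrability $g\mapsto|J_{\mathcal{D}}(g,0)|^a\in L^1(G)$ for $a\geq 2$ cited earlier, applied to a suitable tubular neighborhood, together with Satake's description of the fundamental domain of $\Gamma_F$ near $F$. Once the interchange is justified, the above coset analysis delivers $\Phi_F(P_{m,f})=0$.
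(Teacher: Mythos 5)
Your overall strategy --- unfold the series over $\Gamma_F\backslash\Gamma$ and kill each piece in the limit defining $\Phi_F$ --- is the strategy behind the Baily--Borel result you are implicitly reproving, but the two decay claims that carry your argument are both false as stated, and that is where the gap lies. For a \emph{fixed} group element $\alpha$, the factor $J_{\mathcal{D}}(\alpha,z)=\det(\ad_{\mathfrak{p}^+}J(\alpha,z))$ extends to a continuous, nowhere-vanishing function on the closed bounded domain $\overline{\mathcal{D}}$ (for $SL(2,\mathbb{R})$ acting on the disk it is $(\bar b w+\bar a)^{-2}$ with the zero of $\bar b w+\bar a$ lying outside $\overline{\mathbb{D}}$), so $|J_{\mathcal{D}}(\alpha,z)|^m$ does \emph{not} decay as $z\to F$; the decay of $|J_{\mathcal{D}}(\gamma,z)|$ is in the group variable $\gamma$ for fixed $z$ (that is what \cite{BB} Lemma 5.8 and the $L^1$ estimate control, and what makes the series converge), which is irrelevant to the limit $z\to F$. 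The same objection is fatal to your inner sum: for $\gamma=e\in\Gamma_F$ the term is just $f(z)$, a polynomial that is bounded but generically \emph{nonzero} on $\overline{F}$, so there is neither a decaying weight factor nor any ``polynomial growth of $f$ in the normal directions'' to beat. No individual term $J_{\mathcal{D}}(\gamma,z)^m f(\gamma\cdot z)$ tends to zero as $z\to F$. The vanishing comes entirely from the normalization you mention only in passing: $\Phi_F$ evaluates the form through the Siegel-domain realization adapted to $F$, so each term acquires the factor $J_{\mathcal{D}}(c_F,z_t)^m$ of the partial Cayley transform, which tends to $0$ uniformly over $\gamma$; equivalently, Theorem \ref{tpcseries} already asserts that $g\mapsto J_{\mathcal{D}}(g,0)^{-m}P_{m,f}(g\cdot 0)$ is bounded on $G$, which forces the normalized limit to vanish once the interchange of limit and sum (the one genuine analytic point, which you correctly flag) is justified.

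For comparison, the paper proves none of this directly: it observes that $P_{m,f}$ is a Poincar\'e--Eisenstein series adapted to the improper boundary component $\mathcal{D}$ itself, that every proper rational boundary component $F$ satisfies $\dim F\leq\dim\mathcal{D}$ and $F\subsetneq\Gamma\cdot\mathcal{D}$, and then quotes \cite{BB} Theorem 8.6. If you want a self-contained argument, either reproduce Baily--Borel's limit computation with the partial Cayley normalization made explicit, or derive the statement from the boundedness assertion of Theorem \ref{tpcseries}; as written, your term-by-term decay mechanism does not hold.
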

\begin{proof}
Note $\mathcal{D}$ is a rational boundary component of the highest dimension. 
For any proper rational boundary component $F$, we have $\dim F\leq \dim \mathcal{D}$ and $F\subsetneq \Gamma\cdot \mathcal{D}$. 

Following \cite{BB} Theorem 8.6, we have $\Phi_F(P_{m,f})=0$. 
\end{proof}

\begin{remark}\label{rteopc}
This also implies $P_{m,\phi}$ is cuspidal (see \cite{BB} Section 8.10). 
\end{remark}

\subsection{The result on real Lie groups}

Observe the function $g\mapsto J_{\mathcal{D}}(g,0)^{-m}P_{m,f}(g\cdot 0)$ is bounded (Theorem \ref{tpcseries}),  
as in the proof of Lemma \ref{lMf}.
We are able to associate to each $P_{m,f}$ a well-defined Toeplitz operator $T_{P_{m,f}}$. 
We will focus on the Toeplitz operators of this type for the proof of Theorem \ref{tmain}.

Recall two cusp forms $f,g$ on the bounded symmetric domain $\mathcal{D}$ are called of a same type $\rho$ if both $f,g$ take values in a representation $V_{\rho}$ of $K$ and $f(\gamma x)=\rho(J(\gamma,x))\cdot f(x), g(\gamma x)=\rho(J(\gamma,x))\cdot g(x)$ in Definition \ref{dcusptype}.
%For the proof of Theorem \ref{tmain}, we will focus on the cusp forms of type $\det^{m}(\ad_{\mathfrak{p}^+})$,  
%which are nothing but the cusp forms of weight $m$ by Lemma \ref{ljj}. 

\begin{proof}[Proof of Theorem~\ref{tmain}]
By Corollary \ref{cTeo}, if $f,g$ are cusp forms of type $(\rho,V_{\rho})$ and at least one of them is holomorphic, we know the composite operator $T_g^*T_f$ is just the Toeplitz operator $T_{(f,g)_{\rho}}$ associated with the essentially bounded function $(f,g)_{\rho}(w)=\langle \rho(\kappa(w,w)^{-1})f(w),g(w))\rangle_{V_{\rho}}$. 
By Corollary \ref{cTfgen}, it suffices to show these $(f,g)_{\rho}$'s span a dense subspace of $L^{\infty}(\mathcal{F})$ (or equivalently $C(\mathcal{F})$, $L^{2}(\mathcal{F})$).  

By Theorem \ref{tBB1}, we know $\mathcal{F}^*$ is a compact Hausdorff space which contains $\mathcal{F}$ as a dense open subset. 
Let us consider the quotient space $\mathcal{F}_1$ of $\mathcal{F}^*$ by identifying all the elements do not belong to $\mathcal{F}$ (which form a closed subset of $\mathcal{F}^*$ ), i.e.,
\begin{center}
$\mathcal{F}_1=\mathcal{F}^*/(\mathcal{F}^*\backslash\mathcal{F})$.
\end{center}
This is also the disjoint union of $\mathcal{F}$ and a single point, denoted as $\{\text{pt}\}$ (which represents all the proper boundary components), i.e., $\mathcal{F}_1=\mathcal{F}\sqcup\{\text{pt}\}$. 

Let us consider the Poincar\'{e} series $P_{m,\phi}$ for a polynomial $\phi$ on $\mathcal{D}$. 
By Proposition \ref{pPSvan}, $\Phi_F(P_{m,\phi})=0$ for any proper boundary components $F$. 
Hence every $P_{m,\phi}$ gives a well-defined function on $\mathcal{F}_1$ which vanishes at $\text{pt}$. 
Then it suffices to consider the functions of the type $(P_{m,\phi},P_{m,\psi})_{\rho}$.  

Take any two distinct points $z_1,z_2\in \mathcal{F}_1$ and consider the following two cases: (i) $z_1=\text{pt},z_2\in \mathcal{F}$, or (ii) $z_1,z_2\in \mathcal{F}$. 
In either of the two cases, by Proposition \ref{ppoinsep}, there is a polynomial $\phi$ such that $P_{m,\phi}(z_1)\neq P_{m,\phi}(z_2)$ for some $m$. 
So $(P_{m,\phi},P_{m,\psi})(z_1)_{\rho}\neq (P_{m,\phi},P_{m,\psi})_{\rho}(z_2)$ for a suitable $\psi$ such that $P_{m,\psi}(z_2)\neq 0$. 
Note $\mathcal{F}_1$ is compact and Hausdorff, by Stone-Weierstrass Theorem, the forms $(P_{m,\phi},P_{m,\psi})_{\rho}$ generate the space
\begin{center}
$\{h\colon \mathcal{F}_1\to \mathbb{C}|h\text{~is~ continuous},~h(\text{pt})=0\}$.    
\end{center} 
Hence their restriction to $\mathcal{F}$ is dense in $L^{\infty}(\mathcal{F})$, which completes the proof. 
\end{proof}

\begin{remark}
This result may also be done by showing the rational functions on the projective variety $\mathcal{F}^*$ separate points, 
which can be reduced to the proof that $\mathcal{F}^*$ is an integral separated scheme. 

Assume $G$ is a connected semi-simple linear algebraic group over $\mathbb{R}$ and a lattice $\Gamma$ is Zariski-dense in $G$.  
We can show $\Gamma$ is an ICC group (see \cite{GHJ} 3.3.b). 
This gives a large family of the cases that $A_{\pi}$ is a $\text{II}_1$ factor. 
\end{remark}

\bibliographystyle{abbrv}
%\bibliography{references_list}
\typeout{}
\bibliography{ms} % ref
\end{document}